\theoremstyle{plain}
\newtheorem{thm}{Theorem}[section]
\newtheorem{lemma}[thm]{Lemma}
\newtheorem{prop}[thm]{Proposition}
\newtheorem{cor}[thm]{Corollary}
\numberwithin{equation}{section}
\numberwithin{figure}{section}
\theoremstyle{definition}
\newtheorem{definition}[thm]{Definition}
\newtheorem{example}[thm]{Example}
\newtheorem{remark}[thm]{Remark}
\newtheorem{obs}[thm]{Observation}
\newtheorem{assumption}[thm]{Assumption}
\newtheorem{question}[thm]{Question}
\newcommand{\G}{\Gamma}
\newcommand{\Z}{\mathbb{Z}}
\newcommand{\N}{\mathbb{N}}
\newcommand{\Q}{\mathbb{Q}}
\newcommand{\cC}{\mathcal{C}}
\newcommand{\cG}{\mathcal{G}}
\newcommand{\cH}{\mathcal{H}}
\newcommand{\cO}{\mathcal{O}}
\newcommand{\cP}{\mathcal{P}}
\newcommand{\cR}{\mathcal{R}}
\newcommand{\la}{\langle}
\newcommand{\ra}{\rangle}
\newcommand{\half}{\mathcal{H}}
\newcommand{\cS}{\mathcal{S}}
\newcommand{\cA}{\mathcal{A}}
\newcommand{\cT}{\mathcal T}
\newcommand{\cQ}{\mathcal{Q}}
\newcommand{\cV}{\mathcal{V}}
\newcommand{\cX}{\mathcal{X}}
\newcommand{\cY}{\mathcal{Y}}
\newcommand{\cM}{\mathcal{M}}
\newcommand{\cW}{\mathcal{W}}
\newcommand{\cZ}{\mathcal{Z}}
\newcommand{\wv}{\widetilde{v}}
\newcommand{\ww}{\widetilde{w}}
\newcommand{\hw}{\widehat{w}}
\newcommand{\hwn}{\widehat{w_0}}
\newcommand{\hz}{\widehat{z}}
\newcommand{\cOg}{\mathcal{O}_{\Gamma}}
\newcommand{\cOgp}{\mathcal{O}_{\Gamma'}}
\newcommand{\gL}{\Lambda}
\newcommand{\intcap}{\, \footnotesize{\stackrel{\circ}{\cap}} \,}
\newcommand{\hide}[1]{}
\newcommand{\card}{\operatorname{card}}
\def\polhk#1{\setbox0=\hbox{#1}{\ooalign{\hidewidth
    \lower1.0ex\hbox{$\,\lhook$}\hidewidth\crcr\unhbox0}}}
\title[Commensurability for RACG's and geometric amalgams]{Commensurability for certain right-angled Coxeter groups and geometric amalgams of free groups}
\author{Pallavi Dani, Emily Stark and Anne Thomas}
\begin{document}
\maketitle

\begin{abstract}  We give explicit necessary and sufficient conditions for the abstract commensurability of certain families of $1$-ended, hyperbolic groups, namely right-angled Coxeter groups defined by generalized $\Theta$-graphs and cycles of generalized $\Theta$-graphs, and geometric amalgams of free groups whose JSJ graphs are trees of diameter $\le 4$.  We also show that if a geometric amalgam of free groups has JSJ graph a tree, then it is commensurable to a right-angled Coxeter group, and give an example of a geometric amalgam of free groups which is not quasi-isometric (hence not commensurable) to any group which is finitely generated by torsion elements. Our proofs involve a new geometric realization of the right-angled Coxeter groups we consider, such that covers corresponding to  torsion-free, finite-index subgroups are surface amalgams. \end{abstract}

\section{Introduction}

Two groups $G$ and $H$ are \emph{(abstractly) commensurable} if they contain finite-index subgroups $G' < G$ and $H' < H$ so that $G'$ and $H'$ are abstractly isomorphic.  There are related but stronger notions of commensurability for subgroups of a given group.  Commensurability in the sense we consider is an equivalence relation on abstract groups which implies quasi-isometric equivalence (for finitely generated groups).  In this paper, we give explicit necessary and sufficient conditions for commensurability within certain families of Coxeter groups and amalgams of free groups.   

Let $\G$ be a finite simplicial graph with vertex set $S$.  The \emph{right-angled Coxeter group} $W_\G$ with \emph{defining graph $\G$}
has generating set $S$ and relations $s^2 = 1$ for all $s \in S$, and $st = ts$ whenever $s, t \in S$ are adjacent vertices.  We assume throughout that  $W_\G$ is infinite, or equivalently, that $\G$ is not a complete graph. The graph $\G$ is \emph{3-convex} if every path between a pair of vertices of valence at least three in $\G$ has at least three edges.  For each induced subgraph $\Lambda$ of $\G$, with vertex set $A$, the corresponding \emph{special subgroup} of $W_\G$ is the right-angled Coxeter group $W_\Lambda = W_A = \langle A \rangle$.   See Section~\ref{sec:graphs} for additional terminology for graphs, and Figure~\ref{GraphsAmalgamsIntro} for some examples of $3$-convex defining graphs.

\begin{center}
\begin{figure}[htp]
\begin{overpic}[width=125mm]
{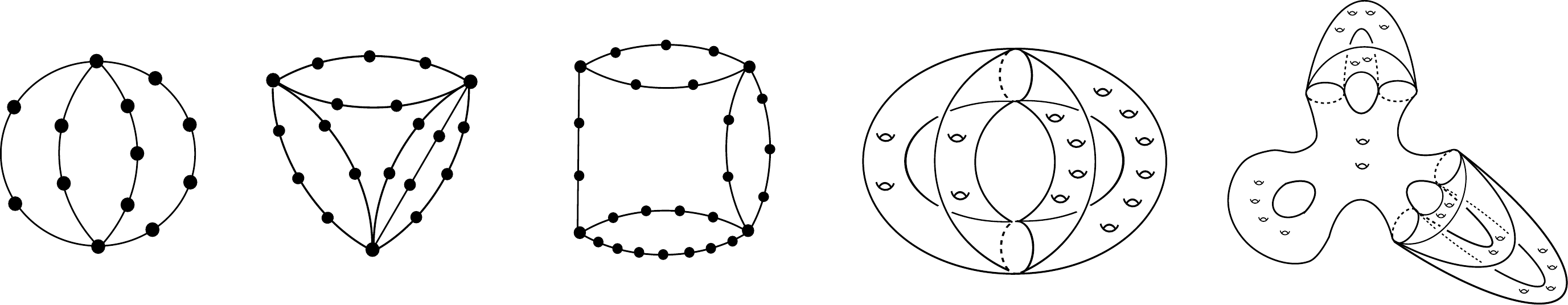}
\put(0,16){$\G$}
\put(15,16){$\G'$}
\put(33,16){$\G''$}
\put(55,16){$\cX$}
\put(78,16){$\cX'$}
\end{overpic}
\caption{\small{The graphs $\G$, $\G'$, and $\G''$ are examples of $3$-convex defining graphs, and the spaces $\cX$ and $\cX'$ are examples of surface amalgams.}}
\label{GraphsAmalgamsIntro}
\end{figure}
\end{center}

\emph{Geometric amalgams of free groups} were introduced by Lafont in~\cite{lafont}. These are fundamental groups of spaces called \emph{surface amalgams},  which, roughly speaking, consist of surfaces with boundary glued together along their boundary curves.  See 
Section~\ref{sec:lafont} for details, and Figure~\ref{GraphsAmalgamsIntro} for some examples. 

We now outline our main results and the ideas of their proofs, including some new constructions which may be of independent interest, and discuss previous work on related questions.  We defer precise definitions and theorem statements to Section~\ref{sec:defs-statements}.

In Theorems~\ref{thm:GenTheta} and~\ref{thm:CycleGenTheta}, we give explicit necessary and sufficient conditions for commensurability of right-angled Coxeter groups defined by two families of graphs.  Theorem~\ref{thm:GenTheta} classifies those defined by 
3-convex \emph{generalized $\Theta$-graphs} (see Definition~\ref{def:genTheta} and the graph~$\G$ in 
Figure~\ref{GraphsAmalgamsIntro}), and Theorem~\ref{thm:CycleGenTheta} classifies those defined by 
3-convex \emph{cycles of generalized $\Theta$-graphs} (see Definition~\ref{def:cyclegenTheta} and the graphs $\G'$ and $\G''$ in Figure~\ref{GraphsAmalgamsIntro}).  We prove 
Theorem~\ref{thm:GenTheta} in Section~\ref{sec:GenTheta}, and the necessary and sufficient conditions of Theorem~\ref{thm:CycleGenTheta} in Sections~\ref{sec:CycleGenThetaNec} and~\ref{sec:CycleGenThetaSuff}, respectively.
 Our commensurability criteria are families of equations involving the Euler characteristics of certain special subgroups, and we express these criteria using commensurability of vectors with entries determined by these Euler characteristics.  
 
In~\cite{crisp-paoluzzi}, Crisp and Paoluzzi classified up to commensurability the right-angled Coxeter groups defined by a certain family of three-branch generalized $\Theta$-graphs, and in Remark~\ref{rem:doubling} we recover their result using Theorem~\ref{thm:GenTheta}.  If $\G$ is a $3$-convex generalized $\Theta$-graph and $\G'$ a $3$-convex cycle of generalized $\Theta$-graphs, we can also determine the commensurability of $W_\G$ and $W_{\G'}$, as explained in Remark~\ref{rem:GenThetaCycleGenTheta}.  

The results described in the previous two paragraphs fit into a larger program of classifying 1-ended, hyperbolic right-angled Coxeter groups up to commensurability.  
Since groups that are commensurable are quasi-isometric, a step in this program is provided by Dani and Thomas~\cite{dani-thomas-jsj}, who considered Bowditch's JSJ tree~\cite{bowditch}, a quasi-isometry invariant for $1$-ended hyperbolic groups which are not cocompact Fuchsian.  If $G$ is such a group then $G$ acts cocompactly on its Bowditch JSJ tree $\cT_G$ with edge stabilizers maximal $2$-ended subgroups over which $G$ splits.  The quotient graph for the action of $G$ on $\cT_G$ is called the \emph{JSJ graph of $G$}, and the induced graph of groups is the \emph{JSJ decomposition for $G$}.  In Section~\ref{sec:jsj}, we recall results from~\cite{dani-thomas-jsj} that give an explicit ``visual" construction of the JSJ decomposition for right-angled Coxeter groups~$W_\G$ satisfying the following.

\begin{assumption} \label{assumptions}
The graph $\G$ has no triangles ($W_\G$ is 2-dimensional); 
$\G$ is connected and 
no vertex or (closed) edge separates $\G$ into two or more components
($W_\G$ is $1$-ended);
$\G$  has no squares ($W_\G$ is hyperbolic);
$\G$  is not a cycle of length $\ge 5$ ($W_\G$ is not cocompact Fuchsian); and $\G$ 
has a cut pair of vertices~$\{a, b\}$ ($W_\G$ splits over a $2$-ended subgroup).
\end{assumption}
\noindent Moreover, Theorem~1.3 of~\cite{dani-thomas-jsj} says that Bowditch's tree is a \emph{complete} quasi-isometry invariant for the family of groups satisfying, in addition, 
 \begin{assumption}\label{K4assumption}
 $\G$ has no induced subgraph which is a subdivided copy of $K_4$, the complete graph on four vertices. 
 \end{assumption}
 
We denote by $\cG$ the family of graphs satisfying Assumptions~\ref{assumptions} and~\ref{K4assumption}.  Generalized~$\Theta$-graphs and cycles of generalized $\Theta$-graphs are two infinite families of graphs in $\cG$.  Thus Theorems~\ref{thm:GenTheta} and~\ref{thm:CycleGenTheta} and Remarks~\ref{rem:doubling} and~\ref{rem:GenThetaCycleGenTheta} 
provide a finer classification (up to commensurability) within some quasi-isometry classes determined by graphs in $\cG$.   In  Section~\ref{sec:discussion} we discuss the obstructions to extending our results to other families of graphs in $\cG$.

Our proofs of the necessary conditions in Theorems~\ref{thm:GenTheta} 
and~\ref{thm:CycleGenTheta} follow the same general strategy used by Crisp--Paoluzzi~\cite{crisp-paoluzzi} and Stark~\cite{stark} on commensurability of certain geometric amalgams of free groups (discussed further below).  Given two groups which are commensurable, the first step in both these papers is to consider covers corresponding to 
isomorphic (torsion-free) finite-index subgroups.  In both cases such covers are surface amalgams, and a 
 crucial ingredient is Lafont's topological rigidity result from~\cite{lafont},
which says that any isomorphism between a pair of geometric amalgams of free groups is induced by a homeomorphism of the corresponding surface amalgams.  This homeomorphism between the covers is then analyzed to obtain the necessary conditions.  

The natural spaces to apply this strategy to in our setting are quotients $\Sigma_\G/G$ where $\Sigma_\G$ is the Davis complex for $W_\G$, and $G$ is a torsion-free, finite-index subgroup of $W_\G$.  However, Stark proves in~\cite{stark-rigidity} that topological rigidity fails for such quotients, by constructing an example where $G$ and $G'$ are isomorphic torsion-free, finite-index subgroups of $W_\G$, but $\Sigma_\G/G$ and $\Sigma_\G/G'$ are not homeomorphic.  The graph $\G$ in this example is a $3$-convex cycle of generalized $\Theta$-graphs.

In light of the result of~\cite{stark-rigidity}, in Section~\ref{sec:orbicomplex} we introduce a new geometric realization for  right-angled Coxeter groups~$W_\G$ with  $3$-convex $\G \in \cG$, by constructing a piecewise hyperbolic orbicomplex $\cO_\G$ with fundamental group $W_\G$.  The orbicomplex $\cO_\G$ has underlying space obtained by gluing together right-angled hyperbolic polygons, and each edge of $\cO_\G$ which is contained in only one such polygon is a reflection edge.  It follows that any cover of $\cO_\G$ corresponding to a torsion-free, finite-index subgroup of $W_\G$ is a surface amalgam (tiled by right-angled polygons).  Thus we can apply Lafont's result to these spaces.  With a view to generalizing the commensurability classification, we give the construction of the orbicomplex $\cO_\G$ for all 
$3$-convex graphs in $\cG$, not just for generalized $\Theta$-graphs and cycles of generalized $\Theta$-graphs.  

Our construction of $\cO_\G$ makes heavy use of the JSJ decomposition from~\cite{dani-thomas-jsj}.  We restrict to $3$-convex defining graphs in this paper so that the correspondence between $\G$ and the JSJ decomposition of $W_\G$ is more straightforward than the general case in~\cite{dani-thomas-jsj}.  A reference for orbifolds is Kapovich \cite{kapovich}; we view orbicomplexes as complexes of groups, and use the theory of these from Bridson--Haefliger~\cite{bridson-haefliger}.

The proofs of the necessary conditions in Theorems~\ref{thm:GenTheta} 
and~\ref{thm:CycleGenTheta} then involve a careful analysis of the homeomorphic finite covers guaranteed by topological rigidity. For generalized $\Theta$-graphs, we adapt Stark's proof 
in~\cite{stark} to the setting where the orbicomplexes considered do not have the same Euler characteristic. The proof of the necessary conditions for cycles of generalized $\Theta$-graphs is considerably more difficult.  Here, the groups $W_\G$ and $W_{\G'}$ are fundamental groups of orbicomplexes $\cO_\G$ and $\cO_{\G'}$ with ``central" orbifolds $\cA$ and $\cA'$ that have  many branching edges along which other orbifolds are attached (see the lower right of 
Figure~\ref{CycleGenTheta}).  A key ingredient in the proof of Theorem~\ref{thm:CycleGenTheta} is a careful argument to show that the homeomorphism $f:\cX \to \cX'$ between finite covers $\pi:\cX \to \cO_\G$ and $\pi':\cX' \to \cO_{\G'}$ guaranteed by Lafont's topological rigidity result can be modified so that either $f(\pi^{-1}(\cA)) = \pi'^{-1}(\cA')$, or $f(\pi^{-1}(\cA))$ has no surfaces in common with $\pi'^{-1}(\cA')$.

To prove the sufficient conditions in Theorem~\ref{thm:GenTheta} and~\ref{thm:CycleGenTheta}, 
given any pair of groups $W_\G$ and $W_{\G'}$ which satisfy the putative sufficient conditions, we construct finite-sheeted covers of $\cO_\G$ and $\cO_{\G'}$ with isomorphic fundamental groups.  It follows that $W_\G$ and $W_{\G'}$ have isomorphic finite-index subgroups.  In the case of generalized $\Theta$-graphs, these finite-sheeted covers are orbicomplexes whose construction is an immediate generalization of Crisp and Paoluzzi's.  The finite covers in the case of cycles of generalized $\Theta$-graphs are homeomorphic surface amalgams, and their construction is quite  delicate.  

In order to explain our constructions of surface amalgams covering $\cO_\G$, we introduce the notion of a \emph{half-covering} of graphs (see Section~\ref{sec:half-coverings}).  The idea is that if a surface amalgam~$\cY$ is a finite-sheeted cover of another surface amalgam $\cX$, or of an orbicomplex $\cO_\G$, then the induced map of JSJ graphs is a half-covering.  For the proofs of the sufficient conditions in Theorem~\ref{thm:CycleGenTheta}, we first construct the JSJ graphs for the homeomorphic finite-sheeted covers, together with the associated half-coverings.   We then construct suitable surfaces and glue them together along their boundaries, guided by adjacencies in the JSJ graphs, to obtain a surface amalgam covering $\cO_\G$.

One construction in our proof of sufficient conditions for Theorem~\ref{thm:CycleGenTheta} may be of independent interest.  In Section~\ref{sec:tfcover}, given any orbicomplex $\cO_\G$ (with $\G \in \cG$ and $\G$ being $3$-convex), we construct a particularly nice degree 16 cover $\cX$ which is a surface amalgam. Hence $W_\G$ has an index 16 subgroup which is a geometric amalgam of free groups (this result is stated below as  Theorem~\ref{thm:Degree16}).  Our construction of $\cX$ uses tilings of surfaces similar to those in Futer and Thomas~\cite[Section 6.1]{futer-thomas}.   Given two groups $W_\G$ and $W_{\G'}$ which satisfy the sufficient conditions from Theorem~\ref{thm:CycleGenTheta}, we first pass to our degree 16 covers $\cX$ and $\cX'$ of $\cO_\G$ and $\cO_{\G'}$, then construct homeomorphic finite-sheeted covers of $\cX$ and $\cX'$.  

Theorem~\ref{thm:GeomAmalgamsRACGs}, proved in Section~\ref{sec:GeomAmalgamsRACGs}, says that for all geometric amalgams of free groups $G$, if the JSJ graph of $G$ is a tree, then $G$ is commensurable to some $W_\G$ (with $\G \in \cG$).  In Section~\ref{sec:GeomAmalgamsRACGs} we also give an example of a geometric amalgam of free groups which is not quasi-isometric (and hence not commensurable) to any $W_\G$, or indeed to any group finitely generated by torsion elements.
This leads to the question:

\begin{question}  Which geometric amalgams of free groups are commensurable to right-angled Coxeter groups (with defining graphs in $\cG$)? 
\end{question}

The proof of Theorem~\ref{thm:GeomAmalgamsRACGs} uses the torsion-free degree 16 cover of the orbicomplex $\cO_\G$ that we construct in Section~\ref{sec:tfcover}.  We show that any surface amalgam whose JSJ graph is a tree admits a finite-sheeted cover which ``looks like'' our torsion-free cover.  Then we follow our construction in Section~\ref{sec:tfcover} backwards to obtain an orbicomplex $\cO_\G$ as a finite quotient, with fundamental group the right-angled Coxeter group $W_\G$.

As a corollary to Theorems~\ref{thm:GenTheta},~\ref{thm:CycleGenTheta}, and~\ref{thm:GeomAmalgamsRACGs}, we obtain the commensurability classification of geometric amalgams of free groups whose JSJ graph is a tree with diameter at most~4 (see Corollary~\ref{cor:GeomAmalgamsCommens}).  This recovers part of Theorem 3.31 of Stark \cite{stark}, which gives the commensurability classification of fundamental groups of surface amalgams obtained by identifying two closed surfaces along an essential simple closed curve in each.  We remark that Malone \cite{malone} provides a complete quasi-isometry classification within the class of geometric amalgams of free groups; in particular, he proves that the isomorphism type of Bowditch's JSJ tree for such a group is a complete quasi-isometry invariant, and supplies an algorithm to compute this tree.  

We conclude this part of the introduction by mentioning some earlier work on commensurability and on the relationship between commensurability and quasi-isometry for groups closely related to the ones we study.  We refer to the surveys by Paoluzzi~\cite{paoluzzi} and Walsh~\cite{walsh} for more comprehensive accounts.  
In~\cite{jkrt} 
and~\cite{gjk},   
the related notion of wide commensurability is studied for Coxeter groups generated by reflections in the faces of polytopes in $n$-dimensional real hyperbolic space.  Apart from these two papers and~\cite{crisp-paoluzzi}, we do not know of any other work on commensurability for (infinite non-affine) Coxeter groups.  

In~\cite{crisp}, Crisp investigated commensurability in certain 2-dimensional Artin groups, while Huang has studied quasi-isometry and commensurability in right-angled Artin groups with finite outer automorphism groups in~\cite{huang-qi} and~\cite{huang-commens}.  Huang's combined results show that 
within a class of right-angled Artin groups defined by a few additional conditions on the defining graph,
the quasi-isometry, commensurability and isomorphism classes are the same.  We note that none of the groups we consider is quasi-isometric to a right-angled Artin group, since the groups we consider are all $1$-ended and hyperbolic, and a right-angled Artin group is hyperbolic if and only if it is free.

The above results of Huang are in contrast to our settings of right-angled Coxeter groups and geometric amalgams of free groups:
Theorems~\ref{thm:GenTheta},~\ref{thm:CycleGenTheta}, and~\ref{thm:GeomAmalgamsRACGs} above, together with the descriptions of Bowditch's JSJ tree from~\cite{dani-thomas-jsj} and~\cite{malone}, show that each quasi-isometry class containing one of the groups considered in our theorems contains infinitely many commensurability classes.  For geometric amalgams of free groups, Malone~\cite{malone} and Stark~\cite{stark} had both given examples to show that commensurability and quasi-isometry are different.

\subsection{Definitions and statements of results}\label{sec:defs-statements}  We now give precise definitions and statements for our main results.

First, we recall the definition of the Euler characteristic of a Coxeter group in the case we will need.  A reference for the general definition is pp. 309--310 of~\cite{davis-book}.

\begin{definition}[Euler characteristic of $W_\G$]\label{def:euler} Let $W_\G$ be a right-angled Coxeter group with defining graph $\G$ having vertex set $V(\G)$ and edge set $E(\G)$.  Assume that $\G$ is triangle-free.  Then the \emph{Euler characteristic} of $W_\G$ is the rational number $\chi(W_\G)$ given by:
\[
\chi(W_\G) = 1 - \frac{\card V(\G)}{2}  + \frac{\card E(\G)}{4}.
\]
\end{definition}

\noindent We remark that $\chi(W_\G)$ is equal to the Euler characteristic of the orbicomplex induced by the action of $W_\G$ on its Davis complex (we will not need this interpretation).

To each group we consider, we associate a collection of vectors involving the Euler characteristics of certain special subgroups.  We will use the following notions concerning commensurability of vectors.

 \begin{definition}[Commensurability of vectors]
 Let $k, \ell \geq 1$.  Vectors $v \in \Q^k$ and $w  \in \Q^{\ell}$ are {\it commensurable} if $k = \ell$ and there exist non-zero integers $K$ and $L$ so that $Kv = Lw$.   Given a nontrivial commensurability class $\cV$ of vectors in $\Q^k$, the \emph{minimal integral element} of $\cV$ is the unique vector $v_0 \in \cV \cap \Z^k$ so that the entries of $v_0$ have greatest common divisor~$1$ and the first nonzero entry of $v_0$ is a positive integer.  Then for each $v \in \cV$, there is a unique rational $R = R(v)$ so that $v = Rv_0$.
 \end{definition}
 
Our first main result, Theorem~\ref{thm:GenTheta}, classifies the commensurability classes among right-angled Coxeter groups with defining graphs  
generalized $\Theta$-graphs, defined as follows.

\begin{definition}[Generalized $\Theta$-graph]\label{def:genTheta}
For $k\ge 1$, let $\Psi_k$ be the graph with two vertices $a$ and~$b$, each of valence $k$, and $k$ edges $e_1, \dots, e_k$ connecting $a$ and $b$.  For integers $0 \leq n_1 \leq \dots \leq n_k$, the \emph{generalized $\Theta$-graph} $\Theta(n_1, n_2, \dots, n_k)$ is obtained by subdividing the edge $e_i$ of $\Psi_k$ into $n_i+1$ edges by inserting $n_i$ new vertices along $e_i$, for $1 \leq i \leq k$.   
\end{definition}

\noindent For example, the graph $\G$ in Figure~\ref{GraphsAmalgamsIntro}  is $\Theta(2,2,3,4)$.  
 We write $\beta_i$ for the induced subgraph of $\Theta = \Theta(n_1, n_2, \dots, n_k)$ which was obtained by subdividing the edge $e_i$ of $\Psi_k$, and call $\beta_i$ the \emph{$i$th branch} of~$\Theta$.  A generalized $\Theta$-graph is $3$-convex if and only if $n_i \ge 2$ for all $i$, equivalently $n_1 \geq 2$. 
Note that if $\Theta$ is $3$-convex and has at least 3 branches then $\Theta$ satisfies 
Assumptions~\ref{assumptions} and~\ref{K4assumption}.

To each generalized $\Theta$-graph, we associate the following Euler characteristic vector. 

 \begin{definition}[Euler characteristic vector for generalized $\Theta$-graphs]\label{def:theta-euler}
   Let $W_{\Theta}$ be the right-angled Coxeter group with defining graph 
   $\Theta = \Theta(n_1, \ldots, n_k)$.  The {\it Euler characteristic vector of $W_{\Theta}$} is $v= (\chi(W_{\beta_1}), \dots, \chi(W_{\beta_k}))$.  \end{definition}

\noindent Note that, by definition, $\chi(W_{\beta_1}) \geq \ldots \geq \chi(W_{\beta_k})$.  For example, if $\G = \Theta(2,2,3,4)$ as in Figure~\ref{GraphsAmalgamsIntro}, then the Euler characteristic vector of $W_\G$ is $v = (-\frac{1}{4},-\frac{1}{4},-\frac{1}{2},-\frac{3}{4})$.

The commensurability classification for the corresponding right-angled Coxeter groups is then as follows.  

\begin{thm}\label{thm:GenTheta} Let $\Theta$ and $\Theta'$ be $3$-convex generalized $\Theta$-graphs with at least 3 branches, and let $v$ and~$v'$ be the Euler characteristic vectors of the right-angled Coxeter groups $W_{\Theta}$ and $W_{\Theta'}$, respectively. Then $W_{\Theta}$ and $W_{\Theta'}$ are abstractly commensurable if and only if $v$ and $v'$ are commensurable.
\end{thm}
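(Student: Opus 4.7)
The plan is to use the orbicomplex $\cO_\Theta$ constructed in Section~\ref{sec:orbicomplex}, which has fundamental group $W_\Theta$. For $\Theta = \Theta(n_1,\dots,n_k)$ with $k\ge 3$, the JSJ decomposition of $W_\Theta$ splits along the $2$-ended subgroup generated by the cut pair $\{a,b\}$, with vertex groups the special subgroups $W_{\beta_i}$; correspondingly, $\cO_\Theta$ decomposes into one ``branch'' orbifold $\cA_i$ per $\beta_i$, glued along reflection arcs corresponding to $a$ and $b$, with $\chi^{\mathrm{orb}}(\cA_i) = \chi(W_{\beta_i})$.

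For the necessary direction, suppose $W_\Theta$ and $W_{\Theta'}$ are commensurable, so there exist torsion-free finite-index subgroups $G<W_\Theta$ and $G'<W_{\Theta'}$ of indices $d$ and $d'$ with $G\cong G'$. The corresponding covers $\pi:\cX\to\cO_\Theta$ and $\pi':\cX'\to\cO_{\Theta'}$ are surface amalgams, and by Lafont's topological rigidity~\cite{lafont} the isomorphism $G\cong G'$ is realized by a homeomorphism $f:\cX\to\cX'$. The JSJ decomposition of $\cX$ consists of the preimages $\pi^{-1}(\cA_i)$, each a disjoint union of surface covers of $\cA_i$ with total Euler characteristic $d\cdot\chi(W_{\beta_i})$; similarly for $\cX'$. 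Since $f$ must carry JSJ pieces of $\cX$ to JSJ pieces of $\cX'$ preserving Euler characteristic, careful bookkeeping---adapting Stark's argument from~\cite{stark} to the setting where $\chi(\cO_\Theta) \ne \chi(\cO_{\Theta'})$ in general---first forces $k = k'$, and then matches the sorted Euler characteristic lists to give $d\cdot v = d'\cdot v'$. Hence $v$ and $v'$ are commensurable.

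For sufficiency, suppose $K v = L v'$ for positive integers $K, L$, so $k = k'$ and $K\chi(W_{\beta_i}) = L\chi(W_{\beta_i'})$ for each $i$. Generalizing the construction of Crisp and Paoluzzi~\cite{crisp-paoluzzi}, for each $i$ build compatible degree-$K$ and degree-$L$ covers of the branch orbifolds $\cA_i$ and $\cA_i'$ whose surface outputs are homeomorphic (which is possible by the Euler characteristic equality, once boundary covering data is chosen to match on the arcs corresponding to $a$ and $b$), and glue these together over compatible covers of the two vertex reflection pieces. The resulting orbicomplex covers of $\cO_\Theta$ and $\cO_{\Theta'}$ are homeomorphic, exhibiting isomorphic finite-index subgroups of $W_\Theta$ and $W_{\Theta'}$.

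The principal obstacle lies in the necessary direction, where one must exploit topological rigidity to pin down the matching of branches \emph{entrywise} in sorted order. The complication is greatest when several $\chi(W_{\beta_i})$ coincide: the homeomorphism $f$ may permute the corresponding pieces, and one must verify that such permutations do not break the vector identity $d\cdot v = d'\cdot v'$. This follows because pieces of equal Euler characteristic contribute identical entries after sorting, but the argument requires one to treat the ``number of sheets per branch'' as variable---the new ingredient beyond~\cite{stark}, where orbicomplex Euler characteristics of both sides agreed a priori.
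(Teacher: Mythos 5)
Your proposal follows essentially the same route as the paper: for necessity, pass to isomorphic torsion-free finite-index subgroups, note the corresponding covers of $\cO_\Theta$ and $\cO_{\Theta'}$ are surface amalgams, invoke Lafont's topological rigidity, and adapt Stark's Euler-characteristic bookkeeping to covers of unequal degrees (the paper extracts $k=k'$ from the quasi-isometry classification of \cite{dani-thomas-jsj}, but your observation that the branching degree of the singular curves forces it works just as well); for sufficiency, the paper likewise generalizes Crisp--Paoluzzi by unfolding $K$ and $K'$ times along the reflection stars $r_a,r_b$ to get homeomorphic covers $K\cO_\Theta$ and $K'\cO_{\Theta'}$. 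The only caveat is that these sufficient-direction covers are orbicomplexes with reflection edges, not surfaces (a degree-$K$ \emph{surface} cover of a branch orbifold need not exist for the integers $K$ arising from vector commensurability), so your phrase ``surface outputs'' should be read as the unfolded branch orbifolds, exactly as in the paper.
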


\begin{remark}\label{rem:doubling}
We now explain how Theorem~\ref{thm:GenTheta} can be used to classify all right-angled Coxeter groups defined by generalized $\Theta$-graphs satisfying   Assumptions~\ref{assumptions} (so that the corresponding groups are $1$-ended and hyperbolic), with $\Theta$ not required to be $3$-convex.   

If $\Theta = \Theta(n_1,\dots,n_k)$ satisfies every condition in Assumptions~\ref{assumptions}, it is easy to check that $k \geq 3$, $n_1 \geq 1$, and $n_i \geq 2$ for all $i \geq 2$.  Theorem~\ref{thm:GenTheta} covers the case $n_1 \geq 2$, so we just need to discuss the case $n_1=1$.  

Let $c$ be the unique vertex of valence $2$ on the first 
branch of $\Theta$.  Then we may form the \emph{double of~$\Theta$ over $c$}, defined by
$D_c(\Theta) = 
\Theta(n_2, n_2, n_3, n_3, \dots, n_k, n_k)$.
This is a $3$-convex generalized $\Theta$-graph with $2(k-1) \geq 4$ branches, obtained from $\Theta$ by deleting the open star of~$c$, and then identifying two 
copies of the resulting graph along $a$ and $b$.  The group $W_{D_c(\Theta)}$ is isomorphic to the kernel of the map 
$W_{\Theta} \to \Z/2\Z$ which sends $c$ to 1 and all other generators to 0.  In particular, $W_{D_c(\Theta)}$ is 
commensurable to $W_\Theta$.  This, together with Theorem~\ref{thm:GenTheta},
can be used to extend our classification result above to all generalized 
$\Theta$-graphs 
satisfying Assumptions~\ref{assumptions}.  In this way we recover Crisp and Paoluzzi's result 
from~\cite{crisp-paoluzzi}, as they  considered the family of graphs $\Theta(1, m+1,n+1)$ with $m,n \ge 1$.
\end{remark}

Next, in Theorem~\ref{thm:CycleGenTheta} we consider groups $W_\G$ where $\G$ is a cycle of generalized $\Theta$-graphs, defined as follows.  

\begin{definition}
[Cycle of generalized $\Theta$-graphs]\label{def:cyclegenTheta}
Let $N\ge 3$ and let $r_1, \dots, r_N$ be positive integers so that for each $i$, at most 
one of $r_i$ and $r_{i+1}$ (mod $N$) is equal to 1.  Now for $1 \le i \le N$, let $\Psi_{r_i}$ be the graph from Definition~\ref{def:genTheta}, with $r_i$ edges between $a_i$ and $b_i$.  Let $\Psi$ be the graph obtained by identifying $b_i$ with $a_{i+1}$ for all $i$ (mod $N$). 
A \emph{cycle of $N$ generalized $\Theta$-graphs}  is a graph obtained from $\Psi$ 
 by (possibly) subdividing edges of $\Psi$.   \end{definition}

\noindent For example, the graph $\G'$ (respectively, $\G''$) in Figure~\ref{GraphsAmalgamsIntro} is a cycle of three (respectively, four) generalized $\Theta$-graphs. If $\G$ is a cycle of generalized $\Theta$-graphs, we denote by $\Theta_i$ the $i$th generalized $\Theta$-graph of $\G$, that is, 
the subdivided copy of $\Psi_{r_i}$ inside $\G$, and we say that $\Theta_i$ is
\emph{nontrivial} if~$r_i>1$.  
Observe that the condition on the $r_i$ guarantees that if $\Theta_i$ is 
trivial, then $\Theta_{i-1}$ and $\Theta_{i+1}$ are not, hence the vertices 
$a_1, \dots, a_N$ have valence at least three in $\G$. 
It follows that a cycle of generalized $\Theta$-graphs $\G$ is $3$-convex if and only if each edge of $\Psi$ is subdivided into at least three edges, by inserting at least two vertices.  

We now use this notation to define Euler characteristic vectors associated to cycles of generalized $\Theta$-graphs.

\begin{definition}\label{def:cycleEuler}
[Euler characteristic vectors for cycles of generalized $\Theta$-graphs]
Let $\G$ be a 3-convex cycle of $N$ generalized $\Theta$-graphs and let 
$I\subset \{1, \dots, N\}$ be the set of indices with 
 $r_i>1$ (so~$I$ records the indices $i$ for which $\Theta_i$ is nontrivial).
 \begin{enumerate}  
\item For each $i \in I$,  we define the vector $v_i \in \Q^{r_i}$ to be 
the Euler characteristic vector of $W_{\Theta_i}$ (from Definition~\ref{def:theta-euler}).  Thus if 
$\Theta_i$ has  branches $\beta_{i1}, \dots, \beta_{ir_i}$, with $r_i > 1$, then \[v_i = (\chi(W_{\beta_{i1}}), \ldots, \chi(W_{\beta_{ir_i}})) =: (\chi_{i1}, \ldots, \chi_{ir_i}).\]

\item If there is some $r \geq 2$ so that each nontrivial $\Theta_i$ has exactly $r$ branches, we define another vector $w$ associated to $\G$ as follows.  Let $A$  be the union of $\{a_1,\dots,a_N\}$ with the vertex sets of all trivial $\Theta_i$.  
Recall from the beginning of this section that $W_A$ is the special subgroup of $W_\G$ defined by the subgraph of~$\G$ induced by $A$.  
Then $w \in \Q^{r+1}$ is the reordering of the vector 
 \[ \left( \sum_{i \in I} \chi_{i1}, \,\sum_{i  \in I} \chi_{i2}, \, \ldots, \sum_{i \in I} \chi_{ir}, \, \chi(W_A)\right) \]
 obtained by putting its entries in non-increasing order. 
 \end{enumerate}
\end{definition}

\noindent For example, the graph $\G'$ in Figure~\ref{GraphsAmalgamsIntro} has $I = \{1,2,3\}$, with say $v_1 = v_2 = (-\frac{1}{4},-\frac{1}{2})$ and $v_3 = (-\frac{1}{4},-\frac{1}{4},-\frac{1}{2})$.  If $\G''$ is as in Figure~\ref{GraphsAmalgamsIntro}, then $r = 2$, we can choose $I = \{1,2,3\}$, and then $v_1 = v_2 = (-\frac{1}{4},-\frac{1}{2})$, $v_3 = (-\frac{3}{4},-\frac{3}{2})$, $\chi(W_A) = -\frac{5}{4}$, and $w = (-\frac{5}{4},-\frac{5}{4},-\frac{5}{2})$.

We can now state the commensurability classification of right-angled Coxeter groups defined by 3-convex cycles of generalized $\Theta$-graphs.  

\begin{thm}\label{thm:CycleGenTheta}  
 Let $\Gamma$ and $\Gamma'$ be 3-convex cycles of $N$ and $N'$ generalized $\Theta$-graphs, 
respectively (with $N, N' \geq 3$).  Let $r_i$ be the number of branches of the $i$th generalized $\Theta$-graph $\Theta_i$ in $\G$, and let 
$I$ be the set of indices with $r_i>1$.  Let $\{v_i \mid i \in I\}$ and $w$ be the vectors from 
Definition~\ref{def:cycleEuler}, and let $W_A$ be the special subgroup from 
Definition~\ref{def:cycleEuler}(2).  Here, $\{ v_i \mid i \in I \}$ denotes a multiset of vectors (since the $v_i$ may not all be distinct).  Let $r_k'$, $\Theta'_k$, $I', \{v_k'\mid k \in I'\}$, $w'$, and $W_{A'}$ be the corresponding objects for $\G'$.  

The right-angled Coxeter groups $W = W_{\Gamma}$ and $W'= W_{\Gamma'}$ are abstractly commensurable if and only if at least one of (1) or (2) below holds.

\medskip
\begin{enumerate}
\item 
\begin{enumerate}
    \item The set of commensurability classes of the vectors  $\{v_i \mid i\in I\}$ coincides with the set of commensurability classes of the vectors $\{v_k' \mid k \in I'\}$; and

\medskip    
\item 
given a commensurability class of vectors $\cV$, if $I_{\cV}\subset I$ is the set of indices of the vectors in $\{v_i \mid i \in I\}\cap \cV$, and $I'_{\cV} \subset I'$  is the set of indices of the vectors in 
 $\{v_k' \mid k\in I'\}\cap \cV$, 
then
$$
\chi(W_{A'})\cdot \left( \displaystyle \sum_{i \in {I_{\cV}}} \chi(W_{\Theta_i}) \right)=
\chi(W_A)\cdot \left( \displaystyle  \sum_{k \in {I'_{\cV}}}  \chi(W_{\Theta'_k})\right).
$$
   \end{enumerate}  

\item There exists $r \geq 2$ such that each nontrivial generalized $\Theta$-graph in $\Gamma$ and in $\Gamma'$ has~$r$ branches, and:
  \begin{enumerate}
   \item the vectors $v_i$ for $ i \in I$ are contained in a single commensurability class; likewise, the vectors $v_k'$ for $k\in I'$ are contained in a single commensurability class; and
   \item the vectors $w$ and $w'$ are commensurable. 
  \end{enumerate}
\end{enumerate}
\end{thm}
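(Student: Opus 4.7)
\medskip

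\noindent\textbf{Proof proposal.} The plan is to follow the strategy outlined in the introduction: represent $W$ and $W'$ as fundamental groups of the piecewise hyperbolic orbicomplexes $\cO_\G$ and $\cO_{\G'}$ from Section~\ref{sec:orbicomplex}, so that any torsion-free finite-index subgroup corresponds to a finite cover which is a surface amalgam tiled by right-angled hyperbolic polygons. Under the JSJ decomposition, $\cO_\G$ decomposes into one \emph{central} orbifold $\cA$ (corresponding to the special subgroup $W_A$) with one orbifold $\cB_i$ attached along each nontrivial $\Theta_i$; each $\cB_i$ in turn decomposes according to the branches $\beta_{ij}$ of $\Theta_i$, and the Euler characteristics of the relevant pieces are exactly the entries of $v_i$ and $w$.

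For the necessary direction, suppose $W$ and $W'$ are commensurable. Select torsion-free finite-index subgroups of $W$ and $W'$ which are isomorphic, and let $\pi: \cX \to \cO_\G$ and $\pi':\cX' \to \cO_{\G'}$ be the corresponding surface amalgam covers. Applying Lafont's topological rigidity produces a homeomorphism $f:\cX \to \cX'$, and the key modification lemma referenced in the introduction lets us assume that either $f(\pi^{-1}(\cA))=(\pi')^{-1}(\cA')$, or $f(\pi^{-1}(\cA))$ and $(\pi')^{-1}(\cA')$ share no surfaces. In the second case, $f$ matches the preimages of the $\Theta_i$-pieces to preimages of $\Theta'_k$-pieces; counting sheets and comparing Euler characteristics of branches shows that the unordered multiset of commensurability classes of the $v_i$ coincides with that of the $v'_k$, and comparing total Euler characteristics of matching pieces yields the product equation in~(1)(b), since the covering degrees over $\cA$ and $\cA'$ are determined by $\chi$-ratios. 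In the first case, the homeomorphism identifies the central preimages, and the orbifold pieces attached along branches on the two sides must pair up with the \emph{same} number $r$ of branches per nontrivial $\Theta_i$; combining the surfaces glued along a given $a_i$ with the corresponding central surfaces produces JSJ vertex groups whose Euler characteristics are the entries of $w$ and $w'$, which must then be commensurable, giving~(2).

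For the sufficient direction, we build homeomorphic finite-sheeted covers. Start from the torsion-free degree~$16$ surface amalgam covers $\cX \to \cO_\G$ and $\cX' \to \cO_{\G'}$ from Theorem~\ref{thm:Degree16}. In each case we describe the JSJ graph of the desired common cover together with the two induced maps to the JSJ graphs of $\cX$ and $\cX'$; these maps are half-coverings in the sense of Section~\ref{sec:half-coverings}. In case~(1), the vector commensurability provides, for each class $\cV$, a uniform scaling of the $v_i$ in $I_\cV$ and of the $v'_k$ in $I'_\cV$; combined with the product equation, this forces consistent multiplicities on $\cA$-pieces and $\Theta_i$-pieces and thus determines valid half-coverings. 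In case~(2), the commensurability of $w$ and $w'$ determines compatible covering degrees for the combined central-plus-branch pieces. In each case, we then choose surfaces with prescribed Euler characteristics and numbers of boundary components dictated by the half-covering, and glue them along their boundary curves exactly as indicated by the adjacencies in the JSJ graph, to produce a pair of homeomorphic surface amalgams covering $\cO_\G$ and $\cO_{\G'}$.

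The main obstacle will be the necessary direction: specifically, pushing the homeomorphism $f$ to the dichotomy ``preserves $\pi^{-1}(\cA)$ or shares no surfaces with it,'' and tracking degrees of the restricted covers carefully enough to extract both the vector commensurability and the product identity in~(1)(b). On the sufficient side, the delicate point will be arranging the boundary matchings so that the two constructed surface amalgams are genuinely homeomorphic (not just stably so), which is why we route the construction through the standardized degree~$16$ covers.
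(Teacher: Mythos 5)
Your overall architecture (orbicomplexes from Section~\ref{sec:orbicomplex}, Lafont rigidity, the central-orbifold dichotomy for necessity, and homeomorphic covers built through the degree-$16$ covers and half-coverings for sufficiency) is the paper's, but you have attached the conclusions to the \emph{wrong} halves of the dichotomy, and this is a genuine error rather than a labelling slip. When $f(\pi^{-1}(\cA)) = \pi'^{-1}(\cA')$, each component of $\pi^{-1}(\cO_i)$ is carried to a cover of some $\cO'_k$ with $r'_k = r_i$, so Theorem~\ref{thm:GenTheta} applies piece by piece and, together with the degree identity $D\,\chi(W_A) = D'\,\chi(W_{A'})$, yields condition~(1) --- not condition~(2): in this case the nontrivial $\Theta_i$ need not all have the same number of branches, the $v_i$ need not lie in a single commensurability class, and there is no reason for $w$ and $w'$ to be commensurable. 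Conversely, when the two preimages share no surfaces, $f$ does \emph{not} match $\Theta_i$-pieces to $\Theta'_k$-pieces; rather the preimage of $\cA$ is spread over branch orbifolds of $\cO_{\G'}$ and vice versa, which is exactly what forces a uniform branch number $r$ (Lemma~\ref{lem:equal-branching}), the commensurability of $w$ and $w'$ (Proposition~\ref{prop:ww'-commensurable}), and that all $v_i$ lie in one class --- i.e.\ condition~(2), not the multiset statement~(1)(a) and product identity~(1)(b) you claim there. Since Remark~\ref{rem:Conditions} exhibits commensurable pairs where (1) holds and (2) fails, and pairs where (2) holds and (1) fails, deriving (2) in the ``equal'' case and (1) in the ``disjoint'' case would produce false statements, so the necessity argument as written does not go through.

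A secondary but substantial gap: you invoke ``the key modification lemma'' to reduce to the dichotomy, but establishing it is most of the work in the necessary direction. In the intermediate case ($f(\pi^{-1}(\cA)) \neq \pi'^{-1}(\cA')$ yet the two sets share a surface), one cannot simply adjust $f$ on the given covers: the offending closed pieces $Z = Z_\cA \cup Z_{\cA'}$ have disconnected halves, so the paper must perform surgery --- cut out all such $Z$ from $2m^2$ disjoint copies of the covers and glue in replacement surfaces built via Lemma~\ref{lem:neumann}, with degrees controlled by a careful bookkeeping of the curves $\epsilon_{ik}$ --- to produce \emph{new} homeomorphic covers of larger degree satisfying the ``equal'' case (Proposition~\ref{prop:new-maps}). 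Your proposal treats this as a black box and flags it as the main obstacle, which is honest, but combined with the swapped case analysis the necessity half of your argument is not salvageable as stated; the sufficiency sketch is closer to the paper (for (2) the paper in fact builds a common quotient surface amalgam $\cW$ covered by covers of both $\cX$ and $\cX'$, rather than directly homeomorphic covers, but that is a minor structural difference).
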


\begin{remark}\label{rem:Conditions}  For a pair of graphs $\G$ and $\G'$ as in the statement of 
Theorem~\ref{thm:CycleGenTheta}, with $W_\G$ and $W_{\G'}$ abstractly commensurable, it is possible that (1) holds but not (2), that (2) holds but not (1), or that both (1) and (2) hold.
See Figure~\ref{fig:Conditions} for examples of this. 

\begin{center}
\begin{figure}[htp]
\begin{overpic}[width=125mm]
{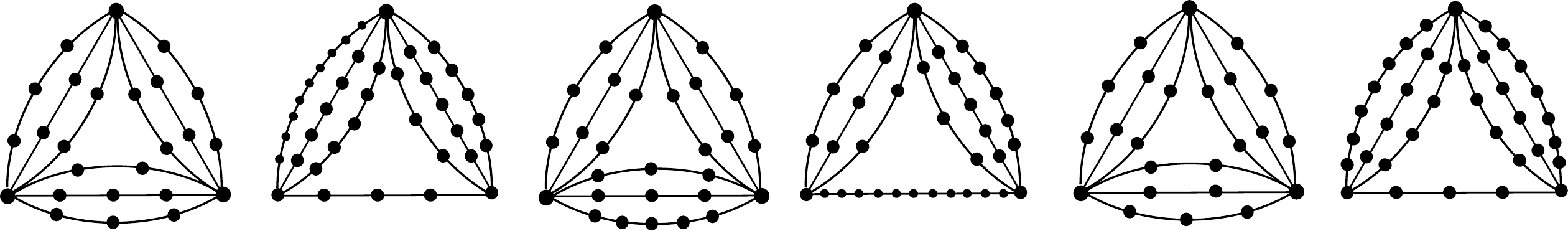}
\put(0,15){\footnotesize{$\G_1$}}
\put(16,15){\footnotesize{$\G_1'$}}
\put(33,15){\footnotesize{$\G_2$}}
\put(50,15){\footnotesize{$\G_2'$}}
\put(67,15){\footnotesize{$\G_3$}}
\put(85,15){\footnotesize{$\G_3'$}}
\put(0.5,10){\footnotesize{$u$}}
\put(12,10){\footnotesize{$u$}}
\put(6.5,-2){\footnotesize{$v$}}
\put(16.5,10){\footnotesize{$4u$}}
\put(30,10){\footnotesize{$2v$}}
\put(35,10){\footnotesize{$u$}}
\put(46.5,10){\footnotesize{$u$}}
\put(40,-2){\footnotesize{$2u$}}
\put(52,10){\footnotesize{$v$}}
\put(63.5,10){\footnotesize{$2v$}}
\put(69,10){\footnotesize{$u$}}
\put(81,10){\footnotesize{$u$}}
\put(75,-2){\footnotesize{$u$}}
\put(85,10){\footnotesize{$3u$}}
\put(98,10){\footnotesize{$3u$}}
\put(6.5,15){\footnotesize{$a$}}
\put(-1.5,1){\footnotesize{$b$}}
\put(14,0.5){\footnotesize{$c$}}
\put(24,15){\footnotesize{$a'$}}
\put(23,-1){\footnotesize{$\beta'$}}
\end{overpic}
\vspace{3mm}
\caption{\small We illustrate Remark~\ref{rem:Conditions}.  Let $u= (-\frac 14, -\frac {1}4, -\frac {1}2)$ and $v = (-\frac {1}4, -\frac {1}2,- \frac {1}2)$. 
By Definition~\ref{def:euler}, the Euler characteristic vectors associated to the nontrivial generalized $\Theta$-subgraphs of the  graphs above are the multiples of $u$ and $v$ depicted.  
Consider first the pair $\G_1, \G_1'$.   Condition (1a) in Theorem~\ref{thm:CycleGenTheta} clearly holds.   Now $W_A = \langle a, b, c\rangle <W_{\G_1}$ 
satisfies $\chi(W_A)= -\frac{1}2$, and $W_{A'}<W_{\G_1'}$ is the group defined by the branch $\beta'$ together with the vertex $a'$, so $\chi(W_{A'}) = -1$.  
It is easy to check that for the commensurability class of~$u$, both sides of the equation in (1b) are 
$2$ and for that of $v$, both sides of the equation in (1b) are $\frac54$.  Thus condition (1) is satisfied for $\G_1, \G_1'$.  However, since $u$ and $v$ are not commensurable, (2a) fails for this pair.  Now consider the graphs $\G_2, \G_2'$.  It is clear that condition (2a) holds, but since $u$ and $v$ are not commensurable, (1a) fails.  For this pair, we have $w = (-\frac 12, -1, -1, -2)$ and $w' = (-\frac 34, -\frac 32, -\frac 32, -3)$, so (2b) holds as well.  Thus condition (2) but not condition~(1) is satisfied for $\G_2, \G_2'$.  Finally, we leave it to the reader to check that both (1) and (2) hold for the pair $\G_3, \G_3'$.  
}
\label{fig:Conditions}
\end{figure}
\end{center}
\end{remark}

\begin{remark}\label{rem:GenThetaCycleGenTheta}
If $\G$ is a generalized $\Theta$-graph as in the statement of Theorem~\ref{thm:GenTheta}, then the result of doubling $\G$ along a vertex of valence 2 which is adjacent to a vertex of valence at least $3$ is a cycle of three generalized $\Theta$-graphs, and by doubling again if necessary, we can obtain a $3$-convex cycle of generalized $\Theta$-graphs $\G'$ such that the groups $W_\G$ and $W_{\G'}$ are commensurable (compare Remark~\ref{rem:doubling}).  Hence we can determine which groups from Theorems~\ref{thm:GenTheta} and~\ref{thm:CycleGenTheta} are commensurable to each other.
\end{remark}

We then turn our attention to the relationship between right-angled Coxeter groups and geometric amalgams of free groups.  We prove the following two theorems.  Recall that $\cG$ is the class of graphs satisfying Assumptions~\ref{assumptions} and~\ref{K4assumption}.

\begin{thm}\label{thm:Degree16} If $\G \in \cG$ and $\G$ is $3$-convex, then $W_\G$ has an index 16 subgroup which is a geometric amalgam of free groups.
\end{thm}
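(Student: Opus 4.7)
The plan is to construct a surjective homomorphism $\phi \colon W_\G \twoheadrightarrow (\Z/2\Z)^4$ whose restriction to every finite subgroup of $W_\G$ is injective, and then identify the degree-$16$ cover $\cX$ of $\cO_\G$ corresponding to $\ker\phi$ as a surface amalgam. Since $\G$ is triangle-free by Assumption~\ref{assumptions}, every finite subgroup of $W_\G$ is conjugate into some dihedral special subgroup $\langle s, t\rangle\cong(\Z/2\Z)^2$ corresponding to an edge $\{s,t\}$ of $\G$ (or into a subgroup thereof). Hence $\phi$ is injective on finite subgroups if and only if $\phi(s)\neq 0$ for every $s\in V(\G)$ and $\phi(s)\neq \phi(t)$ for every edge $\{s,t\}$ of $\G$; equivalently, $\phi$ must restrict to a proper vertex coloring of $\G$ using the fifteen nonzero elements of $(\Z/2\Z)^4$.

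To produce such a $\phi$, I will use the visual JSJ decomposition of $W_\G$ from Section~\ref{sec:jsj} together with the orbicomplex structure $\cO_\G$ built in Section~\ref{sec:orbicomplex}. The tree structure of the JSJ graph (a consequence of Assumption~\ref{K4assumption}) combined with $3$-convexity ensures that each JSJ vertex group is realized in $\cO_\G$ by a right-angled hyperbolic polygon whose reflection edges form a cyclic pattern of length at least five. The reflection edges around each polygon admit a proper $3$-coloring, since every cycle of length at least three is $3$-chromatic; I take these three colors from the nonzero vectors of a distinguished $(\Z/2\Z)^2$ summand of $(\Z/2\Z)^4$, following the local tiling picture of Futer and Thomas~\cite[Section~6.1]{futer-thomas}. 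I then process the JSJ tree inductively, using translations by elements of the complementary $(\Z/2\Z)^2$ summand to reconcile, at each new polygon, its local $3$-coloring with the already-chosen colors on the vertices of $\G$ shared with previously processed polygons (corresponding to the endpoints of the attaching cut pair).

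Once $\phi$ is constructed, the cover $\cX\to\cO_\G$ of degree $|(\Z/2\Z)^4| = 16$ is torsion-free, because the only sources of nontrivial local isotropy in $\cO_\G$---the reflections along reflection edges and the $(\Z/2\Z)^2$ dihedral stabilizers at their meeting corners---are all killed by $\phi$. The underlying space of $\cX$ is therefore an honest $2$-complex in which each polygon of $\cO_\G$ lifts to a disjoint union of surfaces-with-boundary (each tiled by copies of the original polygon, as in Futer--Thomas), and these surfaces are glued along the lifts of the branching edges of $\cO_\G$. This is the definition of a surface amalgam, so $\ker\phi\leq W_\G$ is a geometric amalgam of free groups of index~$16$. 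The principal obstacle is the inductive compatibility step: one must verify that the freedom provided by the complementary $(\Z/2\Z)^2$ summand always suffices to match a polygon's local $3$-coloring to both already-fixed colors at the attaching cut-pair vertices, which will require case analysis based on the configuration of shared vertices in $\G$ made available by $3$-convexity and the tree structure of the JSJ graph.
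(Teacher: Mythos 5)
Your overall route is genuinely different from the paper's: the paper proves Theorem~\ref{thm:Degree16} by explicitly building a degree $16$ cover of $\cO_\G$ out of tiled surfaces (the ``jigsaw'' surfaces $S_\beta$, $S_A$, $S_\cA$ of Section~\ref{sec:tfcover}, glued according to $\half(\Lambda)$), whereas you want to produce a torsion-free index $16$ subgroup algebraically, as the kernel of a surjection $\phi\colon W_\G\to(\Z/2\Z)^4$ injective on finite subgroups, and then invoke the general principle that a cover of $\cO_\G$ corresponding to a torsion-free finite-index subgroup is a surface amalgam. The reduction of torsion-freeness to a proper coloring of $\G$ by nonzero vectors is correct (triangle-freeness gives that maximal finite subgroups are conjugates of edge subgroups). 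But your construction of $\phi$ is where the proposal has a genuine gap, and you say so yourself: the inductive ``reconcile by translating with the complementary $(\Z/2\Z)^2$ summand'' step is not verified, and as described it cannot work in general --- a single translation $t$ must simultaneously satisfy $\phi(a)=c_a+t$ and $\phi(b)=c_b+t$ at the two attaching vertices, i.e.\ $\phi(a)+\phi(b)=c_a+c_b$, which is a nontrivial constraint that the earlier choices need not satisfy. Moreover the entire JSJ-tree induction is unnecessary: since $(\Z/2\Z)^4$ is an elementary abelian $2$-group, \emph{any} set map $V(\G)\to(\Z/2\Z)^4$ extends to a homomorphism of $W_\G$, so all you need is a proper vertex coloring of $\G$ by nonzero vectors whose image spans. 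What you then actually owe the reader is a bound on the chromatic number of $\G$ (triangle-freeness alone bounds nothing, by Mycielski-type examples); this is where Assumption~\ref{K4assumption} should enter, e.g.\ via the fact that graphs with no subdivided $K_4$ are $3$-colorable, after which surjectivity onto $(\Z/2\Z)^4$ is arranged by recoloring one vertex. None of this is in your write-up, so as it stands the existence of $\phi$ is not established.

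Even granting $\phi$, note that your final step --- ``the torsion-free degree $16$ cover is a surface amalgam'' --- is only sketched: to match Definition~\ref{def:surface-amalgam} you must pass to connected components of the preimages of the orbifold pieces, check each has nonempty boundary and negative Euler characteristic, and check that each lifted branching edge closes up into a circle incident to at least three sheets. These verifications are routine, and the paper asserts the same principle in the introduction, but the paper's actual proof does not rely on it: the explicit construction is what delivers the finer structure (connected pieces of positive genus, exactly two boundary circles over each branching edge, JSJ graph $\half(\Lambda)$) that is used repeatedly later, e.g.\ in the proofs of Theorem~\ref{thm:CycleGenTheta} (via Lemma~\ref{lem:neumann}) and Theorem~\ref{thm:GeomAmalgamsRACGs}. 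So your approach, once the coloring gap is filled, does prove the bare statement of Theorem~\ref{thm:Degree16}, and more cheaply, but it does not replace the Section~\ref{sec:tfcover} construction for the purposes the paper puts it to.
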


\begin{thm}\label{thm:GeomAmalgamsRACGs}  If a geometric amalgam of free groups has JSJ graph which is a tree, then it is abstractly commensurable to a right-angled Coxeter group (with defining graph in $\cG$). \end{thm}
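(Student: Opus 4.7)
The plan is to reverse-engineer the degree-16 cover construction from Theorem~\ref{thm:Degree16}. Let $G$ be a geometric amalgam of free groups whose JSJ graph $T$ is a tree, so that $G = \pi_1(\cX)$ for a surface amalgam $\cX$ in which vertices of $T$ correspond to surfaces with boundary and edges of $T$ correspond to identified boundary curves. The strategy is to produce a defining graph $\G \in \cG$ together with a finite-sheeted cover $\cY \to \cX$ that is simultaneously a finite-sheeted cover of the orbicomplex $\cO_\G$; then $\pi_1(\cY)$ is a common finite-index subgroup of $G$ and $W_\G$, giving the commensurability.

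First, I would read a candidate defining graph $\G$ off of $T$: the valence and attaching pattern at each vertex of $T$ dictate the corresponding JSJ piece of $\G$ (either a generalized $\Theta$-subgraph or a more general assembly), with enough edge subdivisions inserted to ensure $3$-convexity and membership in $\cG$. Next, since the torsion-free degree-16 cover $\cX_\G$ of $\cO_\G$ produced in Section~\ref{sec:tfcover} is itself a surface amalgam assembled from prescribed surfaces tiled by right-angled hyperbolic polygons and glued along boundary circles in a specific combinatorial pattern, I would build $\cY$ to match $\cX_\G$: root $T$ and, proceeding vertex by vertex, pass to a finite cover of each surface piece of $\cX$ whose topological type (genus, number of boundary components, and boundary degrees) agrees with the corresponding piece of $\cX_\G$, while arranging the boundary covering data to be consistent with what is already fixed on the parent piece. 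Once $\cY$ has been arranged to be homeomorphic to $\cX_\G$ as a surface amalgam, the finite covering action on $\cX_\G$ from Section~\ref{sec:tfcover} transfers to $\cY$, exhibiting $\cY \to \cO_\G$ as a degree-16 cover.

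I expect the main obstacle to be the vertex-by-vertex compatibility step of the construction: simultaneously prescribing the covering degree on each boundary circle so that the pieces assemble into a connected finite cover of $\cX$ that is, at the same time, homeomorphic to $\cX_\G$. The tree hypothesis on $T$ is decisive here, since it allows one to orient edges away from a root and choose each child cover using only the already-determined data from its parent, never having to close up a loop of constraints. Residual finiteness of surface groups and the freedom to further pass to finite covers of individual pieces should handle any remaining degree adjustments, and the example promised in Section~\ref{sec:GeomAmalgamsRACGs} indicates that the tree hypothesis cannot be dropped from the argument.
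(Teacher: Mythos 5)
Your overall plan (reverse-engineer the degree-16 construction, work down the tree, glue piece-by-piece covers) is in the right spirit, but the first step is fatally underdetermined: you cannot read the defining graph $\G$ off the combinatorics of $T$ alone, "with enough subdivisions inserted for $3$-convexity." The commensurability class of $W_\G$ depends on the Euler characteristics of the pieces, not just on the JSJ tree: by Theorems~\ref{thm:GenTheta} and~\ref{thm:CycleGenTheta} (cf.\ Corollary~\ref{cor:GeomAmalgamsCommens}), a single tree $T$ carries infinitely many commensurability classes of geometric amalgams, distinguished exactly by Euler characteristic vectors. So for a $\G$ chosen only combinatorially, $W_\G$ will in general \emph{not} be commensurable to $\pi_1(\cX)$, and no cover $\cY$ of $\cX$ can be homeomorphic to (or cover) $\cX_\G$; concretely, a piece of $\cX_\G$ has Euler characteristic $16\chi(W_\beta)$ or $16\chi(W_A)$ dictated by $\G$, and this must be an integer multiple of $\chi(S_y)$ with a single global degree and Neumann-type parity constraints (Lemma~\ref{lem:neumann}) holding simultaneously over every vertex and every shared boundary curve. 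Your fallback of "passing to further finite covers of individual pieces" does not rescue this: changing one piece destroys the homeomorphism with $\cX_\G$, and finding a common finite cover of $\cX$ and $\cX_\G$ is precisely the commensurability problem you are trying to solve, which is obstructed whenever the Euler data of $\G$ is wrong.

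The paper avoids this by running the construction in the opposite order: the right-angled Coxeter group is an \emph{output}, not an input. After first making all surfaces have positive genus (Corollary~\ref{cor:PosGenus}), one builds a degree $8$ cover $\cX(\half(T)) \to \cX(T)$ in which the surface over a vertex of genus $g$ with $b$ boundary components has $2b$ boundary components and genus $g' = 3b + 8g - 7$; the point of this formula is the parity of $g'$ and the inequality $g' > 3b$. These numerical facts are exactly what allow each terminal piece to be exhibited as a degree $16$ cover of a reflection polygon $\cP_y$ with $\tfrac{g'}{2}+4$ sides, and each non-terminal piece to be \emph{cut along a separating pair of curves} into $S_1 \cup S_2$ covering $\cQ_A \cup \cP = \cA_y$ as in Section~\ref{sec:tfcover}. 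The branch lengths and the vertex sets $A$, hence $\G$ itself, are thereby determined by the topology of $\cX$, and the covering maps automatically agree along the branching curves. This decomposition of the non-terminal pieces, and the Euler-characteristic/parity bookkeeping that makes it possible, is the essential content missing from your proposal; without it the vertex-by-vertex compatibility you flag as "the main obstacle" cannot be resolved. (Minor additional points: you also omit the preliminary positive-genus cover needed to apply Lemma~\ref{lem:neumann}, and the example in Section~\ref{sec:GeomAmalgamsNonTrees} does not show the tree hypothesis is necessary for commensurability to a RACG --- some non-tree examples are commensurable to RACGs --- it only exhibits one amalgam not quasi-isometric to any group generated by torsion elements.)
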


In addition, we show that if the diameter of the JSJ graph of the geometric amalgam of free groups is at most 4, then the defining graph of the corresponding right-angled Coxeter group guaranteed by 
Theorem~\ref{thm:GeomAmalgamsRACGs} is either a generalized $\Theta$-graph or a cycle of generalized $\Theta$-graphs.   Thus we have the following corollary.

\begin{cor}\label{cor:GeomAmalgamsCommens}  Geometric amalgams of free groups whose JSJ graphs are trees of diameter at most 4 can be classified up to abstract commensurability.
\end{cor}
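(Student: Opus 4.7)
The plan is to assemble the preceding results. Let $G$ and $G'$ be geometric amalgams of free groups whose JSJ graphs are trees of diameter at most $4$. First, apply Theorem~\ref{thm:GeomAmalgamsRACGs} to produce defining graphs $\G, \G' \in \cG$ such that $W_\G$ is abstractly commensurable to $G$ and $W_{\G'}$ is abstractly commensurable to $G'$. The addendum to that theorem, stated just before the corollary, guarantees that under the diameter hypothesis each of $\G$ and $\G'$ is either a generalized $\Theta$-graph or a cycle of generalized $\Theta$-graphs. Since commensurability is an equivalence relation, the classification of $G, G'$ reduces to the classification of $W_\G, W_{\G'}$.

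Next, I would pass to $3$-convex representatives. Remark~\ref{rem:doubling} shows that any generalized $\Theta$-graph satisfying Assumption~\ref{assumptions} can, by doubling along a valence-$2$ vertex, be replaced by a $3$-convex generalized $\Theta$-graph defining a commensurable right-angled Coxeter group; an analogous doubling argument applies to cycles of generalized $\Theta$-graphs. With both $\G$ and $\G'$ now $3$-convex, the commensurability question for $W_\G$ and $W_{\G'}$ splits into three cases. If both graphs are generalized $\Theta$-graphs, apply Theorem~\ref{thm:GenTheta} and compare the Euler characteristic vectors of Definition~\ref{def:theta-euler}. If both are cycles of generalized $\Theta$-graphs, apply Theorem~\ref{thm:CycleGenTheta} and verify either condition $(1)$ or condition $(2)$ of its statement, which is an explicit check on the Euler characteristic data of Definition~\ref{def:cycleEuler}. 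If $\G$ and $\G'$ are of differing types, first use Remark~\ref{rem:GenThetaCycleGenTheta} to double the generalized $\Theta$-graph into a commensurable $3$-convex cycle of generalized $\Theta$-graphs, then apply Theorem~\ref{thm:CycleGenTheta}. In every case the decision reduces to an explicit comparison of Euler-characteristic data readable directly from $\G$ and $\G'$.

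The main point requiring work is the addendum to Theorem~\ref{thm:GeomAmalgamsRACGs}: that the diameter-at-most-$4$ hypothesis on the JSJ tree of the geometric amalgam forces the defining graph produced by the proof of Theorem~\ref{thm:GeomAmalgamsRACGs} to lie in one of the two special families. A tree of diameter at most $4$ is either a star (diameter $\le 2$), a double star with a central edge (diameter $3$), or centred at a single vertex with depth-$2$ subtrees hanging off (diameter $4$). A case analysis over these shapes, together with the way the construction in the proof of Theorem~\ref{thm:GeomAmalgamsRACGs} reads off a defining graph from a surface amalgam (by reversing the degree-$16$ orbicomplex cover of Section~\ref{sec:tfcover}), constrains the output to be a generalized $\Theta$-graph in the star cases and a cycle of generalized $\Theta$-graphs in the double-star and depth-$2$ cases. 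Given this structural claim, the corollary is immediate from Theorems~\ref{thm:GenTheta},~\ref{thm:CycleGenTheta}, and~\ref{thm:GeomAmalgamsRACGs} and the two remarks above.
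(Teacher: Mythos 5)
Your assembly is essentially the paper's: apply Theorem~\ref{thm:GeomAmalgamsRACGs}, identify the resulting defining graphs as generalized $\Theta$-graphs or cycles of generalized $\Theta$-graphs, and then classify via Theorems~\ref{thm:GenTheta} and~\ref{thm:CycleGenTheta}, using Remark~\ref{rem:GenThetaCycleGenTheta} for mixed pairs. The one place you take a different (and shakier) route is the ``addendum.'' The paper does not rerun a case analysis on the reverse construction: the proof of Theorem~\ref{thm:GeomAmalgamsRACGs} produces a right-angled Coxeter group whose JSJ graph is the tree $T$ itself, and Remark~\ref{rmk:TreeJSJ} already records that, for $3$-convex graphs in $\cG$, the JSJ graph has diameter $2$ exactly when the defining graph is a generalized $\Theta$-graph and diameter $4$ exactly when it is a cycle of generalized $\Theta$-graphs. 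Moreover, since Type~1 vertices of the JSJ graph of a surface amalgam have valence at least $3$, every leaf of $T$ is a Type~2 vertex, so $T$ has even diameter; your ``double star'' (diameter $3$) case is therefore vacuous, and your assignment of that case to cycles of generalized $\Theta$-graphs would in any event be wrong, since those have JSJ graphs of diameter exactly $4$. This does not sink your argument (the case is empty), but the clean justification is the one sentence via Remark~\ref{rmk:TreeJSJ} rather than a fresh structural analysis of the construction.

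Two smaller points. The preliminary ``pass to $3$-convex representatives'' is unnecessary: the orbicomplex built in the proof of Theorem~\ref{thm:GeomAmalgamsRACGs} uses only branch orbifolds over polygons with at least five sides, glued to essential-vertex orbifolds whose reflection edges are pairwise non-adjacent, so every branch of the resulting defining graph has at least two interior vertices and the graph is already $3$-convex. Relatedly, the ``analogous doubling argument'' you invoke for non-$3$-convex cycles of generalized $\Theta$-graphs is not established anywhere in the paper (Remark~\ref{rem:doubling} treats only generalized $\Theta$-graphs), so as written that step is unsupported --- fortunately you never need it.
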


\subsection*{Acknowledgements}

We thank the Forschungsinstitut f\"ur Mathematik at ETH Z\"urich, for hosting Thomas in Spring Semester 2016 and supporting a visit by Dani in June 2016, during which part of this research was carried out.   This research was also supported by the National Science Foundation under 
Grant No.~DMS-1440140 while Dani and Thomas were in residence at the Mathematical Sciences Research Institute (MSRI) in Berkeley, California during the Fall 2016 semester. We also thank MSRI for supporting a visit by Stark in Fall 2016. 
Dani was partially supported by NSF Grant No.~DMS-1207868 and this
work was supported by a grant from the Simons Foundation (\#426932, Pallavi Dani).  Thomas was partially supported by an Australian Postdoctoral Fellowship and this research was supported in part by Australian Research Council Grant 
No.~DP110100440.  Finally, we thank the anonymous referee for careful and swift reading, which included spotting a mistake in the proof of Theorem~\ref{thm:GeomAmalgamsRACGs} in an earlier version, and numerous other helpful suggestions.

\section{Preliminaries}\label{sec:background}

We recall relevant graph theory in Section~\ref{sec:graphs}.  Section~\ref{sec:jsj} states the results on JSJ decompositions from~\cite{dani-thomas-jsj} that we will need, and establishes some technical lemmas.  In Section~\ref{sec:lafont} we recall from~\cite{lafont} the definitions of geometric amalgams of free groups and surface amalgams, and state Lafont's topological rigidity result.  Section~\ref{sec:coverings} contains some well-known results on coverings of surfaces with boundary, and Section~\ref{sec:euler} recalls the definition of Euler characteristic for orbicomplexes.

\subsection{Graph theory}\label{sec:graphs}

In this paper, we work with several different kinds of graphs.
We now recall some graph-theoretic terminology and establish notation. 

We will mostly consider unoriented graphs, and so refer to these as just \emph{graphs}.  As in~\cite{behrstock-neumann}, a \emph{graph}~$\Lambda$ consists of a vertex set $V(\Lambda)$, an edge set $E(\Lambda)$, and a map $\epsilon:E(\Lambda) \to V(\Lambda)^2/C_2$ from the edge set to the set of unordered pairs of elements of $V(\Lambda)$.  For an edge $e$, we write $\epsilon(e) = [x,y] = [y,x]$, where $x, y \in V(\Lambda)$.  An edge $e$ is a \emph{loop} if $\epsilon(e) = [x,x]$ for some $x \in V(\Lambda)$.  If $\epsilon(e) = [x,y]$ we say that $e$ is \emph{incident} to $x$ and $y$, and when $x \neq y$, that $x$ and $y$ are \emph{adjacent} vertices.  The \emph{valence} of a vertex $x$ is the number of edges incident to $x$, counting $2$ for each loop $e$ with $\epsilon(e) = [x,x]$.  A vertex is \emph{essential} if it has valence at least $3$.  

Identifying $\Lambda$ with its realization as a $1$-dimensional cell complex, a \emph{cut pair} in $\Lambda$ is a pair of vertices $\{x,y\}$ so that $\Lambda \setminus \{x,y\}$ has at least two components, where a \emph{component} by definition contains at least one vertex.  A cut pair $\{x,y\}$ is \emph{essential} if $x$ and $y$ are both essential vertices.  A \emph{reduced path} in $\Lambda$ is a path which does not self-intersect.  A \emph{branch} of $\Lambda$ is a subgraph of $\Lambda$ consisting of a (closed) reduced path between a pair of essential vertices, which does not contain any essential vertices in its interior.

 A graph $\Lambda$ is \emph{bipartite} if $V(\Lambda)$ is the disjoint union of two nonempty subsets $V_1(\Lambda)$ and $V_2(\Lambda)$, such that every edge of $\Lambda$ is incident to exactly one element of $V_1(\Lambda)$ and exactly one element of $V_2(\Lambda)$.  In this case, we sometimes refer to the vertices in $V_1(\Lambda)$ as the \emph{Type~1} vertices and those in $V_2(\Lambda)$ as the \emph{Type~2} vertices.

An \emph{oriented graph} $\Lambda$ consists of a vertex set $V(\Lambda)$, an edge set $E(\Lambda)$, and maps $i:E(\Lambda) \to V(\Lambda)$ and $t:E(\Lambda) \to V(\Lambda)$.  For each edge $e \in E(\Lambda)$, we refer to $i(e)$ as the \emph{initial vertex} of~$e$ and $t(e)$ as the \emph{terminal vertex} of $e$.  Other definitions are similar to the unoriented case.

Throughout this paper, we reserve the notation $\G$ and $\G'$ for defining graphs of right-angled Coxeter groups, and we assume throughout that $\G$ and $\G'$ are finite, simplicial graphs in $\cG$, that is, they satisfy Assumptions~\ref{assumptions} and~\ref{K4assumption}.  (A graph $\G$ is \emph{simplicial} if it has no loops and the map $\epsilon$ is injective, that is, $\G$ does not have any multiple edges.)   

\subsection{JSJ decomposition of right-angled Coxeter groups}\label{sec:jsj}

In this section we recall the results we will need from~\cite{dani-thomas-jsj}.  We also establish some technical lemmas needed for our constructions in Section~\ref{sec:orbicomplex}, which use 
similar arguments to those in~\cite{dani-thomas-jsj}.

\subsubsection{JSJ decomposition}
Let $W = W_\G$.  The main result of~\cite{dani-thomas-jsj} gives an explicit description of the $W$-orbits in Bowditch's JSJ tree $\cT = \cT_{W_\G}$ and the stabilizers for this action.  From this, we can obtain an explicit description of the canonical graph of groups induced by the action of $W$ on $\cT$.  Recall from the introduction that the quotient graph $\Lambda = W \backslash \cT$ is the \emph{JSJ graph of $W$} and the canonical graph of groups over $\Lambda$ is the \emph{JSJ decomposition of $W$}.  The JSJ graph $\Lambda$ is in fact a tree, since $W$ is not an HNN extension (like any Coxeter group,~$W$ is generated by torsion elements hence does not surject to $\Z$). The next result follows from Theorem~3.36 of~\cite{dani-thomas-jsj}, and is illustrated by the examples in Figure~\ref{def_graph_JSJ}.

\begin{cor}\label{cor:jsj}  
Let $\G$ be a finite, simplicial graph satisfying Assumptions~\ref{assumptions} and~\ref{K4assumption}, so that $\G$ is $3$-convex.  The JSJ decomposition for $W = W_\G$ is as follows:
\begin{enumerate}
\item For each pair of essential vertices $\{a, b\}$ of $\G$ such that $\G \setminus \{a, b\}$ has $k \ge 3$ components, the JSJ graph $\Lambda$ has a vertex of valence $k$, with local group $\langle a, b \rangle$.
\item For each set $A$ of vertices of $\G$ satisfying the following conditions:
\begin{enumerate}
\item[\emph{($\alpha_1$)}] elements of $A$ pairwise separate the geometric realization $|\G|$ of $\G$, that is, given $a, b \in A$ with $a \neq b$, the space $|\G| \setminus \{a, b\}$ has at least two components; 
\item[\emph{($\alpha_2$)}] the set $A$ is maximal among all sets satisfying \emph{($\alpha_1$)}; and
\item[\emph{($\alpha_3$)}] $\langle A \rangle$ is infinite but not $2$-ended;
\end{enumerate}
the JSJ graph $\Lambda$ has a vertex of valence $\ell$ where $\ell \geq 1$ is the number of distinct pairs of essential vertices in $A$ which are as in (1).  The local group at this vertex is $\langle A \rangle$.  
\item Every edge of $\Lambda$ connects some vertex as in (1) above to some vertex as in (2) above.  Moreover, 
a vertex $v_1$ as in (1) and a vertex $v_2$ as in (2) are adjacent if and only if their local groups intersect, with this intersection necessarily $\langle a, b \rangle $ where $\{a,b\}$ are as in (1).  There will then be an edge with local group $\langle a, b \rangle$ between these vertices.  All maps from edge groups to vertex groups are inclusions.
\end{enumerate}
\end{cor}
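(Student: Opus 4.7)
The plan is to derive the statement directly from Theorem~3.36 of~\cite{dani-thomas-jsj}, which describes the $W_\G$-orbits and stabilizers for the action of $W_\G$ on Bowditch's JSJ tree $\cT$ in slightly greater generality. The bulk of the argument will consist of showing that, under the additional hypothesis of $3$-convexity, the general description in that theorem specializes cleanly to the conditions (1)--(3) stated above.

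First I would recall that Theorem~3.36 of~\cite{dani-thomas-jsj} partitions the $W_\G$-orbits of vertices of $\cT$ into two types, corresponding respectively to certain cut pairs of $|\G|$ (giving the $2$-ended edge stabilizers) and to maximal sets of vertices which pairwise separate $|\G|$ (giving the hanging or rigid vertex stabilizers). In that generality the separating pairs need not consist of essential vertices of $\G$: they may involve subdivision points of branches, producing splittings over subgroups generated by commuting pairs of reflections that come from interior vertices of a branch. Under $3$-convexity, however, every branch between two essential vertices has at least three edges, and this length condition is precisely what is needed to eliminate such ``virtual'' cut pairs. Hence the only cut pairs contributing to $\cT$ are honest pairs $\{a,b\}$ of essential vertices of $\G$.

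Next I would check that the local-group descriptions match. For a Type~(1) vertex arising from an essential cut pair $\{a,b\}$, the triangle-free hypothesis forces $a$ and $b$ to be non-adjacent in $\G$ (otherwise $\langle a,b\rangle \cong \Z/2 \times \Z/2$ would be finite and could not stabilize an edge of $\cT$), so $\langle a,b\rangle$ is infinite dihedral, as required. The valence of this vertex in $\Lambda$ equals the number of $W_\G$-orbits of edges of $\cT$ incident to it, which in turn equals the number of connected components of $\G \setminus \{a,b\}$; the requirement $k\ge 3$ is inherited from the maximality condition in the JSJ decomposition, since a cut pair producing only two components is absorbed into an adjacent Type~(2) vertex and does not appear as a Type~(1) vertex after maximal refinement. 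For Type~(2) vertices, conditions ($\alpha_1$)--($\alpha_3$) are the direct translation via $3$-convexity of the conditions in Theorem~3.36 characterizing maximal hanging/rigid sets, with ($\alpha_3$) ensuring that $\langle A\rangle$ is neither finite nor itself a $2$-ended edge stabilizer.

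Finally, for part (3), Bass--Serre theory gives that two vertices of $\cT$ lie at distance one in $\cT$ if and only if their stabilizers intersect in an infinite subgroup, which must then be the common edge stabilizer; this forces each edge of $\Lambda$ to connect a Type~(1) vertex to a Type~(2) vertex, with edge group $\langle a,b\rangle$ for the corresponding essential cut pair contained in $A$. Counting distinct such cut pairs contained in $A$ yields the valence $\ell$ of the Type~(2) vertex. The main obstacle will be the careful verification that $3$-convexity rules out all of the more exotic JSJ vertices and edge stabilizers permitted by Theorem~3.36 (splittings over subgroups other than $\langle a,b\rangle$ for an essential cut pair $\{a,b\}$); this is where the ``path of length at least three'' condition in the definition of $3$-convex enters in an essential way, and is the step that justifies the explicit, ``visual'' form of the JSJ decomposition recorded here.
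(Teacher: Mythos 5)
Your proposal takes essentially the same route as the paper: the paper gives no independent argument for this corollary, stating only that it follows from Theorem~3.36 of~\cite{dani-thomas-jsj}, with the $3$-convexity hypothesis serving (as you describe) to make the specialization of that theorem's general description straightforward. Your outline of how $3$-convexity eliminates the more general cut pairs and how the valence and local-group data translate is exactly the verification the paper leaves implicit in that citation.
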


For $i = 1,2$, we refer to the vertices of the JSJ graph $\Lambda$ as in part (i) of Corollary~\ref{cor:jsj} as the \emph{Type i} vertices, and denote these by $V_i(\Lambda)$.  The JSJ graph $\Lambda$ is a bipartite graph, with vertex set $V(\Lambda) = V_1(\Lambda) \sqcup V_2(\Lambda)$.

\begin{center}
\begin{figure}
\begin{overpic}[width=125mm]
{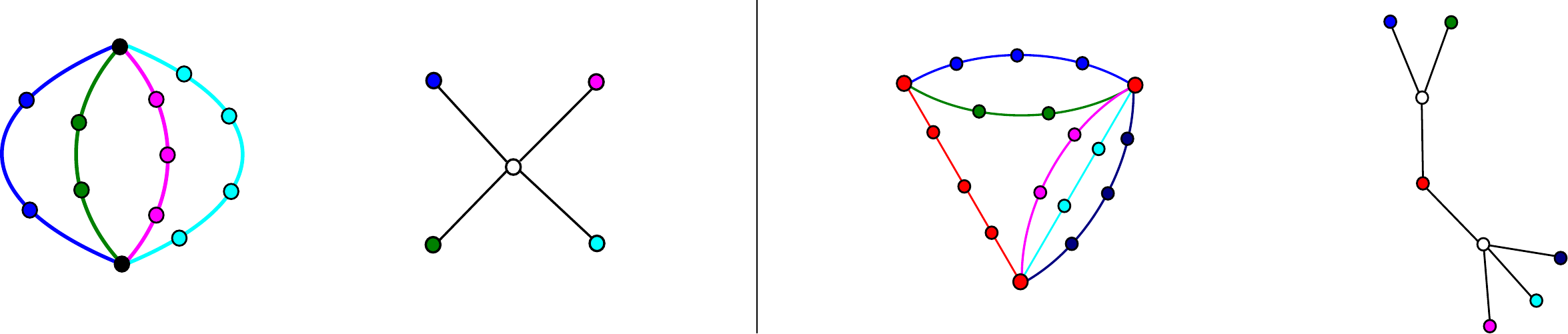}
\put(-7,10){$\Gamma = $}
\put(7,20){$a$}
\put(7,1){$b$}
\put(0,16){\footnotesize{\textcolor{blue}{$s_1$}}}
\put(0,6){\footnotesize{\textcolor{blue}{$s_2$}}}
\put(2.5,14){\footnotesize{\textcolor{ForestGreen}{$t_1$}}}
\put(3,8){\footnotesize{\textcolor{ForestGreen}{$t_2$}}}
\put(7,14){\footnotesize{\textcolor{magenta}{$u_1$}}}
\put(7.5,11){\footnotesize{\textcolor{magenta}{$u_2$}}}
\put(6.5,8){\footnotesize{\textcolor{magenta}{$u_3$}}}
\put(13,17){\footnotesize{\textcolor{cyan}{$v_1$}}}
\put(16,14){\footnotesize{\textcolor{cyan}{$v_2$}}}
\put(16,8){\footnotesize{\textcolor{cyan}{$v_3$}}}
\put(13,5){\footnotesize{\textcolor{cyan}{$v_4$}}}
\put(26,10){\footnotesize{$\langle a, b \rangle$}}
\put(35,18){\footnotesize{$\langle a, b, u_i \rangle$}}
\put(35,3){\footnotesize{$\langle a, b, v_i \rangle$}}
\put(22,18){\footnotesize{$\langle a, b, s_i \rangle$}}
\put(22,3){\footnotesize{$\langle a, b, t_i \rangle$}}
\put(50,10){$\Gamma' = $}
\put(55,17){\footnotesize{\textcolor{red}{$a_1$}}}
\put(72,17){\footnotesize{\textcolor{red}{$a_2$}}}
\put(64,1){\footnotesize{\textcolor{red}{$a_3$}}}
\put(61,5){\footnotesize{\textcolor{red}{$a_4$}}}
\put(58.5,8.5){\footnotesize{\textcolor{red}{$a_5$}}}
\put(56.5,12){\footnotesize{\textcolor{red}{$a_6$}}}
\put(86,9){\footnotesize{\textcolor{red}{$\langle a_i \rangle$}}}
\put(82,15){\footnotesize{\textcolor{black}{$\langle a_1, a_2\rangle$}}}
\put(86,4){\footnotesize{\textcolor{black}{$\langle a_2, a_3\rangle$}}}
\end{overpic}
\caption{\small{Examples of JSJ decompositions given by Corollary~\ref{cor:jsj}.  From left to right, we have a defining graph $\G$, the JSJ decomposition of $W_\G$, a defining graph~$\G'$, and the JSJ decomposition of $W_{\G'}$.  In the JSJ decompositions, the Type~1 vertices are white, all unlabelled Type 2 vertex groups are the special subgroups corresponding to the branches indicated by color, and all edge groups are equal to the groups on the adjacent Type 1 vertices.}}
\label{def_graph_JSJ} 
\end{figure}
\end{center}

\subsubsection{Technical lemmas}

We will use in Section~\ref{sec:orbicomplex} the following technical lemmas related to the JSJ decomposition, where $\G$ is as in the statement of Corollary~\ref{cor:jsj}.

\begin{lemma}\label{lem:L2}  Suppose a Type~2 vertex in the JSJ decomposition of $W_\G$ has stabilizer $\langle A \rangle$.  
Let $L = L_A$ be the number of essential vertices in $A$.  Then $L \geq 2$, and $L = 2$ if and only if $A$ is equal to the set of vertices of a branch of $\G$ (including its end vertices).
\end{lemma}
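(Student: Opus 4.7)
The plan is to use the visual description of the JSJ decomposition from Corollary~\ref{cor:jsj}.

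The bound $L \geq 2$ is immediate: a Type~2 vertex has valence $\ell \geq 1$ in the JSJ tree, and by Corollary~\ref{cor:jsj}(2) this valence equals the number of essential cut pairs contained in $A$, so $A$ contains at least two essential vertices. The implication ``$A$ is the vertex set of a branch $\Rightarrow L = 2$'' is also immediate, because the interior vertices of a branch have valence $2$ in $\Gamma$ and hence are non-essential, leaving only the two branch endpoints as essential vertices of $A$.

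For the converse, suppose $L_A = 2$ and let $\{a, b\}$ denote the essential vertices of $A$. I would show that $A$ is the vertex set of some branch between $a$ and $b$ in three steps.

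\emph{Step 1.} Every non-essential vertex of $A$ lies on a branch between $a$ and $b$. Suppose for contradiction that $c \in A$ is non-essential on a branch $\beta'$ whose essential endpoints $u, v$ satisfy $\{u, v\} \neq \{a, b\}$, and choose $v \notin \{a, b\}$. I would show that $A \cup \{v\}$ also satisfies ($\alpha_1$), contradicting the maximality in ($\alpha_2$). The verification amounts to checking that $\{v, x\}$ is a cut pair of $|\Gamma|$ for each $x \in A$: the pairs $\{v, a\}$ and $\{v, c\}$ follow from $3$-convexity, since once $v$ is removed a non-empty sub-arc of $\beta'$ is isolated; the pair $\{v, b\}$ is handled using the hypothesis that $\{b, c\}$ is already a cut pair, which encodes a $2$-separation in the auxiliary graph obtained from $\Gamma$ by contracting each branch to a single edge.

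\emph{Step 2.} All non-essential vertices of $A$ lie on a single branch. By Corollary~\ref{cor:jsj}(3) the Type~1 vertex with stabilizer $\langle a, b \rangle$ is adjacent to our Type~2 vertex, so Corollary~\ref{cor:jsj}(1) gives that $|\Gamma| \setminus \{a, b\}$ has at least three components. If $c_i, c_j \in A$ lay on distinct branches $\beta_i, \beta_j$ between $a$ and $b$, a third component of $|\Gamma| \setminus \{a, b\}$ supplies a path from $a$ to $b$ avoiding $\beta_i \cup \beta_j$, so $|\Gamma| \setminus \{c_i, c_j\}$ is connected, contradicting ($\alpha_1$).

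\emph{Step 3.} $A$ contains every vertex of the branch $\beta$ produced by the previous steps. For any interior vertex $y$ of $\beta$, every pair $\{y, x\}$ with $x \in A$ is a cut pair of $|\Gamma|$ (some sub-arc of $\beta$ is an isolated component of $|\Gamma| \setminus \{y, x\}$), so maximality forces $y \in A$.

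The main obstacle is Step~1, particularly the verification that $\{v, c'\}$ is a cut pair of $|\Gamma|$ for non-essential vertices $c' \in A$ that do not lie on $\beta'$. This verification will combine ($\alpha_1$) applied to the pair $\{c, c'\}$ with the $2$-edge connectedness of the auxiliary contracted graph, which is a consequence of the hypothesis in Assumption~\ref{assumptions} that no edge separates $\Gamma$.
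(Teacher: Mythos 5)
Your easy directions are fine ($L \geq 2$ from the valence of the Type~2 vertex, and ``branch $\Rightarrow L=2$''), and your Steps~2 and~3 are essentially sound (Step~2 is close in spirit to the paper's observation that two interior vertices on distinct branches between $a$ and $b$ cannot separate, which the paper phrases via ``$\G$ is not a cycle''). But the crux of the converse is your Step~1, and there it has a genuine gap. Your stated justification for the pair $\{v,a\}$ is incorrect: if $a$ is not an endpoint of $\beta'$, then removing the points $v$ and $a$ isolates no sub-arc of $\beta'$, since the interior of $\beta'$ remains attached to the rest of $|\G|$ through the other endpoint $u$; and since essential vertices are never adjacent in a $3$-convex graph, there is no ``adjacent open edge'' fallback either. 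Likewise the verification that $\{v,b\}$ and $\{v,c'\}$ (for non-essential $c' \in A$ off $\beta'$) are cut pairs is only gestured at via the contracted auxiliary graph, and you explicitly defer the $\{v,c'\}$ case as ``the main obstacle.'' These verifications are precisely the nontrivial combinatorial content here: the paper sidesteps them by invoking Lemma~3.20 of~\cite{dani-thomas-jsj}, which says that once $A$ contains one non-essential vertex of a branch it contains the whole branch (endpoints included); with that lemma the conclusion $\{u,v\}\subseteq\{a,b\}$ is immediate because $u,v$ are then essential vertices of $A$. As written, your argument replaces that lemma with a maximality argument whose key cut-pair checks are either wrong or missing, so the proof does not go through.

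A second, smaller omission: you never use condition ($\alpha_3$), so the degenerate case $A=\{a,b\}$ is not excluded. If $A$ contained no non-essential vertex, your Steps~1--3 would be vacuous and would produce no branch, while the conclusion ``$A$ is the vertex set of a branch'' would be false (a branch has non-empty interior by $3$-convexity). The paper rules this out at the outset: by ($\alpha_3$) the group $\langle A\rangle$ is not $2$-ended, whereas $\langle a,b\rangle$ is infinite dihedral, so $A$ must contain a non-essential vertex; your write-up needs this step before Step~1 can even begin.
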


\begin{proof}
The fact that $L \geq 2$ follows from the vertex $v$ having valence $\ell \geq 1$, and the description given in Corollary~\ref{cor:jsj} of the vertices adjacent to $v$.

If $A$ equals the set of vertices of a branch of $\G$, then it is clear that $L = 2$.  Conversely, if $L = 2$ let the two essential vertices of $A$ be $a$ and $b$.  Since $\la A \ra$ is not $2$-ended by ($\alpha_3$), the set $A$ must also contain a non-essential vertex, say $a'$.  Let $\beta$ be the branch of $\G$ containing~$a'$.  Using Lemma 3.20 of \cite{dani-thomas-jsj}, we obtain that $A$ contains all vertices of $\beta$, including its end vertices.  Since $L = 2$, these end vertices must be $a$ and $b$.  If $A$ contains a vertex $c$ of $\G$ which is not in the branch $\beta$, then since $L = 2$, the vertex $c$ must be non-essential and lie on another branch, say $\beta'$, between $a$ and $b$.  Now the graph $\G$ is not a cycle, so the pair $\{a',c\}$ cannot separate $| \G |$.  This contradicts ($\alpha_1$).  So if $L = 2$, the set $A$ is equal to the set of vertices of a branch $\beta$ in $\G$. 
\end{proof}

\begin{lemma}\label{lem:k3}  Suppose a Type~2 vertex in the JSJ decomposition of $W_\G$ has stabilizer $\langle A \rangle$.  Assume~$A$ contains the set of vertices of a branch $\beta$ of $\G$, including its end vertices $b$ and $b'$.  Let $k$ be the number of components of $\G \setminus \{b,b'\}$.  Then $k \geq 2$, and $k \geq 3$ if and only if $A$ equals the set of vertices of $\beta$.
\end{lemma}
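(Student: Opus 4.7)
The plan is to prove the three assertions --- $k \geq 2$, and the two directions of the iff --- separately. For $k \geq 2$, we apply condition $(\alpha_1)$ to the pair $\{b, b'\} \subset A$ to conclude that $|\G| \setminus \{b, b'\}$ has at least two components; since $\G$ is $3$-convex, there is no edge directly between the essential vertices $b$ and $b'$, so no open-interval component arises from the removal and the number of components of $|\G| \setminus \{b, b'\}$ agrees with the number $k$ of components of the subgraph $\G \setminus \{b, b'\}$.

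For the forward implication $k \geq 3 \Rightarrow A = V(\beta)$, we argue by contradiction: suppose some $c \in A$ lies outside $V(\beta)$, so $c$ sits in some component $C_m$ of $\G \setminus \{b, b'\}$ distinct from the interior $C_1$ of $\beta$, and pick an interior vertex $v_j$ of $\beta$. The goal is to show that $\{v_j, c\}$ fails to separate $|\G|$, contradicting $(\alpha_1)$. Three ingredients drive this: (i)~interior vertices of $\beta$ are non-essential of valence $2$ with both neighbors in $\beta$, so $v_j$ and $c$ are non-adjacent and no open-interval component arises in $|\G| \setminus \{v_j, c\}$; (ii)~Assumption~\ref{assumptions}'s prohibition of cut vertices forces every component of $\G \setminus \{b, b'\}$ to be adjacent to both $b$ and $b'$, and similarly forces every piece of $C_m \setminus \{c\}$ to remain adjacent to at least one of $b, b'$ (else that piece would be isolated in $\G \setminus \{c\}$); (iii)~since $k \geq 3$, a third component $C_{m'} \neq C_1, C_m$ exists and, by (ii), supplies a $b$-to-$b'$ path in $\G \setminus \{v_j, c\}$. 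Together these let the two segments of $\beta \setminus \{v_j\}$ join up through $b$ and $b'$, with the pieces of $C_m \setminus \{c\}$ picked up along the way via their attachments to $b$ or $b'$, showing $|\G| \setminus \{v_j, c\}$ is connected.

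For the converse $A = V(\beta) \Rightarrow k \geq 3$, the plan is to invoke Corollary~\ref{cor:jsj}(2): the Type~$2$ vertex $\langle A \rangle$ has valence $\ell \geq 1$ equal to the number of pairs of essential vertices inside $A$ that form cut pairs yielding at least three components. If $A = V(\beta)$, the only essential vertices of $A$ are the branch endpoints $b$ and $b'$, so the unique candidate pair is $\{b, b'\}$ and the condition $\ell \geq 1$ forces $k \geq 3$.

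The main obstacle is the forward implication, where the careful topological bookkeeping in (i)--(iii) is required; Assumption~\ref{assumptions}'s no-cut-vertex hypothesis does the essential work of ruling out isolated fragments of $C_m \setminus \{c\}$ and of ensuring that $C_{m'}$ really does join $b$ to $b'$. The converse, by contrast, falls out at once from how Type~$2$ vertices are defined in Corollary~\ref{cor:jsj}.
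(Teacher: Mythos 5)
Your proposal is correct and follows essentially the same route as the paper: the forward direction contradicts $(\alpha_1)$ by showing that an interior vertex of $\beta$ together with a vertex of $A$ off $\beta$ fails to separate $|\G|$, using the no-cut-vertex assumption and the existence of a third component of $\G\setminus\{b,b'\}$, and the converse is read off from Corollary~\ref{cor:jsj} exactly as in the paper. The only cosmetic differences are in the connectivity bookkeeping (you track adjacency of the components of $C_m\setminus\{c\}$ to $b$ or $b'$, where the paper runs a reduced path inside the induced graph $\overline{\Lambda}_2$) and in deducing $k\ge 2$ from $(\alpha_1)$ rather than from $\beta$ being a proper subgraph; these are equivalent.
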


\begin{proof}  First observe that $k \geq 2$ since the branch $\beta$ is not all of $\Gamma$.  Now assume $k \geq 3$, and let $\Lambda_1,\dots,\Lambda_k$ be the components of $\G \setminus \{b,b'\}$, with $\Lambda_1 = \beta \setminus \{b,b'\}$.  Suppose $A$ contains a vertex $a$ which is not on $\beta$.  Without loss of generality, $a \in \Lambda_2$.  Let $c$ be an interior vertex of $\beta$.  We will show that $\G \setminus \{a,c\}$ is connected, hence the (non-adjacent) pair $\{a,c\}$ does not separate $|\G|$.  This contradicts condition ($\alpha_1$) for $A$.  

Define $\Lambda$ to be the induced subgraph of $\G$ which is the union of $b$, $b'$, and the graphs $\Lambda_i$ for $3 \leq i \leq k$.  Since $k \geq 3$, the graph $\Lambda$ is connected, and since $c \in \Lambda_1$ and $a \in \Lambda_2$, we have that~$\Lambda$ is contained in one component of $\G \setminus \{a,c\}$.  It now suffices to show that for any vertex $d \in \Lambda_1 \cup \Lambda_2$ with $d \not \in \{a,c\}$, there is a path in $\G$ from $d$ to either $b$ or $b'$ which misses both $a$ and~$c$.  If $d \in \Lambda_1 \setminus \{a,c\} = \Lambda_1 \setminus \{c\}$, then $d$ is an interior vertex of the branch $\beta$ and the result is clear.  If $d \in \Lambda_2 \setminus \{a,c\} = \Lambda_2 \setminus \{a\}$, consider the induced graph $\overline\Lambda_2$ with vertices $b$, $b'$, and the vertex set of~$\Lambda_2$. We claim that $d$ must be in a component of $\overline\Lambda_2 \setminus\{a\}$ containing either $b$ or $b'$, hence there is a path in $\overline\Lambda_2$ from $d$ to either $b$ or $b'$ which misses $a$ (and also $c$), and we are done.  

Suppose for a contradiction that $d$ is in a component of $\overline\Lambda_2 \setminus\{a\}$ containing neither $b$ nor~$b'$.  By Assumption~\ref{assumptions}, the graph $\G$ has no separating vertices, so there is a reduced path $\eta$ in~$\G$ from $d$ to say $b$ which does not pass through $a$.  Without loss of generality, we may assume that $\eta$ does not pass through $b'$.  Then the entire path $\eta$ is contained in a component of $\overline\Lambda_2 \setminus \{a\}$.  So in fact $d$ is in a component of $\overline\Lambda_2 \setminus\{a\}$ containing $b$, and the claim holds.  Thus $\G \setminus \{a,c\}$ is connected.  We conclude that if $k \geq 3$, the set $A$ equals the vertex set of $\beta$.

Now suppose that $A$ equals the vertex set of $\beta$, and let $\ell$ be valence of the vertex of the JSJ graph which has stabilizer $\langle A \rangle$.  Then by (2) of Corollary~\ref{cor:jsj}, we have that $\ell \geq 1$ and that $A$ must contain at least one pair of vertices as in (1) of Corollary~\ref{cor:jsj}.  Since $A$ equals the vertex set of the branch $\beta$, the set $A$ contains at most one pair of vertices as in (1) of Corollary~\ref{cor:jsj}, namely the pair $\{b,b'\}$.  By the description given in (1) of Corollary~\ref{cor:jsj}, it follows that $k \ge 3$.
\end{proof}

\begin{lemma}\label{lem:k23}  Suppose a Type~2 vertex in the JSJ decomposition of $W_\G$ has stabilizer $\langle A \rangle$.
\begin{enumerate}
\item The set $A$ has a well-defined cyclic ordering $a_1,\dots,a_n$. 
\item Assume $A$ contains $L \geq 3$ essential vertices, and let $a_{i_1}, \dots, a_{i_L}$ be the essential vertices of $A$ in the induced  cyclic ordering.  For $1 \leq j \leq L$ let $k_j$ be the number of components of $\G \setminus \{a_{i_j},a_{i_{j+1}}\}$. 
\begin{enumerate}
\item For all $1 \leq j \leq L$, $k_j \geq 2$, hence $\{a_{i_j},a_{i_{j+1}}\}$ is an essential cut pair.
\item For each $1 \leq j \leq L$, there is at least one branch $\beta$ of $\G$ between $a_{i_j}$ and $a_{i_{j+1}}$.
\item Suppose $a \in A$ is a non-essential vertex, lying between $a_{i_j}$ and $a_{i_{j+1}}$ in the cyclic ordering.  Then $k_j = 2$, there is a unique branch $\beta = \beta_j$ of $\G$ between $a_{i_j}$ and~$a_{i_{j+1}}$, all vertices of $\beta$ are contained in~$A$, and $\beta$ contains the vertex $a$.
\item If there are no non-essential vertices of $A$ lying between $a_{i_j}$ and $a_{i_{j+1}}$ in the cyclic ordering on $A$, then $k_j \geq 3$.
\item Suppose $a,b \in A$ are such that $\langle a, b \rangle$ is the stabilizer of a Type 1 vertex in the JSJ decomposition of $W_\G$.  Then $a$ and $b$ are adjacent in the cyclic ordering on~$A$.
\end{enumerate}
\end{enumerate}
\end{lemma}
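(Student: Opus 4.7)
For part (1), the plan is to derive the cyclic ordering on $A$ from the pairwise-separating property ($\alpha_1$). Concretely, for any three distinct $a, b, c \in A$, the pair $\{a, b\}$ separates $|\G|$ into at least two components, and $c$ lies in exactly one of them; doing this for all pairs yields a consistent ternary ``betweenness'' relation on $A$, which determines a cyclic ordering $a_1, \dots, a_n$. An equivalent route is to read the cyclic order off Theorem~3.36 of~\cite{dani-thomas-jsj}: the stabilizer $\langle A\rangle$ of a Type~2 vertex corresponds in $\G$ to a maximal cyclic family of pairwise-separating vertices.

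Part (2a) follows immediately from ($\alpha_1$) applied to the pair $\{a_{i_j}, a_{i_{j+1}}\} \subset A$ together with the essentiality of both vertices. For part (2b), the idea is that between consecutive essential vertices in the cyclic ordering of $A$, the ``arc'' component of $|\G| \setminus \{a_{i_j}, a_{i_{j+1}}\}$ contains no essential vertex of $A$ in its interior; a reduced path through this component from $a_{i_j}$ to $a_{i_{j+1}}$, shortened to bypass any extraneous essential vertices, yields a branch of $\G$ with endpoints $a_{i_j}$ and $a_{i_{j+1}}$.

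For part (2c), I would use that a non-essential vertex $a \in A$ lies on some branch $\beta$ of $\G$. By Lemma~3.20 of~\cite{dani-thomas-jsj} (already invoked in the proof of Lemma~\ref{lem:L2}), every vertex of $\beta$, including both essential endpoints, lies in $A$, and the cyclic position of $a$ forces these endpoints to be $a_{i_j}$ and $a_{i_{j+1}}$. Then $k_j = 2$ is obtained by contradiction: if $k_j \geq 3$, Lemma~\ref{lem:k3} would force $A$ to equal the vertex set of $\beta$, yielding $L = 2$ in contradiction to $L \geq 3$. For uniqueness, if a second branch $\beta'$ between $a_{i_j}$ and $a_{i_{j+1}}$ existed, then $\beta \cup \beta'$ would be a cycle whose two open arcs lie in distinct components of $|\G| \setminus \{a_{i_j}, a_{i_{j+1}}\}$; together with further components containing other essential vertices of $A$ (which exist because $L \geq 3$), this contradicts $k_j = 2$.

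Parts (2d) and (2e) are the most involved, and I expect them to be the main obstacle. For (2d), suppose for contradiction that $k_j = 2$ and no non-essential vertex of $A$ lies between $a_{i_j}$ and $a_{i_{j+1}}$; part (2b) supplies a branch $\beta$ between them, whose interior lies in a component of $|\G| \setminus \{a_{i_j}, a_{i_{j+1}}\}$ disjoint from $A \setminus \{a_{i_j}, a_{i_{j+1}}\}$. I would then select an interior vertex $c$ of $\beta$ and verify that $A \cup \{c\}$ still satisfies ($\alpha_1$), contradicting the maximality ($\alpha_2$) of $A$. For (2e), assume $\{a, b\} \subset A$ stabilizes a Type~1 vertex but is not cyclically adjacent in $A$; then each of the two cyclic arcs of $A$ between $a$ and $b$ carries essential vertices which, by repeated application of (2b), lie in exactly two distinct components of $|\G| \setminus \{a, b\}$. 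Since $k \geq 3$ by Corollary~\ref{cor:jsj}(1), a third component $C$ exists meeting $A$ only in $\{a, b\}$, and a maximality-violating enlargement of $A$ within $C$ produces the contradiction. The technical heart of these last two arguments is verifying, for the candidate new vertex $c$, that $\{c, a'\}$ separates $|\G|$ for every $a' \in A$; this requires a careful analysis of how each such pair interacts with the branch structure supplied by (2b).
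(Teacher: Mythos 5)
The proposal breaks down at part (2e), and the step you defer there is not merely technical --- it is false, so your route cannot be repaired as stated. You propose to pick a component $C$ of $\G \setminus \{a,b\}$ meeting $A$ only in $\{a,b\}$ and then enlarge $A$ inside $C$ to violate maximality ($\alpha_2$). But no vertex of $C$ can be added to $A$: take $c \in C$ and $a' \in A \setminus \{a,b\}$ (such $a'$ exists precisely because $a$ and $b$ are assumed non-consecutive in the cyclic ordering); then $a'$ lies in a component $D \neq C$ of $\G \setminus \{a,b\}$, and since $k \geq 3$ there is a third component $E$. Every component of $\G\setminus\{a,b\}$ has edges to both $a$ and $b$ (otherwise $a$ or $b$ would be a cut vertex, contradicting Assumption~\ref{assumptions}), and $c, a'$ are non-adjacent, so in $|\G|\setminus\{c,a'\}$ the vertices $a$ and $b$ are still joined through $E$, every piece of $C\setminus\{c\}$ and of $D\setminus\{a'\}$ still reaches $a$ or $b$ (else $c$, respectively $a'$, would be a cut vertex of $\G$), and all other components are untouched. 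Hence $\{c,a'\}$ does not separate $|\G|$, so ($\alpha_1$) fails for $A\cup\{c\}$ for every $c \in C$, and the "maximality-violating enlargement" does not exist. The paper's actual argument for (2e) is different in kind: it takes an induced cycle $\sigma$ containing all of $A$ (Lemma~3.12 of \cite{dani-thomas-jsj}); non-consecutiveness gives vertices $c,d \in A$ on the two arcs of $\sigma$ between $a$ and $b$, while $k\geq 3$ gives a reduced path from $a$ to $b$ avoiding both $c$ and $d$ through a third component, contradicting the separation property of the cyclic ordering (Lemma~3.14(2) of \cite{dani-thomas-jsj}).

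Two smaller gaps. For (2d) your skeleton (adjoin an interior vertex of $\beta$ and contradict ($\alpha_2$)) matches the paper, but the entire content is the verification you leave open; the paper's key observation is that when $k_j = 2$, for $a \in A$ not on $\beta$ and $b$ interior to $\beta$, the pair $\{a,b\}$ separates $|\G|$ if and only if both $\{a,a_{i_j}\}$ and $\{a,a_{i_{j+1}}\}$ do --- this is exactly where $k_j=2$ is used and why the third-component obstruction above disappears in that case. For (2b), "a reduced path \dots shortened to bypass any extraneous essential vertices" assumes away the real difficulty: an essential vertex of $\G$ not in $A$ may be unavoidable in its component. The paper's dichotomy is that either such a vertex is unavoidable, in which case maximality ($\alpha_2$) forces it into $A$, contradicting the consecutiveness of $a_{i_j}, a_{i_{j+1}}$, or one can route around it, in which case $\G$ contains a subdivided $K_4$, contradicting Assumption~\ref{K4assumption}; your sketch never invokes Assumption~\ref{K4assumption} or the induced-cycle lemma, a sign the needed content is missing. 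Parts (1), (2a) and (2c) are essentially in line with the paper (the paper simply cites Lemma~3.14(1) of \cite{dani-thomas-jsj} for (1), and for (2a) one should note that $3$-convexity is what rules out the adjacent-pair case, so that each side of the separation of $|\G|$ contains a vertex).
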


\begin{proof}  Part (1) is Lemma 3.14(1) of \cite{dani-thomas-jsj}.  

For (2)(a), we have $k_j \geq 2$ for all $j$ by ($\alpha_1$) in Corollary~\ref{cor:jsj} and the graph $\G$ being $3$-convex.  

For (2)(b), let $\sigma$ be an induced cycle in $\G$ containing all vertices of $A$, as guaranteed by Lemma~3.12 of~\cite{dani-thomas-jsj}.   Then since $a_{i_j}$ and $a_{i_{j+1}}$ are adjacent essential vertices in the cyclic ordering on $A$, there is an arc of $\sigma \setminus \{a_{i_j},a_{i_{j+1}}\}$ which contains no essential vertices of $A$.  To see that there is a branch $\beta$ between $a_{i_j}$ and $a_{i_{j+1}}$, it suffices to show that this arc contains no essential vertices of $\G$.  Assume there is an essential vertex $b$ of $\G$ which lies on $\sigma$ between $a_{i_j}$ and $a_{i_{j+1}}$.  Then it is not hard to see that either every reduced path from $a_{i_j}$ to $a_{i_{j+1}}$ in the component of $\G \setminus \{a_{i_j},a_{i_{j+1}}\}$ containing $b$ passes through $b$, or $\G$ contains a subdivided $K_4$ subgraph.  In the first case, by the maximality condition ($\alpha_2$) of Corollary~\ref{cor:jsj}, the vertex $b$ is in $A$, which is a contradiction.  The second case contradicts Assumption~\ref{K4assumption}.  Hence there is at least one branch $\beta$ of $\G$ between $a_{i_j}$ and $a_{i_{j+1}}$.

To prove (2)(c), let $a$ be a non-essential vertex of $A$ lying between $a_{i_j}$ and $a_{i_{j+1}}$ in the cyclic ordering, and let $\beta$ be the branch of $\G$ containing $a$. (Note that every non-essential vertex of $\G$ lies on a unique branch of $\G$.) Then all vertices of $\beta$ are in $A$, by Lemma~3.20 of~\cite{dani-thomas-jsj}.  Since the cyclic ordering on $A$ is well-defined, and $\beta$ contains $a$, it follows that the branch $\beta$ has endpoints $a_{i_j}$ and $a_{i_{j+1}}$.  Now as $L \geq 3$, the set $A$ is not equal to the vertex set of $\beta$.  Thus by Lemma~\ref{lem:k3} we get that $k_j = 2$.  So there is at most one branch of $\G$ between $a_{i_j}$ and $a_{i_{j+1}}$.  Hence $\beta = \beta_j$ is the unique branch of $\G$ between $a_{i_j}$ and $a_{i_{j+1}}$.  We have proved all claims in (2)(c).

For (2)(d), since there are no non-essential vertices of $A$ lying between $a_{i_j}$ and $a_{i_{j+1}}$ in the cyclic ordering on $A$, without loss of generality $a_{i_j} = a_1$ and $a_{i_{j+1}} = a_2$, and $\G \setminus \{ a_1, a_2\}$ has $k_j = k \geq 2$ components.  By (2)(b), there is a branch $\beta$ of $\G$ between $a_1$ and $a_2$.  Assume $k = 2$.  We will obtain a contradiction by showing that every interior vertex of $\beta$ is in $A$.  Let $b$ be an interior vertex of $\beta$.   First notice that the pairs $\{a_1, b \}$ and $\{ a_2, b\}$ both separate $|\G|$, and that if $b'$ is any other interior vertex of $\beta$ then $\{b,b'\}$ also separates $|\G|$.  Now since $k = 2$, for every $a \in A$ which is not in $\beta$, the pair $\{a, b\}$ separates $|\G|$ if and only if both $\{a, a_1\}$ and $\{a,a_2\}$ separates $|\G|$.  There is at least one such $a$ since $L \geq 3$.  It follows that for all $a \in A$ and for all vertices $b$ in the interior of $\beta$, the pair $\{a,b\}$ separates $|\G|$.  Hence by the maximality condition ($\alpha_2$), every interior vertex of $\beta$ is in $A$.  This contradicts $a_1$ and $a_2$ being adjacent in the cyclic ordering on $A$, so $k_j \geq 3$ as required.

To prove (2)(e), we have by Corollary~\ref{cor:jsj} that $\G \setminus \{a,b\}$ has $k \geq 3$ components.  Let $\sigma$ be an induced cycle in $\G$ containing all vertices of $A$, as guaranteed by Lemma 3.12 of~\cite{dani-thomas-jsj}, and suppose $a$ and $b$ are not consecutive in the cyclic ordering on $A$.  Then there are vertices $c, d \in A$ so that one of the arcs of $\sigma$ from $a$ to $b$ contains $c$ and the other arc contains $d$.  Since $k \geq 3$, there is also a reduced path from $a$ to $b$ which misses both $c$ and $d$.  But this contradicts Lemma 3.14(2) of~\cite{dani-thomas-jsj}.  Hence $a$ and $b$ are consecutive in the cyclic ordering on $A$, as required.  
\end{proof}

\begin{lemma}\label{lem:not_cycle}  If a Type~2 vertex in the JSJ decomposition of $W_\G$ has stabilizer $\langle A \rangle$, then the set $A$ is not equal to the vertex set of an induced cycle in $\G$.
\end{lemma}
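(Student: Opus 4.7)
The strategy is to argue by contradiction. Suppose $A = V(\sigma)$ for some induced cycle $\sigma$ in $\G$. By Assumption~\ref{assumptions}, $\G$ has no triangles and no squares, so $\sigma$ has length at least $5$. By Corollary~\ref{cor:jsj}(2), the Type~$2$ vertex with stabilizer $\la A\ra$ has valence $\ell\ge 1$, so there exists a pair $\{a,b\}\subset A$ of essential vertices of $\G$ such that $\G\setminus\{a,b\}$ has $k\ge 3$ components; equivalently, $\la a,b\ra$ is the stabilizer of an adjacent Type~$1$ vertex in the JSJ decomposition. The plan is to show that $a$ and $b$ must then be adjacent in $\G$, which will contradict $3$-convexity: an edge $ab$ would be a path of length $1$ between two essential vertices.

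I will split the argument on $L = L_A$, the number of essential vertices of $A$. In the main case $L\ge 3$, I will invoke Lemma~\ref{lem:k23}(1) to put the well-defined cyclic ordering on $A$, and observe that since $A = V(\sigma)$ lies on the induced cycle $\sigma$, this cyclic ordering must coincide with the one inherited from $\sigma$; in particular, being consecutive in the cyclic ordering on $A$ is the same as being adjacent along $\sigma$. Applying Lemma~\ref{lem:k23}(2)(e) to the pair $\{a,b\}$ then forces $a$ and $b$ to be consecutive in the cyclic ordering on $A$, hence adjacent in $\sigma$. Since $\sigma$ is induced in $\G$, adjacency in $\sigma$ implies adjacency in $\G$, giving the desired contradiction with $3$-convexity.

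For the remaining case $L=2$, Lemma~\ref{lem:k23}(2)(e) is not available, so I will instead use Lemma~\ref{lem:L2} to conclude that $A = V(\beta)$ for some branch $\beta$ of $\G$, with essential endpoints $a$ and $b$. Then the induced subgraph of $\G$ on $A = V(\beta) = V(\sigma)$ must equal both the cycle $\sigma$ and a subgraph containing the path $\beta$. Because each interior vertex of $\beta$ is non-essential and therefore has degree exactly $2$ in $\G$ with both incident edges already in $\beta$, the only possible edge of $\G$ on $V(\beta)$ not lying in $\beta$ is the chord $ab$. For the induced subgraph to be a cycle rather than a path, this chord $ab$ must be present in $\G$, once again contradicting $3$-convexity.

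The step I expect to require the most care is the identification of the cyclic ordering on $A$ supplied by Lemma~\ref{lem:k23}(1) with the cyclic ordering inherited from $\sigma$ in the $L\ge 3$ case, together with the parallel verification in the $L=2$ case that no chord other than $ab$ can appear in the induced subgraph on $V(\beta)$. Both reductions rest on the same elementary fact that non-essential vertices of $A$ have all of their edges in $\G$ accounted for by $\sigma$ (respectively, by $\beta$), and this is what lets $3$-convexity deliver the contradiction uniformly.
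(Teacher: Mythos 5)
Your proof is correct, but it is genuinely different from the paper's. The paper disposes of this in one line on the group-theoretic side: if $A$ were the vertex set of an induced cycle (necessarily of length at least $5$, since $\G$ has no triangles or squares), then $\langle A \rangle$ would be a cocompact Fuchsian group, and Bowditch's characterization of the stabilizers of Type~2 vertices (which are ``hanging Fuchsian,'' in particular virtually free) rules this out. You instead argue combinatorially inside the machinery already developed: Corollary~\ref{cor:jsj}(2) supplies an essential pair $\{a,b\}\subset A$ stabilizing an adjacent Type~1 vertex; for $L\ge 3$, Lemma~\ref{lem:k23}(2)(e) makes $a$ and $b$ consecutive in the cyclic ordering on $A$, which (once one identifies that ordering with the order along $\sigma$, legitimate here since $\sigma$ is itself an induced cycle through all of $A$, and consistent with how the paper uses the ordering in the proofs of Lemma~\ref{lem:k23}(2)(b)--(e)) makes them adjacent in $\G$; for $L=2$, Lemma~\ref{lem:L2} forces $A$ to be the vertex set of a branch, and your valence count shows the only way the induced subgraph can be a cycle is via the chord $ab$. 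In both cases adjacency of two essential vertices contradicts $3$-convexity, which is indeed in force for these technical lemmas. The trade-off: the paper's argument is shorter, does not use $3$-convexity at all (so is slightly more general), but leans on Bowditch's structure theory; yours is more elementary and self-contained relative to the stated lemmas, at the cost of the case split, the reliance on Lemma~\ref{lem:k23}(2)(e) (hence on the cited results of Dani--Thomas), and the implicit identification of the abstract cyclic ordering on $A$ with the cyclic order of $\sigma$, which you rightly flag as the delicate point.
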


\begin{proof}  If $A$ is equal to the vertex set of an induced cycle in $\G$, then the group $\langle A \rangle$ is cocompact Fuchsian, which contradicts the characterization of stabilizers of Type 2 vertices in~\cite{bowditch}. 
\end{proof}

\begin{remark}\label{rmk:TreeJSJ}  Suppose $\Lambda$ is the JSJ graph of a right-angled Coxeter group $W_\G$ as in the statement of Corollary~\ref{cor:jsj}.  Then $\Lambda$ is a finite tree (containing at least one edge).  By Corollary~\ref{cor:jsj}(1), all Type 1 vertices of $\Lambda$ have valence $\geq 3$.  Hence all valence one vertices of $\Lambda$ are of Type 2, so $\Lambda$ has even diameter.  Using Corollary~\ref{cor:jsj} and the above technical lemmas, it is not hard to check that $\Lambda$ has diameter 2 if and only if $\G$ is a generalized $\Theta$-graph, and that~$\Lambda$ has diameter $4$ if and only if $\G$ is a cycle of generalized $\Theta$-graphs (with $\G$ being $3$-convex and satisfying Assumptions~\ref{assumptions} and~\ref{K4assumption} in both cases).
\end{remark}

\subsection{Surface amalgams and topological rigidity}\label{sec:lafont}

We now recall some definitions and a topological rigidity result from Lafont~\cite{lafont}.

\begin{definition}[Surface amalgams and geometric amalgams of free groups] 
\label{def:surface-amalgam}
Consider a graph of spaces over an oriented graph $\Lambda$ with the following properties. 
\begin{enumerate}
\item The underlying graph $\Lambda$ is bipartite with vertex set $V(\Lambda) = V_1 \sqcup V_2$ and edge set $E(\Lambda)$ such that each edge $e \in E(\Lambda)$ has $i(e) \in V_1$ and  $t(e) \in V_2$. 
\item 
The vertex space $C_x$ associated to a vertex $x \in V_1$ is a copy of the circle $S^1$.  
The vertex space $S_y$ associated to a vertex $y \in V_2$ is a connected surface with negative Euler characteristic and nontrivial boundary.  

\item Given an edge $e \in E(\Lambda)$, the edge space $B_e$ is a copy of $S^1$.  The map 
$\phi_{e,i(e)}:B_e \to C_{i(e)}$ is a homeomorphism, and the map $\phi_{e,t(e)}:B_e \to S_{t(e)}$
is a homeomorphism onto a boundary component of $S_{t(e)}$.  

\item Each vertex $x \in V_1$ has valence at least 3.  
Given any vertex $y \in V_2$, for each boundary component $B$ of $S_y$, there exists an edge 
$e$ with $t(e) = y$, such that the associated edge map identifies $B_e$ with $B$.  The valence of $y$ is the number of boundary components of $S_y$.  
\end{enumerate}
A \emph{surface amalgam} $\cX = \cX(\Lambda)$ is 
the space
\newcommand{\bigslant}[2]{{\raisebox{.7em}{$#1$}\left/\raisebox{-.5em}{$#2$}\right.}}
\begin{equation*}
 \bigslant{\displaystyle\cX= \left(\bigsqcup_{x \in V_1} C_x \sqcup \bigsqcup_{y\in V_2} S_y \sqcup  \bigsqcup_{e\in E(\Lambda)} B_e\right)}{\sim}
\end{equation*}
where for each $e \in E(\Lambda)$ and each $b\in B_e$, we have $b \sim \phi_{e,i(e)}(b)$ and 
$b \sim \phi_{e,t(e)}(b)$. 
If $\cX$ is a surface amalgam, the \emph{surfaces in $\cX$} are the surfaces $S_y$ for $y \in V_2(\Lambda)$. 
The fundamental group of a surface amalgam is a \emph{geometric amalgam of free groups}. \end{definition}

\noindent Note that in~\cite{lafont}, surface amalgams are called \emph{simple, thick, $2$-dimensional hyperbolic $P$-manifolds}.

For an oriented graph $\Lambda$ as in Definition~\ref{def:surface-amalgam}, we may by abuse of notation write $\Lambda$ for the unoriented graph with the same vertex and edge sets and $\epsilon:E(\Lambda) \to V(\Lambda)/C_2$ given by $\epsilon(e) = [i(e),t(e)]$.  We note that:

\begin{remark}\label{rem:jsj}  If $\cX = \cX(\Lambda)$ is a surface amalgam, then the JSJ graph of the geometric amalgam of free groups $\pi_1(\cX)$ is the (unoriented) graph $\Lambda$. For details, see \cite[Section 4.1]{malone}.
\end{remark}

We will use the following topological rigidity result of Lafont. 

\begin{thm}\cite[Theorem 1.2]{lafont}\label{thm:Lafont}  Let $\cX$ and $\cX'$ be surface amalgams.  Then any isomorphism $\phi:\pi_1(\cX) \to \pi_1(\cX')$ is induced by a homeomorphism $f:\cX \to \cX'$.
\end{thm}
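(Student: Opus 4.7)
The plan is to recover the graph-of-spaces structure of each surface amalgam from its fundamental group, to realise $\phi$ piecewise by homeomorphisms on the individual surfaces and circles, and then to glue these pieces compatibly. First, I would observe that the decomposition in Definition~\ref{def:surface-amalgam} yields a graph-of-groups structure on $\pi_1(\cX)$ over $\Lambda$ whose vertex groups are either infinite cyclic (for $x \in V_1$) or surface groups with nontrivial peripheral structure (for $y \in V_2$), and whose edge groups are the cyclic peripheral subgroups. The hypotheses that each $S_y$ has negative Euler characteristic and each $C_x$ bounds at least three surfaces imply that this is (up to collapsing cyclic edges) the canonical JSJ decomposition of $\pi_1(\cX)$ over two-ended subgroups, in the sense of Bowditch. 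Since JSJ decompositions are preserved by isomorphisms up to conjugation, $\phi$ induces bijections $C_x \leftrightarrow C_{x'}$ and $S_y \leftrightarrow S_{y'}$ together with a matching of boundary components; in particular $\Lambda \cong \Lambda'$ as bipartite graphs, and $\phi$ respects this combinatorial structure.

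Second, for each matched pair $(S_y, S_{y'})$ the isomorphism $\phi$ restricts (after suitable conjugation) to an isomorphism $\phi_y : \pi_1(S_y) \to \pi_1(S_{y'})$ that carries peripheral subgroups to peripheral subgroups as prescribed by Step~1. For compact surfaces with nonempty boundary and negative Euler characteristic, any $\pi_1$-isomorphism preserving the peripheral structure is induced by a homeomorphism (a classical rigidity statement, provable by cutting into pairs of pants and matching them combinatorially, or equivalently via Nielsen realisation for surfaces with boundary). This yields homeomorphisms $f_y : S_y \to S_{y'}$ realising $\phi_y$. For each matched pair $(C_x, C_{x'})$ we may pick an arbitrary orientation-preserving homeomorphism $f_x$ realising the induced isomorphism on $\pi_1$.

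The main obstacle is compatibility along the edge spaces $B_e$: each circle $C_x$ is identified with a boundary component of each of the at least three surfaces $S_y$ adjacent to $x$, and the homeomorphisms $f_y$ produced independently above need not agree with $f_x$ under these identifications. Since each $f_y$ is only determined up to isotopy, I would resolve this by modifying $f_y$ on a collar neighborhood of each boundary component so that its restriction to that boundary equals the prescribed parametrisation coming from $f_x$. This is possible because any two orientation-preserving self-homeomorphisms of $S^1$ are isotopic, and the mapping class group of a surface with boundary acts independently on parametrisations of each boundary circle (Dehn twists near the boundary can be absorbed without changing the map's behaviour away from the collar). After these adjustments, the collection of $f_x$ and $f_y$ agree on every gluing locus and assemble into a continuous bijection $f : \cX \to \cX'$, which is a homeomorphism because it is so piecewise on a finite decomposition into compact subsets meeting along closed sets; it induces $\phi$ on $\pi_1$ by construction. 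The subtlest point is ensuring that the peripheral-preserving hypothesis in Step~2 is simultaneously sufficient for surface-level rigidity and genuinely preserved by $\phi$; this is precisely where one relies on the canonicity of the JSJ decomposition established in Step~1.
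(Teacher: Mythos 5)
First, note that the paper does not prove this statement at all: it is quoted verbatim from Lafont (his topological rigidity theorem for surface amalgams, alias simple thick $2$-dimensional hyperbolic P-manifolds), and Lafont's own argument runs through quasi-isometry and boundary rigidity (an isomorphism induces a homeomorphism of Gromov boundaries, the local cut-point structure of the boundary detects the branching circles and the surface pieces, and the homeomorphism is then assembled piecewise), rather than through the combinatorial JSJ-plus-surface-realization route you propose. Your Steps 1 and 2 are reasonable in outline: the graph-of-groups structure is Bowditch's JSJ (this is Remark~\ref{rem:jsj}, citing Malone), so $\phi$ does induce a matching of circles, surfaces, and boundary components; and a peripheral-structure-preserving isomorphism between fundamental groups of compact surfaces with boundary is indeed induced by a homeomorphism (Zieschang-type realization).

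The genuine gap is in your gluing step, precisely at the sentence claiming that Dehn twists near the boundary ``can be absorbed without changing the map's behaviour away from the collar.'' Each $f_y$ realizes $\phi|_{\pi_1(S_y)}$ only up to conjugation by some element $g_y$, and after you isotope the $f_y$ in collars to force their boundary restrictions to agree with $f_x$, the glued map $f$ induces an automorphism that agrees with $\phi$ on each vertex group only up to these vertex-dependent conjugations. The discrepancy between $f_*$ and $\phi$ is then a product of twist automorphisms along the branching circles: a homeomorphism of $\cX'$ supported in a collar of a branching circle inside one page acts trivially on the $\pi_1$ of that page, but it acts on $\pi_1(\cX')$ by an automorphism that is in general not inner (exactly as Dehn twists along JSJ tori of graph manifolds give non-inner automorphisms), because loops crossing from another page through the circle pick up powers of the core curve. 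So your construction, as written, proves only that $\cX$ and $\cX'$ are homeomorphic; it does not show that the homeomorphism induces the given $\phi$, which is what the theorem (and its use in this paper, where $f$ must induce a prescribed isomorphism of finite-index subgroups) requires. To close the gap you would have to choose the boundary parametrizations and the amounts of twisting in the collars according to the edge-compatibility elements of $\phi$ in its graph-of-groups description (Bass--Serre theory), or else show that the residual discrepancy is always a composition of twists that are themselves induced by homeomorphisms of $\cX'$ and correct $f$ by composing with them; neither argument appears in the proposal, and it is the heart of the matter.
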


\subsection{Coverings of surfaces}\label{sec:coverings}

We now recall some results on coverings of surfaces.

We write $S_{g,b}$ for the connected, oriented surface of genus $g$ with $b$ boundary components.  This surface has Euler characteristic $\chi(S_{g,b}) = 2 - 2g - b$.  The next lemma allows us to obtain positive genus covers of any $S_{g,b}$ with negative Euler characteristic.

\begin{lemma}\label{lem:PosGenus}  Suppose $\chi(S_{g,b}) < 0$.  Then $S_{g,b}$ has a connected $3$-fold covering $S_{g',b}$, where $g' = 3g + b - 2$ and so $g' > 0$.
\end{lemma}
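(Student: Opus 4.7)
The plan is to realize the desired $3$-fold cover as the one associated to a homomorphism $\phi : \pi_1(S_{g,b}) \to G$, where $G$ is a finite group acting transitively on a $3$-element set and $\phi$ sends every boundary loop of $S_{g,b}$ to an element that acts as a $3$-cycle. For such a cover, each boundary circle has exactly one preimage component (triply covering it), so the cover is a connected orientable surface with precisely $b$ boundary components, i.e.\ $S_{g',b}$ for some $g' \ge 0$. Multiplicativity of Euler characteristic then forces $2-2g'-b = 3(2-2g-b)$, giving $g' = 3g+b-2$; since $\chi(S_{g,b})<0$ is equivalent to $2g+b \ge 3$, one immediately gets $g' \ge g+1 \ge 1$, so $g'>0$.

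Using the standard presentation
\[
\pi_1(S_{g,b}) = \la a_1, b_1, \dots, a_g, b_g, c_1, \dots, c_b \mid c_1 \cdots c_b\, [a_1,b_1] \cdots [a_g,b_g] = 1 \ra,
\]
with $c_1,\dots,c_b$ the boundary loops, I would split into two cases. When $b \ge 2$, the plan is to build a surjection $\phi : \pi_1(S_{g,b}) \to \Z/3$ sending each $a_i, b_i$ to $0$ and each $c_j$ to a nonzero element of $\Z/3$. The only constraint is the abelianized relation $\sum_j \phi(c_j) \equiv 0 \pmod 3$, and this is met by sending $k$ of the $c_j$ to $1$ and $b-k$ to $2$ for any $k \in \{0,\dots,b\}$ with $k \equiv 2b \pmod 3$; such a $k$ clearly exists for $b \ge 2$. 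The resulting $\Z/3$-cover is connected and has the desired boundary behavior.

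The remaining case is $b=1$, in which $\chi<0$ forces $g \ge 1$. Here any abelian quotient sends the sole boundary loop $c_1 = \prod_i [a_i,b_i]$ to $0$, so is useless. Instead I would take $G = S_3$ and define $\phi$ by $a_1 \mapsto (1\,2)$, $b_1 \mapsto (1\,3)$, with all remaining generators sent to the identity. Then $\phi(c_1) = [(1\,2),(1\,3)] = (1\,2\,3)$ is a $3$-cycle, and since the image contains both $(1\,2)$ and $(1\,3)$ it equals $S_3$, so the cover associated to the stabilizer of $1 \in \{1,2,3\}$ is the desired connected $3$-fold cover.

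The only real subtlety I anticipate is this $b=1$ case, where one is forced to leave the abelian world because no homomorphism to an abelian group can send a commutator to a nontrivial element, let alone to a $3$-cycle. Beyond that, the proof reduces to the routine combinatorial check of the modular condition on the $\phi(c_j)$ for $b \ge 2$ and a one-line Euler-characteristic computation at the end.
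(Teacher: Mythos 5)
Your proof is correct, but it takes a genuinely different route from the paper. The paper's proof simply verifies $\chi(S_{g',b}) = 3\chi(S_{g,b})$ and invokes Proposition 5.2 of Edmonds--Kulkarni--Stong (a realization theorem for covers with prescribed boundary data), and records an alternative explicit construction suggested by the referee: an order-$3$ rotation of a hexagonal torus gives a degree $3$ cover $S_{1,3} \to S_{0,3}$, and a pants decomposition propagates this to general $S_{g,b}$. You instead build the cover by hand from a permutation representation: for $b \ge 2$ a surjection $\pi_1(S_{g,b}) \to \Z/3$ killing the $a_i, b_i$ and sending each $c_j$ to a nonzero class (your congruence $k \equiv 2b \pmod 3$ is solvable in $\{0,\dots,b\}$ since $b\ge 2$), and for $b=1$ (where $\chi<0$ forces $g\ge 1$ and abelian targets are useless because $c_1$ is a product of commutators) a map to $S_3$ with $a_1 \mapsto (1\,2)$, $b_1\mapsto (1\,3)$, so $\phi(c_1)$ is a $3$-cycle. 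In both cases the $3$-cycle boundary monodromy forces a single degree-$3$ preimage component over each boundary circle and transitivity (hence connectedness), and the Euler characteristic computation pins down $g' = 3g+b-2 > 0$ exactly as in the paper. What your argument buys is self-containedness: it uses only the covering space/subgroup correspondence and in effect proves the special case of the Edmonds--Kulkarni--Stong/Neumann realization result that is needed, at the cost of a short case division; the paper's approaches are shorter but lean on an external citation or on a geometric construction plus pants decompositions.
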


\begin{proof}  By Proposition 5.2 of Edmonds, Kulkarni, and Stong~\cite{edmonds-kulkarni-stong}, it is enough to check that $\chi(S_{g',b}) = 3\chi(S_{g,b})$.  This is an easy calculation.
Alternatively, as the referee suggested to us, one may explicitly construct a degree 3 covering map from $S_{1,3}$ to $S_{0,3}$, and then use a pants decomposition to deduce the result for other surfaces.  To construct such a map, note that a regular hexagon $H$ with opposite sides attached forms a torus, and the vertices of $H$ project to two points on the torus.  
Obtain $S_{1,3}$ by removing open balls around these two points as well as around the center of $H$.  Then the order~3 rotation of~$H$ induces an isometry of $S_{1,3}$ and the corresponding quotient is $S_{0,3}$. 
\end{proof}
We have the following easy corollary.
\begin{cor}\label{cor:PosGenus} 
If $\cX=\cX(\Lambda)$ is a surface amalgam, then $\cX$ has a degree 3 cover $\cX'$ which is a surface amalgam whose underlying graph is also $\Lambda$, so that each surface in $\cX'$ has positive genus.  \hfill{\qed}
\end{cor}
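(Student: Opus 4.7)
The plan is to construct $\cX'$ piece-by-piece by replacing each vertex space of $\cX$ with a specific connected $3$-fold cover and then gluing the pieces via the recipe of Definition~\ref{def:surface-amalgam}. For each circle vertex $C_x$, take the connected $3$-fold cyclic cover $\widetilde{C}_x \to C_x$ given by $z \mapsto z^3$. For each surface vertex $S_y = S_{g,b}$, apply Lemma~\ref{lem:PosGenus} to obtain a connected $3$-fold cover $\widetilde{S}_y = S_{g', b}$ with positive genus and the same number $b$ of boundary components as $S_y$. Since the total numbers of boundary circles upstairs and downstairs both equal $b$ and the cover has degree $3$, a direct degree count forces each of the $b$ boundaries of $S_y$ to have a unique preimage in $\widetilde{S}_y$ wrapping three times around it.

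For each edge $e$ of $\Lambda$, let $\widetilde{B}_e$ denote the unique boundary of $\widetilde{S}_{t(e)}$ covering the boundary of $S_{t(e)}$ which $B_e$ is identified with in $\cX$. Glue $\widetilde{B}_e$ to $\widetilde{C}_{i(e)}$ via the essentially unique isomorphism between connected $3$-fold cyclic covers of the circle $B_e$, and to $\widetilde{S}_{t(e)}$ via the corresponding boundary identification. The resulting space $\cX'$ carries a natural map to $\cX$ which is of degree $3$ on every vertex and edge space, and hence is a $3$-fold covering. Connectedness of $\cX'$ follows from the connectedness of $\Lambda$ together with that of each $\widetilde{C}_x$ and $\widetilde{S}_y$. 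Since each vertex and each edge of $\Lambda$ has a unique preimage in the underlying graph of $\cX'$, that underlying graph equals $\Lambda$ itself, and each surface in $\cX'$ has positive genus by Lemma~\ref{lem:PosGenus}.

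Most of the work is already contained in Lemma~\ref{lem:PosGenus}, so I do not expect a serious obstacle; the main point requiring care is verifying that the covering of each surface provided by that lemma really does restrict to a degree-$3$ wrap on each boundary component, as follows from the forced degree count above. Given this, the gluings along edge spaces to the cyclic covers $\widetilde{C}_x$ are well-defined, and it is then routine to check that the conditions of Definition~\ref{def:surface-amalgam} are inherited by $\cX'$, so that $\cX'$ is a surface amalgam with underlying graph $\Lambda$, each of whose surfaces has positive genus.
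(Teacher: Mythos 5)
Your proof is correct and is essentially the argument the paper has in mind: the corollary is deduced directly from Lemma~\ref{lem:PosGenus} by replacing each surface $S_{g,b}$ with its connected $3$-fold cover $S_{g',b}$, observing (as you do, via the degree count on the $b$ boundary components) that each boundary circle is covered by a single circle of degree $3$, and gluing these together with the $3$-fold cyclic covers of the branching circles according to the same graph $\Lambda$. The paper leaves this as an easy consequence, and your verification of the boundary behavior and of the conditions in Definition~\ref{def:surface-amalgam} fills in exactly the routine details intended.
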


We will make repeated use of the following lemma concerning coverings of positive genus surfaces with boundary, from Neumann~\cite{neumann}.  As discussed in~\cite{neumann}, the result appears to be well-known.

\begin{lemma}\cite[Lemma 3.2]{neumann}\label{lem:neumann}  Let $S = S_{g,b}$ where $g > 0$ and $b > 0$.  Let $D$ be a positive integer.  Suppose that for each boundary component of $S$, a collection of degrees summing to $D$ is specified.  Then $S$ has a connected $D$-fold covering $S'$ with $b' \geq b$ boundary components and these specified degrees on the collection of boundary components of $S'$ lying over each boundary component of $S$ if and only if $b'$ has the same parity as $D \cdot\chi(S)$.
\end{lemma}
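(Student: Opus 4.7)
The plan is to prove the two directions separately. For necessity, multiplicativity of the Euler characteristic under covers gives $\chi(S') = D\chi(S)$; combined with $\chi(S') = 2 - 2g' - b'$, this forces $b' \equiv D\chi(S) \pmod{2}$. The inequality $b' \geq b$ is immediate, since each boundary circle of $S$ has non-empty preimage in $S'$ and distinct boundary components have disjoint preimages.

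For sufficiency, I would invoke the standard correspondence between isomorphism classes of connected $D$-fold covers of $S$ and conjugacy classes of transitive homomorphisms $\rho : \pi_1(S) \to S_D$, where $S_D$ denotes the symmetric group on $\{1, \ldots, D\}$. Under this correspondence, if $c_i$ is a based loop around the $i$-th boundary component of $S$, then the boundary components of $S'$ lying above it are in bijection with the cycles of $\rho(c_i)$, with covering degree equal to cycle length. Since $S$ has boundary, $\pi_1(S)$ is free of rank $2g+b-1$, and after a suitable choice of generators $a_1, b_1, \ldots, a_g, b_g, c_1, \ldots, c_b$, the boundary loops satisfy the single relation
\[
\gamma_1 \gamma_2 \cdots \gamma_b = [\alpha_1, \beta_1] \cdots [\alpha_g, \beta_g],
\]
where $\gamma_i = \rho(c_i)$, $\alpha_j = \rho(a_j)$, and $\beta_j = \rho(b_j)$.

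The construction proceeds as follows. First, choose each $\gamma_i \in S_D$ to be any permutation with the prescribed cycle type; it has $k_i$ cycles summing to $D$, hence sign $(-1)^{D-k_i}$, so $\gamma_1 \cdots \gamma_b$ has sign $(-1)^{bD-b'}$. Since $D\chi(S) = D(2-2g-b) \equiv Db \pmod{2}$, the parity hypothesis is precisely the condition that this product lies in the alternating group $A_D$. Now I would invoke the classical fact that, for any $g \geq 1$, every element of $A_D$ is a product of $g$ commutators in $S_D$: for $D \geq 5$ this follows from Ore's theorem that every element of $A_D$ is a single commutator, and the cases $D \leq 4$ can be handled by direct inspection. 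This produces $\alpha_j, \beta_j$ and hence a homomorphism $\rho$ with the prescribed cycle types on the $\gamma_i$.

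The main obstacle is transitivity, required for $S'$ to be connected, and this is where $g \geq 1$ is essential. If the image of $\rho$ has orbits $O_1, \ldots, O_m$ on $\{1, \ldots, D\}$ with $m > 1$, one modifies $\alpha_1$ by composing with a permutation $\tau$ that cycles chosen representatives of the $O_j$, then adjusts $\beta_1$ (and, if necessary, another commutator pair) so that the commutator product relation is preserved without disturbing the cycle types of the $\gamma_i$. Topologically this corresponds to re-gluing a possibly disconnected cover along a non-separating simple closed curve of $S$, available precisely because $g \geq 1$. Iterating merges all orbits into one, yielding the required connected $D$-fold cover, whose genus $g' = 1 + D(g-1) + (Db - b')/2$ is automatically non-negative since $g \geq 1$ and $b' \leq bD$.
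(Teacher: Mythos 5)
The paper does not actually prove this lemma---it is quoted from \cite{neumann}, where it is described as well known---so your argument has to stand on its own. Its skeleton is the standard one, and most of it is fine: necessity via $\chi(S')=D\chi(S)$, the dictionary between connected degree-$D$ covers and conjugacy classes of transitive homomorphisms $\rho\colon\pi_1(S)\to S_D$ with boundary circles corresponding to cycles of the $\rho(c_i)$, the sign computation showing the parity hypothesis is exactly the condition $\sigma:=\gamma_1\cdots\gamma_b\in A_D$, and the fact that every element of $A_D$ is a product of $g\ge 1$ commutators.

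The genuine gap is the connectivity step, which is the heart of the sufficiency direction. Once the $\gamma_i$ are fixed, you must produce $\alpha_j,\beta_j$ with $\prod_j[\alpha_j,\beta_j]$ equal to $\sigma$ \emph{exactly} (any deviation changes the cycle type of one of the boundary permutations), and with $\langle \gamma_i,\alpha_j,\beta_j\rangle$ transitive. Your repair---replace $\alpha_1$ by $\tau\alpha_1$ for an orbit-joining cycle $\tau$, then ``adjust $\beta_1$''---is asserted, not proved, and as described it need not be possible: when $g=1$ it amounts to demanding that $\sigma$ be a commutator with prescribed first entry $\mu=\tau\alpha_1$, i.e.\ that $\mu^{-1}\sigma$ be conjugate to $\mu^{-1}$, a nontrivial condition you never verify; and the topological version (cross-regluing two components along lifts of a non-separating curve) requires the two lifts to cover that curve with the same degree, which also need not hold. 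The clean fix is to strengthen the algebraic input rather than to do surgery afterwards: by a classical result, every element of $A_D$ is a product of two $D$-cycles (for $D\le 4$ one checks directly), so $\sigma=c_1\,\beta c_1^{-1}\beta^{-1}=[c_1,\beta]$ with $c_1$ a $D$-cycle; taking $\alpha_1=c_1$, $\beta_1=\beta$ and the remaining pairs trivial makes the image transitive automatically, and the rest of your argument (including the genus count) then goes through unchanged.
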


\subsection{Euler characteristic for orbicomplexes}\label{sec:euler}

We now recall the definition of orbicomplex Euler characteristic, in the special case that we will need.   

All of the orbicomplexes that we construct will be $2$-dimensional and have (possibly disconnected) underlying spaces obtained by gluing together some collection of right-angled hyperbolic polygons.  When the underlying space is a single polygon, the orbifold will be a reflection polygon.  We will then identify certain of these reflection polygons along non-reflection edges to obtain other orbicomplexes.  The local groups are as follows.  All edge groups will be either trivial or $C_2$, corresponding to non-reflection and reflection edges, respectively.  All vertex groups will be the direct product of the adjacent edge groups, and will be either $C_2$ or $C_2 \times C_2$.   

If $\cO$ is such an orbicomplex, write $F$ for the number of faces (i.e. polygons) in $\cO$, $E_1$ (respectively,~$E_2$) for the number of edges in $\cO$ with trivial (respectively, $C_2$) local groups, and $V_2$ (respectively, $V_4$) for the number of vertices in $\cO$ with $C_2$ (respectively, $C_2 \times C_2$) local groups.  Then
\[
\chi(\cO) = F - \left( E_1 + \frac{E_2}{2} \right) + \left( \frac{V_2}{2} + \frac{V_4}{4}\right).
\]

The fundamental groups of the connected orbicomplexes that we construct will be right-angled Coxeter groups with triangle-free defining graphs, such that if $W_\G = \pi_1(\cO)$ then the Euler characteristic of~$W_\G$ from Definition~\ref{def:euler} is equal to $\chi(\cO)$.

\section{Orbicomplex construction}\label{sec:orbicomplex}

From now on, $\G$ is a finite, simplicial, $3$-convex graph satisfying Assumptions~\ref{assumptions} and~\ref{K4assumption}.   In this section we construct a piecewise-hyperbolic orbicomplex $\cO_\G$ with fundamental group $W_\G$, such that covers of $\cO_\G$ corresponding to torsion-free, finite-index subgroups of $W_\G$ are surface amalgams.  The bottom right of Figure \ref{GenTheta} gives an example  of the orbicomplex $\cO_\G$ when $\Gamma$ is a generalized $\Theta$-graph, and Figure \ref{CycleGenTheta} contains an example of $\cO_\G$ when $\Gamma$ is a cycle of generalized $\Theta$-graphs.  We begin by constructing hyperbolic orbifolds in Sections~\ref{sec:Pbeta} and~\ref{sec:A} which have fundamental groups the stabilizers of Type 2 vertices in the JSJ decomposition of $W_\G$ (see Corollary~\ref{cor:jsj}).  The underlying spaces of these orbifolds are right-angled hyperbolic polygons.  We then in Section~\ref{sec:OGamma} glue these orbifolds together along their non-reflection edges to obtain $\cO_\G$.  Some features of the orbifolds constructed in this section are summarized in the first two columns of Table~\ref{table:summary} at the end of Section~\ref{sec:tfcover}.  

\subsection{Branch orbifolds}\label{sec:Pbeta}

For each branch $\beta$ in $\G$, we construct an orbifold $\cP_\beta$ with fundamental group the special subgroup $W_\beta$.  We call $\cP_\beta$ a \emph{branch orbifold}.  We will also assign types to some of the edges and vertices of $\cP_\beta$, which will later be used to glue $\cP_\beta$ to other orbifolds.

Let $\beta$ be a branch with $n = n_\beta$ vertices (including its endpoints).  Since $\G$ is $3$-convex, we have $n \geq 4$.  Let $P = P_\beta$ be a right-angled hyperbolic $p$-gon where $p = n + 1 \geq 5$.  We construct $P$ to have one edge of length $1$.  

Now we construct the orbifold $\cP_\beta$ over $P$.  The distinguished edge of $P$ with length 1 is a non-reflection edge of $\cP_\beta$.  The other $ n = p-1$ edges of $P$ are reflection edges of $\cP_\beta$, with local groups $\langle b_1 \rangle, \dots, \langle b_n \rangle$ in that order, where $b_1, \dots,b_n$ are the vertices in $\beta$ going in order along the branch.  For the vertex groups of $\cP_\beta$, the endpoints of the unique non-reflection edge of $\cP_\beta$ have groups $\langle b_1 \rangle$ and $\langle b_{n}\rangle$, so that for $i = 1$ and $i = n$ the vertex group $\langle b_i \rangle$ is adjacent to the edge group $\langle b_i \rangle$.  The other $n - 1 = p-2$ vertex groups of $\cP_\beta$ are $\langle b_i, b_{i+1} \rangle \cong C_2 \times C_2$ for $1 \leq i < n$, with $\langle b_i, b_{i+1} \rangle$ the local group at the vertex of $\cP_\beta$ whose adjacent edges have local groups $\langle b_i \rangle$ and $\langle b_{i+1} \rangle$.  

An easy calculation shows that the Euler characteristic of $\cP_\beta$ is $\chi(\cP_\beta) = \frac{3-n}{4} = \frac{4-p}{4} = \chi(W_\beta)$.  In fact, we have: 

\begin{lemma}\label{lem:fundgpbeta}  The fundamental group of $\cP_\beta$ is $W_\beta$.
\end{lemma}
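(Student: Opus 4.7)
The plan is to extract $\pi_1(\cP_\beta)$ directly from the Bridson--Haefliger presentation theorem for fundamental groups of $2$-dimensional complexes of groups~\cite{bridson-haefliger}. Two features of $\cP_\beta$ make that presentation especially easy to read off: the underlying polygon $P$ is a disk and hence simply connected as a CW complex, and the complex-of-groups structure is simple (no nontrivial twisting elements), because all local groups are finite reflection groups acting isometrically on $P$.

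First I would choose a spanning tree $T$ of the boundary 1-skeleton of $P$ to consist of all $n$ reflection edges $e_1,\dots,e_n$, so that the unique non-tree edge is the non-reflection edge $e_0$, to which we assign an edge symbol $s$. The presentation theorem then produces three families of relations: the vertex group relations, edge identification relations, and one face relation coming from the single 2-cell. The vertex group relations yield $b_i^2 = 1$ for all $i$ and $[b_i, b_{i+1}] = 1$ for $1 \le i < n$. The edge identification along each tree edge $e_i$ simply identifies the two copies of $b_i$ appearing in adjacent vertex groups, while the identification along $e_0$ is vacuous because its edge group is trivial. Finally, reading the face relation around the boundary path $e_1 e_2 \cdots e_n \bar{e}_0$ gives $s = 1$, since each tree edge contributes the trivial edge symbol and the face group is trivial.

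After these simplifications, the presentation collapses to
\[
\langle b_1, \dots, b_n \mid b_i^2 = 1 \text{ for all } i,\ [b_i, b_{i+1}] = 1 \text{ for } 1 \le i < n \rangle,
\]
which is precisely the right-angled Coxeter presentation of $W_\beta$, since consecutive vertices of the path graph $\beta$ are adjacent in $\G$ while non-consecutive ones are not. I do not anticipate a substantive obstacle: the argument reduces to bookkeeping once the correct presentation theorem is invoked. The only care needed is to confirm that the trivial local group on the non-reflection edge introduces no extra relations, and that the single face relation exactly eliminates the non-tree edge symbol $s$; both points follow directly from the explicit form of the Bridson--Haefliger presentation for a simply connected 2-complex with a simple complex-of-groups structure.
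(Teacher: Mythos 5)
Your proposal is correct and follows essentially the same route as the paper: both compute $\pi_1(\cP_\beta)$ from the Bridson--Haefliger presentation of the simple polygon of groups over the simply connected polygon $P$, where the trivial face group and trivial non-reflection edge group force the group to be generated by the reflection edge groups $\langle b_i\rangle$ subject only to the relations inside the vertex groups, giving exactly the Coxeter presentation of $W_\beta$. Your spanning-tree/stable-letter bookkeeping (killing the symbol $s$ via the $2$-cell) is just a more explicit rendering of the paper's direct-limit formulation, not a different argument.
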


\begin{proof}  We regard $\cP_\beta$ as a simple polygon of groups over the underlying polygon $P$, with trivial face group, trivial group on the non-reflection edge, and the other edge and vertex groups as described above (see~\cite[Example 12.17(6)]{bridson-haefliger} for the general definition of a simple polygon of groups).  Notice that each vertex group is generated by its adjacent edge groups.  Since the face group is trivial, it follows that the fundamental group of $\cP_\beta$ is generated by its edge groups, subject to the relations imposed within its vertex groups (compare~\cite[Definition 12.12]{bridson-haefliger}).  Hence by construction, $\pi_1(\cP_\beta)$ is generated by the vertices of the branch~$\beta$, which are $b_1,\dots,b_n$ going in order along the branch, with relations $b_i^2 = 1$ for $1 \leq i \leq n$, and $[b_i,b_{i+1}] = 1$ for $1 \leq i < n$.  That is, $\pi_1(\cP_\beta)$ is the special subgroup $W_\beta$ generated by the vertex set of $\beta$.

Alternatively, as suggested to us by the referee, consider the hyperbolic orbifold whose underlying space is $S_{0,1}$, the sphere with one boundary component, with $n - 1 = p - 2 \geq 3$ cone points of order 2.  Its fundamental group is the free product of $n-1$ copies of $C_2$.  Then $\cP_\beta$ is obtained by quotienting this orbifold by a reflection in a properly embedded segment containing all the cone points.  An easy computation shows that the orbifold fundamental group of $\cP_\beta$ is $W_\beta$, as desired.
\end{proof}

We assign the non-reflection edge of $\cP_\beta$ to have type $\{b_1, b_{n}\}$.  Note that since $b_1$ and $b_n$ are the end vertices of a branch in $\G$, the pair $\{b_1, b_n\}$ is an essential cut pair in $\G$.  For $i = 1$ and $i = n$, we assign type $\{b_i\}$ to the vertex of $\cP_\beta$ which has group $\la b_i \ra$.

\subsection{Essential vertex orbifolds and non-branch orbifolds}\label{sec:A}

Now let $A$ be a subset of vertices of $\G$ so that $W_A = \langle A \rangle$ is the stabilizer of a Type 2 vertex in the JSJ decomposition, and let $L$ be the number of essential vertices of $A$.  Recall from Lemma~\ref{lem:L2} that $L \geq 2$.  In this section, we assume that $L \geq 3$ and construct two hyperbolic orbifolds, $\cQ_A$ and $\cA$.  

By Lemma~\ref{lem:L2}, since $L \geq 3$ the set $A$ is not equal to the set of vertices of a branch of $\Gamma$.  However the set $A$ may still contain the vertex sets of branches.  The orbifold $\cA$ will be constructed in two stages: we first construct the orbifold $\cQ_A$ over a $2L$-gon, and then obtain $\cA$ by gluing on the branch orbifold $\cP_\beta$ for each branch $\beta$ whose vertex set is contained in $A$.  The fundamental group of $\cQ_A$ will be the special subgroup generated by the essential vertices in $A$, and $\cA$ will have fundamental group $W_A$.  
We refer to $\cQ_A$ as an \emph{essential vertex orbifold} and to $\cA$ as a \emph{non-branch orbifold}.

We now construct $\cQ_A$.  Let $Q = Q_A$ be a right-angled hyperbolic $2L$-gon ($L \geq 3$).  Using the following lemma, we can specify that alternate edges of $Q$ have length $1$.  

\begin{lemma}
Given $L\ge 3$, there exists a right-angled hyperbolic $2L$-gon in which alternate edges have length 
$1$.   
\end{lemma}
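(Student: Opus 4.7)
The strategy is to construct the desired $2L$-gon by assembling $2L$ isometric copies of a suitably chosen Lambert quadrilateral---a hyperbolic quadrilateral with three right angles and one acute angle---around a common vertex.

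First I would fix $\phi := \pi/L$, which is an acute angle because $L \geq 3$. Let $Q$ be a Lambert quadrilateral in the hyperbolic plane with acute angle $\phi$ at a vertex $A$ and right angles at the three other vertices $B, C, D$ (in cyclic order). Recall that Lambert quadrilaterals with a prescribed acute angle form a one-parameter family up to isometry, parameterized by the length of any single side. Using the standard hyperbolic trigonometric identities relating the sides of such a quadrilateral, I would verify that the length of the side $BC$, one of the two sides of $Q$ not adjacent to $A$, can be made to equal any prescribed positive real number, in particular $1/2$. I expect this existence claim to be the main technical step, though it reduces to a routine continuity argument (applying the intermediate value theorem to the appropriate hyperbolic identity) rather than any serious difficulty.

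Given such a quadrilateral $Q$ with $|BC| = 1/2$, I would then construct the $2L$-gon as follows. Place $2L$ isometric copies of $Q$ in the hyperbolic plane, all sharing the common vertex $A$ and arranged cyclically, so that each adjacent pair are mirror images across their shared edge (this alternates the orientation of $Q$). Since $2L \cdot \phi = 2\pi$, the copies tile a neighborhood of $A$ without gap or overlap, so the outer boundary of the union is a hyperbolic polygon. Each vertex labeled $B$ or $D$ is shared between two adjacent copies, whose combined interior angles at this shared point sum to $\pi/2 + \pi/2 = \pi$; consequently, such shared points are interior to boundary edges and are not vertices of the outer polygon. The outer polygon therefore has precisely $2L$ vertices---one from each copy's vertex $C$---all with interior angle $\pi/2$. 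Its $2L$ edges alternate in length between $2|BC| = 1$ (from merged pairs of $BC$-sides across shared $B$-vertices) and $2|CD|$ (from merged pairs of $CD$-sides across shared $D$-vertices). This yields the required right-angled hyperbolic $2L$-gon with alternate edges of length $1$.
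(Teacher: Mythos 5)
Your construction is correct, but it is a genuinely different argument from the one in the paper. The paper quotes the existence of a right-angled hyperbolic hexagon with any three prescribed alternate side lengths (Proposition B.4.13 of Benedetti--Petronio) and then inducts on $L$, gluing a hexagon with alternate sides of length $1$ onto a $2L$-gon with alternate sides of length $1$ along a length-$1$ edge to produce the $2(L+1)$-gon. You instead build the polygon in one step by arranging $2L$ mirror-image copies of a Lambert quadrilateral with acute angle $\pi/L$ around the acute vertex; the key identity $\cos\phi=\sinh|BC|\,\sinh|CD|$ (the quadrilateral analogue of the right-angled pentagon relation) indeed lets you prescribe $|BC|=1/2$ for any $\phi\in(0,\pi/2)$, so the step you flag as the main technical point goes through, and the angle count $2L\cdot(\pi/L)=2\pi$ together with the angles $\pi/2+\pi/2=\pi$ at the shared $B$- and $D$-vertices gives a right-angled $2L$-gon whose edges alternate between length $1$ and $2|CD|$. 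Your approach buys a more explicit and more symmetric polygon (invariant under a dihedral group of order $2L$, with the second family of alternate edges all equal), at the cost of the Lambert-quadrilateral trigonometry; the paper's route is shorter because it outsources the analytic input to the cited hexagon existence result. One small point to tighten: the claim that the $2L$ copies have disjoint interiors (so that the union really is an embedded polygon) should be justified, e.g.\ by noting that a Lambert quadrilateral is convex, hence each copy lies in its angular sector of width $\pi/L$ at the common vertex, and these sectors are pairwise disjoint since the angles sum to exactly $2\pi$.
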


\begin{proof}  Given any three positive numbers, there exists a right-angled hyperbolic hexagon with alternate edges having lengths equal to these three numbers (cf.~Proposition B.4.13 of~\cite{benedetti-petronio}.)  The result then follows by induction on $L$, since if a right-angled hyperbolic hexagon with alternate edges of length 1 is glued along an edge of length 1 to a right-angled hyperbolic $2L$-gon with alternate edges of length 1, the result is a right-angled hyperbolic $2(L+1)$-gon with alternate edges of length 1.  
\end{proof}

The essential vertex orbifold $\cQ_A$ is constructed over $Q$.  The alternate edges of $Q$ of length~$1$ will be non-reflection edges of $\cQ_A$.  The remaining edges of $\cQ_A$ will be reflection edges, as follows.  By Lemma~\ref{lem:k23}(1), the set $A$ has a well-defined cyclic ordering $a_1,\dots,a_n$.  Let $a_{i_1}, \dots, a_{i_L}$ be the essential vertices of $A$ in this induced cyclic order.  The reflection edges of $\cQ_A$ will have groups $\langle a_{i_j} \rangle  \cong C_2$ for $1 \leq j \leq L$, going in order around $Q$.  Now each vertex of $\cQ_A$ is adjacent to one non-reflection edge and one reflection edge with group $\la a_{i_j} \ra$, and this vertex will also have group~$\la a_{i_j} \ra$.  

An easy calculation shows that $\chi(\cQ_A) = \frac{2-L}{2}$.  This is equal to Euler characteristic of the special subgroup generated by the $L$ essential vertices in $A$, since as $\G$ is $3$-convex, there are no edges between any two essential  vertices.

We next assign types to certain edges and all vertices of $\cQ_A$.  The non-reflection edges going around $\cQ_A$ are  assigned type $\{a_{i_j},a_{i_{j+1}}\}$ (where $j \in \Z/L\Z$), and we assign type $\{a_{i_j}\}$ to the vertices of $\cQ_A$ with group $\la a_{i_j} \ra$, so that the endpoints of the non-reflection edge of $\cQ_A$ with type $\{a_{i_j}, a_{i_{j+1}}\}$ have types $\{a_{i_j}\}$ and $\{a_{i_{j+1}}\}$.  Notice that each pair $\{a_{i_j},a_{i_{j+1}}\}$ is an essential cut pair, by Lemma~\ref{lem:k23}(2)(a).

We now construct the non-branch orbifold $\cA$.  If $A$ consists entirely of essential vertices, then we put $\cA = \cQ_A$.  Otherwise, by Lemma~\ref{lem:k23}(2)(c), there is at least one pair $\{a_{i_j},a_{i_{j+1}}\}$ of essential vertices in $A$ so that  $\G \setminus \{a_{i_j},a_{i_{j+1}}\}$ has $k_j = 2$ components, and for all such~$j$, there is a unique branch $\beta_j$ of $\G$ between $a_{i_j}$ and $a_{i_{j+1}}$.  Denote by $P_j$ the right-angled polygon underlying the branch orbifold $\cP_{\beta_j}$ constructed in Section~\ref{sec:Pbeta} above.  Recall that all non-reflection edges in $\cQ_A$ have length~1, and that the unique non-reflection edge in $\cP_{\beta_j}$ has length~1 as well.  Also, the non-reflection edge of $\cP_{\beta_j}$ has type $\{a_{i_j},a_{i_{j+1}}\}$.  

To obtain $\cA$, for each $j$ such that $A$ contains the vertex set of $\beta_j$, we glue the non-reflection edge of $\cQ_A$ of type $\{a_{i_j},a_{i_{j+1}}\}$ to the unique non-reflection edge of $\cP_{\beta_j}$, so that the types of the end-vertices match up.  In both $\cQ_A$ and $\cP_{\beta_j}$, the end-vertices of the edge which has just been glued have groups $\langle a_{i_j} \rangle$ and $\langle a_{i_{j+1}}\rangle$.  Also, the edge groups adjacent to the vertex group $\langle a_{i_j} \rangle$ (respectively, $\langle a_{i_{j+1}}\rangle$) are both $\langle a_{i_j} \rangle$ (respectively, $\langle a_{i_{j+1}}\rangle$).  So we may erase all of the non-reflection edges along which we just glued branch orbifolds to $\cQ_A$, and combine the edges of $\cQ_A$ and $\cP_{\beta_j}$ with group $\langle a_{i_j} \rangle$ (respectively, $\langle a_{i_{j+1}} \rangle$) into a single edge with group $\langle a_{i_j} \rangle$ (respectively, $\langle a_{i_{j+1}} \rangle$).  We now define $\cA$ to be the resulting orbifold over the right-angled hyperbolic polygon obtained by gluing together the polygon $Q_A$ which underlies $\cQ_A$ and the polygons $P_j$ which underly the $\cP_{\beta_j}$.  

\begin{lemma}\label{lem:fundgpA} The fundamental group of $\cA$ is $W_A$.
\end{lemma}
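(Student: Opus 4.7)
The plan is to follow the polygon-of-groups approach used in the proof of Lemma~\ref{lem:fundgpbeta}. By construction, $\cA$ is a simple polygon of groups with trivial face group, supported on the right-angled hyperbolic polygon obtained by joining $Q_A$ with the various $P_{\beta_j}$ along their length-one non-reflection edges. Its reflection edges carry $C_2$ local groups $\langle a\rangle$ indexed bijectively by the vertices $a \in A$; its remaining non-reflection edges carry trivial local groups and correspond to those essential pairs $\{a_{i_j}, a_{i_{j+1}}\}$ whose connecting branch is not contained in $A$ (equivalently, $k_j \ge 3$, by Lemma~\ref{lem:k23}(2)(d)); and each vertex group is the direct product of its two adjacent edge groups, so is either $C_2$ or $C_2\times C_2$.

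By the same polygon-of-groups argument as in Lemma~\ref{lem:fundgpbeta} (cf.~\cite[Chapter~II.12]{bridson-haefliger}), $\pi_1(\cA)$ is generated by its edge groups subject to the vertex-group relations. The reflection edges supply a generating set indexed by $A$ with the relations $a^2 = 1$ for each $a \in A$, while the $C_2$-vertex groups impose no further relations. The only remaining relations are commutations $[a,a']=1$ coming from the $C_2 \times C_2$-vertex groups, which occur precisely where two distinct reflection edges $\langle a\rangle$ and $\langle a'\rangle$ meet at a right angle. It thus remains to show that these commutations are exactly the defining commutations of $W_A$, namely $[a,a']=1$ for every edge $\{a,a'\}$ of the subgraph of $\G$ induced by $A$.

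For one direction, a right-angled corner of $\cA$ between distinct reflection edges $\langle a\rangle,\langle a'\rangle$ arises either as an original corner of some branch polygon $P_{\beta_j}$ (so $a,a'$ are consecutive vertices of a branch $\beta_j \subset A$, hence adjacent in $\G$), or as a new corner created by the merge step, where the combined $\langle a_{i_j}\rangle$-edge meets the adjacent interior reflection edge of $P_{\beta_j}$ (so $a=a_{i_j}=b_1$ and $a'=b_2$, again adjacent in $\G$). Conversely, suppose $\{a,a'\}$ is an edge of $\G$ with $a,a' \in A$. Since $\G$ is $3$-convex, no two essential vertices of $\G$ are adjacent, so $a$ and $a'$ lie on a common branch $\beta$; by Lemma~3.20 of~\cite{dani-thomas-jsj}, every vertex of $\beta$ then lies in $A$, so $\cP_\beta$ appears as one of the glued branch orbifolds, and the commutation $[a,a']=1$ is realized by the $C_2\times C_2$ vertex group between the corresponding two adjacent reflection edges in $\cP_\beta$ (or at the adjacent merge corner, if one of $a,a'$ is an endpoint of $\beta$).

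The main bookkeeping obstacle is to treat the merge step carefully: one must verify that the ``flat'' interior point created when the two $\langle a_{i_j}\rangle$-edges of $\cQ_A$ and $\cP_{\beta_j}$ combine into a single reflection edge carries only a $C_2$ local group (hence introduces no spurious commutation), while the newly created right-angled corner on the $P_{\beta_j}$ side carries a genuine $C_2\times C_2$ vertex group producing exactly the commutation $[a_{i_j},b_2]=1$ that $W_A$ requires. Once these local checks are in place, matching generators and relations identifies $\pi_1(\cA)$ with $W_A$.
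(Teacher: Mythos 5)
Your proposal is correct and follows essentially the same route as the paper: the paper's (very brief) proof consists precisely of the observation that the reflection edges of $\cA$ carry the groups $\langle a_1\rangle,\dots,\langle a_n\rangle$ with two such edges adjacent in $\cA$ if and only if the corresponding vertices are adjacent in $\G$, followed by the same polygon-of-groups argument as in Lemma~\ref{lem:fundgpbeta}, and your write-up just fills in those details (including the $3$-convexity/Lemma~\ref{lem:k23}(2)(c) argument for the converse direction). The only cosmetic slip is calling the corner between the combined $\langle a_{i_j}\rangle$-edge and the $\langle b_2\rangle$-edge ``newly created''---it is an original corner of $P_{\beta_j}$ whose incident edge was merely extended---but this does not affect the argument.
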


\begin{proof}  By construction, the reflection edges of $\cA$ have groups $\langle a_1\rangle$, \dots, $\langle a_n \rangle$, where $a_1,\dots,a_n$ is the cyclic ordering on $A$ given by Lemma~\ref{lem:k23}(1), and the reflection edges with groups $\langle a_i \rangle$ and $\langle a_{i+1}\rangle$ are adjacent in $\cA$ if and only if $a_i$ and $a_{i+1}$ are adjacent in $\G$ (for $i \in \Z / n\Z$).  The proof then uses similar arguments to Lemma~\ref{lem:fundgpbeta}.  
\end{proof}

For Euler characteristics, a somewhat involved calculation shows that $\chi(\cA) = W_A$.  We remark that there is no simple formula for this Euler characteristic in terms of $\ell$, the valence of the Type~2 vertex stabilized by $W_A$ in the JSJ decomposition, since $\chi(W_A)$ depends on the number of edges between vertices in $A$, and this varies independently of $\ell$.

Observe that since $A$ is not the vertex set of an induced cycle in $\G$ (by Lemma~\ref{lem:not_cycle}),  it follows from our construction that $\cA$ has at least one non-reflection edge.  The non-reflection edges of $\cA$ retain their types $\{a_{i_j},a_{i_{j+1}}\}$ from $\cQ_A$, as do the endpoints of such edges. 

\subsection{Construction of orbicomplex}\label{sec:OGamma}

We now construct the orbicomplex $\cO_\G$ by gluing together certain branch orbifolds $\cP_\beta$ from Section~\ref{sec:Pbeta} and all of the non-branch orbifolds $\cA$ from Section~\ref{sec:A}.  

Consider a Type 2 vertex in the JSJ decomposition with stabilizer $\langle A \rangle$, where $A$ is a set of vertices of $\G$.  Let $L$ be the number of essential vertices of $A$.   Then by Lemma~\ref{lem:L2}, we have that $L \geq 2$, and  $L = 2$ exactly when $A$ is the set of vertices of a branch $\beta$ of $\G$.  So if $L = 2$ then $W_A = W_\beta$ is the fundamental group of the branch orbifold $\cP_\beta$, and if $L \geq 3$ then $W_A$ is the fundamental group of the non-branch orbifold $\cA$. 

Let $\cC$ be the collection of all branch orbifolds $\cP_\beta$ such that $W_\beta = \pi_1(\cP_\beta)$ is a Type 2 vertex stabilizer, together with all non-branch orbifolds $\cA$.  By the discussion in the previous paragraph, we have:

\begin{cor}\label{cor:C}  The set of orbifolds $\cC$ is in bijection with the set of Type 2 vertices  in the JSJ decomposition of $W_\G$.  Moreover, for each orbifold $\cO$ in the collection $\cC$, we have that $\pi_1(\cO)$ is equal to the stabilizer of the corresponding Type 2 vertex.  
\end{cor}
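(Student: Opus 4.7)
\medskip

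\noindent\textbf{Proof plan for Corollary~\ref{cor:C}.}  The plan is to simply assemble the ingredients already established in Section~\ref{sec:jsj} and the preceding subsections.  The key observation is that by Corollary~\ref{cor:jsj}(2), the Type~2 vertices of the JSJ graph $\Lambda$ of $W_\G$ are in one-to-one correspondence with the subsets $A \subseteq V(\G)$ satisfying conditions ($\alpha_1$), ($\alpha_2$), ($\alpha_3$); under this correspondence, such a subset $A$ parametrizes a Type~2 vertex whose stabilizer is the special subgroup $W_A = \langle A \rangle$.  So the task is to exhibit a bijection between the collection $\cC$ and the collection of such subsets $A$, in a way that identifies the fundamental group of each orbifold in $\cC$ with the associated subgroup $W_A$.

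The plan is to split into two cases according to the number $L$ of essential vertices in $A$, as provided by Lemma~\ref{lem:L2}.  When $L = 2$, Lemma~\ref{lem:L2} gives that $A$ is exactly the vertex set of some branch $\beta$ of $\G$, so $W_A = W_\beta$; by the definition of $\cC$ in Section~\ref{sec:OGamma}, the branch orbifold $\cP_\beta$ belongs to $\cC$ (precisely because its fundamental group is a Type~2 vertex stabilizer), and $\pi_1(\cP_\beta) = W_\beta = W_A$ by Lemma~\ref{lem:fundgpbeta}.  When $L \geq 3$, the construction in Section~\ref{sec:A} produces the non-branch orbifold $\cA$, which by construction lies in $\cC$, and Lemma~\ref{lem:fundgpA} gives $\pi_1(\cA) = W_A$.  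So we obtain a map from Type~2 vertices (equivalently, admissible subsets $A$) to $\cC$, which is surjective because, by definition, every member of $\cC$ is either of the form $\cP_\beta$ with $W_\beta$ a Type~2 stabilizer or of the form $\cA$ coming from an admissible subset with $L \geq 3$.

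For injectivity, the plan is to observe that distinct admissible subsets $A \neq A'$ of $V(\G)$ produce distinct special subgroups $W_A \neq W_{A'}$ of $W_\G$ (this is standard for right-angled Coxeter groups, since the generators of a special subgroup can be recovered from the subgroup).  Hence, since under the proposed correspondence the associated orbifolds have fundamental groups $W_A$ and $W_{A'}$ respectively, the orbifolds themselves are distinct.  This gives the required bijection, and the ``moreover'' clause about fundamental groups follows directly from Lemmas~\ref{lem:fundgpbeta} and~\ref{lem:fundgpA}.

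There is no real obstacle here beyond careful bookkeeping: all substantive work has been done in Lemma~\ref{lem:L2} (dichotomy on $L$), Lemma~\ref{lem:fundgpbeta} (fundamental group of $\cP_\beta$), and Lemma~\ref{lem:fundgpA} (fundamental group of $\cA$), together with the JSJ description in Corollary~\ref{cor:jsj}(2).  The only point worth flagging is the need to match the case $L=2$ with the branch orbifolds explicitly listed in $\cC$ (as opposed to those $\cP_\beta$ that were absorbed into a larger $\cA$ in Section~\ref{sec:A}); this is precisely the content of the clause ``such that $W_\beta = \pi_1(\cP_\beta)$ is a Type~2 vertex stabilizer'' in the definition of $\cC$.
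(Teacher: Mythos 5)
Your proposal is correct and follows essentially the same route as the paper: the dichotomy from Lemma~\ref{lem:L2} on the number $L$ of essential vertices of $A$, combined with Lemmas~\ref{lem:fundgpbeta} and~\ref{lem:fundgpA}, is exactly how the paper deduces the corollary, with the bijection being essentially definitional from how $\cC$ is assembled in Section~\ref{sec:OGamma}. Your extra remark on injectivity (distinct admissible subsets give distinct special subgroups) is a harmless bit of bookkeeping that the paper leaves implicit.
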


We now consider the relationship between Type 1 vertices in the JSJ decomposition and non-reflection edges of orbifolds in the collection $\cC$.  Recall that each $\cP_\beta$ has a unique non-reflection edge, each $\cA$ has at least one non-reflection edge, and each non-reflection edge in either a $\cP_\beta$ or an~$\cA$ has type $\{a,b\}$ where $\{a,b\}$ is an essential cut pair of $\G$. 

\begin{lemma}\label{lem:ell}  Each non-reflection edge in the collection of orbifolds $\cC$ has type $\{a,b\}$ where $\langle a, b \rangle$ is the stabilizer of a Type 1 vertex in the JSJ decomposition of $W_\G$.  
\end{lemma}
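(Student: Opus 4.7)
The plan is to consider separately the two sources of non-reflection edges in the orbifolds of $\cC$: the unique non-reflection edges of those branch orbifolds $\cP_\beta$ that lie in $\cC$, and the non-reflection edges of non-branch orbifolds $\cA$. In both cases I will exhibit a Type~1 vertex of the JSJ decomposition by verifying, via Corollary~\ref{cor:jsj}(1), that the associated pair of essential vertices separates $\G$ into at least three components.

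First suppose $\cP_\beta \in \cC$. By Corollary~\ref{cor:C}, the special subgroup $W_\beta$ is a Type~2 vertex stabilizer, so the set $A$ associated to that vertex is exactly the vertex set of $\beta$, whose essential vertices are precisely the endpoints $b_1$ and $b_n$ of $\beta$. The unique non-reflection edge of $\cP_\beta$ has type $\{b_1,b_n\}$ by construction. Now Lemma~\ref{lem:k3} applies with this choice of $A$, yielding $k \geq 3$ for the number of components of $\G \setminus \{b_1,b_n\}$; Corollary~\ref{cor:jsj}(1) then says $\langle b_1, b_n \rangle$ is the stabilizer of a Type~1 vertex.

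Next let $\cA \in \cC$ be a non-branch orbifold, associated to a set $A$ with $L \geq 3$ essential vertices $a_{i_1},\dots,a_{i_L}$ in the cyclic order on $A$ given by Lemma~\ref{lem:k23}(1). Each non-reflection edge of $\cA$ comes from a non-reflection edge of $\cQ_A$ of type $\{a_{i_j},a_{i_{j+1}}\}$ that was \emph{not} glued away to a branch orbifold $\cP_{\beta_j}$ during the construction in Section~\ref{sec:A}. By construction, such an edge survives only when $A$ does not contain the entire vertex set of any branch of $\G$ between $a_{i_j}$ and $a_{i_{j+1}}$. The key step is to observe that by the contrapositive of Lemma~\ref{lem:k23}(2)(c), this forces $A$ to contain no non-essential vertices between $a_{i_j}$ and $a_{i_{j+1}}$ in the cyclic ordering; otherwise the unique branch between them would contribute all its vertices to $A$ and the edge would have been glued away. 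Lemma~\ref{lem:k23}(2)(d) then gives $k_j \geq 3$, so Corollary~\ref{cor:jsj}(1) identifies $\langle a_{i_j}, a_{i_{j+1}}\rangle$ as the stabilizer of a Type~1 vertex, completing the proof.

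The argument is essentially a bookkeeping one, and the only real subtlety is matching the combinatorial condition ``this non-reflection edge was not erased during gluing'' to the hypothesis of Lemma~\ref{lem:k23}(2)(d); once that correspondence is made explicit, the technical lemmas of Section~\ref{sec:jsj} finish the job.
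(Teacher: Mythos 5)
Your proof is correct and follows essentially the same route as the paper's: Lemma~\ref{lem:k3} plus Corollary~\ref{cor:jsj}(1) for the branch orbifolds, and Lemma~\ref{lem:k23}(2)(c)--(d) plus Corollary~\ref{cor:jsj}(1) for the non-branch orbifolds. The only difference is that you spell out explicitly the contrapositive step (``edge not glued away $\Rightarrow$ no non-essential vertex of $A$ between $a_{i_j}$ and $a_{i_{j+1}}$'') that the paper leaves implicit in its parenthetical remark.
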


\begin{proof}  First suppose that $\cP_\beta$ is in $\cC$, let the endpoints of the branch $\beta$ be $b$ and $b'$, and let $A$ be the vertex set of the branch $\beta$.  Then since $W_\beta = W_A$ is the stabilizer of a Type 2 vertex, Lemma~\ref{lem:k3} implies that $\G \setminus \{b,b'\}$ has $k \geq 3$ components.  The result then follows from Corollary~\ref{cor:jsj}(1).

Now let $\cA$ be a non-branch orbifold and let $\{a_{i_j},a_{i_{j+1}}\}$ be the type of a non-reflection edge of $\cA$.  Then by construction of $\cA$ and Lemma~\ref{lem:k23}(d), we have that $\G \setminus \{a_{i_j},a_{i_{j+1}}\}$ has $k_j \geq 3$ components (otherwise, we would have glued a branch orbifold on at this edge of $\cQ_A$).   The result then also follows from Corollary~\ref{cor:jsj}(1).  
\end{proof}

We note that, by similar arguments to those in Lemma~\ref{lem:ell}, the number of non-reflection edges of $\cA$ is equal to $\ell \geq 1$, the valence of the Type 2 vertex in the JSJ decomposition stabilized by $W_A$.

\begin{lemma}  Let $a$ and $b$ be vertices of $\G$ so that $\langle a, b \rangle$ is the stabilizer of a Type 1 vertex $v$ of valence $k \geq 3$ in the JSJ decomposition of $W_\G$.  Then $\{a,b\}$ is the type of a non-reflection edge in exactly $k$ orbifolds in the collection $\cC$.
\end{lemma}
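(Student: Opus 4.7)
The plan is to prove this by establishing a bijection between the $k$ orbifolds in $\cC$ containing a non-reflection edge of type $\{a,b\}$ and the $k$ Type~2 vertices of the JSJ graph adjacent to $v$. The backbone of the argument is Corollary~\ref{cor:jsj}(3) together with Corollary~\ref{cor:C}: since the JSJ graph is bipartite, the vertex $v$ of valence $k$ has exactly $k$ adjacent Type~2 vertices, and these correspond via Corollary~\ref{cor:C} to exactly $k$ orbifolds in $\cC$. It suffices to show that (i) each of these $k$ orbifolds contains exactly one non-reflection edge of type $\{a,b\}$, and (ii) no other orbifold in $\cC$ does.

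For (i), I would split into cases based on whether the orbifold is a branch orbifold $\cP_\beta$ or a non-branch orbifold $\cA$. If $\cP_\beta \in \cC$ is adjacent to $v$, then $W_\beta$ is the Type~2 stabilizer and contains $\langle a, b\rangle$; by Lemma~\ref{lem:k3} the vertex set of $\beta$ is exactly a set with the essential cut pair $\{a,b\}$ as its two endpoints, so the unique non-reflection edge of $\cP_\beta$ has type $\{a,b\}$ by construction. If $\cA \in \cC$ is adjacent to $v$, then $A$ contains $a$ and $b$ as essential vertices, and Lemma~\ref{lem:k23}(2)(e) says $a$ and $b$ must be consecutive essential vertices in the cyclic ordering on $A$, say $\{a,b\} = \{a_{i_j}, a_{i_{j+1}}\}$. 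Since $\G \setminus \{a,b\}$ has $k \geq 3$ components, the construction in Section~\ref{sec:A} does \emph{not} glue a branch orbifold along the corresponding non-reflection edge of $\cQ_A$, so this edge survives in $\cA$ with type $\{a,b\}$; uniqueness follows since each non-reflection edge type of $\cQ_A$ occurs once and pairs of consecutive essential vertices are distinct.

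For (ii), I would argue the converse direction: if an orbifold in $\cC$ has a non-reflection edge of type $\{a,b\}$, then the corresponding Type~2 vertex is adjacent to $v$ in the JSJ graph. For a branch orbifold $\cP_\beta$ whose non-reflection edge has type $\{a,b\}$, by construction $\{a,b\}$ are the endpoints of $\beta$, so $W_\beta \cap \langle a, b\rangle = \langle a, b\rangle$ and Corollary~\ref{cor:jsj}(3) gives the desired adjacency. For a non-branch orbifold $\cA$ whose non-reflection edge has type $\{a,b\}$, this type is a consecutive pair in the cyclic ordering on $A$ coming from $\cQ_A$, so $\{a,b\} \subset A$ and again the adjacency in the JSJ graph follows from Corollary~\ref{cor:jsj}(3).

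The main obstacle is the non-branch case of (i): one must verify that the non-reflection edge of type $\{a,b\}$ is not absorbed when gluing branch orbifolds onto $\cQ_A$ to produce $\cA$. This is precisely where the hypothesis $k \geq 3$ is used, in combination with Lemma~\ref{lem:k23}(2)(c), since the gluing step in Section~\ref{sec:A} only occurs on edges whose type corresponds to a pair separating $\G$ into exactly two components. Once this point is pinned down, the rest of the argument is bookkeeping with Corollary~\ref{cor:jsj} and the explicit assignment of types in Sections~\ref{sec:Pbeta} and~\ref{sec:A}.
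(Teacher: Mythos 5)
Your proposal is correct and follows essentially the same route as the paper: identify the $k$ orbifolds adjacent to $v$ via Corollary~\ref{cor:C} and Corollary~\ref{cor:jsj}(3), handle branch orbifolds via the endpoints of $\beta$, and handle non-branch orbifolds via consecutiveness in the cyclic ordering (Lemma~\ref{lem:k23}(2)(e)). Your explicit check that the non-reflection edge of type $\{a,b\}$ is not absorbed when branch orbifolds are glued onto $\cQ_A$ (using $k\geq 3$ with Lemma~\ref{lem:k23}(2)(c)) is a point the paper compresses into ``by the construction of $\cA$,'' and is a reasonable elaboration rather than a departure.
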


\begin{proof}  Since $v$ has valence $k$, by the first statement in Corollary~\ref{cor:C} there are exactly $k$ orbifolds in the collection $\cC$ whose fundamental groups are stabilizers of Type 2 vertices adjacent to $v$.  By Corollary~\ref{cor:jsj}(3) and the second statement in Corollary~\ref{cor:C}, these $k$ orbifolds are exactly the elements of $\cC$ whose fundamental groups contain the generators $a$ and $b$.  If a branch orbifold $\cP_\beta$ is one of these $k$ orbifolds, then $a$ and $b$ are the endpoints of the branch $\beta$, so by construction the unique non-reflection edge of $\cP_\beta$ is of type $\{a,b\}$.   If a non-branch orbifold $\cA$ is one of these $k$ orbifolds, let $A$ be the set of vertices of $\G$ so that $W_A = \pi_1(\cA)$.  By the construction of $\cA$, it suffices to show that $a$ and $b$ are consecutive in the cyclic ordering on $A$ given by Lemma~\ref{lem:k23}(1), and this is Lemma~\ref{lem:k23}(2)(e).  
\end{proof}

Recall that each non-reflection edge in the orbifolds we have constructed has length $1$.  Now for each $\{a,b\}$ which is the type of some non-reflection edges in the collection $\cC$,  we glue together all non-reflection edges of type $\{a,b\}$ in this collection, so that the types of their end-vertices are preserved.  The resulting orbicomplex is $\cO_\G$.  

Note that in the resulting space, each non-reflection edge still has a well-defined type $\{a,b\}$, and is the unique non-reflection edge of this type.  Also, if a non-reflection edge $e$ of $\cO_\G$ has type $\{a,b\}$, and~$v$ is its vertex of type $\{a\}$ (respectively, $\{b\}$), then the vertex group of $\cO_\G$ at~$v$ is $\langle a \rangle$ (respectively, $\{b\}$), and each reflection edge of $\cO_\G$ which is adjacent to $v$ has group $\langle a \rangle$ (respectively,~$\{b\}$).  

\begin{lemma}  
The orbicomplex $\cO_\G$ has fundamental group $W_\G$.  
\end{lemma}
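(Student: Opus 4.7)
The plan is to identify $\cO_\G$ with the total space of a graph of orbispaces whose underlying graph is the JSJ graph $\Lambda$ of $W_\G$ (see Corollary~\ref{cor:jsj}), and whose associated graph of groups is precisely the JSJ decomposition of $W_\G$. Since each $2$-cell of $\cO_\G$ is a right-angled hyperbolic polygon, $\cO_\G$ carries a piecewise hyperbolic locally $\CAT(-1)$ metric, so the induced complex of groups is nonpositively curved, hence developable; its orbifold fundamental group then coincides with the fundamental group of the associated graph of groups in the sense of Bass--Serre theory, as developed in Bridson--Haefliger~\cite{bridson-haefliger}.

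To set up the graph of orbispaces, I will assign to each Type~2 vertex of $\Lambda$, with stabilizer $W_A$ (or $W_\beta$), the unique orbifold in the collection $\cC$ having fundamental group $W_A$ (respectively $W_\beta$); this is well-defined by Corollary~\ref{cor:C}, together with Lemmas~\ref{lem:fundgpbeta} and~\ref{lem:fundgpA}. To each Type~1 vertex of $\Lambda$, with stabilizer $\langle a, b\rangle$, I will assign the common non-reflection edge of type $\{a, b\}$ in $\cO_\G$, regarded as a $1$-dimensional sub-orbicomplex with trivial edge group and vertex groups $\langle a\rangle$ and $\langle b\rangle$. Since $\{a,b\}$ is an essential cut pair and $\G$ is $3$-convex, the vertices $a$ and $b$ are non-adjacent in $\G$, so the orbifold fundamental group of this sub-orbicomplex is $\langle a\rangle \ast \langle b\rangle = \langle a, b\rangle$, matching the Type~1 stabilizer. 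The edges of $\Lambda$ correspond to the inclusions of these non-reflection edges into the adjacent Type~2 vertex orbifolds, realizing the inclusions of edge groups into vertex groups described in Corollary~\ref{cor:jsj}(3) and Lemma~\ref{lem:ell}.

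By the construction in Section~\ref{sec:OGamma}, $\cO_\G$ is obtained from the disjoint union of the orbifolds in $\cC$ by identifying matching non-reflection edges (one such edge for each Type~1 vertex of $\Lambda$), so $\cO_\G$ is exactly the total space of the graph of orbispaces above. Applying the van Kampen theorem for developable complexes of groups, $\pi_1(\cO_\G)$ is the fundamental group of the associated graph of groups, which by construction is the JSJ decomposition of $W_\G$; hence $\pi_1(\cO_\G) \cong W_\G$. The main step requiring care is verifying that the edge inclusions agree with those of the JSJ graph of groups: specifically, that when the non-reflection edge of type $\{a, b\}$ is glued into a Type~2 vertex orbifold with fundamental group $W_A$ (containing $a$ and $b$), the induced map $\langle a, b\rangle \hookrightarrow W_A$ is the natural inclusion. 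This follows from the way types were assigned to non-reflection edges and their endpoints in the constructions of $\cP_\beta$ (Section~\ref{sec:Pbeta}) and $\cA$ (Section~\ref{sec:A}), which ensure that the two reflection generators $\langle a\rangle$ and $\langle b\rangle$ emanating from the endpoints of this non-reflection edge in the Type~2 orbifold are precisely the generators $a, b \in A$.
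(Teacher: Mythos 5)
Your proof is correct, but it takes a genuinely different route from the paper's. The paper argues directly with the complex-of-groups structure of $\cO_\G$: since the JSJ graph is a tree, the underlying space obtained by gluing the polygons along non-reflection edges is connected and contractible, so $\pi_1(\cO_\G)$ is the direct limit of the fundamental groups of the pieces in $\cC$ subject to the identifications of generators coming from the gluings, and this colimit visibly yields the Coxeter presentation of $W_\G$. You instead read off a graph-of-orbispaces decomposition of $\cO_\G$ over $\Lambda$, check that its vertex groups, edge groups and monomorphisms coincide with the JSJ decomposition of Corollary~\ref{cor:jsj} (using that $a$ and $b$ are non-adjacent so the Type~1 piece has fundamental group $\langle a\rangle * \langle b\rangle \cong \langle a,b\rangle$, and that $\langle a,b\rangle$ includes as a special subgroup of $W_A$), and then quote Bass--Serre theory for the fact that this graph of groups has fundamental group $W_\G$. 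Your route has the merit of making explicit that $\cO_\G$ topologically realizes the JSJ decomposition, and it would work even if $\Lambda$ had loops; the paper's route is more elementary and self-contained, producing the Coxeter presentation directly without appealing to the Bass--Serre identification. Two small caveats about your write-up: the appeal to a locally $\CAT(-1)$ metric and developability is both unverified and unnecessary -- the van Kampen/decomposition statement for complexes of groups holds at the level of fundamental groups without any curvature hypothesis -- and $\cO_\G$ is the total space of your graph of orbispaces only after collapsing the mapping cylinders of the edge-space inclusions, which changes nothing for $\pi_1$ but should be said. Neither point affects the validity of the argument.
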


\begin{proof}  The underlying space of $\cO_\G$ is obtained by gluing together the polygons underlying the orbifolds in the collection $\cC$ along non-reflection edges.  After gluing, there is a polygon in the resulting space for each Type 2 vertex in the JSJ decomposition, and a single non-reflection edge contained in at least $3$ distinct polygons for each Type 1 vertex in the JSJ decomposition.  Since the JSJ decomposition is over a connected graph, it follows that the underlying space after gluing is connected.  Moreover, since the JSJ decomposition is over a tree, it follows that the underlying space after gluing is contractible.  Hence the fundamental group of $\cO_\G$ is generated by the fundamental groups of the orbifolds in $\cC$, subject to the identifications of generators induced by gluing non-reflection edges.  This gives fundamental group $W_\G$.
\end{proof}

We leave it to the reader to verify that $\chi(\cO_\G) = \chi(W_\G)$.

\section{Half-coverings and torsion-free covers}\label{sec:tfcover}

Let $\cO_\G$ be the orbicomplex with fundamental group $W_\G$ constructed in 
Section~\ref{sec:orbicomplex}.  In this section we 
construct a covering space $\cX$ of $\cO_\G$ so that $\pi_1(\cX)$ is an index 16 torsion-free subgroup of $\pi_1(\cO_\G) = W_\G$. Examples appear in 
Figures~\ref{GenTheta} and~\ref{CycleGenTheta}. The space $\cX$ will be a surface amalgam with each connected surface in $\cX$ having positive genus (so that we can obtain further covers by applying Lemma~\ref{lem:neumann}).  
We describe the construction using the terminology of half-coverings, which we define in 
Section~\ref{sec:half-coverings}.
The surfaces $S_\beta$, which cover the branch orbifolds $\cP_\beta$, and 
$S_\cA$, which cover the non-branch orbifolds $\cA$, are constructed in Sections~\ref{sec:S_beta}
and~\ref{sec:S_A} respectively.  
In Section~\ref{sec:X}, we explain how to glue the $S_\beta$ and $S_\cA$ together to obtain $\cX$.  Section~\ref{sec:SummaryRemarks} contains a table summarizing the surfaces we construct, and an explanation of why we use degree 16 covers.

\subsection{Half-coverings}\label{sec:half-coverings}

In this section we define half-coverings of general bipartite graphs, and a particular half-covering $\cH(T)$ where $T$ is a bipartite tree.

Given a graph $\Lambda$ which contains no loops, for each vertex $x \in V(\Lambda)$, let $\Lambda(x)$ the set of edges of $\Lambda$ which are incident to $x$. 
Now 
let $\Lambda$ and $\Lambda'$ be (unoriented) graphs, with associated maps $\epsilon:E(\Lambda) \to V(\Lambda)^2/C_2$ and $\epsilon':E(\Lambda') \to V(\Lambda')^2/C_2$.  (See 
Section~\ref{sec:graphs} for graph-theoretic definitions.)  Recall that a \emph{graph morphism} $\theta:\Lambda \to \Lambda'$ is a map taking $V(\Lambda)$ to $V(\Lambda')$ and $E(\Lambda)$ to $E(\Lambda')$, so that for all $e \in E(\Lambda)$, if $\epsilon(e) = [x,y]$ then $\epsilon'(\theta(e)) = [\theta(x),\theta(y)]$.

\begin{definition}\label{def:half-cover}  Let $\Lambda$ and $\Lambda'$ be bipartite graphs with vertex sets $V(\Lambda) = V_1 \sqcup V_2$ and $V(\Lambda') = V_1' \sqcup V_2'$, respectively.  A graph morphism $\theta:\Lambda \to \Lambda'$ is a \emph{half-covering} if:
\begin{enumerate}
\item For $i = 1,2$, the map $\theta$ takes $V_i$ to $V_i'$.
\item For all $x \in V_1$, the restriction of $\theta$ to $\Lambda(x)$ is a bijection onto $\Lambda'(\theta(x))$.
\item For all $y \in V_2$, and for every edge $e' \in \Lambda'(\theta(y))$, there is an $e \in \Lambda(y)$ so that $\theta(e) = e'$.
\end{enumerate}
If there is a half-covering $\theta:\Lambda \to \Lambda'$ then we say that $\Lambda$ \emph{half-covers} $\Lambda'$.
\end{definition}

In short, a half-covering is a morphism of bipartite graphs which preserves the bipartition, is locally bijective at vertices of Type 1, and is locally surjective at vertices of Type 2.  (In Section~\ref{sec:discussion}, we compare half-coverings with the \emph{weak coverings} in~\cite{behrstock-neumann}.)

In several of our constructions we will use half-coverings of the following particular form.  Examples appear in Figures~\ref{GenTheta} and~\ref{CycleGenTheta}.

\begin{definition}[The graph $\half(T)$ half-covering a tree $T$]\label{def:halfT}
Let $T$ be a bipartite tree with vertex set $V(T) = V_1 \sqcup V_2$.  Let $\half(T)$ be the bipartite graph defined as follows.  
\begin{enumerate}
\item The vertex set $V(\half(T))$ equals $V_1' \sqcup V_2'$, where $V_1'$ consists of two disjoint copies 
of $V_1$, and~$V_2'$ is a copy of $V_2$.  
\item Each edge of $\half(T)$ connects a vertex in $V_1'$ to one in $V_2'$.  Suppose $u \in V_1$ corresponds to~$u'$ and $u''$ in $V_1'$ and $v \in V_2$ corresponds to $v'$ in $V_2'$.
Then $u'$ and $u''$ are adjacent to $v'$ in 
$\half(T)$ if and only if $u$ is adjacent to $v$ in $T$. 
\end{enumerate}
\end{definition}
 
\noindent In other words, $\half(T)$ is obtained by taking two copies of the tree $T$ and identifying them along their vertices of Type 2.  By construction, the morphism $\half(T) \to T$ induced by sending each vertex in $V(\half(T))$ to the 
corresponding vertex in $V_1$ or $V_2$ (according to the identification of $V_1'$ with two copies of $V_1$, and of $V_2'$ with $V_2$) is a half-covering.  

\subsection{Covering the branch orbifolds}\label{sec:S_beta}

Let $\beta$ be a branch in $\G$ with $n_\beta$ vertices in total.  The orbifold $\cP_\beta$ 
constructed in Section~\ref{sec:Pbeta} 
has underlying space $P$ a right-angled $p$-gon with $p = n_\beta + 1 \geq 5$.   
We now construct a connected surface $S_\beta$ with genus $2(p-4) \geq 2$ and $2$ boundary components so that $S_\beta$ is a 16-fold cover of $\cP_\beta$. 

The surface $S_\beta$ we construct will be tessellated by 16 right-angled $p$-gons, so that:
\begin{itemize}
\item each $p$-gon has exactly one edge in a boundary component of $S_\beta$;
\item the two boundary components of $S_\beta$ each contain $8$ edges; and 
\item types can be assigned to the edges of this tessellation and to the edges of the $p$-gon $P$, such that there is a type-preserving map from $S_\beta$ to $P$ which takes each edge in a boundary component of $S_\beta$ to the (unique) non-reflection edge of $\cP_\beta$. 
\end{itemize}
It follows that the type-preserving map $S_\beta \to P$ induces a degree 16 covering map from the surface~$S_\beta$ to the orbifold $\cP_\beta$.  (More precisely, we can consider the trivial complex of groups $\cG_1(S_\beta)$ over this tessellation of $S_\beta$, that is, the complex of groups in which each local group is trivial.  The type-preserving map $S_\beta \to P$ then induces a covering of complexes of groups from $\cG_1(S_\beta)$ to the simple complex of groups $\cP_\beta$; see~\cite{bridson-haefliger} for the general definition of a covering of complexes of groups.  This induced covering is $16$-sheeted since each face group in $\cG_1(S_\beta)$ and in $\cP_\beta$ is trivial, and $S_\beta$ contains 16 $p$-gons while $P$ is one $p$-gon.)

For the construction of 
$S_\beta$, we consider two cases, $p \geq 5$ odd and $p \geq 6$ even. 

  \begin{figure}
\centering
\includegraphics[scale=.4]{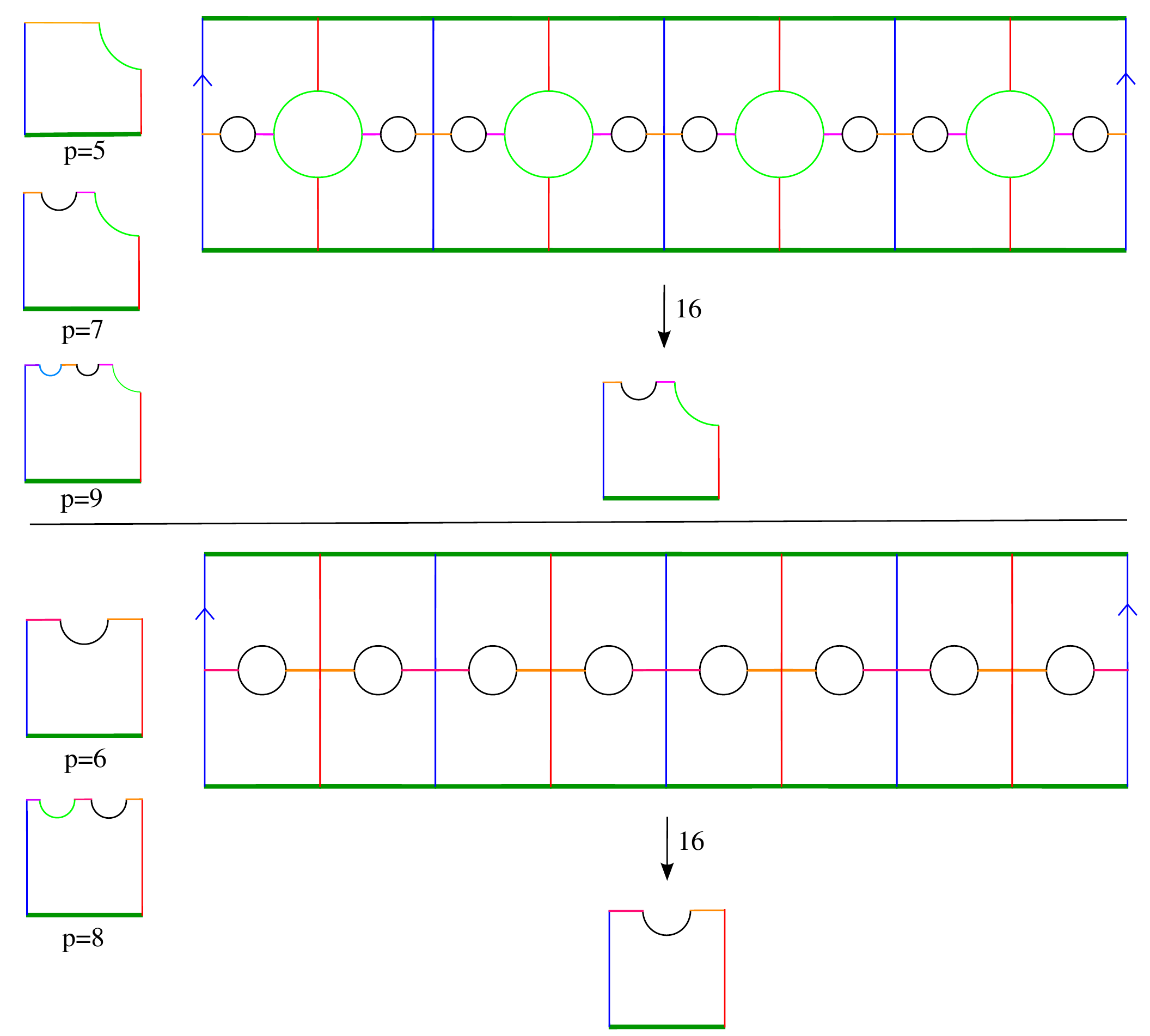}
\caption{{\small On the left are the ``jigsaw puzzle pieces'' in the odd and even cases. On the right are the degree $16$ covers constructed by gluing the pieces together. The left and right blue boundaries are glued and the center circles are glued in pairs to construct a surface with two boundary components, drawn in green. }}
\label{jigsaw_covers}
\end{figure}

\subsubsection*{Case 1: $p \geq 5$ is odd.}
We obtain a tessellated surface $S_\beta$ by gluing together, in a type-preserving manner, 16 copies of the right-angled $p$-gon ``jigsaw puzzle piece"  shown on the top left in Figure~\ref{jigsaw_covers}.  Each piece is straight on the bottom and sides, while along the top we have $\frac{p-5}{2}$ full scoops and $\frac{p-5}{2} + 1$ horizontal edges, and there is one half-scoop in the top right corner.  (So there are $3 + 2\frac{p-5}{2} +1+ 1 = p$ sides in total.)  
Glue 16 of these pieces together into an $8$-by-$2$ block as in Figure~\ref{jigsaw_covers}.  Now glue the left and right boundaries of this block as indicated by the arrows.  The resulting surface will be a sphere with $2$ boundary components corresponding to non-reflection edges (these are the outer horizontal edges drawn in thick lines; they form 2 cycles of length 8 after the gluing), $4$ boundary components corresponding to the half-scoops, and $\frac{p-5}{2} \times 8$ boundary components corresponding to the full scoops.  These boundary components contain $8$, $4$, and $2$ edges respectively.  The final step to obtain $S_\beta$ is to glue together 
the 
boundary components coming from half-scoops and full-scoops in type-preserving pairs.  

\subsubsection*{Case 2: $p \geq 6$ is even.}
In this case we use similar jigsaw puzzle pieces to Case 1, except that 
now 
we have $\frac{p-4}{2}$ full scoops along the top, as shown on the bottom left in 
Figure~\ref{jigsaw_covers}.  Glue~16 of these pieces together into an $8$-by-$2$ block as shown in Figure~\ref{jigsaw_covers}, and identify the left and right boundaries
as indicated by the arrows.  The resulting surface will be a sphere with $2$ boundary components corresponding to non-reflection edges 
and $\frac{p-4}{2} \times 8$ boundary components corresponding to the scoops along the top edge of the piece.  The final step to obtain $S_\beta$ is to glue together these latter boundary components in type-preserving pairs.  

\subsection{Covering the essential vertex and non-branch orbifolds}\label{sec:S_A}

Let $A$ be a subset of vertices of $\G$ so that $\langle A \rangle$ is the stabilizer of a Type 2 vertex in the JSJ decomposition of $W_\G$, and let $L \geq 2$ be the number of essential vertices of $A$.  In this section, we assume that $L \geq 3$ and construct a connected surface $S_\cA$ which is a 16-fold cover of the orbifold $\cA$.  (See Section~\ref{sec:A} for the construction of $\cA$.)  The surface $S_\cA$ will have genus at least $2$ and $2\ell$ boundary components, where $\ell \geq 1$ is the valence in the JSJ decomposition of the vertex with stabilizer $\langle A \rangle$. An illustration of the construction appears in Figure \ref{jigsaw_cut_boundary_cover} (which should be viewed in color).

As in Section~\ref{sec:A}, we will first consider the essential vertex orbifold $\cQ_A$.  We will cover $\cQ_A$ by a connected surface $S_A$ with genus $3L - 7 \geq 2$ and $2L$ boundary components.  Recall that the underlying space of $\cQ_A$ is a right-angled $2L$-gon $Q$.    
The surface $S_A$ we construct will be tessellated by 16 right-angled $2L$-gons, so that:
\begin{itemize}
\item each $2L$-gon has its alternate edges in boundary components of $S_A$;
\item each boundary component of $S_A$ contains $8$ edges; and 
\item types can be assigned to the edges of this tessellation of $S_A$ and to the edges of the $2L$-gon $Q$, such that there is a type-preserving map from $S_A$ to $Q$ which takes each edge in a boundary component of $S_A$ to a non-reflection edge of $\cQ_A$.  In particular, each boundary component of $S_A$ has the same type as a non-reflection edge of $\cQ_A$, and $S_A$ has two boundary components of each type.
\end{itemize}
As for branch orbifolds in Section~\ref{sec:S_beta} above, it follows that the type-preserving map $S_A \to Q$ induces a degree 16 covering map from the surface $S_A$ to the orbifold $\cQ_A$.

  \begin{figure}
\centering
\includegraphics[scale=.5]{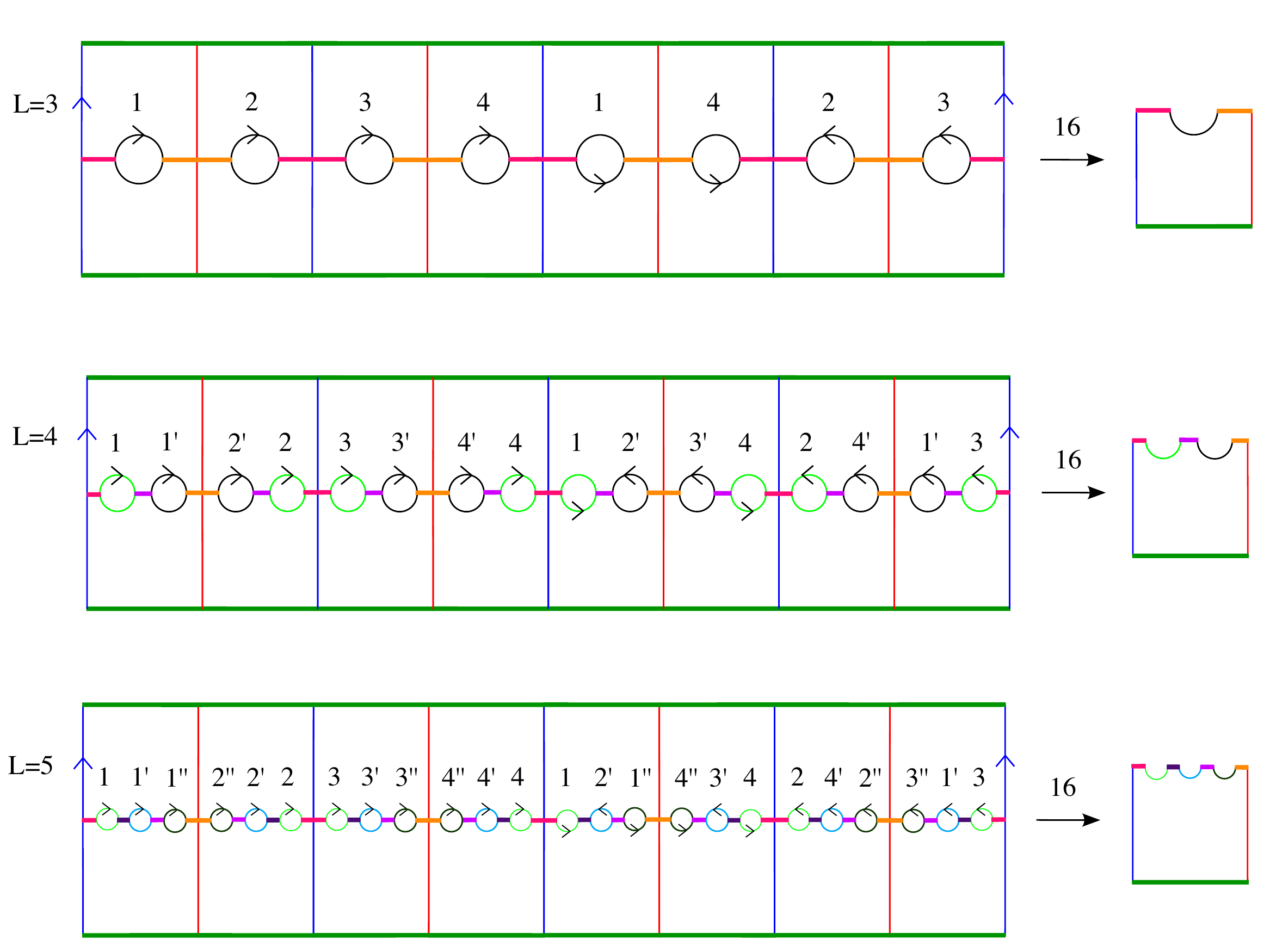}
\caption{{\small On the right are orbifolds with $2L$ sides which alternate between reflection edges 
(non-horizontal, and drawn in thin lines) and non-reflection edges (horizontal, and drawn in thick lines).  On the left are the initial 8 by 2 blocks of jigsaw puzzle pieces for their degree 16 covers. 
The blue vertical sides of these blocks are glued as indicated by the arrows, and the center circles are glued as indicated by the labels and arrows, with the top half of a circle in the left half glued to the top (respectively, bottom) half of a circle in the right half as shown by the arrow on the right-hand circle being on the top (respectively, bottom). After gluing, the center horizontal arcs are cut to obtain a surface with $2L$ boundary components.  }}
\label{jigsaw_cut_boundary_cover}
\end{figure}

Start with the $8$ by $2$ block of jigsaw puzzle pieces from the case $p = 2L \geq 6$ even above.  The non-reflection edges will be the horizontal edges of the puzzle piece.
We will first glue this into a surface with just 2 boundary components of the same type, this type being one of the non-reflection edges of $\cQ_A$.   We will then cut along $L - 1$ curves which have types the other non-reflection edges of $\cQ_A$, to obtain a connected surface with $2 + 2(L-1) = 2L$ boundary components, and with $2$ boundary components for every type of non-reflection edge of $\cQ_A$.

The first stage is to glue together the left and right sides of the block, as indicated by arrows.  We then glue pairs of boundary components which consist of 2 edges; these are the ``center circles" in Figure~\ref{jigsaw_cut_boundary_cover}.  This is done is such a way that the inner horizontal edges of the 8 by 2 block, those coming from the top edges of the jigsaw piece, form cycles of a single type of length 8.  For each non-reflection type, there is one such cycle of $8$ edges.  

In more detail, when $L = 3$, the gluing pattern for the center circles is as indicated by the arrows and labels in the top row of Figure~\ref{jigsaw_cut_boundary_cover}.  The orange horizontal edges, which occur in pairs in the initial block, then form a cycle of 8 edges passing through the center circles with labels $1,2,3,4$ in that order, and similarly the pink horizontal edges form a cycle of 8 edges pass through the center circles with labels $1,3,2,4$ in that order.  The construction is similar for the pink edges for all $L \geq 4$, as indicated in Figure~\ref{jigsaw_cut_boundary_cover}, with the pink edges passing through light green center circles labeled $1,3,2,4$ for all $L \geq 4$.  

For the other horizontal edges, a suitable labeling is constructed by induction on $L \geq 4$.  When $L = 4$ we use the labeling from the middle row of Figure~\ref{jigsaw_cut_boundary_cover}, so that the orange edges form a cycle of 8 edges passing through the center circles with labels $1',2',3',4'$ in that order, and the purple edges form a cycle of 8 edges passing through alternately black and light green center circles with labels $1, 1', 3, 3', 4, 4', 2, 2'$ in that order.  Write $n^{(k)}$ for the integer $n$ followed by $k$ primes.  The labeling of center circles in the left half of the initial block is then, from left to right, $1, 1', \dots, 1^{(L-3)}, 2^{(L-3)},\dots, 2',2, 3, 3',\dots, 3^{(L-3)}, 4^{(L-3)},\dots,4',4$.  On the right half of the initial block, going from left to right, the 5th (respectively, 6th, 7th, and 8th) puzzle pieces have center circles labeled $1,2',1'',\dots$ (respectively, $4^{(L-3)},3^{(L-4)},4^{(L-5)}, \dots$;  $2,4',2'',\dots$; and $3^{(L-3)},1^{(L-4)},3^{(L-5)}, \dots$) when $L$ is odd, and center circles labeled $1,2',1'',\dots$ (respectively, $3^{(L-3)},4^{(L-4)},3^{(L-5)}, \dots$;  $2,4',2'',\dots$; and $1^{(L-3)},3^{(L-4)},1^{(L-5)}, \dots$) when $L$ is even.  By induction and the alternating pattern of labels on the right half, the only color to be checked is orange.  The orange horizontal edges come in pairs in the initial block, and it is not hard to verify that for all $L \geq 4$, they form a cycle of length 8 passing through black center circles with labels $1^{(L-3)}, 2^{(L-3)}, 3^{(L-3)}, 4^{(L-3)}$ in that order.

Now cut along each of these cycles of length 8.  There are then $2$ boundary components of each non-reflection type, each consisting of $8$ edges.  Using the gluing described above, the resulting surface is connected: for this, we just need to glue at least one top half of a scoop to the bottom half of a scoop, and this follows from our construction.

We now describe the surface $S_\cA$ which 16-fold covers the orbifold $\cA$.  If $A$ consists entirely of essential vertices, then we put $S_\cA = S_A$.  Otherwise, for each branch $\beta$ so that the vertex set of $\beta$ is in the set $A$, we glue $S_\beta$ to $S_A$ as follows.  Recall that all edges in the tessellations of $S_\beta$ and $S_A$ have types, that these types are preserved by the covering maps $S_\beta \to \cP_\beta$ and $S_A \to \cQ_A$, and that each boundary component of $S_\beta$ and $S_A$ has $8$ edges.  Recall also that $\cA$ is obtained by gluing $\cP_\beta$ to~$\cQ_A$ along a non-reflection edge in a type-preserving manner, for each branch $\beta$ with vertex set in $A$.  For all such $\beta$, we now glue one boundary component of $S_\beta$ to one boundary component of $S_A$ with its same type, and the other boundary component of $S_\beta$ to the other boundary component of $S_A$ with its same type, so that these gluings match up edges and vertices of the existing tessellations, and so that at each vertex of the resulting tessellation, the incident edges have exactly two types.

Finally, we erase the edges in the resulting tessellation which were in boundary components of $S_A$, and denote the resulting tessellated surface by $S_\cA$.
Note that $S_\cA$ is still tessellated by right-angled polygons whose edges have well-defined type.  By construction, the 16-fold coverings $S_\beta \to \cP_\beta$ and $S_A \to \cQ_A$ induce a 16-fold covering $S_\cA \to \cA$.

We observe that since $S_A$ has genus $3L - 7 \geq 2$, the surface $S_\cA$ has genus at least $2$.   Also, the surface $S_\cA$ has two boundary components for each non-reflection edge of $\cA$.  Now by construction of the orbicomplex $\cO_\G$ in Section~\ref{sec:OGamma}, the number of non-reflection edges of $\cA$ equals the valence $\ell \geq 1$ of the vertex stabilized by $\langle A \rangle$ in the JSJ decomposition of $W_\G$.  Hence $S_\cA$ has $2\ell$ boundary components.

\subsection{The surface amalgam $\cX$}\label{sec:X}

We now construct a surface amalgam $\cX$ which 16-fold covers the orbicomplex $\cO_\G$, by gluing together certain surfaces $S_\beta$ constructed in Section~\ref{sec:S_beta} and all of the surfaces $S_\cA$ constructed in Section~\ref{sec:S_A}. Examples appear in Figures~\ref{GenTheta} 
and~\ref{CycleGenTheta}.

  \begin{figure}
\centering
\includegraphics[scale=.5]{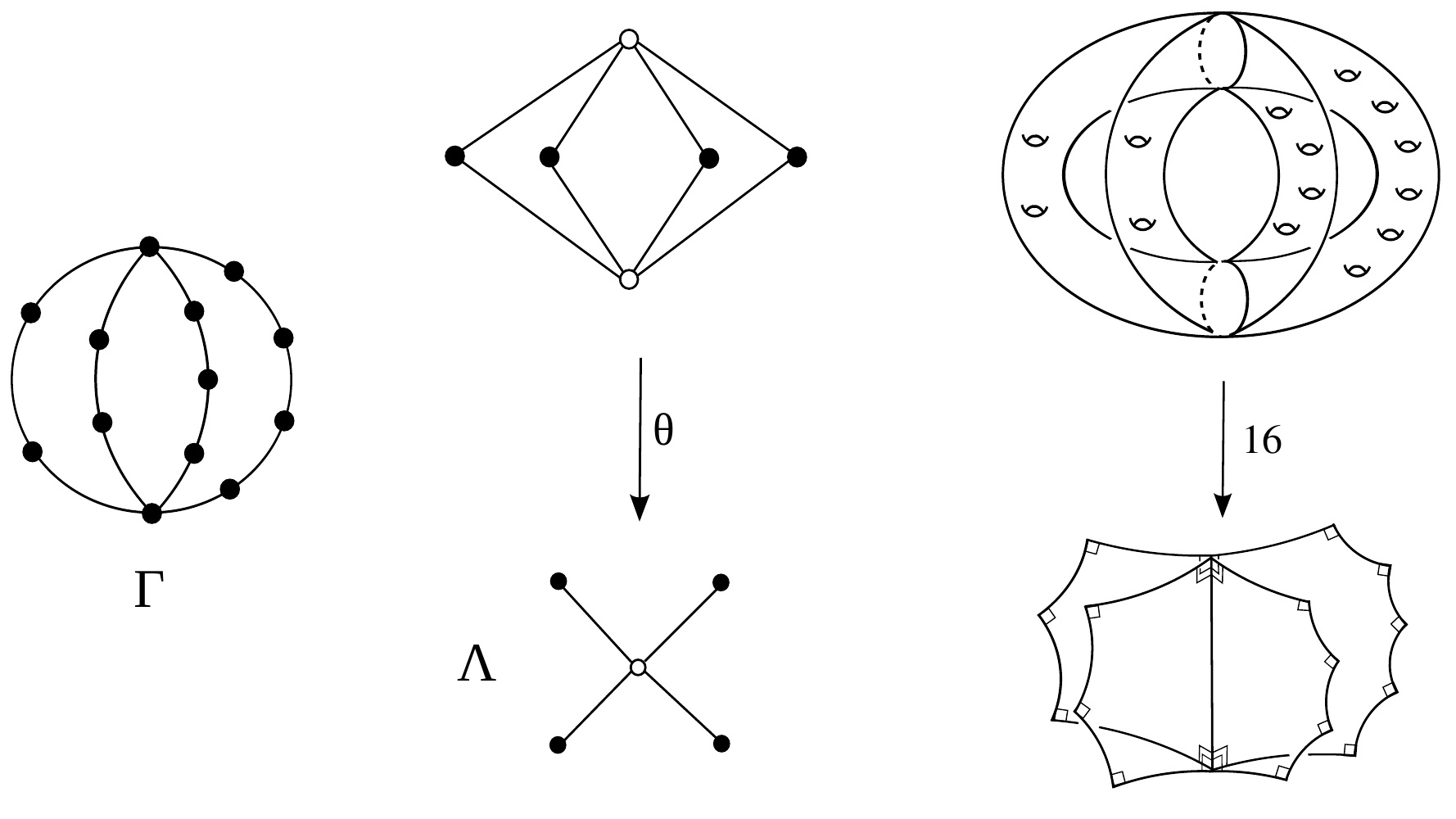}
\caption{{\small The graph $\Gamma$ is a generalized $\Theta$-graph, and the graph $\Lambda$ is the JSJ graph of $W_{\G}$. On the right is the degree $16$ cover $\cX_{\G} \rightarrow \cO_{\G}$, where $\cO_{\G}$ is the orbicomplex described in Section \ref{sec:OGamma}. In the center is the half-covering $\theta$ from the JSJ graph of $\cX_{\G}$ to the JSJ graph of $\cO_{\G}$.  }}
\label{GenTheta}
\end{figure}

Let $\Lambda$ be the JSJ graph for $W_\G$ and recall that $\Lambda$ is a bipartite tree.  Let $\half (\Lambda)$ be the graph from Definition~\ref{def:halfT} which half-covers $\Lambda$.  For each $v \in V_2(\Lambda)$, we write $v'$ for the corresponding vertex in $V_2(\half(\Lambda))$.  Now for all $v \in V_2(\Lambda)$, let $S_v$ be the surface $S_\beta$, if $v$ has stabilizer $W_\beta$ for $\beta$ a branch of $\G$, and let $S_v$ be the surface $S_\cA$,  if $v$ has stabilizer $\langle A \rangle$ with $A$ not equal to the vertex set of a branch of $\G$.  Then the collection of surfaces $\{S_v \mid v \in V_2(\Lambda) \} = \{S_v \mid v' \in V_2(\half(\Lambda)) \}$ is in bijection with the Type 2 vertices of $\half(\Lambda)$.  

We now obtain $\cX$ by gluing together the surfaces $\{ S_v \}$ along boundary components, according to the adjacencies in the graph $\half(\Lambda)$.  (In fact, the JSJ graph of $\cX$ will be $\half(\Lambda)$.)  By construction, each $S_v$ has $2\ell$ boundary components, where $\ell \geq 1$ is the valence of $v$ in $\Lambda$.  Since $v'$ has valence $2\ell$ in $\half(\Lambda)$, each $S_v$ has number of boundary components equal to the valence of $v'$.  We now relabel the boundary components of $S_v$ using the vertices of $\half(\Lambda)$ which are adjacent to $v'$, as follows.  Recall that the boundary components of $S_v$ come in pairs which cover the same non-reflection edge (in either $\cP_\beta$ or $\cA$), and so have type $\{a,b\}$ where $\langle a ,b\rangle$ is the stabilizer of a Type 1 vertex in $\Lambda$.  For each $u\in V_1(\Lambda)$ with stabilizer $\langle a, b \rangle$, with corresponding Type 1 vertices $u'$ and $u''$ in $\half(\Lambda)$, and for each $v$ adjacent to $u$ in $\Lambda$, we label one boundary component of $S_v$ of type $\{a,b\}$ by $u'$ and the other by $u''$.  Now every boundary component in the collection $\{ S_v \mid v' \in V_2(\half(\Lambda)) \}$ has been assigned a type in $V_1(\half(\Lambda))$, and for each $v \in V_1(\Lambda)$, there is a bijection between the types of boundary components of $S_v$ and the vertices adjacent to $v'$ in $\half(\Lambda)$.  We then glue together all boundary components in the collection $\{S_v\}$ which have the same type, so that these gluings match up edges and vertices of the existing tessellations.  The resulting surface amalgam is $\cX$.  By construction, the 16-fold covers $S_\beta \to \cP_\beta$ and $S_\cA \to \cA$ induce a 16-fold cover $\cX \to \cO_\G$.

  \begin{figure}
\centering
\includegraphics[scale=.5]{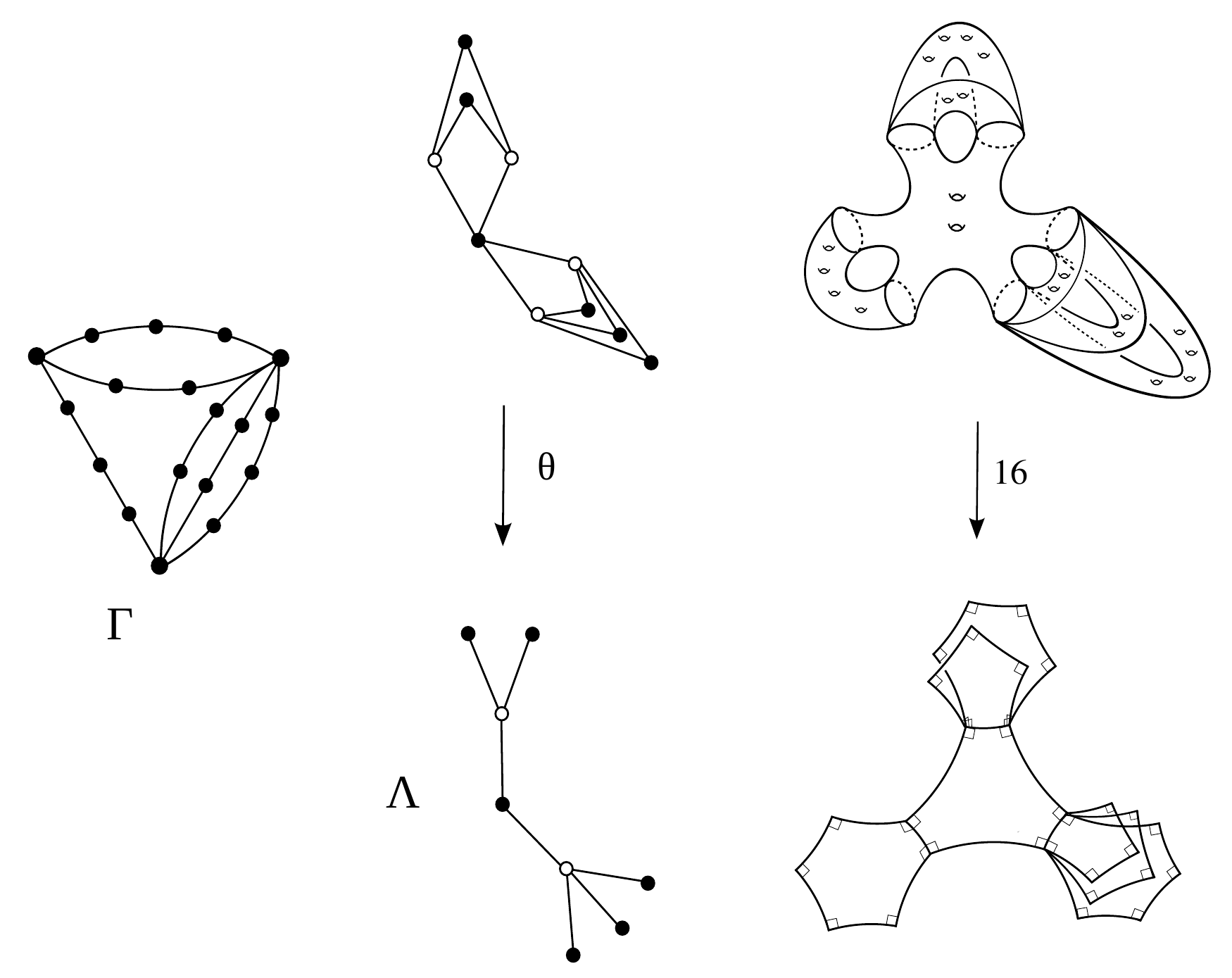}
\caption{{\small The graph $\Gamma$ is a cycle of generalized $\Theta$-graphs, and the graph $\Lambda$ is the JSJ graph of $W_{\G}$. On the right is the degree $16$ cover $\cX_{\G} \rightarrow \cO_{\G}$, where $\cO_{\G}$ is the orbicomplex described in Section \ref{sec:OGamma}. In the center is the half-covering $\theta$ from the JSJ graph of $\cX_{\G}$ to the JSJ graph of $\cO_{\G}$.  }}
\label{CycleGenTheta}
\end{figure}

\subsection{Summary and remarks}\label{sec:SummaryRemarks}

We now provide in Table~\ref{table:summary} a summary of the surfaces constructed in this section, and explain why we constructed covers of degree 16 (rather than some smaller degree).

\begin{table}[htp]
\begin{center}
\begin{tabular}{| l | l | l | l | l |}
\hline
Orbifold & Orbifold & Connected & Genus of &  No. of boundary \\
& Euler & covering & covering & components of \\
& characteristic & surface & surface & covering surface\\
\hline \hline
Branch orbifold $\cP_\beta$ & $\chi(W_\beta) = \frac{4-p}{4}$ & $S_\beta$ & $2(p-4) \geq 2$ & $2$ \\
over $p$-gon, $p \geq 5$, & & & & \\
with $p-1$ reflection edges, &&&& \\
$1$ non-reflection edge &&&& \\
\hline
Essential vertex orbifold $\cQ_A$ & $\frac{2 - L}{2}$ & $S_A$ & $3L - 7 \geq 2$ & $2L$ \\
over $2L$-gon, $L \geq 3$, &&&& \\
with $L$ reflection edges, & && & \\
$L$ non-reflection edges &&&& \\
\hline
Non-branch orbifold $\cA$ & $\chi(W_A)$ & $S_\cA$ & $\geq 2$ & $2\ell$ \\
with $\ell \geq 1$ non-reflection edges & & & & \\
\hline
\end{tabular}
\end{center}
\caption{Summary of orbifolds constructed in Section~\ref{sec:orbicomplex} and their 16-fold connected covering surfaces constructed in Section~\ref{sec:tfcover}.}
\label{table:summary}
\end{table}%

To cover a branch orbifold $\cP_\beta$ with underlying space a right-angled $p$-gon, we wanted a construction for all $p \geq 5$ of a surface with positive genus and a small positive number of boundary components.  The Euler characteristic of $\cP_\beta$ is $\frac{4-p}{4}$ so a degree $D$ cover has Euler characteristic $\frac{D(4-p)}{4}$.  For a surface, the Euler characteristic must be an integer, hence $D = 4k$ for some $k \geq 1$.  Now if the covering surface $S_\beta$ has genus $g$ and $b$ boundary component, then $\chi(S_\beta) = 2 - 2g - b$.  Equating Euler characteristics when $b = 1$, we get $k(4-p) = 1 - 2g$.  This has no solution when $p$ is even.  Thus we consider $b = 2$ boundary components.  Equating Euler characteristics when $b = 2$, we get $k(4-p) = - 2g$.  This has a positive genus solution for all $p \geq 5$, provided $k$ is even.  So we work with covers of degree $D = 4k$ where $k \geq 2$ is even.

Now, an essential vertex orbifold $\cQ_A$ with $L$ reflection edges and $L$ non-reflection edges has Euler characteristic $\frac{2 - L}{2}$, where $L \geq 3$.  We wanted to cover each such $\cQ_A$ by a surface $S_A$ of positive genus~$g$ with $2L$ boundary components (since each surface $S_\beta$ covering a branch orbifold~$\cP_\beta$ will have $2$ boundary components, and certain $S_\beta$ will be glued onto the surface covering $\cQ_A$ in order to cover the non-branch orbifold $\cA$).  If the degree of the cover is $D = 8$, by equating Euler characteristics we get $4(2 - L) = 2 - 2g - 2L$ hence $g = L - 3$.  For $L = 3$ this gives $g = 0$, but we would like $S_A$ to have positive genus.  The next possible degree is  $D = 16$, and the Euler characteristic equation $8(2 - L) = 2 - 2g - 2L$ has positive genus solution $g = 3L - 7$ for all $L \geq 3$.  Hence we work with degree 16 covers.

\section{Generalized $\Theta$-graphs}\label{sec:GenTheta}

In this section we prove Theorem~\ref{thm:GenTheta}, which gives the commensurability classification of right-angled Coxeter groups with defining graph a $3$-convex generalized $\Theta$-graph.  We remark that by Corollary~\ref{cor:jsj}, the JSJ graph of such a group is a $k$-valent star, so in particular, the JSJ graph is a tree of diameter 2.   

 Let  $W_{\Theta}$ and $W_{\Theta'}$ be right-angled Coxeter groups whose defining graphs are
 $3$-convex generalized $\Theta$-graphs $\Theta = \Theta(n_1,\dots,n_k)$ and $\Theta' = \Theta(n_1',
 \dots,n'_{k'})$ respectively, with $k, k' \geq 3$.  If $\Theta$ has branches~$\beta_i$ for $1 \le i \le k$, then 
$W_{\Theta}$ is the fundamental group of the orbicomplex $\cO=\cO_\Theta$ 
  constructed in Section~\ref{sec:orbicomplex},
  obtained by gluing together 
  the branch orbifolds $\mathcal{P}_i = \cP_{\beta_i}$, for $1 \leq i \leq k$, along their non-reflection 
  edge.  
Since $W_i = W_{\beta_i}$ is the fundamental group of~$\cP_{i}$, the 
 Euler characteristic vector of $W_{\Theta}$  
 from Definition~\ref{def:theta-euler} is $v = (\chi(W_1), \ldots, \chi(W_k))$. 
 Similarly, $W_{\Theta'}$ is the fundamental group 
of $\cO'=\mathcal{O}_{\Theta'}$ obtained by gluing together the branch orbifolds $\cP_i' = 
  \cP_{\beta_i'}$, for $1 \leq i \leq k'$, and~$W_{\Theta'}$ has Euler characteristic vector
 $v' = (\chi(W_1'), \ldots, \chi(W_{k'}'))$ where $W'_i = W_{\beta_i'}$.
 
\subsection{Sufficient conditions for commensurability}
  
Suppose that the vectors $v$ and $v'$ are commensurable. Then by definition, $k = k'$ and there exist integers $K, K' \geq 1$ so that $Kv = K'v'$. Let~$a$ and $b$ be the two essential vertices of $\Theta$.  Let $r_a$ 
be the subcomplex of $\cO$ consisting of all reflection edges with local group $\langle a \rangle$ (so $r_a$ is a star of valence $k$), and let $r_b \subset \cO$ be the corresponding subcomplex for $b$.  
An immediate generalization of \cite[Section 3.1]{crisp-paoluzzi} is that, for any positive integer~$R$, there is a degree $R$ orbicomplex covering $R\mathcal{O} \rightarrow \mathcal{O}$ given by unfolding $R$ times along copies of $r_a$ and $r_b$ so that the central branching edge of $R \mathcal{O}$ forms a geodesic path. An easy counting argument then proves that the orbicomplexes $K\mathcal{O}$ and $K'\mathcal{O'}$ are homeomorphic, hence have isomorphic fundamental groups which are finite-index subgroups of $W_\Theta$ and $W_{\Theta'}$, respectively. Therefore $W_{\Theta}$ and $W_{\Theta'}$ are commensurable.

\subsection{Necessary conditions for commensurability}

The proof of the necessary conditions in Theorem~\ref{thm:GenTheta} follows from a slight generalization of \cite[Proposition 3.3.2]{stark}. In that setting, it was assumed that the analogs of 
$\cO$ and $\cO'$ had the same Euler characteristic, while here, $\chi(W_{\Theta}) = \chi(\cO)$ and $\chi(W_{\Theta'}) = \chi(\cO')$ could be unequal. The proof  from~\cite{stark} can thus be altered as follows. 
  
Suppose that the groups $W_{\Theta}$ and $W_{\Theta'}$ are commensurable.  Then they are 
quasi-isometric, and by \cite[Theorem~3.36] {dani-thomas-jsj}, we have $k=k'$. Since $W_{\Theta}$ and $W_{\Theta'}$ are virtually torsion-free, 
they have isomorphic 
torsion-free, finite-index subgroups $H \leq W_{\Theta}$ and $H' \leq W_{\Theta'}$. 
Then the corresponding covers  $\cX$
and $\cX'$ of $\cO$ and $\cO'$ are surface amalgams.  Let $D$ and $D'$ be the degrees with which 
$\cX$ and $\cX'$ respectively cover $\cO$ and $\cO'$.  
By 
 Theorem~\ref{thm:Lafont},
there is a homeomorphism $f:\cX \rightarrow \cX'$ that induces the isomorphism between $H$ 
and $H'$. Suppose 
\begin{align*}
 \chi(W_1) = \chi(W_2) = \dots = \chi(W_s) &> \chi(W_{s+1}) \geq \dots \geq \chi(W_k), \text{ and }\\
    \chi(W_1') = \chi(W_2') = \dots = \chi(W_t') &> \chi(W_{t+1}') \geq \dots \geq \chi(W_k')
  \end{align*}
   for some $s,t \leq k$.  (Note that the ordering in the above display is reversed with respect to the ordering in the proof of Proposition 3.3.2 of~\cite{stark}.)
   Without loss of generality, $D\cdot \chi(W_1) \geq D'\cdot \chi(W_1')$, and if $D\cdot\chi(W_1)= D'\cdot\chi(W_1')$, then $s\geq t$. The remainder of the proof \cite[Proposition 3.3.2]{stark} may now be applied with the first and last line of the main equation in the proof changed to account for the fact that the degrees of the covering maps are different.

\section{Necessary conditions for cycles of generalized $\Theta$-graphs}\label{sec:CycleGenThetaNec}

In this section, we establish our necessary conditions for the commensurability of right-angled Coxeter groups defined by cycles of generalized $\Theta$-graphs, from 
Theorem~\ref{thm:CycleGenTheta}.   We show:

\begin{prop}\label{prop:ness-cond}
Let $W$ and $W'$ be as in Theorem~\ref{thm:CycleGenTheta}.  If $W$ and $W'$ are commensurable, then at least one of (1) and (2) from Theorem~\ref{thm:CycleGenTheta} holds.   
\end{prop}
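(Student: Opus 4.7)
My plan is to pass to homeomorphic finite-sheeted covers of the orbicomplexes $\cO_\Gamma$ and $\cO_{\Gamma'}$ from Section~\ref{sec:orbicomplex}, following the general strategy used for Theorem~\ref{thm:GenTheta}. Since $W$ and $W'$ are commensurable and both virtually torsion-free, I fix isomorphic torsion-free, finite-index subgroups $H \leq W$ and $H' \leq W'$, and consider the corresponding covers $\pi\colon \cX \to \cO_\Gamma$ and $\pi'\colon \cX' \to \cO_{\Gamma'}$. By construction of the orbicomplexes, all torsion in $\pi_1(\cO_\Gamma)$ and $\pi_1(\cO_{\Gamma'})$ comes from reflections along reflection edges, so these covers are surface amalgams. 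Applying Lafont's topological rigidity (Theorem~\ref{thm:Lafont}) to the isomorphism $H\cong H'$ then produces a homeomorphism $f\colon \cX\to \cX'$ realizing this isomorphism on fundamental groups.

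Next, I will exploit the canonical nature of the JSJ decomposition. By Remark~\ref{rmk:TreeJSJ}, the JSJ trees $\Lambda$ of $W$ and $\Lambda'$ of $W'$ both have diameter~$4$, each with a unique central Type~$2$ vertex, corresponding to the non-branch orbifolds $\cA \subset \cO_\Gamma$ and $\cA' \subset \cO_{\Gamma'}$ whose vertex groups are $W_A$ and $W_{A'}$. The JSJ graphs of $\cX$ and $\cX'$ half-cover $\Lambda$ and $\Lambda'$ in the sense of Section~\ref{sec:half-coverings} (compare Remark~\ref{rem:jsj}), and $f$ induces an isomorphism between them. The hard part will be the following dichotomy, which I expect to be the main obstacle: I must modify $f$ by a finite sequence of homotopies and swaps (without changing its induced isomorphism on $\pi_1$) so that one of the following holds:
\begin{enumerate}
\item[(A)] $f(\pi^{-1}(\cA)) = \pi'^{-1}(\cA')$, or
\item[(B)] $f(\pi^{-1}(\cA))$ shares no surface with $\pi'^{-1}(\cA')$.
\end{enumerate}
The delicate point is that a generic $f$ produced by Lafont's theorem may send some connected surface lifts of $\cA$ to lifts of $\cA'$ and other such lifts to surfaces covering peripheral branch orbifolds. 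To resolve this, I plan to use the symmetry of the half-covering $\half(\Lambda)\to \Lambda$ from Section~\ref{sec:half-coverings} and the fact that the center of the diameter-$4$ tree is a canonical JSJ invariant, allowing two central lifts to be swapped with their peripheral counterparts in a controlled way.

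In Case~(A), I plan to derive condition~(2). Here $f$ restricts to a homeomorphism $\pi^{-1}(\cA)\to \pi'^{-1}(\cA')$, and its complement identifies the peripheral branch surface preimages on each side. Matching central boundary circles to peripheral boundary circles forces the local combinatorics of $\Lambda$ and $\Lambda'$ near the center to agree, so there is a single $r\geq 2$ such that every nontrivial $\Theta_i$ and $\Theta'_k$ has exactly $r$ branches, and the vectors $\{v_i\mid i\in I\}$ (resp.\ $\{v_k'\mid k\in I'\}$) are forced into a single commensurability class, giving (2)(a). Equating the Euler characteristics of the central surfaces on the two sides, and grouping the peripheral contributions column-by-column across $I$ and $I'$, will then produce the commensurability of $w$ and $w'$ required by (2)(b).

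In Case~(B), I plan to derive condition~(1). Each connected surface in $\pi^{-1}(\cA)$ is sent by $f$ to a surface covering some peripheral branch orbifold of $\cO_{\Gamma'}$, and by symmetry every connected surface in $\pi'^{-1}(\cA')$ is the $f$-image of a surface covering some peripheral branch orbifold of $\cO_\Gamma$. Since $f$ preserves boundary circles and the JSJ tree structure, the induced identifications at the level of branches respect the multiset structure of the Euler characteristic vectors: this yields the bijection of commensurability classes of $\{v_i\mid i\in I\}$ and $\{v_k'\mid k\in I'\}$ in (1)(a). Finally, for each such class $\cV$, equating the total Euler characteristic of the surfaces in $\cX$ mapping to branches whose vector lies in $\cV$ against the corresponding total in $\cX'$, and using the covering degrees $D$ and $D'$ together with the relations $\chi(\pi^{-1}(\cA)) = D\chi(W_A)$ and $\chi(\pi'^{-1}(\cA')) = D'\chi(W_{A'})$ to eliminate the degrees, will give the identity of (1)(b). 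Since at least one of the cases must occur, this completes the plan.
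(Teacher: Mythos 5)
There is a genuine gap, and it sits exactly where you say you expect the main obstacle to be. Your plan rests on the claim that $f$ can be modified ``by a finite sequence of homotopies and swaps (without changing its induced isomorphism on $\pi_1$)'' so that either (A) $f(\pi^{-1}(\cA)) = \pi'^{-1}(\cA')$ or (B) the two sets share no surface. This is not justified and is not how the difficulty is resolved. In the mixed case the paper does \emph{not} modify $f$ at all: it discards the covers and builds entirely new homeomorphic covers $\cM \to \cO_\G$ and $\cM' \to \cO_{\G'}$ of degrees $2m^2D$ and $2m^2D'$, by cutting out the closed pieces $\cZ$ along which the preimages of $\cA$ and $\cA'$ interleave and gluing in replacement surfaces produced via Lemma~\ref{lem:neumann} (Lemmas~\ref{lem:TA} and~\ref{lem:Tik}), after first establishing the Euler characteristic identities that make these replacements possible (Lemma~\ref{lem:equal-branching}, Proposition~\ref{prop:ww'-commensurable}, the coloring of sub-orbicomplexes, and Lemma~\ref{lem:epsilonik}). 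The reasons a swap cannot work are spelled out in Section~\ref{sec:Case3}: the pieces $Z_\cA$ and $Z_{\cA'}$ are never connected, the covering maps cannot be consistently redefined on them, the parity condition in Lemma~\ref{lem:neumann} forces doubling the degree, and making the new covering maps well defined near the branching curves forces passing to $m^2$ copies. Appealing to the symmetry of $\half(\Lambda) \to \Lambda$ and the canonicity of the central JSJ vertex does not produce any of this; the modified object is a new pair of covers (with some isomorphic finite-index subgroups), not the original $f$ up to homotopy.

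Separately, your assignment of conclusions to cases is reversed, and as written the derivations would fail. When $f(\pi^{-1}(\cA)) = \pi'^{-1}(\cA')$, central surfaces match central surfaces and each component over $\cO_i$ covers some $\cO'_k$ with $r'_k = r_i$; via Theorem~\ref{thm:GenTheta} this yields condition (1)(a), and the degree bookkeeping $D\chi(W_A) = D'\chi(W_{A'})$ together with the matching of the $\Theta$-parts yields (1)(b). You cannot derive condition (2) in that case: by Remark~\ref{rem:Conditions} there are commensurable pairs satisfying (1) but not (2), and these are realized precisely by homeomorphic covers in which central matches central (this is how sufficiency of (1) is proved). Symmetrically, the disjoint case is where one proves condition (2): there one first gets a common branch number $r$ and $w \sim w'$, and then shows each $v_i$ (and each $v'_k$) is commensurable to the truncation $\hat w_0$ of the common minimal vector; one cannot get the class-by-class matching of (1)(a) there, again by the examples in Figure~\ref{fig:Conditions} where (2) holds but (1) fails. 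So both halves of your dichotomy argue toward the wrong condition, in addition to the dichotomy itself being unestablished.
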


In Section~\ref{sec:nec_notation} we fix  notation, and we explain the three cases we will consider in Section~\ref{sec:cases}.  Case~1 is proved in Section~\ref{sec:Case1}.  We establish some results for both Cases 2 and 3 in Section~\ref{sec:prelims23}, then complete the proof of Case 2 in Section~\ref{sec:Case2} and that of Case 3 in Section~\ref{sec:Case3}.

\subsection{Notation}\label{sec:nec_notation}

Suppose $W$ and $W'$ as in Theorem~\ref{thm:CycleGenTheta} are commensurable.  We continue all notation from Section~\ref{sec:defs-statements}.  In addition, let $\cO_\G$ and $\cO_{\G'}$ be the orbicomplexes with fundamental groups $W = W_{\G}$ and $W' = W_{\G'}$, respectively, constructed in Section~\ref{sec:orbicomplex}, and let $\cA$ and $\cA'$ be the (unique) non-branch orbifolds in $\cO_\G$ and $\cO_{\G'}$, with fundamental groups $W_A$ and $W_{A'}$, respectively.  So $\cO_\G$ (respectively, $\cO_{\G'}$) is obtained by gluing certain branch orbifolds to $\cA$ (respectively, $\cA'$) along non-reflection edges.   For each $i \in I$ and $1 \leq j \leq r_i$, let $\cP_{ij}$ be the branch orbifold in $\cO_\G$ with fundamental group $W_{ij}:=W_{\beta_{ij}}$, and let $\cO_i$ be the sub-orbicomplex of $\cO_\G$ obtained by gluing together the $r_i$ orbifolds $\cP_{ij}$ along their non-reflection edge.  Then $\pi_1(\cO_i) = W_{\Theta_i}$ for each $i \in I$. Similarly define $\cP'_{kl}$, $W'_{kl}$, and $\cO'_k$ with $\pi_1(\cO'_k) = W_{\Theta'_k}$ for each $k \in I'$ and $1 \leq l \leq r_k'$.  We write $E_i$ for the $i$th \emph{branching edge} of $\cO_\G$, that is, the non-reflection edge along which $\cO_i$ is glued to $\cA$, and similarly write $E_k'$ for the $k$th branching edge of $\cO_{\G'}$.

If $r_i = r = r'_k$ for each $i \in I$ and each $k \in I'$, then for all $1 \leq j \leq r$ we define the subcomplex $\cR_j$ of $\cO_\G$ to be the disjoint union of the ``$j$th ring" of branch orbifolds $\{ \cP_{ij} \mid i \in I\}$ in $\cO_\G$, and define $\chi_j$ to be its Euler characteristic.   
  (Note that, by construction, if $i_1 \neq i_2 \in I$ then the underlying spaces of $\cP_{i_1j}$ and $\cP_{i_2 j}$ are disjoint polygons.)  
Thus $\chi_j = \chi(\cR_j) = \sum_{i \in I} \chi(P_{ij})  = \sum_{i \in I} \chi_{ij} $.
For each $1 \leq j \leq r$, we define $\cR'_j \subset \cO_{\G'}$ analogously, and put 
$  \chi'_j= \chi(\cR'_j)$.  

Let $\rho: \cX \to \cO_\G$ and $\rho':\cX'\to \cO_{\G'}$ be the degree 16 torsion-free covers from Section~\ref{sec:tfcover}.  Then $\pi_1(\cX)$ and $\pi_1(\cX')$ are commensurable as well.  Let $\eta:\cY \to \cX$ and $\eta':\cY' \to \cX'$ be covers corresponding to isomorphic finite-index subgroups of $\pi_1(\cX)$ and $\pi_1(\cX')$, and set $\pi = \eta \circ \rho$ and  
 $\pi' = \eta' \circ \rho'$.  Let $D$ and $D'$ be the degrees of the covering maps $\pi:\cY \to \cO_\G$
 and $\pi':\cY' \to \cO_{\G}$, respectively.   Finally, let $f:\cY \to \cY'$ be the homeomorphism guaranteed by Theorem~\ref{thm:Lafont}.
 
\subsection{Cases}\label{sec:cases} 
 
To prove Proposition~\ref{prop:ness-cond}, we consider three cases, by comparing the subsets $f(\pi^{-1} (\cA))$ and $ \pi'^{-1} (\cA')$ of the surface amalgam $\cY'$.  Note first that since $\cA$ (respectively, $\cA'$) contains every branching edge of $\cO_\G$ (respectively, $\cO_{\G'}$), the sets $f(\pi^{-1} (\cA))$ and $ \pi'^{-1} (\cA')$ will always have non-empty intersection containing all branching curves in $\cY'$.  We write $f(\pi^{-1} (\cA)) \intcap \pi'^{-1} (\cA') = \emptyset$ if the \emph{interiors} of $f(\pi^{-1} (\cA))$ and $ \pi'^{-1} (\cA')$ are disjoint, that is, these subsets of $\cY'$ have no surfaces in common, and we write $f(\pi^{-1} (\cA)) \intcap \pi'^{-1} (\cA')\neq \emptyset$ if the \emph{interiors} of $f(\pi^{-1} (\cA))$ and $\pi'^{-1} (\cA')$ are non-disjoint, that is, these subsets of $\cY'$ have at least one surface in common.  The cases we consider, and their consequences, are as follows:

\begin{enumerate}[{{Case} 1.}]
\item If $f(\pi^{-1} (\cA)) = \pi'^{-1} (\cA')$, we show that condition (1) holds. 
\item If $f(\pi^{-1} (\cA)) \stackrel{\circ}{\cap} \pi'^{-1} (\cA') = \emptyset$, we show that condition (2) holds.
\item If $f(\pi^{-1} (\cA)) \neq \pi'^{-1} (\cA')$ and $f(\pi^{-1} (\cA)) \intcap \pi'^{-1} (\cA')\neq \emptyset$, we construct new homeomorphic finite-sheeted covers of 
$\cO_{\G}$ and $\cO_{\G'}$ which satisfy Case 1.  It follows that condition (1) holds.  
\end{enumerate}

\subsection{Case 1}\label{sec:Case1}
  
In this case we prove:
\begin{prop}
If $f(\pi^{-1} (\cA)) = \pi'^{-1} (\cA')$ then condition (1) in Theorem~\ref{thm:CycleGenTheta} holds.  
\end{prop}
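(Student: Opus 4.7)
The plan is to exploit the hypothesis $f(\pi^{-1}(\cA)) = \pi'^{-1}(\cA')$ to pair up, via $f$, the connected components of $\pi^{-1}(\cO_i)$ (for $i \in I$) with those of $\pi'^{-1}(\cO'_k)$ (for $k \in I'$), and then to read off condition~(1) from this component-by-component matching. Each connected component $Y$ of $\pi^{-1}(\cO_i)$ is a connected finite-sheeted cover of $\cO_i$, and the analogous statement holds on the $\cY'$ side. Since $f$ preserves $\pi^{-1}(\cA)$ setwise, it restricts to a homeomorphism between $\cY \setminus \pi^{-1}(\cA)$ and $\cY' \setminus \pi'^{-1}(\cA')$, and therefore induces a bijection between these two families of components, pairing each $Y$ covering $\cO_i$ with a homeomorphic $f(Y)$ covering some $\cO'_{k(Y)}$ with $k(Y) \in I'$.

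To establish~(1)(a), I will argue that $v_i$ and $v'_{k(Y)}$ are commensurable whenever $f$ matches $Y$ over $\cO_i$ with $f(Y)$ over $\cO'_{k(Y)}$. When $r_i \geq 3$, the isomorphism $\pi_1(Y) \cong \pi_1(f(Y))$ certifies that $W_{\Theta_i}$ and $W_{\Theta'_{k(Y)}}$ are commensurable; in particular $r'_{k(Y)} \geq 3$, so Theorem~\ref{thm:GenTheta} directly yields commensurability of $v_i$ and $v'_{k(Y)}$. When $r_i = 2$, commensurability forces $r'_{k(Y)} = 2$ as well, and a direct Euler-characteristic accounting on the matched sub-surfaces of $Y$ and $f(Y)$---using that $f$ preserves the decomposition curves, which are branching curves of the ambient amalgams---supplies the same conclusion for the resulting pair of $2$-vectors. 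Running the same argument with $f^{-1}$ gives the reverse inclusion, so $\{[v_i] : i \in I\}$ and $\{[v'_k] : k \in I'\}$ coincide as sets of commensurability classes, establishing~(1)(a).

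To deduce~(1)(b), fix a commensurability class $\cV$. By~(1)(a), the bijection of components induced by $f$ restricts to a bijection between the union of components of $\pi^{-1}(\cO_i)$ for $i \in I_\cV$ and the union of components of $\pi'^{-1}(\cO'_k)$ for $k \in I'_\cV$. Since $\pi$ and $\pi'$ have degrees $D$ and $D'$ respectively, and $\chi(\cO_i) = \chi(W_{\Theta_i})$, equating total Euler characteristics on these matched unions yields
\[
D \sum_{i \in I_\cV} \chi(W_{\Theta_i}) \;=\; D' \sum_{k \in I'_\cV} \chi(W_{\Theta'_k}).
\]
Separately, the restriction of $f$ to the homeomorphism $\pi^{-1}(\cA) \to \pi'^{-1}(\cA')$ gives $D\,\chi(W_A) = D'\,\chi(W_{A'})$. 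Eliminating the ratio $D/D'$ between these two identities produces exactly the equation in~(1)(b).

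The main obstacle is the identification step of~(1)(a) when $r_i = 2$: Theorem~\ref{thm:GenTheta} does not apply to two-branch $\Theta$-graphs, so one must verify that the bipartite decomposition of $Y$ and $f(Y)$ into sub-surfaces covering individual branch orbifolds is preserved by $f$ rigidly enough to pair up branches of $v_i$ with branches of $v'_{k(Y)}$ and extract the desired proportionality. The remainder of the plan is essentially a bookkeeping of Euler characteristics.
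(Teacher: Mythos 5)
Your overall strategy is the same as the paper's: use the hypothesis to see that $f$ carries each component $\cS$ of $\pi^{-1}(\cO_i)$ to a cover of some $\cO'_k$ with $r'_k=r_i$, invoke Theorem~\ref{thm:GenTheta} to compare $v_i$ and $v'_k$, and then get (1)(b) from the Euler characteristic identities $D\sum\chi(W_{\Theta_i})=D'\sum\chi(W_{\Theta'_k})$ and $D\,\chi(W_A)=D'\,\chi(W_{A'})$ (your per-class bookkeeping for (1)(b) is fine, and if anything slightly more explicit than the paper's).

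The genuine gap is exactly the step you flag: the case $r_i=2$. A ``direct Euler-characteristic accounting'' on the matched subsurfaces of $\cS$ and $f(\cS)$ does not suffice. The homeomorphism need not pair subsurfaces type-to-type: a piece of $\cS$ covering $\cP_{i1}$ may land on a piece of $f(\cS)$ covering $\cP'_{k2}$, and summing Euler characteristics over the pieces only yields the single relation $d(\chi_{i1}+\chi_{i2})=d'(\chi'_{k1}+\chi'_{k2})$ (four unknown cross-terms, two equations per side), which is strictly weaker than commensurability of the $2$-vectors. Nor can you fall back on commensurability of the groups themselves: for $r=2$ the defining graph $\Theta_i$ is a cycle, so $W_{\Theta_i}$ is cocompact Fuchsian and any two such groups are abstractly commensurable regardless of their Euler characteristic vectors, so Theorem~\ref{thm:GenTheta} (which requires at least $3$ branches) has no analogue here without further input. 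The paper's resolution is a doubling trick: starting from the homeomorphism $\cS\to f(\cS)$, one glues in extra copies of the subsurfaces lying over a branch orbifold (the paper duplicates the pieces over $\cP_{i2}$ and $\cP'_{k2}$; duplicating the pieces over every branch works just as well and makes the extension of the homeomorphism to the augmented spaces transparent), thereby producing homeomorphic finite covers of the orbicomplexes of the $3$-convex generalized $\Theta$-graphs with branch multisets $\{\beta_{i1},\beta_{i2},\beta_{i2}\}$ and $\{\beta'_{k1},\beta'_{k2},\beta'_{k2}\}$. Now these have at least $3$ branches, so Theorem~\ref{thm:GenTheta} applies and gives commensurability of $(\chi_{i1},\chi_{i2},\chi_{i2})$ and $(\chi'_{k1},\chi'_{k2},\chi'_{k2})$, hence of $v_i$ and $v'_k$. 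Without some such device (or a from-scratch rerun of the Stark-type degree comparison adapted to two branches), your proof of (1)(a) is incomplete precisely where you anticipated trouble; with it, the rest of your argument goes through and matches the paper.
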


\begin{proof}
Given $i \in I$, consider one component $\cS$ of the preimage of $\cO_i$ in $\cY$.  The assumption that $f(\pi^{-1} (\cA)) = \pi'^{-1} (\cA')$
implies that $\pi'(f(\cS))$ cannot intersect the interior of $\cA'$, and so $f(\cS)$ must 
cover some orbicomplex $\mathcal{O}'_k$ in $\cO_{\G'}$ with $r_k' =r_i$. 
If $r_i\geq 3$, then the vectors $v_{i}$ and $v_k'$ are  commensurable by Theorem~\ref{thm:GenTheta} for 
generalized $\Theta$-graphs.  

If $r_i =2$ and $\Theta_i$ has branches $\beta_{i1}$ and $\beta_{i2}$, then form a new $\Theta$-graph $\Theta$ with branches $\beta_{i1}$ and two copies of $\beta_{i2}$, and do the same for $\Theta'_k$ to get $\Theta'$.  Now $W_\Theta$ and $W_{\Theta'}$ are commensurable, since we can construct homeomorphic covers of $\cO_\Theta$ and $\cO_{\Theta'}$ from $\cS$ and $f(\cS)$ by adding extra copies of the surfaces mapping to $\cP_{i2}$ and $\cP'_{k2}$, respectively.  It follows from Theorem~\ref{thm:GenTheta} that the vectors 
$(\chi_{i1}, \chi_{i2}, \chi_{i2})$ and 
$(\chi_{k1}', \chi_{k2}', \chi_{k2}')$ are commensurable, and so $v_i = (\chi_{i1}, \chi_{i2})$ and $v_k'=(\chi_{k1}', \chi_{k2}')$ are as well.

Applying the same argument with $f^{-1}$, we conclude that the sets of commensurability classes of the vectors $\{v_i \mid r_i \geq 2\}$ and $\{v_k' \mid r_k' \geq 2\}$ coincide, proving condition (1)(a). 

We now prove (1)(b).  It follows from the proof of (1)(a) that 
\[  f\left(\pi^{-1}\left( \bigcup_{i\in I} \cO_i\right)\right) = (\pi')^{-1}\left( \bigcup_{k\in I'}\cO'_k\right). \]
Now $\pi_1(\cO_i) = W_{\Theta_i}$ for each $i \in I$ and $\pi_1(\cO'_k) = W_{\Theta'_k}$ for each $k \in I'$.  As the degrees of $\pi$ and $\pi'$ are $D$ and $D'$ respectively, we deduce that
\begin{equation}\label{eq:full-preimage}
D\; \left( \displaystyle \sum_{i \in I} \chi(W_{\Theta_i})\right)=
D'\; \left( \displaystyle  \sum_{k \in I'} \chi(W_{\Theta'_k})\right). 
\end{equation}
Now $f(\pi^{-1}(\mathcal{A})) = (\pi')^{-1}(\mathcal{A}')$ by assumption, and $\pi_1(\cA) = W_A$ and $\pi_1(\cA') = W_{A'
}$, so we also have $D\cdot  \chi(W_A)=D' \cdot \chi(W_{A'})$.  This together with Equation~\eqref{eq:full-preimage} implies (1)(b). 
\end{proof}

\subsection{Results for both Cases 2 and 3}\label{sec:prelims23} 

In this section we establish some results which are relevant to both of the remaining cases.  From now on, we suppose that $f(\pi^{-1} (\cA)) \neq \pi'^{-1} (\cA')$.  We start by showing in Lemma~\ref{lem:equal-branching} that all nontrivial generalized $\Theta$-graphs in both $\G$ and $\G'$ have the same number of branches.  Next, in Proposition~\ref{prop:ww'-commensurable} we prove that the vectors $w$ and~$w'$ are commensurable, hence condition (2)(b) in Theorem~\ref{thm:CycleGenTheta} holds.  We then color certain sub-orbicomplexes of $\cO_\G$ and $\cO_{\G'}$, using the commensurability of $w$ and $w'$, and make some observations about this coloring in Remark~\ref{rem:colors-preserved} and Lemma~\ref{lem:orbifold-color}.  In Remark~\ref{remark:preimageAA'} we discuss the structure of the subset $\pi^{-1} (\cA) \cup f^{-1} (\pi'^{-1} (\cA'))$ of $\cY$, and then in Lemma~\ref{lem:epsilonik} we consider pre-images of branching edges in $\cO_\G$ and $\cO_{
\G'}$.

\begin{lemma}\label{lem:equal-branching}
If $f ( \pi^{-1} (\cA)) \neq \pi'^{-1} (\cA')$ then there exists $r \ge 2$ such that $r_i=r'_k=r$ 
for all $i \in I$ and all  $k \in I'$.  
\end{lemma}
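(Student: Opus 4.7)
The plan is to exploit the JSJ graph of the surface amalgam $\cY$ (respectively $\cY'$): Type~1 vertices are the branching circles, and a branching circle $C \subset \cY$ with $\pi(C) = E_i$ has valence $r_i + 1$ in this graph. This is because the cover $\cY \to \cO_\G$ is locally modeled on the base, and the edge $E_i$ is met by exactly $1 + r_i$ orbicomplex $2$-cells in $\cO_\G$ (namely $\cA$ together with the branch orbifolds $\cP_{i1}, \dots, \cP_{ir_i}$); the analogous statement holds for $\cY'$, $\pi'$, and $E'_k$. Since the JSJ decomposition of a surface amalgam is a topological invariant (cf.\ Remark~\ref{rem:jsj}), the homeomorphism $f$ induces an isomorphism of JSJ graphs, and in particular preserves Type~1 valences.

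Under the hypothesis $f(\pi^{-1}(\cA)) \neq \pi'^{-1}(\cA')$, after possibly swapping the roles of the primed and unprimed data, I may choose a surface component $\cS$ of $\pi^{-1}(\cA)$ with $f(\cS) \not\subseteq \pi'^{-1}(\cA')$. Since $\cY'$ decomposes as $\pi'^{-1}(\cA') \cup \bigcup_{k \in I'} \bigcup_l \pi'^{-1}(\cP'_{kl})$ and $f(\cS)$ is a single connected surface, we must have $f(\cS) \in \pi'^{-1}(\cP'_{kl})$ for some fixed $k \in I'$ and some $l \in \{1, \dots, r'_k\}$. The covering map $\cS \to \cA$ is surjective, so for each $i \in I$ there is a boundary circle $C_i \subset \cS$ with $\pi(C_i) = E_i$, whose valence is $r_i + 1$ by the previous paragraph. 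On the other hand $f(C_i)$ is a boundary circle of $f(\cS) \in \pi'^{-1}(\cP'_{kl})$; as $\cP'_{kl}$ has unique branching edge $E'_k$, the circle $f(C_i)$ projects via $\pi'$ to $E'_k$ and has valence $r'_k + 1$. Equating valences gives $r_i = r'_k$ for every $i \in I$; set $r := r'_k$. Since $I$ consists of indices with $r_i \geq 2$, we have $r \geq 2$.

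To finish, I would show $r'_{k'} = r$ for every $k' \in I'$. Pick any surface $\cS'_0 \in \pi'^{-1}(\cA')$ and examine $f^{-1}(\cS'_0) \subset \cY$. If $f^{-1}(\cS'_0) \in \pi^{-1}(\cA)$, applying the preceding valence argument to $f^{-1}$ and $\cS'_0$ gives $r'_{k'} = r$ for every $k' \in I'$, using the surjectivity of $\cS'_0 \to \cA'$. Otherwise $f^{-1}(\cS'_0) \in \pi^{-1}(\cP_{ij})$ for some $i \in I$ and some $j$, in which case every boundary circle of $f^{-1}(\cS'_0)$ projects to the unique branching edge $E_i$ and has valence $r_i + 1 = r + 1$; transporting back via $f$ and using surjectivity again forces $r'_{k'} + 1 = r + 1$ for every $k' \in I'$. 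The main point in both stages is simply the valence-preservation property of $f$; the only care required is in justifying that each branching circle's valence in $\cY$ or $\cY'$ really is governed by the number of orbicomplex $2$-cells meeting the corresponding edge in the base.
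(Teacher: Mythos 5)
Your proof is correct and rests on exactly the same ingredients as the paper's: a homeomorphism of surface amalgams preserves the branching degree of singular circles, a surface covering the central orbifold has boundary circles over every branching edge $E_i$ (valence $r_i+1$), and a surface covering a branch orbifold has all of its boundary over a single branching edge. The only difference is logical packaging --- you argue directly from a component of $\pi^{-1}(\cA)$ whose image escapes $\pi'^{-1}(\cA')$, whereas the paper proves the contrapositive (unequal branch numbers force $f(\pi^{-1}(\cA)) = \pi'^{-1}(\cA')$) --- so this is essentially the paper's argument.
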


\begin{proof}
We prove the contrapositive.  If the conclusion fails, then one of the following holds: 
\begin{enumerate}[(i)]
\item 
$r_{i_1} \neq r_{i_2}$ for some $r_{i_1}, r_{i_2} \in I$;
\item 
$r'_{k_1} \neq r'_{k_2}$ for some $r'_{k_1}, r'_{k_2} \in I'$; or
\item 
 $r_i=r$ 
for all $i \in I$ and $r'_k=s$ 
for all $k \in I'$, but $r\neq s$.  
\end{enumerate}
Suppose (i) holds. If $S$ is any component of $\pi^{-1}(\cA)$, then $S$ is incident to branching curves in $\cY$ 
with different 
branching degrees $r_{i_1} + 1$ and $r_{i_2} + 1$.   Then $\pi'(f(S))$ is an orbifold of $\cO_{\G'}$ which is incident to branching edges of at least two different degrees.  It follows that $\pi'(f(S)) = \cA'$ and~$\cA'$ has branching edges of at least two degrees.  Applying the same argument with $f^{-1}$, we see that 
$\pi(f^{-1}(T) )= \cA$ for each component $T$ of ${\pi'}^{-1}(\cA')$.  Thus 
$f(\pi^{-1}(\cA)) =\pi'^{-1}(\cA')$. The proof is identical if (ii) occurs. Finally, (iii) cannot occur, since the degree of branching is preserved by homeomorphisms.  
\end{proof}

We next extend the techniques used to show that the Euler characteristic vectors of generalized $\Theta$-graphs are commensurable (in the proof of Theorem~\ref{thm:GenTheta}) to prove the following.

\begin{prop}\label{prop:ww'-commensurable}
The vectors $w$ and $w'$  are commensurable.
\end{prop}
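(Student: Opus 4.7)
The plan is to exploit the homeomorphism $f:\cY \to \cY'$ of Theorem~\ref{thm:Lafont} together with the uniform $(r+1)$-valent branching guaranteed by Lemma~\ref{lem:equal-branching} to match up ring Euler characteristics directly. I will assign to each surface $S$ of $\cY$ a \emph{ring label} $\tau(S) \in \{0, 1, \ldots, r\}$ by setting $\tau(S) = 0$ if $S$ covers $\cA$, and $\tau(S) = j \ge 1$ if $S$ covers some $\cP_{ij}$; similarly define $\tau'$ on $\cY'$, and set $\chi_0 := \chi(W_A)$ and $\chi'_0 := \chi(W_{A'})$. Grouping surfaces by ring gives $\sum_{S:\, \tau(S)=j}\chi(S) = D\chi_j$ and $\sum_{T:\, \tau'(T)=l}\chi(T) = D'\chi'_l$ for each $j$ and $l$, since the surfaces of $\cY$ with $\tau = j$ are precisely the components of $\pi^{-1}(\cR_j)$ (with $\cR_0 := \cA$), and analogously on the primed side.

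The key local observation I will use is that at every branching curve $\gamma$ of $\cY$ lying over some $E_i$, the $r+1$ surfaces incident to $\gamma$ realize each value of $\tau$ exactly once, because the orbifolds of $\cO_\G$ meeting $E_i$ are precisely $\cA, \cP_{i1}, \ldots, \cP_{ir}$---one from each ring. The same holds in $\cY'$, so $f$ induces at each $\gamma$ a local permutation $\sigma_\gamma$ of $\{0, 1, \ldots, r\}$ defined by $\sigma_\gamma(\tau(S)) = \tau'(f(S))$. The heart of the argument will be to show that all the $\sigma_\gamma$ coincide with a single global permutation $\sigma$. Granting this, the short computation
\[ D\chi_j \;=\; \sum_{S:\, \tau(S)=j}\chi(S) \;=\; \sum_{S:\, \tau(S)=j}\chi(f(S)) \;=\; \sum_{T:\, \tau'(T)=\sigma(j)}\chi(T) \;=\; D'\chi'_{\sigma(j)} \]
for each $j \in \{0, 1, \ldots, r\}$ shows the multisets $\{D\chi_j\}_{j=0}^{r}$ and $\{D'\chi'_l\}_{l=0}^{r}$ coincide, so $Dw = D'w'$ after sorting and $w, w'$ are commensurable.

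The main obstacle will be the global consistency of the local permutations $\sigma_\gamma$. Two branching curves $\gamma, \delta$ sharing a common incident surface $S_0$ immediately satisfy $\sigma_\gamma(\tau(S_0)) = \sigma_\delta(\tau(S_0)) = \tau'(f(S_0))$, but this only pins down one of the $r+1$ inputs. I plan to propagate agreement through the connected JSJ graph of $\cY$, exploiting that each component of $\pi^{-1}(\cA)$ borders many branching curves (since $\cA$ is incident to every branching edge of $\cO_\G$) and carries a single image ring label $\tau'(f(\cdot))$ across all of its boundary curves. Walking the JSJ graph along these $\cA$-sheets together with branch orbifold sheets, and supplementing with the global identity $\chi(\cY) = \chi(\cY')$ to rule out degenerate matrices of pair counts $(\tau, \tau'\!\circ f)$, should force the $\sigma_\gamma$ to agree at every branching curve and thus complete the proof.
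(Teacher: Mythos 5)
Your local observation is correct and is also implicitly used in the paper: since $\pi$ and $\pi'$ are covering maps, each branching curve of $\cY$ is incident to exactly $r+1$ surfaces, one lying over $\cA$ and one over each $\cP_{i1},\dots,\cP_{ir}$, and the homeomorphism $f$ therefore induces a bijection of ring labels at every branching curve. The gap is the globalization step, which is not just unproven but false in the setting where this proposition is needed. Section~\ref{sec:prelims23} only assumes $f(\pi^{-1}(\cA)) \neq \pi'^{-1}(\cA')$; in Case 3 one has $f(\pi^{-1}(\cA)) \intcap \pi'^{-1}(\cA') \neq \emptyset$ as well, so some components of $\pi^{-1}(\cA)$ are sent by $f$ to surfaces over $\cA'$ (forcing $\sigma_\gamma(0)=0$ on their boundary curves) while others are not (forcing $\sigma_\gamma(0)\neq 0$ elsewhere). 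Even in Case 2, the image $f(\pi^{-1}(\cR_j))$ of a single ring typically spreads over $\pi'^{-1}(\cA')$ and over several rings of $\cO_{\G'}$ --- handling exactly this spreading is the whole point of the later argument for condition (2)(a). So no single global permutation $\sigma$ exists. Without it, your data reduce to a ``transport matrix'' $\chi_{j,l}=\sum\{\chi(S) : \tau(S)=j,\ \tau'(f(S))=l\}$ whose row sums are $D\chi_j$ and column sums are $D'\chi'_l$; equality of the sorted row and column sums does not follow from local bijectivity, and the single identity $\chi(\cY)=\chi(\cY')$ you invoke as a supplement is one linear equation, far too weak to force it.

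What the paper does instead is an extremal, greedy matching that you would need some version of. Order the entries of $w$ and $w'$, assume without loss of generality $Dw_1 \geq D'w'_1$ (with a convention on multiplicities $s$ and $t$ of the top value), and let $\cT_1$ be the sub-orbicomplex realizing the top entry. Decompose $f(\pi^{-1}(\cT_1))$ into the part $\cS_{\cA'}$ covering $\cA'$, of some degree $D''$, and parts $\cS_k$ covering branch orbifolds of $\cO'_k$; since $\cS_{\cA'}$ meets each $\pi'^{-1}(E'_k)$ in degree $D''$, the degrees of the pieces of $\cS_k$ sum to $D'-D''$ for every $k$, and because every orbifold has Euler characteristic at most $w'_1$ per unit covering degree, one gets $Dw_1 \leq D''w'_1 + (D'-D'')w'_1 = D'w'_1$, hence equality. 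The multiplicities are then matched by your own local observation (each branching curve of $\cY'$ meets exactly $s$ surfaces from the top block), giving $t \geq s$, and one inducts on the remaining, strictly smaller entries. Some argument of this kind --- tracking degrees of $\pi'$ restricted to the image pieces and exploiting extremality of the largest entry --- is what is missing from your outline; the global-permutation route cannot be repaired as stated.
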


\begin{proof}
Let $w = (w_{1}, \ldots, w_{r+1})$ and $w' = (w'_{1}, \ldots, w'_{r+1})$, and suppose
$$w_{1} = \dots = w_{s} > w_{s+1} \geq \dots \geq w_{r+1}, \quad \text{ and } \quad
 w'_{1} = \dots = w'_{t} > w'_{t+1} \geq \dots \geq w'_{r+1}.$$
Without loss of generality, we may assume that $D w_{1} \geq D' w'_{1}$, and if $D w_{1} = D'w'_{1}$ then $ t \leq s$.     
For each $1 \le u \le r+1$, let 
$\cT_u \subset \cO_\G$ be the sub-orbicomplex corresponding to the entry $w_{u}$ of $w$. That is, if $w_u = \chi(W_A)$ then $\cT_u = \cA$, and if $w_u = \chi_j$ then $\cT_u=\cR_j$.
Similarly, let $\cT'_{u} \subset \cO_{\G'}$ be the sub-orbicomplex corresponding to the entry $w'_{u}$ of $w'$.  Note that $f(\pi^{-1}(\cT_u)) \subset \cY'$ is a disjoint collection of connected surfaces with boundary, 
and the set of boundary curves of these surfaces is exactly the set of branching curves of $\cY'$.  
  
 We now partition $f(\pi^{-1}(\cT_1)) \subset \cY'$  as follows:
   \begin{itemize}
    \item $\mathcal{S}_{\cA'}$ is the union  of the connected surfaces in $f(\pi^{-1}(\cT_1))$ which cover~$\cA'$. 
   
    \item For each $k \in I'$, $\cS_k$ is the union of the connected surfaces in $f(\pi^{-1}(\cT_1))$ which cover a branch orbifold in $\cO_{k}'$.  
\end{itemize}

Suppose $\mathcal{S}_{\cA'}$ forms a cover of degree $D'' \leq D'$ of $\mathcal{A}'$.  This includes the possibility that $\cS_{\cA'}$ is empty, in which case we put $D'' = 0$.  Then $(\pi')^{-1}(E_k') \cap \mathcal{S}_{\cA'}$ covers the branching edge $E_k'$ by degree~$D''$ as well, for each $k \in I'$. 
 For each component surface $S$ of a collection $\cS_k$, let $d(S)$ be the degree of $\pi'$ restricted to $S$.   Since $(\pi')^{-1}(E_k')$ covers $E_k'$ by degree $D'$, it follows that 
  $\displaystyle \sum_{S \subset \cS_k}d(S) = D' - D''$ for all $k \in I'$.   So, 
\begin{equation*}
\begin{split}
Dw_{1} 
&= \chi(\pi^{-1}(\cT_1)) 
= \chi(f(\pi^{-1}(\cT_1))) 
= \chi(\mathcal{S}_{\cA'}) + \sum_{k \in I'}\chi(\mathcal{S}_k)  \\
&=D'' \; \chi({\cA'}) + \sum_{k \in I'} \left(\sum_{S \subset \cS_k}d(S) \;\chi(\mathcal{P}_{kl_S}')\right) \mbox{ where $S$ covers $\cP'_{kl_S}$}\\
&\leq   D''w'_{1} + \sum_{k \in I'} \chi(\mathcal{P}_{k1}') \left( \sum_{S \subset \cS_k}d(S) \right)
\leq  D''w'_{1} + (D' - D'')w'_{1}
= D'w'_{1}.
   \end{split}
\end{equation*}

By our assumption, $Dw_{1} \geq D'w'_{1}$.  Thus we conclude that $Dw_{1} = D'w'_{1}$.  Now, each branching curve in $\cY'$ is incident to exactly $s$ connected surfaces in $f(\pi^{-1}(\cT_1)) \cup 
\dots \cup f(\pi^{-1}(\cT_s))$.  It follows that $\pi'(f(\pi^{-1}(\cT_1)) \cup \ldots \cup f(\pi^{-1}(\cT_s)))$ must have in its image at least $s$ orbifolds in $\{\cT_1', \ldots, \cT_{r+1}'\}$, and therefore $t \geq s$. Thus we have $Dw_i = D'w_i'$ for $1 \leq i \leq s=t$. Moreover, $\bigcup_{i=1}^s\pi^{-1}(\cT_i)  =\bigcup_{i=1}^s\pi'^{-1}(\cT_i')$. Now the above argument can be repeated (at 
most finitely many times) with the remaining sub-orbicomplexes of $\cO_\G$ and $\cO_{\G'}$ which are of strictly smaller 
Euler characteristic, proving the claim. 
\end{proof}

The next step in both Cases 2 and 3 is to color certain sub-orbicomplexes of $\cO_\G$ and $\cO_{\G'}$, as follows.  
Let $w_0 \in \Z^{r+1}$ be the minimal integral element in the commensurability class of $w$ and  
$w'$, so that $w = Rw_0$ and $w' = R'w_0$.  Let $C= \{c_1, \dots, c_n\}$ be the set of distinct 
values occurring in $w_0$, and assume that $c_1> \dots > c_n$, so that 
$w_0 = (c_1, \dots, c_1, c_2, \dots, c_2, \dots, c_n, \dots, c_n)$.  We call the elements of the set $C$ the \emph{colors}.   
Let $\cT_u$ and $\cT'_u$ be defined as in the proof  of Proposition~\ref{prop:ww'-commensurable} for $1 \le u \le r$. 
 We now color $\cO_\G$ and $\cO_{\G'}$ so that $\cT_u$ has color $c \in C$ 
 if $w_{u} = Rc$ and $\cT_u'$ has color $c\in C$ if $w'_{u} = R'c$.  Note that for each color $c$, there is an $m = m_c$ so that each branching edge $E_i$ (respectively, $E'_k$) is incident to $m$ branch orbifolds $\cP_{ij}$ (respectively, $\cP'_{kl}$) of color $c$.

\begin{remark}\label{rem:colors-preserved}  
 (The map $f$ preserves colors.)  The following equation is an easy consequence of the proof of 
 Proposition~\ref{prop:ww'-commensurable}:
 $$
f\left(\pi^{-1}\left(\{\cT_u \mid \cT_u \subset \cO_\G \text{ has color } c\}\right)\right) = \pi'^{-1}\left(\{\cT_u' \mid \cT_u' \subset \cO_{\G'} \text{ has color } c\}\right).$$
  \end{remark}

It also follows that orbifolds of the same color which are attached to a single edge in $\cO_\G$ (respectively, $\cO_{\G'}$)  
 are identical: 
  
\begin{lemma}\label{lem:orbifold-color}
For $i \in I$, if
$\mathcal{P}_{i{j_1}}$ and $\mathcal{P}_{i{j_2}}$ have the same color $c$, then $\chi_{i{j_1}} = \chi_{i{j_2}}$, and $\mathcal{P}_{i{j_1}}$ and $\mathcal{P}_{i{j_2}}$ are therefore identical orbifolds.  
The analogous statement holds for $\mathcal{P}'_{k{l_1}}$ and $\mathcal{P}'_{k{l_2}}$ of  the same color, where $k \in I'$.
\end{lemma}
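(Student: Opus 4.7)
The plan is to use the non-increasing ordering of branches within each $\Theta_i$ to promote an equality of sums of Euler characteristics to the desired pointwise equality; the coloring convention set up just before the lemma is what supplies the equality of sums.

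I would begin by unpacking the coloring. By construction, $\cR_{j_1}$ and $\cR_{j_2}$ are assigned the same color $c$ precisely when $\chi(\cR_{j_1}) = Rc = \chi(\cR_{j_2})$, where $R$ is the scalar satisfying $w = Rw_0$. Since $\chi(\cR_j) = \sum_{i \in I}\chi_{ij}$, this immediately yields
$$\sum_{i \in I}\chi_{ij_1} \;=\; \sum_{i \in I}\chi_{ij_2}.$$

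Next, I would invoke the indexing convention from Definition~\ref{def:theta-euler}: within each nontrivial $\Theta_i$, the branches are ordered so that $\chi_{i1} \geq \chi_{i2} \geq \dots \geq \chi_{ir}$. Assuming without loss of generality that $j_1 < j_2$, this gives the termwise inequality $\chi_{ij_1} \geq \chi_{ij_2}$ for every $i \in I$. Combined with the equality of the two sums above, this forces equality term by term, so $\chi_{ij_1} = \chi_{ij_2}$ for each individual $i \in I$, as required.

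Finally, I would recall from Section~\ref{sec:Pbeta} that the orbifold $\cP_{ij}$ is built over a right-angled $(n_{\beta_{ij}}+1)$-gon, with $\chi_{ij} = (3-n_{\beta_{ij}})/4$. Since this relation is injective in $n_{\beta_{ij}}$, the Euler characteristic determines the underlying polygon and, along with the edge- and vertex-group data prescribed in Section~\ref{sec:Pbeta}, the entire orbifold up to isomorphism; hence $\chi_{ij_1} = \chi_{ij_2}$ implies $\cP_{ij_1}$ and $\cP_{ij_2}$ are identical orbifolds. The argument for $\cP'_{kl_1}$ and $\cP'_{kl_2}$ in $\cO_{\G'}$ is verbatim the same, with $R$ replaced by $R'$. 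No real obstacle arises here: the substantive work was done in Proposition~\ref{prop:ww'-commensurable} to secure the commensurability of $w$ and $w'$ and thereby the scalar $R$, and this lemma is essentially a bookkeeping consequence of the branch-ordering convention.
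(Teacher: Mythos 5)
Your proposal is correct and follows essentially the same route as the paper: the coloring gives $\sum_{u\in I}\chi_{uj_1}=Rc=\sum_{u\in I}\chi_{uj_2}$, and the non-increasing ordering of the $\chi_{ij}$ within each $\Theta_i$ turns this equality of sums into the termwise equality $\chi_{ij_1}=\chi_{ij_2}$ (the paper phrases this as a contradiction from a strict inequality, you argue it directly, which is the same idea). Your extra remark that equal Euler characteristics force identical branch orbifolds, via $\chi_{ij}=(3-n)/4$ determining the underlying polygon, is a harmless elaboration of what the paper leaves implicit.
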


\begin{proof}
Let $i \in I$.  If the conclusion fails, we may assume that $\chi_{i{j_1}} > \chi_{i{j_2}}$.  
Then by our assumption that  $\chi_{uj} \ge \chi_{u j'}$ whenever  $j < j'$, we see that $j_1 < j_2$, and 
$\sum_{u \in I} \chi_{u{j_1}} > \sum_{u \in I} \chi_{u{j_2}} $.  This is a contradiction as both of these sums are equal to $Rc$.
Thus $\chi_{i{j_1}} = \chi_{i{j_2}}$, and $\mathcal{P}_{i{j_1}}$ and $\mathcal{P}_{i{j_2}}$ are identical orbifolds. The proof of the second sentence is identical.  
\end{proof}

For the proofs of Cases 2 and 3,
it will be important to understand the structure of the subset $\pi^{-1} (\cA) \cup f^{-1} (\pi'^{-1} (\cA'))$ of $\cY$, which is described in the following remark. 

\begin{remark}\label{remark:preimageAA'}
(Structure of $\pi^{-1} (\cA) \cup f^{-1} (\pi'^{-1} (\cA'))$).  
Each of $\pi^{-1} (\cA)$ and $f^{-1} (\pi'^{-1} (\cA'))$ is a disjoint union of (connected) surfaces in $\cY$.
Let $\cW$ be the collection of surfaces in $\cY$ which are in both these sets, that is, $\cW$ is the collection of surfaces in $\pi^{-1} (\cA) \cap f^{-1} (\pi'^{-1} (\cA'))$.  

 Now if $S$ is a surface in $\pi^{-1}(\cA) \setminus \cW$, then since $\pi'(f(S)) \neq \cA'$, for each boundary component of $S$ there is necessarily a surface in $f^{-1}(\pi'^{-1}(\cA'))$ incident to this boundary component.  If $S'$ is one of these, then $\pi(S') \neq \cA$, since $S$ and $S'$ share a boundary curve and $S$ is in $ \pi^{-1}(\cA)$.  Hence  
every other boundary component of $S'$ also borders a component of $\pi^{-1}(\cA)$.  Continuing in this way, we see that the connected component of $\pi^{-1} (\cA) \cup f^{-1} (\pi'^{-1} (\cA'))$ which contains $S$ has no boundary.  

Thus there is a decomposition 
$\pi^{-1} (\cA) \cup f^{-1} (\pi'^{-1} (\cA')) = \cW \sqcup \cZ$, where $\cZ$ is a disjoint union of closed surfaces.  Moreover, given a component  $Z$ of  $\cZ$, the branching curves on $Z$  
 partition it as 
$Z= Z_\cA \cup Z_{\cA'}$, 
where $Z_\cA$ and 
$Z_\cA'$ are the subsets of $Z$ consisting of surfaces in $\cY$ such that 
\[ 
\pi(Z_\cA) = \cA, \quad \pi(Z_{\cA'}) \subset \cup_{i \in I}\cO_i, \quad \pi'(f(Z_{\cA'}))=  \cA', \quad\mbox{and}\quad \pi'(f(Z_\cA)) \subset \cup_{k\in I'}\cO_k'.
\]
Observe that $Z_{\cA'}$ contains at least one surface mapping into $\cO_i$ for each $i\in I$, so 
$\cZ_{\cA'}$, and similarly $\cZ_{\cA}$, is necessarily disconnected.  
We denote the set of branching curves of a component $Z$ of $\cZ$
 by the (slightly counterintuitive) notation $\partial Z$.  Then by the description above,  
$Z_\cA$ and $Z_{\cA'}$ intersect in exactly in $\partial Z$.  So we have that 
$\partial Z = \partial Z_\cA = \partial Z_{\cA'}$.  It follows that  the degree of $\pi$ restricted to 
$Z_\cA$, the degree of $\pi$ restricted to $Z_{\cA'}$, and the degree of $\pi$ restricted to $\partial Z$ are all equal. 
\end{remark}

Observe that for any component $Z$ of $\cZ$, the collection $\partial Z$ intersects $\pi^{-1}(E_i)$ for each branching edge $E_i$ of $\cO_\G$, as well as $f^{-1}({\pi'}^{-1}(E_k'))$ for each branching edge 
$E_k'$ of $\cO_{\G'}$.
In what follows it will be necessary to consider the following subset 
of $\partial Z$.

\begin{definition}\label{def:epsilonik}
Let $Z$ be a component of $\cZ$.  
For $i \in I$ and $k \in I'$, let $\epsilon_{ik}= \epsilon_{ik}(Z)$ be the collection of branching curves in $\partial Z$ which map to $E_i$ under $\pi$ and to $E_k'$ under 
$\pi'\circ f$.  That is, $\epsilon_{ik} = \pi^{-1}(E_i) \cap f^{-1}(\pi'^{-1}(E_k')) \cap \partial Z$. 
\end{definition}

The degrees of $\pi$ and $\pi'$ restricted to $\epsilon_{ik}$ satisfy the following useful property.

\begin{lemma}\label{lem:epsilonik}
Let $Z$ be a component of $\cZ$,
and let 
 $\epsilon_{ik}$ be the curves from Definition~\ref{def:epsilonik}.
 Then for each $i \in I$ and $k \in I'$, the degree of the map $\pi: \epsilon_{ik} \to E_i$ is a number $\delta_k= \delta_k(Z)$ which depends on~$k$ but not on $i$, and the degree of the map $\pi': f(\epsilon_{ik}) \to E_k'$ is a number $\delta_i' = \delta_i'(Z)$ which depends on $i$ but not on $k$.  

Moreover, if $d$ and $d'$ are the degrees of $\pi$ and $\pi'$ restricted to $\partial Z$ and $f(\partial Z)$ respectively, then 
we have the following equations:
\begin{equation} \label{eq:degree sum}
\sum_{k \in I'} \delta_k = d 
\text{ and }
\sum_{i \in I} \delta_i' = d' .
\end{equation}
\end{lemma}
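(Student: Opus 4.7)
The overall plan is to decompose $Z_\cA$ (and, symmetrically, $f(Z_{\cA'})$) into its connected components, and to exploit the observation that each such component is a single connected surface whose image under the \emph{opposite} projection is contained in a single branch orbifold.  This connectedness ``locks in'' one of the two indices parametrizing the subsets $\epsilon_{ik}$: a component of $Z_\cA$ will contribute only to $\epsilon_{ik}$ for a single value of $k$, while a component of $f(Z_{\cA'})$ will contribute only to $\epsilon_{ik}$ for a single value of $i$.

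First I enumerate the connected components $T_1, T_2, \dots$ of $Z_\cA$.  Each $T_m$ is a single surface in $\cY$ covering $\cA$ with some degree $e_m := \deg(\pi|_{T_m})$.  Since $\pi'(f(Z_\cA)) \subset \bigcup_{k\in I'} \cO'_k$ and $f(T_m)$ is connected, $f(T_m)$ must cover a single branch orbifold $\cP'_{k_m l_m}$; hence $\partial T_m = f^{-1}(\partial f(T_m)) \subset f^{-1}(\pi'^{-1}(E'_{k_m}))$.  On the other hand, because $E_i$ is a non-reflection boundary edge of $\cA$, the subset $\partial T_m \cap \pi^{-1}(E_i)$ covers $E_i$ with total degree $e_m$ for every $i \in I$.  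Next I use the fact that each branching curve in $\partial Z$ is a boundary circle of exactly one component of $Z_\cA$ (the unique $\cA$-covering surface of $\cY$ adjacent to it, which lies in $Z_\cA$ because the curve itself lies in $Z_\cA \cap Z_{\cA'} = \partial Z$) to conclude that $\partial Z = \bigsqcup_m \partial T_m$ and $\epsilon_{ik} = \bigsqcup_{m : k_m = k}(\partial T_m \cap \pi^{-1}(E_i))$.  Summing degrees then gives
$$\deg(\pi|_{\epsilon_{ik}}) = \sum_{m : k_m = k} e_m,$$
a quantity depending only on $k$; set $\delta_k$ equal to the right-hand side.  Summing further over $k$ collapses the indexing to all components of $Z_\cA$, yielding $\sum_k \delta_k = \sum_m e_m = \deg(\pi|_{Z_\cA}) = d$ by Remark~\ref{remark:preimageAA'}.

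The assertions about $\delta_i'$ and the second equation in \eqref{eq:degree sum} follow by the entirely symmetric argument: decompose $f(Z_{\cA'})$ into its connected components $S'_1, S'_2, \dots$, each a single surface in $\cY'$ covering $\cA'$ with some degree $e'_n$ whose preimage $f^{-1}(S'_n)$ is a single connected surface covering a single branch orbifold $\cP_{i_n j_n}$, so that $\partial S'_n \subset f(\pi^{-1}(E_{i_n}))$.  This yields $\delta_i' = \sum_{n : i_n = i} e'_n$, independent of $k$, and $\sum_i \delta_i' = d'$.  The only input beyond Remark~\ref{remark:preimageAA'} itself is its symmetric analog $\deg(\pi'|_{f(Z_{\cA'})}) = d'$, obtained by rerunning the argument of the Remark with the roles of $(\cY, \cO_\G, \pi)$ and $(\cY', \cO_{\G'}, \pi')$ exchanged via the homeomorphism $f^{-1}$.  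I anticipate no significant obstacle here: the content is essentially combinatorial bookkeeping, and the one conceptual step is the recognition that connectedness of each component forces one index to be constant across all branching curves it contributes to $\partial Z$.
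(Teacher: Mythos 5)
Your proof is correct and follows essentially the same route as the paper: your grouping of the components $T_m$ of $Z_\cA$ by the index $k_m$ with $\pi'(f(T_m))\subset \cO'_{k_m}$ is exactly the paper's partition of $Z_\cA$ into the (possibly disconnected) pieces $\cS_k$, and both arguments then read off $\delta_k$ as the total degree of $\pi$ on that piece over $\cA$, hence over each $E_i$. The extra facts you invoke (uniqueness of the $\cA$-covering surface at each branching curve, and $\deg(\pi|_{Z_\cA})=d$) are already available from Remark~\ref{remark:preimageAA'}, so there is no gap.
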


\begin{proof}
We write $Z= Z_{\cA}\cup Z_{\cA'}$ as in Remark~\ref{remark:preimageAA'}. By definition, no component of 
$Z_\cA$ maps to $\cA'$, so we partition $Z_\cA$ into  a disjoint collection of possibly disconnected surfaces $\cS_k$ so that $\pi'(f(\cS_k)) $ is contained in $\cO'_k$ for each $k \in I'$.   Observe that $\partial \cS_k$ contains all curves of $\partial Z = \partial Z_\cA$ which map to  $E_k'$ under 
$\pi' \circ f$.  Thus $\partial \cS_{k} = 
\cup_{i \in I} \epsilon_{ik}$. 

On the other hand, $\pi$ maps $\cS_k$ (and hence $\partial \cS_k$) to $\cA$ with some degree, say $\delta_k$, which depends on $k$.  
Then for any $i$, the edge $E_i$ has exactly $\delta_k$ lifts in $\cS_k$, or equivalently, the degree of $\pi$ restricted to $\epsilon_{ik}$ is $\delta_k$, which is independent of~$i$.  

The left equation in \eqref{eq:degree sum} follows from the fact that $\partial Z = 
\cup_{k \in I'}
\partial \cS_k$, so the degree $d$ of $\pi$ restricted to $\partial Z$ is the sum of the degrees of $\pi$ restricted to $\partial S_k$ over all $k \in I'$.
The corresponding statements about $\delta_i'$ are proved similarly. 
\end{proof}

\subsection{Case 2}\label{sec:Case2} 

We now complete the proof of our necessary conditions in the case that $$f(\pi^{-1} (\cA)) \intcap \pi'^{-1} (\cA') = \emptyset.$$  Lemma~\ref{lem:equal-branching} establishes the first sentence of (2) in Theorem~\ref{thm:CycleGenTheta} and Proposition~\ref{prop:ww'-commensurable} proves (2)(b).  Thus it remains to show:

\begin{prop}
If $f(\pi^{-1} (\cA)) \intcap \pi'^{-1} (\cA')= \emptyset$ then condition (2)(a) holds.  
\end{prop}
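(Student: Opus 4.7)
My plan is to prove that the vectors $v_i$ for $i\in I$ are pairwise commensurable; applying the symmetric argument with $f^{-1}$ will then yield the analogous statement for the $v_k'$.

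The starting observation is that under the Case~2 hypothesis, each component $Z$ of $\cZ$ is in fact a disjoint union of closed $2$-manifolds (rather than a genuine surface amalgam with higher-valence branching curves). Indeed, at each branching curve $\gamma \in \partial Z$ with $\pi(\gamma) \in E_i$, the $r+1$ surfaces of $\cY$ incident to $\gamma$ split into one ``type-(a)'' surface (in $\pi^{-1}(\cA)$), exactly one ``type-(b)'' surface (whose $f$-image covers $\cA'$), and $r-1$ ``type-(c)'' surfaces (whose $f$-images cover some $\cP'_{kl}$). Only the type-(a) and type-(b) surfaces lie in $Z$, so $\gamma$ has valence exactly $2$ in $Z$.

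Consequently, $Z_{\cA'}$ decomposes as a disjoint union $\bigsqcup_{i,j} T_{ij}$ of surfaces with boundary, no two of which share a branching curve, where each $T_{ij} = Z_{\cA'} \cap \pi^{-1}(\cP_{ij})$ covers $\cP_{ij}$ with some degree $d_{ij}$ via $\pi$ and covers $\cA'$ with some degree $e_{ij}$ via $\pi'\circ f$. The Euler characteristic identity $d_{ij}\chi_{ij} = e_{ij}\chi(W_{A'})$ for each $(i, j)$, coupled with $\sum_j d_{ij} = d$ (the degree of $\pi|_{Z_\cA}$ on $\cA$, independent of $i$) and the identity $\sum_j \delta_{ij,k} = \delta_k$ from Lemma~\ref{lem:epsilonik}, gives a system of rational equations relating the entries of the various $v_i$ within a single $Z$.

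The main obstacle will be promoting these pointwise constraints to a global proportionality of the $v_i$'s. I would sum the above identities over all components $Z$ of $\cZ$ and combine them with three further inputs: Remark~\ref{rem:colors-preserved} (the homeomorphism $f$ preserves the coloring), Lemma~\ref{lem:orbifold-color} (entries of $v_i$ at indices corresponding to rings of the same color coincide), and the commensurability $w \sim w'$ of Proposition~\ref{prop:ww'-commensurable}, which induces a bijection between the color classes appearing in $\cO_\G$ and in $\cO_{\G'}$. Careful bookkeeping of the multiplicities $d_{ij}$ and $e_{ij}$ within each color class, together with the fact that the ratio $\chi(W_{A'})/d$ in $\chi_{ij} = (e_{ij}/d_{ij})\chi(W_{A'})$ is independent of $i$, should pin down the ratios of the $\chi_{ij}$'s across different $i$'s and yield that every pair $v_{i_1}, v_{i_2}$ is commensurable, completing the proof of condition (2)(a) in this case.
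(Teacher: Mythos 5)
Your structural observations are sound as far as they go: in Case 2 each branching curve of $\cY$ meets exactly one surface of $\pi^{-1}(\cA)$ and exactly one surface of $f^{-1}(\pi'^{-1}(\cA'))$, so each component $Z$ of $\cZ$ is a closed surface (this is essentially Remark~\ref{remark:preimageAA'}), and the identities $d_{ij}\chi_{ij}=e_{ij}\chi(W_{A'})$ and $\sum_j d_{ij}=d$ are correct. The gap is in what these equations can possibly yield. By Remark~\ref{rem:colors-preserved}, $f^{-1}(\pi'^{-1}(\cA'))$ lies over the color class of $\cA'$, so $Z_{\cA'}$ only covers those branch orbifolds $\cP_{ij}$ whose color equals the color $a$ of $\cA'$; for every other $j$ one has $Z_{\cA'}\cap\pi^{-1}(\cP_{ij})=\emptyset$, i.e.\ $d_{ij}=e_{ij}=0$, and your system is silent. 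Moreover, within the color class of $\cA'$ the entries $\chi_{ij}$ (for fixed $i$) are already equal by Lemma~\ref{lem:orbifold-color}. Thus your equations constrain only one color block of each $v_i$, while commensurability of $v_{i_1}$ and $v_{i_2}$ requires comparing entries across different color classes; no bookkeeping of the unknown degrees $d_{ij},e_{ij}$ can recover those cross-color ratios, and appealing to $w\sim w'$ does not help, since $w$ only records the sums $\sum_i\chi_{ij}$, not the individual vectors.

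The missing idea is to analyze, for every $j$, the full preimage $f(\pi^{-1}(\cP_{ij}))$, including the surfaces whose $f$-images cover branch orbifolds $\cP'_{kl}$ rather than $\cA'$. Since $\pi^{-1}(\cP_{ij})$ contains all of $\pi^{-1}(E_i)$, and since $\cW=\emptyset$ means $\pi^{-1}(E_i)$ is the union of the curves $\epsilon_{ik}(Z)$ over components $Z$, Lemma~\ref{lem:epsilonik} shows that $f(\pi^{-1}(E_i))$ covers every branching edge $E_k'$ with one and the same degree $D_i'=\sum_{Z}\delta_i'(Z)$, independent of $k$. Splitting $f(\pi^{-1}(\cP_{ij}))$ into the part covering $\cA'$ (some degree $D_i''$, contributing $D_i''R'c$) and the parts lying in the $\cO'_k$ (which, by color preservation and Lemma~\ref{lem:orbifold-color}, cover color-$c$ orbifolds of common Euler characteristic, with total degree $D_i'-D_i''$ over each $E_k'$), and summing over $k$, one gets $D\chi_{ij}=\chi(f(\pi^{-1}(\cP_{ij})))=D_i'R'c$ where $c$ is the color of $\cP_{ij}$. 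Hence $Dv_i=D_i'R'\hat w_0$ for every $i$, so all $v_i$ are commensurable to the single reference vector $\hat w_0$ (and symmetrically for the $v_k'$), which is condition (2)(a). This use of the $k$-independence of $D_i'$ applied to full preimages is precisely the step your sketch lacks and cannot be replaced by the data you restrict to.
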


\begin{proof}
Recall from the discussion preceding Remark~\ref{rem:colors-preserved} that $w= Rw_0$, where 
 the distinct entries of $w_0$ correspond to distinct colors.  
 Let $\hat w_0$ denote the vector obtained from $w_0$ by deleting the $p$th entry,  
 where $p$ is the index of the entry $\chi(W_A)$ of $w$. 
  We show below that for each $i \in I$, the vector $v_i=(\chi_{i1}, \dots, \chi_{ir})$ 
 is commensurable to $\hat w_0$.  Hence the vectors $\{ v_i \mid i \in I \}$ belong to a single commensurability class.
 
Fix  $i \in I$.  We begin by showing that there exists a number $D_i'$
such that for $1 \le j \le r$,  if $\cP_{ij}$ has color $c$, then 
\begin{equation}\label{eq:Di'}
\chi( f(\pi^{-1}(\cP_{ij}))) = D_i' R' c.  
\end{equation}
Suppose the color of $\cA'$ is $c'$ (with $c'$ possibly equal to $c$).  
Partition $f(\pi^{-1}(\cP_{ij}))$  into (possibly disconnected) subsurfaces $\cT_{\cA'}$ and $\cT_k$ such that $\pi'(\cT_{\cA'}) = \cA'$ and $\pi'(\cT_k)$ is contained in $\cO'_k$, for each $k \in I'$.  
Note that $\pi^{-1}(\cP_{ij})$ contains the full preimage of the branching edge $E_i$.  
Let $E_{ik}$ be the collection of all branching curves in $\cY$ which map to $E_i$ under $\pi$ and to $E_k'$ under $\pi'\circ f$.  
Then
$f(E_{ik})$ consists of 
$ \partial \cT_k $ together with some curves from $\partial \cT_{\cA'}$.

Let $\cW$ and $\cZ$ be as in Remark~\ref{remark:preimageAA'}.
Then $\cW$ is empty, since $f(\pi^{-1} (\cA)) \intcap \pi'^{-1} (\cA')= \emptyset$.  Thus $E_{ik}$ is the union of the curves $\epsilon_{ik} = \epsilon_{ik}(Z)$ corresponding to all components $Z$ of $\cZ$.  Then it follows from Lemma~\ref{lem:epsilonik} 
that 
the degree of $\pi'$ restricted to $f(E_{ik})$ is  a number
$D_i' = \sum_{Z\subset \cZ} \delta_i'(Z)$,  
which is independent of $k$.    

Suppose  the degree of $\pi'$ restricted to $\cT_{\cA'}$ is $D_i'' <D_i'$.  Then 
Remark~\ref{rem:colors-preserved} implies that 
for each $k$, 
the map $\pi'$ sends $\cT_k$ into 
the subset of $\cup_{l=1}^r\cP_{kl}'$ consisting of orbifolds of color $c$ by total degree $D_i' - D_i''$.
 By 
Lemma~\ref{lem:orbifold-color}, for all $\cP_{kl}'$ of color $c$, the Euler characteristic  of $W'_{kl} = \pi_1(\cP_{kl}')$ is independent of $l$, and is equal to $\chi(W'_{k{l_0}})$ for some $l_0$.  
Now $\cR_{l_0}'$ consists of one orbifold of color $c$ for each $k\in I'$, and therefore $\chi_{l_0}' = R'c = \sum_{k \in I'}\chi(W'_{k{l_0}})$.  
 Thus we obtain Equation~\eqref{eq:Di'} as follows:
$$
\chi(f(\pi^{-1}(\cP_{ij}))) = D_i'' R' c +  \sum_{k \in I'} (D_i'-D_i'') \chi(W'_{k{l_0}}) = D_i'' R'c+ (D_i'-D_i'') R'c =D_i' R'c.
$$

To complete the proof, note that 
$\chi(f(\pi^{-1}(\cP_{ij}))) = D \chi(W_{ij}) = D\chi_{ij}$, 
since $\pi$ is a degree $D$ map.  Thus 
$$
Dv_i = D(\chi_{i1}, \dots, \chi_{ir}) = (D_i'R'c_1, \dots, D_i'R'c_1, \dots, \dots, D_i'R'c_n, \dots, D_i'R'c_n) = D_i'R' \hat w_0.
$$ 
This proves the first part of (2)(a), and the second part has a similar proof.
\end{proof}

\bigskip

\subsection{Case 3}\label{sec:Case3} 

We are now in the case that 
$f(\pi^{-1} (\cA)) \intcap \pi'^{-1} (\cA') \neq \emptyset$.  We show that condition~(1) of Theorem~\ref{thm:CycleGenTheta} holds in this case by constructing new homeomorphic finite-sheeted covers of $\cO_\G$ and $\cO_{\G'}$ such that the hypothesis of Case 1 is satisfied.  More precisely, we show:

\begin{prop}\label{prop:new-maps}
Suppose 
$f(\pi^{-1} (\cA)) \intcap \pi'^{-1} (\cA') \neq \emptyset$.
Then there exist finite-sheeted, connected torsion-free covers
$\mu: \cM  \to \cO_\G$ and $\mu':  \cM'  \to \cO_{\G'}$ and a homeomorphism $\tilde f: \cM  \to \cM' $  such that 
$\tilde f(\mu^{-1}(\cA)) =\mu'^{-1}(\cA')$.
\end{prop}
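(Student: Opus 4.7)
The plan is to construct torsion-free finite covers $\mu\colon \cM \to \cO_\G$ and $\mu'\colon \cM' \to \cO_{\G'}$ together with a homeomorphism $\tilde f\colon \cM \to \cM'$ satisfying $\tilde f(\mu^{-1}(\cA)) = \mu'^{-1}(\cA')$, by performing a simultaneous cut-and-paste surgery on two copies of $\cY$ (and two copies of $\cY'$) along the branching curves $\partial \cZ$ (respectively $\partial \cZ' := f(\partial \cZ)$). This surgery is designed to realign the mismatched components of $\cZ$ so that the hypothesis of Case~1 becomes satisfied in the new covers.

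First, I would recall from Remark~\ref{remark:preimageAA'} that $\pi^{-1}(\cA) \cup f^{-1}(\pi'^{-1}(\cA')) = \cW \sqcup \cZ$, each component $Z$ of $\cZ$ is a closed surface decomposing as $Z = Z_\cA \cup Z_{\cA'}$ with $Z_\cA \cap Z_{\cA'} = \partial Z$, and $f$ carries each such $Z$ to a component $f(Z)$ of $\cZ' \subset \cY'$ preserving the decomposition (so $f(Z_\cA) = f(Z)_\cA$ and $f(Z_{\cA'}) = f(Z)_{\cA'}$). Take two disjoint copies $\cY_0, \cY_1$ of $\cY$; for each component $Z$ of $\cZ$, cut both copies along $\partial Z$ and re-glue with the swap $Z_\cA^{(0)}$-to-$Z_{\cA'}^{(1)}$ and $Z_\cA^{(1)}$-to-$Z_{\cA'}^{(0)}$ along $\partial Z$. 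The result is a connected $2$-fold cover $\cM \to \cY$, and $\cM'$ is constructed analogously from $\cY'_0 \sqcup \cY'_1$ using components of $\cZ'$. The homeomorphism $\tilde f$ will then be defined to act as $f$ on $\cW$-pieces and on the $(\cY_i \setminus (\cW \cup \cZ))$-pieces, but on each newly-constructed closed surface $Z_\cA^{(0)} \cup_{\partial Z} Z_{\cA'}^{(1)}$ of $\cM$ it will be a \emph{twisted} homeomorphism sending $Z_\cA^{(0)}$ to the $f(Z)_{\cA'}$-part of the corresponding swapped surface of $\cM'$ (and $Z_{\cA'}^{(1)}$ to the $f(Z)_\cA$-part). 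Granting that this produces a homeomorphism, a direct bookkeeping gives $\tilde f(\mu^{-1}(\cA)) = \mu'^{-1}(\cA')$: on each swapped surface the $\mu^{-1}(\cA)$-portion is precisely $Z_\cA^{(0)}$, and by design $\tilde f$ sends it to $f(Z)_{\cA'}$, which is the $\mu'^{-1}(\cA')$-portion.

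The main obstacle is realizing the twisted $\tilde f$ as an honest homeomorphism, since $Z_\cA$ and $f(Z)_{\cA'}$ are not homeomorphic as bordered surfaces in general ($\chi(Z_\cA) = \chi(f(Z)_\cA)$, but a priori $\chi(Z_\cA) \neq \chi(f(Z)_{\cA'}) = \chi(Z_{\cA'})$). The plan to overcome this is to first pass to further common finite covers of $\cM$ and $\cM'$, which exist because the existing homeomorphism between the copies of $\cY$ already gives a commensurability $\pi_1(\cM) \cong \pi_1(\cM')$. In these further covers, I would use Lemma~\ref{lem:neumann} and the classification of surfaces with boundary to arrange that the lifts of each $Z_\cA$ and each $f(Z)_{\cA'}$ become matching (positive-genus) surfaces with corresponding boundary parametrizations, at which point the twisted identifications can be assembled into a genuine homeomorphism compatible with the gluings along $\partial\cZ$ and $\partial\cZ'$. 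Once $\tilde f$ is built, the verification of $\tilde f(\mu^{-1}(\cA)) = \mu'^{-1}(\cA')$ reduces to the bookkeeping above, and the conclusion that Case~1 applies to $\mu, \mu', \tilde f$ follows.
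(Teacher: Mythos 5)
There is a genuine gap, and it sits exactly where you park it: the construction of the ``twisted'' homeomorphism $\tilde f$. Your swap surgery on two copies of $\cY$ produces perfectly good double covers $\cM \to \cY \to \cO_\G$ (the covering map is unchanged on each piece), but the map you need must send $Z_\cA^{(0)}$ homeomorphically onto $f(Z)_{\cA'}$, extending the prescribed identification of $\partial Z$ with $f(\partial Z)$. As Remark~\ref{remark:preimageAA'} notes, $Z_\cA$ and $Z_{\cA'}$ are \emph{never} connected, and although their total Euler characteristics agree (this is Equation~\eqref{eq:AA'}, $d\,\chi(W_A)=d'\,\chi(W_{A'})$, which itself requires proof via Lemma~\ref{lem:epsilonik}), there is no reason their components match up in number, genus, or boundary distribution, so no such homeomorphism exists in general. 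Your fallback --- ``pass to further common finite covers of $\cM$ and $\cM'$'' in which the lifts match --- is circular: commensurability of $\pi_1(\cM)$ and $\pi_1(\cM')$ plus Theorem~\ref{thm:Lafont} only yields \emph{some} homeomorphism of \emph{some} covers, with no control over where it sends $\mu^{-1}(\cA)$; producing covers and a homeomorphism that aligns the $\cA$-preimages is precisely the statement of Proposition~\ref{prop:new-maps}, i.e.\ you would be back at the start of Case~3.

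The paper's proof avoids this by discarding $Z_\cA$ and $Z_{\cA'}$ altogether rather than permuting them: it cuts out their interiors and glues in \emph{new} surfaces built from scratch with Lemma~\ref{lem:neumann} --- a connected surface $T$ (resp.\ $T'$) covering $\cA$ (resp.\ $\cA'$) with degree $2d$ (resp.\ $2d'$) replacing $Z_\cA$, and connected surfaces $S$, $S'$ each covering a \emph{single} colour-$a$ branch orbifold $\cP_{is}$, $\cP'_{kt}$ with degrees $2\delta_k$, $2\delta_i'$ replacing $Z_{\cA'}$; the homeomorphisms $T\cong T'$ and $S\cong S'$ come from Euler characteristic identities that rest on Lemma~\ref{lem:epsilonik} ($\delta_k$ independent of $i$, $\delta_i'$ independent of $k$). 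Two further points of bookkeeping that your sketch does not touch are essential there: the parity hypothesis in Lemma~\ref{lem:neumann} forces the degree doubling (hence $2$ copies of $\cY$), and one must still check that the extended maps $\mu,\mu'$ are orbifold coverings at each branching curve --- a replacement surface maps to a single $\cP_{ij}$, yet the surfaces it replaces may have hit every colour-$a$ orbifold at $E_i$, which is why the paper takes $2m^2$ copies of $\cY$ and twists the covering maps by the isometries $\sigma,\sigma'$ before gluing. Without an argument supplying both the matching replacement pieces and this attachment bookkeeping, your proposal does not close.
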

Then by applying Case 1 to the maps $\mu, \mu'$, and $\tilde f$, we get:
\begin{cor}\label{cor:2}
If $f(\pi^{-1} (\cA)) \intcap \pi'^{-1} (\cA') \neq \emptyset$, then condition (1) holds.   \hfill{\qed}
\end{cor}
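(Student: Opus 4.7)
The plan is to construct finite-sheeted torsion-free covers $\mu: \cM \to \cO_\G$ and $\mu': \cM' \to \cO_{\G'}$ (factoring through further covers of $\cY$ and $\cY'$), together with a homeomorphism $\tilde f: \cM \to \cM'$, designed to resolve the misalignment between $f(\pi^{-1}(\cA))$ and $\pi'^{-1}(\cA')$. First, I will use Remark~\ref{remark:preimageAA'} to isolate the misalignment. That remark provides the decomposition $\pi^{-1}(\cA) \cup f^{-1}(\pi'^{-1}(\cA')) = \cW \sqcup \cZ$, where the surfaces in $\cW$ lie in both $\pi^{-1}(\cA)$ and $f^{-1}(\pi'^{-1}(\cA'))$ and cause no difficulty, while the misalignment is concentrated in the components of $\cZ$. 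Each such component is a closed surface $Z = Z_\cA \cup Z_{\cA'}$ sharing a common set $\partial Z$ of branching curves, with $Z_\cA \subset \pi^{-1}(\cA)$ but $f(Z_\cA) \subset \pi'^{-1}(\cup_{k\in I'}\cO_k')$, and dually for $Z_{\cA'}$.

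Next, using the degree compatibility provided by Lemma~\ref{lem:epsilonik}, I will construct finite covers $\cM \to \cY$ and $\cM' \to \cY'$ of a common degree $n$ in which each bad component $Z$ is ``symmetrized.'' Concretely, for each $Z$, that lemma produces numbers $\delta_k(Z)$ and $\delta_i'(Z)$ controlling how $\partial Z$ distributes across the branching edges $E_i$ of $\cO_\G$ and $E_k'$ of $\cO_{\G'}$. For a suitable $n$ of appropriate parity, I will invoke Lemma~\ref{lem:neumann} to produce positive genus covers of each surface in $Z_\cA$ and $Z_{\cA'}$ with prescribed boundary degrees, and glue these to obtain a finite cover of the surface amalgam in a neighborhood of $Z$. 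Carrying out this local construction compatibly for every component of $\cZ$ and combining it with trivial $n$-fold covers over $\cW$ yields $\cM \to \cY$, and a parallel choice yields $\cM' \to \cY'$.

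The homeomorphism $\tilde f: \cM \to \cM'$ will then be built by combining the natural $n$-fold lift of $f$ on the $\cW$-part with explicit permutations on the fibers over each $Z$: these permutations swap copies of $Z_\cA$-surfaces with copies of $Z_{\cA'}$-surfaces in a way that pairs each surface of $\mu^{-1}(\cA)$ with a surface of $\mu'^{-1}(\cA')$. By construction this forces $\tilde f(\mu^{-1}(\cA)) = \mu'^{-1}(\cA')$, so that Case 1 applies to the triple $(\mu, \mu', \tilde f)$.

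The main obstacle lies in the second step: the local constructions over each component $Z$ must fit together globally. This requires that the boundary degrees produced by the chosen covers match simultaneously along every branching curve (from both the $\cO_\G$- and $\cO_{\G'}$-sides), that the parity hypotheses in Lemma~\ref{lem:neumann} are uniformly satisfied, and that the resulting $\cM$ and $\cM'$ are connected. The last of these can be arranged by enlarging $n$ and, if needed, invoking Corollary~\ref{cor:PosGenus} to ensure all participating surfaces have positive genus. The intricate combinatorial bookkeeping required here is what makes Case~3 substantially more delicate than Cases~1 and~2.
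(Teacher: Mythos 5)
Your overall plan (isolate the bad part via Remark~\ref{remark:preimageAA'}, build new covers $\cM,\cM'$ and a homeomorphism $\tilde f$ with $\tilde f(\mu^{-1}(\cA))=\mu'^{-1}(\cA')$, then quote Case~1) matches the paper's Proposition~\ref{prop:new-maps}, but the mechanism you propose for the $\cZ$-part does not work, and that is where the entire content of Case~3 lies. You take covers of the \emph{existing} surfaces in $Z_\cA$ and $Z_{\cA'}$, so $\mu$ is still $\pi$ composed with a covering and every new surface covers the same orbifold as the surface beneath it; you then ask $\tilde f$ to ``swap copies'' of $Z_\cA$-surfaces with $Z_{\cA'}$-surfaces. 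For such a swap to be part of a homeomorphism of surface amalgams, each lift of a $Z_\cA$-piece (covering $\cA$) must be homeomorphic, compatibly with the boundary circles at every branching curve, to a lift of an $f(Z_{\cA'})$-piece (covering $\cA'$), and each lift of a $Z_{\cA'}$-piece to a lift of an $f(Z_\cA)$-piece (covering various branch orbifolds). The individual surfaces involved have unrelated Euler characteristics and boundary patterns, and equal-degree covers of non-homeomorphic surfaces are not homeomorphic; only the aggregate identity $\chi(Z_\cA)=\chi(Z_{\cA'})$ could help, and even that is not free -- it rests on the coloring machinery (Proposition~\ref{prop:ww'-commensurable}, Remark~\ref{rem:colors-preserved}, Lemma~\ref{lem:orbifold-color}) and the equalities $d\,\chi(W_A)=d'\,\chi(W_{A'})$ and $\delta_k\chi_{is}=\delta_i'\chi'_{kt}$, none of which your outline establishes or invokes (Lemma~\ref{lem:epsilonik} alone gives only the degree bookkeeping). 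The paper explicitly notes that the naive idea of interchanging $Z_\cA$ and $Z_{\cA'}$ fails because these sets are always disconnected; its actual solution is not to cover the old surfaces but to \emph{excise} the interiors of $Z_\cA$ and $Z_{\cA'}$ (and their $f$-images) and glue in newly constructed surfaces with newly defined covering maps: a connected $T\cong T'$ covering $\cA$ and $\cA'$ respectively (Lemma~\ref{lem:TA}), and pairs $S\cong S'$ each covering a single color-$a$ orbifold $\cP_{is}$ and $\cP'_{kt}$ (Lemma~\ref{lem:Tik}), whose homeomorphism follows from the Euler characteristic identities above.

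Even granting such replacements, your outline has no mechanism for keeping $\mu$ an orbifold covering near the branching curves: a connected replacement surface of the second type maps onto one branch orbifold $\cP_{ij_s}$, whereas the surfaces it replaces may map onto \emph{all} color-$a$ orbifolds attached along $E_i$, so some branching curve would become incident to two surfaces mapping to the same orbifold. The paper resolves this by passing to $2m^2$ disjoint copies of $\cY$ and $\cY'$ with covering maps twisted by the rotation $\sigma$ permuting the color-$a$ rings, and by choosing the attaching maps via the $(p-u)(q-v)$ indexing; the extra factor $2$ is forced by the parity hypothesis of Lemma~\ref{lem:neumann}. Your ``suitable $n$ of appropriate parity'' and ``explicit permutations on the fibers'' do not supply any of this, and the gluing of ``trivial $n$-fold covers over $\cW$'' to nontrivial covers over $\cZ$ would in any case fail to match boundary degrees along the branching curves of $\partial Z$, which are also incident to surfaces outside $\cZ$. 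In short, the deferred ``combinatorial bookkeeping'' is not a routine verification but the proof itself, and as described your construction of $\tilde f$ cannot exist.
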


To prove Proposition~\ref{prop:new-maps}, we assume that Case 1 does not already hold for $\cY$, $\cY'$, and  $f$.  
We observe:

\begin{obs}
Since $f(\pi^{-1} (\cA)) \intcap \pi'^{-1} (\cA') \neq \emptyset$, Remark~\ref{rem:colors-preserved} implies that $\cA$ and $\cA'$ have the same color, say $a$.  As Case 1 does not hold, each of $\cO_\G$ and $\cO_{\G'}$ also has orbifolds besides $\cA$ and~$\cA'$ of color $a$.  
\end{obs}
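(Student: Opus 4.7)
The plan is to deduce both assertions directly from Remark~\ref{rem:colors-preserved}, combined with the commensurability $w = Rw_0$, $w' = R'w_0$ just established in Proposition~\ref{prop:ww'-commensurable}.

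First I would show that $\cA$ and $\cA'$ share a color. By the definition of $\intcap$, the hypothesis $f(\pi^{-1}(\cA)) \intcap \pi'^{-1}(\cA') \neq \emptyset$ supplies a component surface $S \subset \pi^{-1}(\cA)$ whose image $f(S)$ lies in $\pi'^{-1}(\cA')$. Letting $a$ denote the color of $\cA$, the surface $S$ sits inside $\pi^{-1}(\{\cT_u \subset \cO_\G \text{ of color } a\})$, so Remark~\ref{rem:colors-preserved} forces $f(S)$ into $\pi'^{-1}(\{\cT'_u \subset \cO_{\G'} \text{ of color } a\})$. Since $\pi'(f(S)) = \cA'$, this pins $\cA'$ down as a color-$a$ sub-orbicomplex.

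Next I would argue that each of $\cO_\G$ and $\cO_{\G'}$ contains a color-$a$ sub-orbicomplex distinct from $\cA$ and $\cA'$, respectively. Since $w = R w_0$ and $w' = R' w_0$, the number of entries of $w$ equal to $Ra$ and the number of entries of $w'$ equal to $R'a$ both coincide with the multiplicity of $a$ in $w_0$; equivalently, $\cO_\G$ and $\cO_{\G'}$ contain the same number of color-$a$ sub-orbicomplexes. Suppose, toward a contradiction, that $\cA$ is the unique color-$a$ sub-orbicomplex of $\cO_\G$. Then $\cA'$ is the unique one in $\cO_{\G'}$, and Remark~\ref{rem:colors-preserved} applied with $c = a$ yields
\[
f(\pi^{-1}(\cA)) = \pi'^{-1}(\cA'),
\]
which is Case~1 and contradicts the standing assumption. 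Hence both orbicomplexes must contain further color-$a$ sub-orbicomplexes.

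There is no genuine obstacle in the argument; the one point to keep in mind is that the colorings on $\cO_\G$ and $\cO_{\G'}$ are governed by the \emph{same} minimal integral vector $w_0$, so the absence of extra color-$a$ orbifolds on one side automatically forces absence on the other and therefore triggers the Case~1 equality.
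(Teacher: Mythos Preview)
Your proposal is correct and follows exactly the reasoning the paper has in mind; the paper states this as an observation without further proof, merely pointing to Remark~\ref{rem:colors-preserved} and the failure of Case~1, and you have simply unpacked those two ingredients. The one detail you make explicit that the paper leaves implicit is the multiplicity argument (that $w=Rw_0$ and $w'=R'w_0$ force $\cO_\G$ and $\cO_{\G'}$ to have the same number of color-$a$ sub-orbicomplexes), which is indeed what makes the uniqueness-on-one-side force uniqueness-on-the-other and hence the Case~1 equality.
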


\subsubsection*{Idea of the construction of $\cM$ and $\cM'$}  
As described in Remark~\ref{remark:preimageAA'}, we have a decomposition 
$\pi^{-1} (\cA) \cap f^{-1} (\pi'^{-1} (\cA')) = \cW \sqcup \cZ$.
 The hypothesis of Case 1 holds on $\cW$ and  fails on $\cZ$.  Our assumption that 
 Case 1 does not hold for  $\cY$, $\cY'$, and $f$ implies that  $\cZ$ is nonempty.  
As in Remark~\ref{remark:preimageAA'}, each component 
$Z$  of $\cZ$ has a decomposition $Z = Z_\cA \cup Z_{\cA'}$ such that 
\[ 
\pi(Z_\cA) = \cA, \quad \pi(Z_{\cA'}) \subset \cup_{i \in I}\cO_i, \quad \pi'(f(Z_{\cA'}))=  \cA', \quad\mbox{and}\quad \pi'(f(Z_\cA)) \subset \cup_{k\in I'}\cO_k'.
\]
The key to constructing the new covers is the fact, which we establish below, that $\chi(\cZ_\cA) = \chi(\cZ_{\cA'})$.

 If $\cZ_\cA$ and $\cZ_\cA'$ were connected, then by redefining $f$ to interchange its images on these two surfaces, we would get a homeomorphism such that the hypothesis of Case~1 is satisfied.  
However, as observed in Remark~\ref{remark:preimageAA'}, the surfaces $\cZ_{\cA}$ 
and $\cZ_{\cA'}$ are never connected, so it is not clear that there is a homeomorphism from $\cZ_{\cA}$ to $f(\cZ_{\cA'})$.  Trying to modify $\pi'$ poses similar difficulties, as 
it is not clear that $\pi'$ can be consistently redefined so that the individual pieces of $f(\cZ_{\cA})$ each map to $\cA'$, and the individual pieces of 
$f(\cZ_{\cA'})$ each map to some $\cP'_{kl}$.  Thus a more subtle approach is required.  

Our strategy is to cut out the interiors of 
$\cZ_\cA$ and $\cZ_{\cA'}$ from $\cY$ (and correspondingly cut out $f(\cZ_\cA)$ and $f(\cZ_{\cA'})$ from $\cY'$) and replace them with different surfaces to get new (homeomorphic) surface amalgams $\cM$ and $\cM'$ covering $\cO_\G$ and $\cO_{\G'}$ respectively.  
We will replace $Z_\cA$ and $f(Z_\cA)$ with homeomorphic connected surfaces with boundary which cover both $\cA$ and $\cA'$. The Euler characteristic of these surfaces will be $\chi(\cZ_\cA) = \chi(\cZ_{\cA'})$. 
We will replace $Z_{\cA'}$  and $f(Z_{\cA'})$ by a collection of homeomorphic connected surfaces with boundary, so that each such homeomorphic pair covers some orbifold $\cP_{ij}$ of color $a$ in $\cO_\G$ and also some orbifold $\cP_{kl}'$ of color $a$ in $\cO_{\G'}$.  
The crucial ingredient for this step is Lemma~\ref{lem:epsilonik}, which allows us to derive the necessary Euler characteristic equations required to construct such pairs.  
After all these replacements, the new covering maps need to extend the already existing covering maps on $\partial Z = \partial Z_\cA = \partial Z_{\cA'}$ and $f(\partial Z)$.

We construct the replacement surfaces in Lemmas~\ref{lem:TA} and~\ref{lem:Tik} by passing to our torsion-free covers from Section~\ref{sec:tfcover} and then using 
Lemma~\ref{lem:neumann} for constructing further covers.   We use Lemma~\ref{lem:epsilonik} to prove the replacement surfaces are homeomorphic. 
 The parity condition in 
Lemma~\ref{lem:neumann} forces the covering maps on the replacement surfaces to have twice the degrees of the restrictions of  $\pi$ and $\pi'$  to $Z$ and $f(Z)$.  Thus to glue in these replacement surfaces, we need to pass to degree 2 covers of $\cY$ and $\cY'$.  

A further difficulty is posed by the fact that each connected replacement surface of the second type maps to a single $\cP_{ij}$ and  $\cP_{kl}'$, but it may be replacing a collection of surfaces in $Z_{\cA'}$ whose image 
in either $\cO_\G$ or $\cO_{\G'}$
contains all orbifolds  of color $a$ at either $E_i$ or $E_k'$. This would make it  impossible to extend at least one of $\pi$ or $\pi'$ to the new surface.  To fix this we pass to a further degree $m^2$ cover, where $m$ is the number of orbifolds of color $a$ in any $\cO_i$, $i \in I$.  

Thus we first construct spaces $\bar \cY$ and $\bar \cY'$ consisting of $2m^2$ disjoint copies of $\cY$ and $\cY'$.  We define new covering maps $\bar \pi$ and $\bar \pi'$ to $\cO_\G$ and $ \cO_{\G'}$ 
respectively, designed to make the eventual extensions to $\cM$ and $\cM'$ possible.  Then we cut out all $2m^2$ copies of $\cZ$ and replace them with $m^2$ copies of each of the replacement surfaces in such a way that $\bar \pi$ and $\bar \pi'$ extend.  The fact that Case 1 holds follows from the construction.  

We now give the details of the construction. 

\subsubsection*{Construction of the replacement surfaces}
Let $Z$ be a component of $\cZ$.  The following lemma defines the surfaces which will be used to replace copies of $Z_\cA$.  

\begin{lemma}\label{lem:TA}
Let  $d$ and $d'$ be the degrees of $\pi$ and $\pi'$ restricted to $\partial Z = \partial Z_\cA$  and $f(\partial Z)$   respectively.  Then there exist connected surfaces $T$ and $T'$ and orbifold covering maps $\alpha: T \to \cA$ and $\alpha': T' \to \cA'$ of degree $2d$ and $2d'$ respectively such that the following properties hold.  
\begin{enumerate}
\item 
There is a partition 
$\partial T = \partial T^{+} \sqcup \partial T^{-}$ and homeomorphisms
$\beta^\pm:   \partial T^{\pm}\to \partial Z$ such that 
$
\alpha  = \pi \circ \beta^\pm$.

\item 
There is a partition 
$\partial T' = \partial T'^{+} \sqcup \partial T'^{-}$ and homeomorphisms
$\beta'^\pm:   \partial T'^{\pm}\to f(\partial Z)$ such that 
$
\alpha'  = \pi' \circ \beta'^\pm$.

\item There is a homeomorphism $\psi : T \to T'$ whose restriction to 
$\partial T$ consists of the maps $({\beta'}^{\pm})^{-1} \circ f \circ \beta^{\pm}$. 

\end{enumerate}
\end{lemma}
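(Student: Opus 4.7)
The plan is to build $T$ and $T'$ by applying Lemma~\ref{lem:neumann} to the torsion-free degree~$16$ positive-genus covers $S_\cA \to \cA$ and $S_{\cA'} \to \cA'$ from Section~\ref{sec:tfcover}, together with prescribed boundary data read off from $\partial Z$ and $f(\partial Z)$. Write $d_C$ for the degree of $\pi|_C$ when $C \in \partial Z$; from the explicit tiling of $S_\cA$ in Section~\ref{sec:S_A}, each boundary component of $S_\cA$ covers the corresponding branching edge of $\cA$ with degree $8$, so $d_C/8$ is a positive integer for every such $C$, and consequently $d/8$ is a positive integer.

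To build $T$, above each boundary component $B$ of $S_\cA$ lying over some $E_i$, I would prescribe two disjoint collections of boundary circles of $T$, each a copy of the multiset $\{\, d_C/8 : C \in \partial Z,\ \pi(C) = E_i\,\}$; these two collections will form $\partial T^+$ and $\partial T^-$. Lemma~\ref{lem:neumann} applied to $S_\cA$ then produces a connected $(d/8)$-fold cover $T$ realizing this boundary data whenever the parity condition $b' \equiv (d/8)\,\chi(S_\cA) \pmod{2}$ is met. A single (undoubled) copy of the boundary data of $\partial Z$ need not satisfy this condition, but doubling it forces $b'$ to be even and hence the parity to hold; this is the origin of the factor of $2$ in the stated degree $\deg(\alpha) = 16 \cdot (d/8) = 2d$. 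The homeomorphisms $\beta^\pm$ are then read off from the construction, and $\alpha = \pi \circ \beta^\pm$ on $\partial T^\pm$ by the degree matching. An entirely parallel argument, using $S_{\cA'}$ in place of $S_\cA$, yields $T'$, $\alpha'$, and $\beta'^\pm$.

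To extend the prescribed boundary identifications $(\beta'^\pm)^{-1} \circ f \circ \beta^\pm : \partial T \to \partial T'$ to a homeomorphism $\psi$, I would verify that $T$ and $T'$ are homeomorphic as connected orientable surfaces with boundary, and then invoke the classification of surfaces to extend the boundary map across the interior. The number of boundary components agrees automatically, since $f$ restricts to a bijection $\partial Z \to f(\partial Z)$. The Euler characteristic equality $\chi(T) = \chi(T')$ reduces to $d\,\chi(W_A) = d'\,\chi(W_{A'})$; the global identity $D\,\chi(W_A) = D'\,\chi(W_{A'})$ comes from Proposition~\ref{prop:ww'-commensurable} applied to the common color $a$ of $\cA$ and $\cA'$, and the component-level version is derived by combining the degree bookkeeping of Remark~\ref{remark:preimageAA'} with the partition identities of Lemma~\ref{lem:epsilonik} to conclude $d/d' = D/D'$.

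The main obstacle will be precisely this last step: extracting the component-level equality $d\,\chi(W_A) = d'\,\chi(W_{A'})$ from the global one. The identities of Lemma~\ref{lem:epsilonik} distribute $d$ and $d'$ across all branching edges of $\cO_\G$ and $\cO_{\G'}$, so isolating the direct ratio $d/d' = D/D'$ requires careful tracking of how $f$ respects the decompositions $\pi^{-1}(\cA) = \cW \sqcup \cZ_\cA$ and $(\pi')^{-1}(\cA') = f(\cW) \sqcup \cZ_{\cA'}$, and how the degrees on $\cW$ on the two sides are related.
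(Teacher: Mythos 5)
The serious gap is in part (3), which is the whole point of the lemma: to know that $T$ and $T'$ are homeomorphic (rel the prescribed boundary identifications) you must show $\chi(T)=\chi(T')$, i.e.\ the \emph{component-level} identity $d\,\chi(W_A)=d'\,\chi(W_{A'})$ for the particular component $Z$. You do not establish this. You propose to deduce it from the global identity $D\,\chi(W_A)=D'\,\chi(W_{A'})$ together with the ratio $d/d'=D/D'$, and then explicitly flag that ratio as ``the main obstacle''; but given the global identity, $d/d'=D/D'$ is literally equivalent to the identity you need, so nothing has been reduced and the key step is missing. The paper proves it directly, without ever passing through $D$ and $D'$: partition $Z_\cA$ into (possibly disconnected) pieces $\cS_k$, $k\in I'$, with $\pi'(f(\cS_k))\subset\cO'_k$; by Lemma~\ref{lem:epsilonik} the boundary of $\cS_k$ is $\cup_{i}\epsilon_{ik}$ and $\pi'$ restricted to $f(\cS_k)$ has total degree $\sum_{i}\delta_i'=d'$; by Remark~\ref{rem:colors-preserved} the branch orbifolds in the image have the color $a$ of $\cA$ and $\cA'$, and by Lemma~\ref{lem:orbifold-color} they are identical, so $\chi(f(\cS_k))=d'\,\chi(W'_{kt})$ for a color-$a$ orbifold $\cP'_{kt}$; summing over $k$ gives $d\,\chi(W_A)=\chi(Z_\cA)=\sum_k\chi(f(\cS_k))=d'\,\chi'_t=d'\,\chi(W_{A'})$, the last equality because $\cR'_t$ and $\cA'$ have the same color. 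Some per-component argument of this kind is indispensable; without it your proof of (3) does not go through.

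There is also a bookkeeping error in your construction of $T$. Since $Z_\cA\to S_\cA$ is a covering of degree $d/16$ (so in fact $16\mid d$, not merely $8\mid d$), the cover of $S_\cA$ you want has degree $2(d/16)=d/8$, and hence the boundary degrees prescribed over \emph{each} boundary circle of $S_\cA$ must sum to $d/8$. Your prescription puts two copies of the full multiset $\{d_C/8 : \pi(C)=E_i\}$ over each of the two boundary circles of $S_\cA$ lying over $E_i$; that multiset already sums to $d/8$, so your data forces total degree $d/4$ per circle, i.e.\ a cover of $S_\cA$ of degree $d/4$ and $\deg\alpha=4d$, contradicting your own claim $\deg\alpha=2d$, and it gives $\partial T^{\pm}$ twice as many components as $\partial Z$, so the maps $\beta^{\pm}$ cannot be homeomorphisms onto $\partial Z$ as (1) requires. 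The correct bookkeeping (which is what the paper does) is to place, in each of $\partial T^{+}$ and $\partial T^{-}$, exactly one circle for each $C\in\partial Z$, lying over $\eta(C)\in\partial S_\cA$ with degree $\deg(\eta|_C)=d_C/8$; then the total over each boundary circle of $S_\cA$ is $2(d/16)=d/8$, and the parity hypothesis of Lemma~\ref{lem:neumann} is satisfied because both $2\card(\partial Z)$ and $2(d/16)\chi(S_\cA)$ are even --- your explanation of the factor of $2$ via parity is the right idea, but it must be implemented with this placement of circles over $\eta(C)$, which is also what makes $\alpha=\pi\circ\beta^{\pm}$ hold as required. Otherwise your overall route (factoring $\pi$ through the degree-$16$ cover $S_\cA$ and invoking Lemma~\ref{lem:neumann}) is the same as the paper's.
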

\begin{proof}
Recall that $\pi = \rho \circ \eta$, where $\rho:\cX \to \cO_\G$ is the degree 16 torsion-free covering space from Section~\ref{sec:tfcover}. If $S_\cA = \rho^{-1}(\cA)$, then 
$\eta$ maps the possibly disconnected surface $Z_{\cA}$ to the connected surface $S_\cA $ by degree $d/ 16 \in\Z$.  We will use Lemma~\ref{lem:neumann} to construct a connected cover $T$ of $S_\cA$ of degree $d/16$.

Let $\partial T^{+}$ and $\partial T^{-}$ be sets of circles, each equal to a copy of $\partial Z$, and let $\beta^+$ and $\beta^-$ be homeomorphisms from 
$\partial T^{+}$ and $\partial T^{-}$ respectively to $\partial Z$.  Then 
$\eta \circ \beta^\pm$ a covering map from each circle in $\partial T^{\pm}$ to a circle in $\partial S_\cA$.  Moreover, the sum of the degrees of $\eta \circ \beta^+$ and $\eta \circ \beta^-$ over all circles that cover a given one in $\partial S_\cA$ is $2(d/16) \in 2\Z$ by construction (since $\eta$ is a covering map of degree $d/16$ on $\partial Z$).   

Now by
Lemma~\ref{lem:neumann}, there 
exists a connected surface $T$ and a  covering map $ \bar \alpha:T\to S_\cA$ of degree $2d/16$ which extends the maps 
$\eta \circ \beta^\pm$ on $\partial T^{+} \sqcup \partial T^{-}$, because both the prescribed number of boundary components, namely $2\card{(\partial Z)}$, and the number $2(d/16) \chi(S_\cA)$ are even.  
Hence $\alpha = \rho \circ \bar \alpha $ is a degree $2d$ covering map from $T$ to $\cA$.  This proves (1), and the map $\alpha': T' \to \cA'$ of degree $2d'$ satisfying (2) is constructed similarly.  

To prove (3) we first show that $T$ and $T'$ have the same Euler characteristic.  By construction $\chi(T) = 2d \chi(W_A)$ and $\chi(T') = 2d' \chi(W_{A'})$, and we now show that these are equal.  

As in the proof of Lemma~\ref{lem:epsilonik}, we partition $Z_\cA$ into unions of connected surfaces $\cS_{k}$ for $k \in I'$, where
$\pi'(f(\cS_k))$ is contained in $\cO'_k$.  Observe that the set of boundary components of $\cS_k$ is exactly 
$\cup_{i \in I}\epsilon_{ik}$, 
and by Lemma~\ref{lem:epsilonik}, the degree of $\pi'$ restricted to 
$f(\epsilon_{ik})$ is $\delta_i'$.  Now the degree of $\pi'$ restricted to $f(\cS_k)$ is exactly the sum of the degrees of $\pi'$ restricted to $f(\epsilon_{ik})$ over all $i \in I$, and this sum is $\sum_{i \in I} \delta_i' = d'$ (again by Lemma~\ref{lem:epsilonik}). Thus we have 
\begin{equation}\label{eq:Sk}
\chi(f(\cS_k)) = d'\chi(W_{kt}'), 
\end{equation}
where $W'_{kt} = \pi_1(\cP_{kt}')$ for $\cP_{kt}'$ a branch orbifold of color $a$.   By Lemma~\ref{lem:orbifold-color} all branch orbifolds of color~$a$ are identical.  So  $f(\cS_k)$ covers an orbifold 
identical to $\cP_{kt}'$ with total degree $d'$ for each $1 \le k \le N'$.  Thus we have 
\begin{equation}\label{eq:AA'}
d\, \chi(W_A) = \chi( Z_\cA) = \sum_{k \in I'} \chi (f(\cS_k)) = d'\; \sum_{k \in I'} \chi(W_{kt}') = 
d'\, \chi'_t
=d'\, \chi(W_{A'})
\end{equation}
where the last equality comes from $\chi'_t = \chi(\cR_t')$, $\chi(W_{A'}) = \chi(\pi_1(\cA'))$,  and the fact that $\cA'$ and~$\cR_t'$ have the same color $a$. 

Now $T$ and $T'$ have the same Euler characteristic and the same number of boundary components, so they are homeomorphic.  Moreover, we can choose a homeomorphism $\psi$ between them which extends the homeomorphism $({\beta'}^{\pm})^{-1} \circ f \circ \beta^{\pm}$ on their boundaries.
\end{proof}

The following lemma constructs pairs of homeomorphic surfaces which map to fixed pairs 
of orbifolds $\cP_{is}$ and $\cP'_{kt}$.  
Collections of surfaces of this form will be used to replace copies of~$Z_{\cA'}$.  

\begin{lemma}\label{lem:Tik}
Let $Z$ be a component of $\cZ$, and let $\epsilon_{ik}, \delta_k$ and $\delta_i'$ be as in 
Lemma~\ref{lem:epsilonik}. 
For each~$i$ and $k$ such that $\epsilon_{ik}$ is nonempty, and for each $s$ and $t$ such that 
$\cP_{is}$ and $\cP'_{kt}$ have color~$a$, 
there
exist connected surfaces $S$ and $S'$ and orbifold covering maps  
$\zeta: S \to \cP_{is}$ and $\zeta': S' \to \cP'_{kt}$ of degree $2\delta_k$ and $2\delta_i'$ respectively, such that  the following properties hold.

\begin{enumerate}
\item 
There is 
a partition 
$\partial S = \partial S^{+} \sqcup \partial S^{-}$ and homeomorphisms
$\xi^{ \pm}:   \partial S^{\pm}\to \partial Z$ such that 
$\zeta  = \pi \circ \xi^{\pm}$.

\item There is 
a partition 
$\partial S' = \partial S'^{+} \sqcup \partial S'^{-}$ and homeomorphisms
$\xi'^{ \pm}:   \partial S'^{\pm}\to f(\partial Z$) such that 
$\zeta'  = \pi \circ \xi'^{\pm}$.

\item There is a homeomorphism $\varphi : S \to S'$ whose restriction to 
$\partial S$ consists of the maps \newline $({\xi'}^{\pm})^{-1} \circ f \circ \xi^{\pm}$. 

\end{enumerate}
\end{lemma}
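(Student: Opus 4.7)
The plan is to follow the template of Lemma~\ref{lem:TA}: construct $S$ and $S'$ by applying Lemma~\ref{lem:neumann} to the connected degree $16$ covers $S_{is}\subset \cX$ of $\cP_{is}$ and $S'_{kt}\subset \cX'$ of $\cP'_{kt}$ from Section~\ref{sec:S_beta}, and then verify that $S$ and $S'$ have matching Euler characteristics so they are homeomorphic via a map extending the prescribed boundary identifications.  Since $\zeta:S\to \cP_{is}$ must send $\partial S$ into the unique non-reflection edge $E_i$ of $\cP_{is}$, the maps $\xi^{\pm}$ will identify $\partial S^{\pm}$ with the subcollection $\epsilon_{ik}\subset \partial Z$ of branching curves mapping to $E_i$ under $\pi$ (and to $E_k'$ under $\pi'\circ f$), and analogously on the $\cY'$ side.

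To construct $S$, take $\partial S^{\pm}$ to be two copies of $\epsilon_{ik}$ with $\xi^{\pm}$ the identification homeomorphisms.  By the same argument used for $Z_\cA$ in the proof of Lemma~\ref{lem:TA}, the restriction $\eta|_{\cS_k}$ is an honest cover of its image in $S_\cA$ of degree $\delta_k/16 \in \Z$, so $\eta \circ \xi^{\pm}$ prescribes integer degrees on each boundary circle of $S_{is}$ summing to $\delta_k/8$ across $\partial S^+ \sqcup \partial S^-$.  As $2\card(\epsilon_{ik})$ and $(\delta_k/8)\chi(S_{is})$ are both even, Lemma~\ref{lem:neumann} yields a connected degree $\delta_k/8$ cover $\bar\zeta:S\to S_{is}$ extending these maps, and $\zeta := \rho \circ \bar\zeta:S\to \cP_{is}$ is then a degree $2\delta_k$ orbifold cover, establishing~(1).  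The symmetric construction using $S'_{kt}\subset \cX'$ produces $\zeta':S'\to\cP'_{kt}$ of degree $2\delta'_i$, establishing~(2).

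For~(3), $S$ and $S'$ have matching boundary counts $2\card(\epsilon_{ik})$, so it suffices to show $\chi(S) = \chi(S')$, i.e., $\delta_k\chi(W_{is}) = \delta'_i\chi(W'_{kt})$.  Since $\cP_{is}$ and $\cP'_{kt}$ both have color $a$, Lemma~\ref{lem:orbifold-color} supplies constants $\chi_a(i) := \chi(W_{is})$ depending only on $i$ and $\chi'_a(k) := \chi(W'_{kt})$ depending only on $k$.  Using the partition $Z_\cA = \sqcup_{k\in I'} \cS_k$ from the proof of Lemma~\ref{lem:epsilonik}, I compute $\chi(\cS_k) = \delta_k\chi(W_A) = \delta_k Ra$ from the $\pi$-side; on the $\pi'$-side, $f(\cS_k)$ covers the color-$a$ branch orbifolds of $\cO'_k$ with total boundary degree $d'$ over $E_k'$ (Lemma~\ref{lem:epsilonik}), and by Lemma~\ref{lem:orbifold-color} each such orbifold contributes $\chi'_a(k)$, so $\chi(f(\cS_k)) = d'\chi'_a(k)$.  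Hence $\delta_k Ra = d'\chi'_a(k)$, and the symmetric argument using the partition $Z_{\cA'} = \sqcup_{i\in I}\cU_i$ yields $\delta'_i R'a = d\chi_a(i)$.  Comparing two expressions for $\chi(Z_{\cA'})$ (one via $\pi$, giving $dRa$, and one via $\pi'\circ f$, giving $d'R'a$) produces the global identity $dR = d'R'$, and combining these three identities forces $\delta_k\chi_a(i) = \delta'_i\chi'_a(k)$.  The desired homeomorphism $\varphi:S\to S'$ extending $({\xi'}^{\pm})^{-1}\circ f \circ \xi^{\pm}$ then exists by the classification of surfaces with boundary.

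The main obstacle is the Euler characteristic identity in part~(3): extracting it requires carefully tracking how $f$ redistributes the pieces of $Z_\cA$ and $Z_{\cA'}$ among the color-$a$ orbifolds of $\cO_{\G'}$ and $\cO_\G$ respectively, and combining color preservation (Remark~\ref{rem:colors-preserved}) with Lemma~\ref{lem:orbifold-color} and the degree bookkeeping of Lemma~\ref{lem:epsilonik} to derive the three identities whose combination forces the final equation.
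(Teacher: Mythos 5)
Your proposal is correct and takes essentially the same route as the paper: both construct $S$ and $S'$ by applying Lemma~\ref{lem:neumann} to the degree $16$ covers from Section~\ref{sec:tfcover} (exactly as in Lemma~\ref{lem:TA}), and both reduce part~(3) to the identity $\delta_k\chi_{is}=\delta_i'\chi'_{kt}$, obtained by combining $\delta_k\chi(W_A)=d'\chi'_{kt}$, $\delta_i'\chi(W_{A'})=d\chi_{is}$, and $d\chi(W_A)=d'\chi(W_{A'})$, which are precisely the paper's facts (ii) and (i) derived from Lemmas~\ref{lem:epsilonik} and~\ref{lem:orbifold-color} and Remark~\ref{rem:colors-preserved}. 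Your reading of $\xi^{\pm}$ as identifying $\partial S^{\pm}$ with the subcollection $\epsilon_{ik}\subset\partial Z$ (forced by $\zeta=\pi\circ\xi^{\pm}$ mapping into $\cP_{is}$) is the intended interpretation, and your boundary-degree bookkeeping for Lemma~\ref{lem:neumann} fills in details the paper leaves implicit.
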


\begin{proof}
As in the proof of Lemma~\ref{lem:TA}, we construct covers $S \to \cP_{is} $ and 
$S' \to \cP_{kt}'$ of the required degrees using Lemma~\ref{lem:neumann}.  By construction, $S$ and $S'$ have homeomorphic boundaries.  
We show below that $\chi(S)=\chi(S')$.  It follows that $S$ and $S'$ are homeomorphic, and we choose a homeomorphism $\varphi$ between them which extends the homeomorphism 
$({\xi'}^{\pm})^{-1} \circ f \circ \xi^{\pm}$ on their boundaries.  

Since $\chi(S) = \delta_k\chi(W_{is}) = \delta_k \chi_{is}$  and $\chi(S') = \delta_i'\chi(W'_{kt}) = \delta'_i \chi'_{kt}$, it is enough to show that $\delta_k\chi_{is} = \delta_i'\chi'_{kt}$.  This
 follows from the two facts below, where $d$ and $d'$ are as in Lemma~\ref{lem:epsilonik}.
\begin{enumerate}[(i)]
\item $d\chi_s = d' \chi'_{t}$.

\item 
$d'\chi'_{kt} = \delta_k \chi_s
\text{ and }
d\chi_{is} = \delta_i' \chi'_t$.

\end{enumerate}
More precisely, facts (i) and (ii) give 
$$
\frac{d'}{d} = \frac{\chi_s}{\chi'_t} = \left(\frac{d' \chi'_{kt}}{\delta_k}\right)
\left(\frac{\delta_i'}{d\chi_{is}}\right) = \left(\frac{d'}{d}\right)\left(\frac{\delta_i'\,\chi'_{kt}}
{\delta_k  \,\chi_{is}} \right).
$$
This yields $\delta_k\chi_{is} = \delta_i'\chi'_{kt}$
as desired.

Equation~\eqref{eq:AA'} in the proof of Lemma~\ref{lem:TA} shows that $d \chi(W_A) = d'\chi(W_{A'})$. 
Now (i) follows from the fact that $\chi_s = \chi(W_A)$ and 
$\chi'_t= \chi (W_{A'})$. 
To prove (ii), we recall from the proof of Lemma~\ref{lem:epsilonik} that $\cS_k$ covers $\cA$ with degree $\delta_k$.  This, together with equation~\eqref{eq:Sk} in the proof of Lemma~\ref{lem:TA}  and the fact that $\chi(W_A) = \chi_s$ 
gives:
$$
\delta_k \chi_s = \delta_k \chi(W_A) = \chi(\cS_k) = \chi (f(\cS_k)) = d'\chi'_{kt}. 
$$
The proof of the second equation in (ii) is similar. 
\end{proof}

We are now ready to construct the new covers 
$\cM$ and $\cM'$.

\begin{proof}[Proof of Proposition~\ref{prop:new-maps}]
Suppose that each $\cO_i$ and $\cO_k'$ has $m$ branch orbifolds of color $a$.  Then we 
will construct homeomorphic surface amalgam covers $\cM$ and $\cM'$ of  $\cO_\G$ and $\cO_{\G'}$ of degrees $2m^2 D$ and $2m^2D'$ respectively. 

\subsubsection*{New homeomorphic covering spaces $\bar \cY$ and $\bar \cY'$}
Take $2m^2$ disjoint copies of $\cY$ and $\cY'$ to obtain $\bar \cY$ and~$\bar \cY'$.  More precisely, for  $1 \le p,q \le m$, 
let $\tau_{pq}^\pm : \cY \to\cY_{pq}^\pm $ and ${\tau'}_{pq}^\pm :  \cY\to {\cY'}_{pq}^\pm  $ be homeomorphisms, and set:
$$
\bar \cY = \left(\bigsqcup_{p,q=1}^m \cY_{pq}^+ \right) \; \bigsqcup \;\left(
\bigsqcup_{p,q=1}^m \cY_{pq}^- \right)
\quad \text{and} \quad
\bar \cY' =\left(\bigsqcup_{p,q=1}^m {\cY'}_{pq}^+ \right) \;\bigsqcup \;\left(
\bigsqcup_{p,q=1}^m {\cY'}_{pq}^- \right).
$$

We define the map $\bar \pi:  \bar \cY\to  \cO_\G$ as follows.  
First define an isometry $\sigma: \cO_\G \to \cO_\G$ so that if  
$1 \leq j_1 <  \dots <  j_m \leq r$ are the indices of the color $a$ orbifolds $\cR_j$ 
in $\cO_{\G}$, then $ \sigma (\cR_{j_u}) = \cR_{j_{u+1}}$, where $u+1$ is taken mod $m$, and $\sigma$ is the identity elsewhere.
  Then $\bar \pi$ is defined on $\cY_{pq}^\pm$ by
$\bar \pi = \sigma^{p}\circ \pi \circ (\tau_{pq}^\pm)^{-1}$.  Clearly the resulting map $\bar \pi: \bar\cY \to \cO_\G$ is an orbifold covering map of degree $2m^2D$.

Similarly we choose the analogous isometry $\sigma': \cO_\G' \to 
\cO_{\G}'$ and define the corresponding covering map $\bar \pi': \cY' \to \cO_\G'$ of degree $2m^2D'$, by setting $\bar \pi' = {\sigma' }^{q} \circ \pi' \circ({\tau'}_{pq}^{\pm})^{-1}$ on ${\cY'}_{pq}^\pm$.
Finally, we define a homeomorphism  $\bar f: \bar \cY \to \bar \cY'$ by setting $\bar f ={{\tau'}_{pq}^\pm} 
 \circ f \circ ({\tau_{pq}^\pm})^{-1}$ on $\cY_{pq}^\pm$. 

\subsubsection*{Construction of $\cM$ and $\cM'$}
Let $\cZ \subset \cY$ be as in Remark~\ref{remark:preimageAA'}.  
Given a component $Z$ of $\cZ \subset \cY$, let $\partial Z$ denote the branching curves it contains.  Let $Z_{pq}^\pm = \tau_{pq}^\pm(Z)$
 and $\partial Z^\pm_{pq}= \tau_{pq}^\pm(\partial Z)$ denote the corresponding objects in $\cY^\pm_{pq}$.  
 Let $\cM_\cZ$ be the subspace of $\bar \cY$ obtained by cutting out the interiors of 
  all $2m^2$ copies of $\cZ$.  More precisely,
$$ 
\cM_\cZ = 
\bar \cY\; 
\setminus \; \left(\; \bigsqcup_{Z\in \cZ} \;\bigsqcup_{p,q=1}^m\left(Z^\pm_{pq} \setminus 
\partial Z^\pm_{pq}  \right)\;\right).
$$

The space $\cM$ is obtained by gluing  a  new collection of surfaces to $\cM_\cZ$ in such a way that 
each branching curve in $\cM$ has a neighborhood homeomorphic to a neighborhood of the corresponding curve in $\cY$, and moreover there is an extension of $\bar \pi|_{\cM_{\cZ}}$ to a map $\mu$ on $\cM$, such that $\bar \pi $ and $\mu$ agree on such neighborhoods of branching curves.  The space $\cM'$ is obtained similarly.  The surfaces are glued as follows. 

Fix a component $Z$ of $\cZ$.  We glue two types of surfaces to $\cM_\cZ$ and  $f(\cM_\cZ)$
along the curves $\sqcup_{p,q=1}^m \partial 
Z_{pq}^\pm$ 
and $\sqcup_{p,q=1}^m \bar f(\partial 
Z_{pq}^\pm)$ respectively.  Let $Z = Z_\cA \sqcup Z_{\cA'}$ as in Remark~\ref{remark:preimageAA'}.

\begin{enumerate}
\item (Replacements for $Z_\cA$.)
Construct $m^2$ copies each of the surfaces $T$  and $T'$ from Lemma~\ref{lem:TA}, indexed by $1 \le p, q \le m$.  
Let $\beta_{pq}^\pm: \partial T_{{pq}} \to \partial Z$ and ${\beta'}_{pq}^\pm: \partial T'_{{pq}} \to f(\partial Z)$ denote the maps from the lemma.  Then $T_{{pq}}$ is attached to $\cM_\cZ$ 
by identifying $\partial T_{{pq}}^+$ to $\partial Z_{pq}^+$ via $\tau_{pq}^+\circ \beta^+_{pq}$, and $\partial T_{{pq}}^-$ to $\partial Z_{pq}^-$ via $\tau_{pq}^-\circ \beta^-_{pq}$.
Similarly 
$T'_{{pq}}$ is attached to $\bar f(\partial Z^\pm_{pq})$ in $\bar f(\cM_\cZ)$
 via the maps ${\tau'_{pq}}^\pm \circ \beta\pm+_{pq}$.
Note that by Lemma~\ref{lem:TA} there are covering maps $\alpha_{pq}: T_{{pq}}
\to \cA$ and $\alpha'_{pq}:T_{{pq}}' \to \cA'$.

\bigskip 

\item (Replacements for $Z_{\cA'}$.)
For each non-empty collection of curves $\epsilon_{ik}$ in $Z$, we glue in $m^2$ copies of a single surface.  Thus $Z_{\cA'}$ is replaced by the union of such surfaces for all pairs~$i, k$ such that $\epsilon_{ik}$ is nonempty.  

Fix such an $\epsilon_{ik}$. 
Construct $m^2$ copies of the surfaces $S$ and $S'$ described in Lemma~\ref{lem:Tik}, denoted by $S_{pq}$ and $S_{pq}'$, with $1\le p,q\le m$. 
 Corresponding to $p$ and $q$, choose $s= j_p$ and $t=l_q$ in the statement of 
Lemma~\ref{lem:Tik}, and let $\zeta_{pq}: S_{pq} \to  \cP_{i{j_p}}$ and $\zeta_{pq}':S'_{pq} \to 
\cP_{k{l_q}}'$ be the covering maps given by the lemma.  Let $\xi_{pq}^\pm: \partial S_{pq} \to \partial Z
$  and ${\xi'}_{pq}^\pm: \partial S'_{pq} \to f(\partial Z)
$ be the associated maps on the boundaries. 

We cannot simply attach $S_{pq}$ to $\partial Z^\pm_{pq}$ as in the previous case, because we want 
to define a 
covering map $\mu$ on $\cM$ which agrees with $\bar \pi$ on a neighborhood of each branching curve in $\bar \cY$.  Now $\mu(S_{pq})$
will have to be a single orbifold $\cP_{i{j_s}}$ for some $1 \le s \le m$.  On the other hand,~$S_{pq}$ is replacing 
a disjoint collection of surfaces in $\cY^+_{pq} \cup \cY_{pq}^-$ (namely all surfaces in 
$\tau_{pq}^{\pm}(Z_{\cA'})$ with boundary in $\tau_{pq}^{\pm}(\epsilon_{ik})$).  A priori the image of the union of these surfaces under~$\bar \pi$ could be all 
the $\cP_{ij}$ of color $a$. Then for any choice of $s$ there would be some branching curve incident to two surfaces mapping to $\cP_{i{j_s}}$, and $\mu$ would not be an orbifold covering.  
To take care of this issue, we  define the attaching maps as follows. 

Let $\gamma$ be a boundary curve in $\partial S_{pq}^+$. 
Then $\xi_{pq}^+(\gamma)$ is a branching curve of $ \cY$, which is incident to two subsurfaces  in $Z$, say $S_1$ in $Z_{\cA'}$ and $S_2$ in $Z_\cA$. Then 
$\pi(S_1) = \cP_{ij_u}$ for some $1 \le u \le m$ and $\pi' (f(S_2)) = \cP'_{kl_v}$ for some $1 \le v \le m$.  
We then attach the $\gamma$ boundary of~$S_{pq}$ to $\cY_{(p-u)(q-v)}^+\cap \cM_Z$ via the map $
\tau_{(p-u)(q-v)}^+ \circ \xi^+_{pq}$.  Note that $u$ and $v$ depend on~$\gamma$.  
Now if $\varphi$ is the homeomorphism from $S_{pq}$ to $S_{pq}'$ given by Lemma~\ref{lem:Tik}, then we attach the~$\varphi(\gamma)$ boundary of $S_{pq}'$ to $\cY_{(p-u)(q-v)} \cap f(\cM_\cZ)$ via the map  
$\bar f \circ \tau_{(p-u)(q-v)}^+ \circ \xi^+_{pq} \circ \varphi^{-1}$.
We repeat this procedure for each $\gamma$ in $\partial S_{pq}^\pm$ to attach all the boundary components of $S_{pq}$ to $\cM_Z$ and the corresponding boundary components of $S'_{pq}$ to $f(\cM_\cZ)$.  

We repeat the above procedure for each $1 \le p \le m$ and $1 \le q\le m$. This completes the replacement of surfaces corresponding to the copies of the curves $\epsilon_{ik}$.  
We now do the same for each $i$ and $k$ such that $\epsilon_{ik}$ is nonempty.  
\end{enumerate}

We claim that at the end of steps (1) and (2), each curve in $\sqcup_{p,q=1}^m \partial 
Z_{pq}^\pm$ is incident to exactly one surface from (1) and one surface from (2).  The first claim is easy.  Now suppose (without loss of generality) that $\gamma \in Z_{st}^+$ is incident to two surfaces, say along $\gamma_1$ in  $S_{p_1q_1}$ and $\gamma_2$ in $S_{p_2q_2}$. Then $\xi_{p_1q_1}^+(\gamma_1)= \xi^+_{p_2q_2}(\gamma_2)
= ({\tau^+}_{st})^{-1}(\gamma)$ in $\partial Z$, and if $u$ and $v$ are defined as in (2), then 
$s=p_1-u=p_2-u$. Thus $p_1=p_2$, and similarly $q_1 = q_2$.

Finally, we repeat steps (1) and (2) above for each component $Z$ of $\cZ$ to get $\cM$ and $\cM'$. 
Note that since the $\partial Z$'s for different $Z$'s partition $\partial \cZ$, we see that each branching curve in $\sqcup_{Z\in \cZ} \sqcup_{p,q} \partial Z_{pq}^\pm$ is incident to exactly one surface from (1) and one from (2).  In fact, if $\gamma \in \partial Z_{pq}^\pm$ for some $Z, p$ and $q$, then there exist neighborhoods $N_\cY(\gamma)$ and 
$N_\cM(\gamma)$ in $\cY$ and $\cM$ respectively, and  a homeomorphism $h_\gamma: N_\cY(\gamma)\to
N_\cM(\gamma)$, which is the identity on $\cM_\cZ \cap N_\cY(\gamma)$, maps $N_\cY(\gamma) \cap \cZ_\cA$ to the part of~$N_\cM$ coming from the surface from (1), and maps 
$N_\cY(\gamma) \cap \cZ_{\cA'}$ to the part of $N_\cM$ coming from the surface from (2).  

\subsubsection*{The covering maps $\mu$ and $\mu'$, and the homeomorphism $\tilde f$}
Define maps $\mu$  and $\mu'$ as follows, where $1 \le p,q \le m$:
$$\mu =
\begin{cases}
\bar \pi & \text{ on } \cM_\cZ\\
\alpha_{pq}& \text{ on each } T_{{pq}}\\
\zeta_{uv}& \text{ on each } S_{pq} 
 \end {cases} 
 \quad \text{ and }\quad
 \mu' =
\begin{cases}
\bar \pi' & \text{ on } f(\cM_\cZ)\\
\alpha'_{pq}& \text{ on each } T'_{{pq}}\\
\zeta_{pq}& \text{ on each } S'_{pq}. 
 \end {cases}
$$

The overlap between the domains of the various maps is exactly the set of branching curves 
$\sqcup_{Z\in \cZ} \sqcup_{p,q} \partial Z_{pq}^\pm$.   Thus it follows from the definitions of the gluing maps, the definition of $\bar \pi$, and items (i) and (ii) in 
Lemmas~\ref{lem:TA} and~\ref{lem:Tik} that $\mu$ and $\mu'$ are  well-defined.  

We already know that $\mu$ is a covering map away from the branching curves in  $\sqcup_{Z\in \cZ} \sqcup_{p,q} \partial Z_{pq}^\pm$.  If~$\gamma$ is such a branching curve, then the above description of the surfaces incident to $\gamma$, together with the fact that $\mu$ and $\bar \pi$ agree on $\gamma$, shows that there is a homeomorphism $h$ from some neighborhood of $\gamma$  in $\cM$ to a neighborhood of $\gamma$ in $\cY$ such that $\cM = \bar \pi \circ h$.  It follows that $\mu$ is an orbifold covering map in a neighborhood of each branching curve.  Thus $\mu$, and similarly $\mu'$, are orbifold covering maps.  

 Finally, define 
$\tilde f$ by setting it equal to $\bar f$ on $\cM_\cZ$, to $\psi_{pq}$ on each surface of the form $T_{{pq}}$ as in (1) above, and to
$\varphi_{pq}$  on each surface of the form $S_{pq}$ as in (2) above, where $\psi_{pq}$ and 
$\varphi_{pq}$ 
are the homeomorphisms from 
Lemmas~\ref{lem:TA} and~\ref{lem:Tik}.  Then $\tilde f: \cM \to \cM'$  is a well-defined homeomorphism, and we have 
that $\tilde f (\mu^{-1} (\cA)) = \mu'^{-1}(\cA)'$. 

If the covers $\cM$ and $\cM'$ constructed above are not connected, we consider the restriction of $\tilde f$ to a single connected component of $\cM$ to get the desired homeomorphic connected covers satisfying the hypothesis of Case 1. 
\end{proof}

This completes the proof of Proposition~\ref{prop:ness-cond}.

\section{Sufficient conditions for cycles of generalized $\Theta$-graphs} \label{sec:CycleGenThetaSuff}

In this section we establish sufficient conditions 
for the commensurability of right-angled Coxeter groups defined by cycles of generalized $\Theta$-graphs.  We show that conditions (1) and (2) from Theorem~\ref{thm:CycleGenTheta} are sufficient in Sections~\ref{sec:cond1} and~\ref{sec:cond2}, respectively.

\subsection{Sufficiency of condition (1)}\label{sec:cond1}

We now prove the following result.

 \begin{prop}\label{prop:cond1}
Let $W = W_\G$ and $W' = W_{\G'}$ be as in Theorem~\ref{thm:CycleGenTheta}.  If condition (1) holds, then~$W$ and $W'$ are commensurable. 
 \end{prop}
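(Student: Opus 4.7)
The plan is to construct homeomorphic finite-sheeted covers of $\cO_\G$ and $\cO_{\G'}$ directly; these will have isomorphic fundamental groups, yielding isomorphic finite-index subgroups of $W$ and $W'$. My first step would be to pass to the degree 16 torsion-free surface amalgam covers $\cX \to \cO_\G$ and $\cX' \to \cO_{\G'}$ from Section~\ref{sec:tfcover}, whose JSJ graphs are the half-coverings $\half(\Lambda)$ and $\half(\Lambda')$ of the JSJ trees $\Lambda$ and $\Lambda'$. It then suffices to build homeomorphic finite covers of $\cX$ and $\cX'$, which I would describe through their JSJ graphs and associated half-coverings.

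Next I would extract degree data from condition (1). Choose positive integers $D, D'$ with $D\,\chi(W_A) = D'\,\chi(W_{A'})$; by condition (1b) this forces the balance equation
\[
D \sum_{i \in I_{\cV}} \chi(W_{\Theta_i}) \;=\; D' \sum_{k \in I'_{\cV}} \chi(W_{\Theta'_k})
\]
for every commensurability class $\cV$ appearing in (1a). These are precisely the Euler-characteristic constraints that must hold if the resulting cover $\cM \to \cO_\G$ has global degree $D$, its cover $\cM' \to \cO_{\G'}$ has global degree $D'$, and the preimage of $\cA$ in $\cM$ is homeomorphic (class-by-class) to the preimage of $\cA'$ in $\cM'$.

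For each commensurability class $\cV$, condition (1a) together with Theorem~\ref{thm:GenTheta} gives that the groups $\{W_{\Theta_i}\}_{i\in I_{\cV}} \cup \{W_{\Theta'_k}\}_{k\in I'_{\cV}}$ are pairwise commensurable, realized by the unfolding construction recalled in Section~\ref{sec:GenTheta}. I would adapt that construction, together with Lemma~\ref{lem:neumann} (after ensuring positive genus via Corollary~\ref{cor:PosGenus}), to produce surfaces covering each $\cO_i$ and each $\cO'_k$ with prescribed total degree and prescribed boundary-component data at every branching edge. In particular, the two orbicomplexes will have surfaces covering them whose Euler characteristics and boundary data agree class-by-class.

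The hard part, and the main obstacle, will be the combinatorial gluing. I would first build abstractly the JSJ graphs $\Lambda_{\cM}$ and $\Lambda_{\cM'}$ of the proposed covers together with half-coverings $\Lambda_{\cM} \to \Lambda$ and $\Lambda_{\cM'} \to \Lambda'$, colored by the commensurability classes of the attached $\cO_i$'s and $\cO'_k$'s. Condition (1a) guarantees that these colored JSJ graphs can be chosen isomorphic, while the balance equation above guarantees that the surfaces placed at Type-2 vertices on the two sides have matching topology. Once the JSJ graphs, vertex surfaces, and boundary-component labels are fixed to agree on the two sides, gluing the surfaces along branching curves according to $\Lambda_{\cM} \cong \Lambda_{\cM'}$ produces surface amalgams $\cM$ and $\cM'$ which are homeomorphic by construction and which cover $\cO_\G$ and $\cO_{\G'}$ respectively. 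Hence $\pi_1(\cM) \cong \pi_1(\cM')$ embeds as finite-index subgroups of $W$ and $W'$, proving commensurability.
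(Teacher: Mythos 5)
Your proposal is correct and follows essentially the same route as the paper: pass to the degree~16 surface amalgam covers $\cX$ and $\cX'$, choose global degrees $D,D'$ with $D\,\chi(W_A)=D'\,\chi(W_{A'})$ so that condition (1b) yields the class-by-class balance equation, and then use Lemma~\ref{lem:neumann} to build vertex surfaces with prescribed boundary degrees, glued according to isomorphic colored JSJ graphs half-covering $\half(\Lambda)$ and $\half(\Lambda')$. The only work you defer — which is the bulk of the paper's Section~\ref{sec:cond1} — is the explicit bookkeeping (the constants $k_{pq}$, $K$, $d_{pq}$, $B_p$, the parity and boundary-count checks for Neumann's lemma, and the verification that the two colored half-covers $\Psi$, $\Psi'$ are isomorphic), but your outline contains all the ideas needed to carry it out.
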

 
By the construction given in Section~\ref{sec:tfcover}, we have surface amalgams $\cX$ and $\cX'$ which are finite-sheeted covers of $\cO_\G$ and $\cO_{\G'}$, respectively.   To prove Proposition~\ref{prop:cond1}, we will construct homeomorphic surface amalgams $\cY$ and $\cY'$ so that $\cY$ is a finite-sheeted cover of $\cX$ and $\cY'$ is a finite-sheeted cover of $\cX'$.  It follows that $\pi_1(\cY) \cong \pi_1(\cY')$ has finite index in both $W = \pi_1(\cO_\G)$ and $W' = \pi_1(\cO_{\G'})$, hence $W$ and $W'$ are abstractly commensurable. 

Let $\Lambda$ and $\Lambda'$ be the JSJ graphs of $W$ and $W'$, respectively, and recall from Section~\ref{sec:tfcover} that the JSJ graphs of $\cX$ and $\cX'$ are the half-covers $\half(\Lambda)$ and $\half(\Lambda')$.  The surface amalgams $\cY$ and $\cY'$ will have isomorphic JSJ graphs $\Psi$ and $\Psi'$, with $\Psi$ and $\Psi'$ being half-covers of $\half(\Lambda)$ and $\half(\Lambda')$, respectively.  After introducing some notation and constants for use in the proof, in Sections~\ref{sec:notation1} and~\ref{sec:constants} respectively, we construct $\Psi$ and $\Psi'$ in Section~\ref{sec:Psi}.  We then construct common covers of the surfaces in $\cX$ and $\cX'$ in Sections~\ref{sec:T} and~\ref{sec:Tpj}.  Finally, in Section~\ref{sec:Y}, we describe how these surfaces are glued together to form homeomorphic surface amalgams $\cY$ and $\cY'$ so that there are finite-sheeted coverings $\cY \to \cX$ and $\cY' \to \cX'$.

\subsubsection{Notation}\label{sec:notation1}

We first establish some notation to be used throughout the proof of Proposition~\ref{prop:cond1}.   This includes labeling the vertices of $\half(\Lambda)$ and $\half(\Lambda')$. 

Using~(1)(a) of Theorem~\ref{thm:CycleGenTheta}, let $\cV_1,\dots,\cV_M$ be the commensurability classes of the Euler characteristic vectors $\{ v_i \mid r_i \geq 2 \}$ and $\{ v_k' \mid r_k \geq 2 \}$.  For each $1 \leq p \leq M$, let  $r_p \geq 2$ be the number of entries of each vector in $\cV_p$, let $w_p$ be the minimal integral element of the commensurability class $\cV_p$, let $\{ v_{pq} \mid 1 \leq q \leq N_p \}$ be the vectors from $\{ v_i \mid r_i \geq 2 \}$ which lie in $\cV_p$, and let $\{ v'_{pq'} \mid 1 \leq q' \leq N'_p \}$ be the vectors from $\{ v_k' \mid r_k' \geq 2 \}$ which lie in $\cV_p$.   For each $p$, $q$, and $q'$, let $R_{pq}$ and $R'_{pq'}$ be the (unique) rationals so that $v_{pq} = R_{pq}w_p$ and $v'_{pq'} = R'_{pq'}w_p$.

\begin{center}
 \begin{figure}[ht]
  \begin{overpic}[scale=.5, tics=5]%
{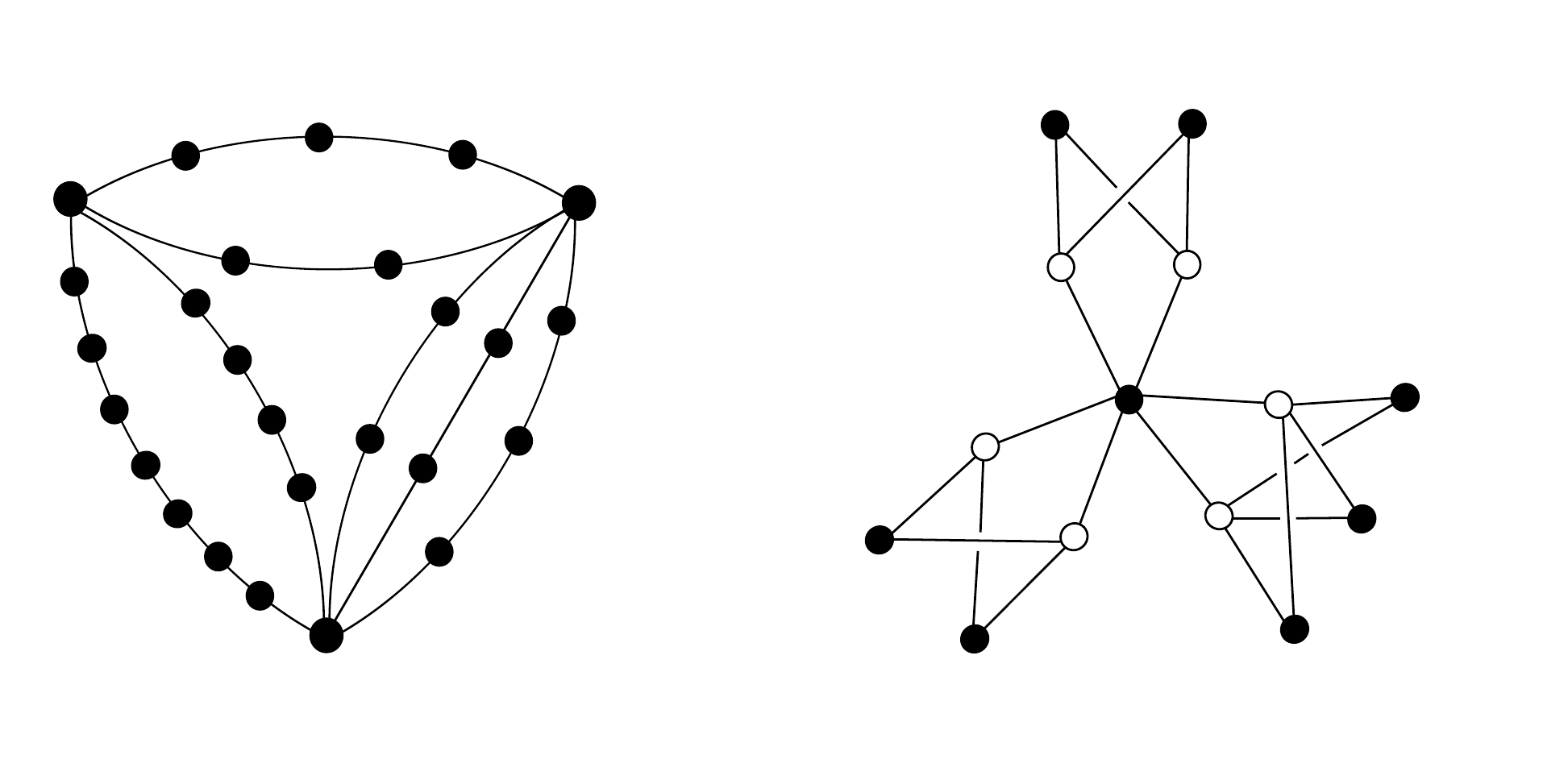}
 \put(74, 26){\scriptsize $y_0$}
 \put(5,10){\footnotesize $\G$}
 \put(62,33){\scriptsize{$x_{11}^+$}}
 \put(77,33){\scriptsize $ x_{11}^-$}
 \put(62,43){\scriptsize $y_{11}^1$}
 \put(77.5,43){\scriptsize $y_{11}^2$}
 \put(81,26.5){\scriptsize $x_{21}^+$}
 \put(75,13){\scriptsize $x_{21}^-$}
 \put(92,24){\scriptsize $y_{21}^1$}
 \put(89,16){\scriptsize $y_{21}^2$}
 \put(84,8){\scriptsize $y_{21}^3$}
 \put(69,13){\scriptsize $x_{12}^+$}
 \put(60,24){\scriptsize $x_{12}^-$}
 \put(58,6){\scriptsize $y_{12}^1$}
 \put(52,12){\scriptsize $y_{12}^2$}
 \put(87,40){\footnotesize $\half(\Lambda)$}
 \end{overpic}
 \caption{{\Small The graph $\G$ on the left is an example of a cycle of generalized $\Theta$-graphs in which there are two commensurability classes of Euler characteristic vectors for $W_{\G}$. On the right, the graph $\half(\Lambda)$ is the JSJ graph of the degree $16$ torsion-free cover $\cX$ of $\cO_\G$  constructed in Section~\ref{sec:tfcover}. The vertices of $\half(\Lambda)$ are labeled as described in Section \ref{sec:notation1}. }}
 \label{figure:suff_cond_1} 
  \end{figure}
 \end{center}

We use the notation from the previous paragraph to label the vertices of the graphs $\half(\Lambda)$ and~$\half(\Lambda')$, as follows.  An example of the labeling is given in Figure \ref{figure:suff_cond_1}.  The graph $\half(\Lambda)$ has two Type~1 vertices for every $r_i \geq 2$, hence has $2 \sum_{p=1}^M N_p$ Type~1 vertices.  We label these as:
\[
V_1(\half(\Lambda)) = \{ x_{pq}^+, x_{pq}^- \mid 1 \leq p \leq M, 1 \leq q \leq N_p \}.
\]
The central vertex of $\half(\Lambda)$, which is of Type 2, we denote by~$y_0$.  This vertex is adjacent to every Type 1 vertex, so has valence $2 \sum_{p=1}^M N_p$.  For each $p$, the graph $\half(\Lambda)$ has $r_pN_p$ additional Type $2$ vertices, all of valence $2$, which we label so that
\[
V_2(\half(\Lambda)) = \{ y_0 \} \cup \{ y_{pq}^{j} \mid 1 \leq p \leq M, 1 \leq q \leq N_p, 1 \leq j \leq r_{p}\}.
\]
Each of the vertices $y_{pq}^j$ is adjacent to both $x_{pq}^+$ and $x_{pq}^-$.   

Similarly, $\half(\Lambda')$ has $2 \sum_{p=1}^M N'_p$ vertices of Type~1, given by
\[
V_1(\half(\Lambda')) = \{ x_{pq'}^+, x_{pq'}^- \mid 1 \leq p \leq M, 1 \leq q' \leq N'_p \},
\]
and $1 + \sum_{p=1}^M r_p N'_p$ vertices of Type~2, given by
\[
V_2(\half(\Lambda')) = \{ y_0' \} \cup \{ y_{pq'}^{j} \mid 1 \leq p \leq M, 1 \leq q' \leq N'_p, 1 \leq j \leq r_{p}\},
\]
so that $y_0'$ is adjacent to every Type 1 vertex, and $y_{pq'}^j$ is adjacent to $x_{pq'}^+$ and $x_{pq'}^-$.

\subsubsection{Constants}\label{sec:constants}

We next define quite a few constants for use in the proof of Proposition~\ref{prop:cond1}, continuing notation from Section~\ref{sec:notation1}.
We will need the following lemma. 

\begin{lemma}\label{lem:pq}  Assume $W = W_\G$ and $W' = W_{\G'}$ are as in Theorem~\ref{thm:CycleGenTheta}, and that condition (1) holds.  Then for each $1 \leq p \leq M$,
\[
\chi(W_{A'})\cdot \left( \sum_{q=1}^{N_p} R_{pq}\right)= \chi(W_A)  \cdot\left(\sum_{q'=1}^{N'_p} R'_{pq'}\right). 
\]
\end{lemma}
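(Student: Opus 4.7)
The plan is to reduce the desired equality directly to the Euler characteristic equation in condition (1)(b) of Theorem~\ref{thm:CycleGenTheta}, by observing that for any $3$-convex generalized $\Theta$-graph $\Theta$ with branches $\beta_1,\dots,\beta_r$, the Euler characteristic of $W_{\Theta}$ equals the sum of the Euler characteristics of the $W_{\beta_j}$.

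First, I would verify the identity
\[
\chi(W_{\Theta}) = \sum_{j=1}^{r} \chi(W_{\beta_j})
\]
by a direct count using Definition~\ref{def:euler}. Writing $\Theta = \Theta(n_1,\dots,n_r)$, the branch $\beta_j$ has $n_j+2$ vertices and $n_j+1$ edges, while $\Theta$ itself has $2 + \sum_j n_j$ vertices and $\sum_j(n_j+1)$ edges. Both sides of the identity then simplify to $(r - \sum_j n_j)/4$. (The overcounting of the two essential vertices $a,b$ on the right is exactly cancelled by the constant term ``$1$'' appearing $r$ times instead of once.)

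Next, fix $p$ with $1\le p\le M$, and let $\sigma_p$ denote the sum of the entries of the minimal integral vector $w_p\in \cV_p$. Since $v_{pq} = R_{pq} w_p$ is the Euler characteristic vector of some $\Theta_{i}$ (for the appropriate $i\in I_{\cV_p}$), the identity above gives
\[
\chi(W_{\Theta_i}) \;=\; \text{(sum of entries of $v_{pq}$)} \;=\; R_{pq}\,\sigma_p.
\]
Similarly, for each $q'$, the corresponding $\chi(W_{\Theta'_{k}}) = R'_{pq'}\sigma_p$. Substituting these into the equation in condition (1)(b) applied to the commensurability class $\cV = \cV_p$ yields
\[
\chi(W_{A'})\cdot\sigma_p\cdot\Bigl(\sum_{q=1}^{N_p} R_{pq}\Bigr) \;=\; \chi(W_{A})\cdot\sigma_p\cdot\Bigl(\sum_{q'=1}^{N'_p} R'_{pq'}\Bigr).
\]

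The only remaining point is to check $\sigma_p\ne 0$ so we may divide through. Since $\G$ is $3$-convex, every branch $\beta_{ij}$ has $n\ge 2$ interior subdivisions, hence $\chi(W_{\beta_{ij}}) = (1-n)/4 < 0$, so every entry of every $v_{pq}$ is strictly negative. Combined with the normalization that the first nonzero entry of $w_p$ is a positive integer and with $v_{pq} = R_{pq}w_p$, this forces $R_{pq} < 0$ and every entry of $w_p$ to be a positive integer. In particular $\sigma_p > 0$, so cancelling $\sigma_p$ on both sides gives the claim.

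The main content here is really the additivity identity in the first paragraph; everything else is bookkeeping. I do not anticipate a genuine obstacle, but I would be careful to record the sign analysis confirming $\sigma_p\ne 0$, as without it the division step would not be justified.
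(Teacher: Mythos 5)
Your proposal is correct and follows essentially the same route as the paper: both reduce the claim to the additivity $\chi(W_{\Theta_i})=\sum_j\chi(W_{\beta_{ij}})=\overline{v_i}$ and then substitute $v_{pq}=R_{pq}w_p$, $v'_{pq'}=R'_{pq'}w_p$ into condition (1)(b) for the class $\cV_p$, cancelling the common factor $\overline{w_p}$. Your explicit sign check that $\sigma_p=\overline{w_p}\neq 0$ is a point the paper leaves implicit, but it is the same argument.
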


\begin{proof}
For any vector $v$, write $\overline{v}$ for the sum of the entries of $v$.  Using this and the notation of Theorem~\ref{thm:CycleGenTheta}, an easy calculation shows that for each $i \in I$ and each $k \in I'$,
\[
\chi(W_{\Theta_i}) = \sum_{j=1}^{r_i} \chi(W_{\beta_{ij}}) = \overline{v_i} \quad \mbox{and} \quad \chi(W_{\Theta_k'}) = \sum_{l=1}^{r_k'} \chi(W_{\beta_{kl}}) = \overline{v_k'}.
\]
Now combining this with the notation established in Section~\ref{sec:notation1}, we have by condition~(1)(b) of Theorem~\ref{thm:CycleGenTheta} that for each $1 \leq p \leq M$, 
\[
\chi(W_{A'}) \cdot\left( \sum_{q=1}^{N_p} \overline{v_{pq}} \right) = \chi(W_{A'}) \cdot\left( \sum_{q=1}^{N_p} R_{pq}\overline{w_p} \right) = 
\chi(W_A) \cdot\left( \sum_{q'=1}^{N'_p} R'_{pq'}\overline{w_p} \right) = \chi(W_A) \cdot\left( \sum_{q'=1}^{N'_p} \overline{v'_{pq'}} \right).
\]
The result follows.  
\end{proof}

Next, for each $1 \leq p \leq M$, each $1 \leq q \leq N_p$, and each $1 \leq q' \leq N_p'$, let 
\begin{equation}\label{eqn:kpq}
k_{pq} = 16|R_{pq}| \quad \mbox{and} \quad k'_{pq'} = 16|R'_{pq'}|.
\end{equation}
Then by Lemma~\ref{lem:pq}, we may define constants $B_p$ for $1 \leq p \leq M$ by 
\[
B_p = |\chi(W_{A'})| \cdot \left(\sum_{q=1}^{N_p} k_{pq} \right)= |\chi(W_A)| \cdot \left( \sum_{q'=1}^{N'_p} k'_{pq'}\right). 
\]
We also define
\begin{equation}\label{eqn:B}
B = \sum_{p=1}^M B_p.
\end{equation}

\begin{lemma}\label{lem:constants1}  With the same hypotheses as Lemma~\ref{lem:pq}:
\begin{enumerate} 
\item each $k_{pq}$ and $k'_{pq'}$ is a positive integer divisible by $4$; 
\item each $B_p$ is a positive integer, so $B$ is a positive integer; and 
\item for each $p$, we have $B_p \geq N_p$.
\end{enumerate}
\end{lemma}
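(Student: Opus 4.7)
The plan is to derive all three parts from a single basic observation: by Definition~\ref{def:euler}, the Euler characteristic of any triangle-free right-angled Coxeter group is a rational number whose denominator divides $4$. This applies to each entry of $v_{pq}$ and $v'_{pq'}$ (whose entries are Euler characteristics of special subgroups $W_\beta$ corresponding to branches) as well as to $\chi(W_A)$ and $\chi(W_{A'})$.

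For (1), I would first compute that each entry of $v_{pq}$ is of the form $\chi(W_\beta) = (3-n_\beta)/4$, where $n_\beta \geq 4$ by $3$-convexity, so that $4v_{pq}$ is a vector of negative integers. Since $v_{pq}$ has all negative entries and $w_p$ has positive first nonzero entry by convention, $R_{pq}$ is negative; write $R_{pq} = -a/b$ in lowest terms with $a,b \in \Z_{>0}$. The relation $v_{pq} = R_{pq}\, w_p$ combined with the integrality of $4v_{pq}$ gives $b \mid 4a\, w_{p,j}$ for every $j$, and since $\gcd(a,b)=1$ and the entries of the minimal integral vector $w_p$ have greatest common divisor $1$, this forces $b \mid 4$. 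Hence $k_{pq} = 16|R_{pq}| = 16a/b$ is a positive integer divisible by $4$, and the identical argument handles $k'_{pq'}$.

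For (2), Definition~\ref{def:euler} gives that $4\chi(W_{A'})$ is an integer, which is nonzero (indeed negative) because $W_{A'}$ is the fundamental group of the non-branch orbifold $\cA'$ whose Euler characteristic is negative by the computation in Section~\ref{sec:A}. Combined with $4 \mid k_{pq}$ from (1), each product $|\chi(W_{A'})| \cdot k_{pq}$ is a positive integer, so $B_p$ and hence $B$ are positive integers. For (3), since $4\chi(W_{A'})$ is a negative integer we have $|\chi(W_{A'})| \geq 1/4$; together with $k_{pq} \geq 4$ this gives $|\chi(W_{A'})| \cdot k_{pq} \geq 1$ for each $q$, and summing the $N_p$ terms yields $B_p \geq N_p$. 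The only real obstacle is the denominator bound for $R_{pq}$ in part (1), which is where the coprimality built into the definition of the minimal integral representative $w_p$ is essential; everything else reduces to integrality arguments modulo~$4$.
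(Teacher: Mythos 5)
Your proof is correct and follows essentially the same route as the paper: the paper's (much terser) proof likewise observes that the entries of $v_{pq}$, $v'_{pq'}$ and the values $\chi(W_A)$, $\chi(W_{A'})$ have denominator dividing $4$ by Definition~\ref{def:euler}, deduces from minimality of $w_p$ that $R_{pq}$ and $R'_{pq'}$ do as well, and states that the lemma "follows easily." Your write-up just fills in the details the paper leaves implicit — the gcd argument bounding the denominator of $R_{pq}$ and the integrality/size estimates for (2) and (3) — so there is no substantive difference in approach.
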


\begin{proof}
By Definition~\ref{def:euler},
the entries in the Euler characteristic vectors $v_{pq}$ and $v'_{pq'}$ and the rationals $\chi(W_A)$ and $\chi(W_{A'})$ all have denominator at most $4$.  Since $w_p$ is the minimal integral element of the commensurability class $\cV_p$, which contains the vectors $v_{pq}$ and $v_{pq'}$, the rationals $R_{pq}$ and $R'_{pq'}$ have denominator at most $4$ as well. The lemma follows easily from this.
\end{proof} 

We now define $K$ to be the product of all of the $k_{pq}$ and $k'_{pq'}$, that is,
\begin{equation}\label{eqn:K}
K = \prod_{p=1}^M \left[ \left( \prod_{q=1}^{N_p} k_{pq}\right) \left( \prod_{q'=1}^{N'_p} k'_{pq'}\right) \right]. 
\end{equation}
We also define 
\begin{equation}\label{eqn:D} 
d_{pq} = \frac{K}{k_{pq}}, \qquad d'_{pq'} = \frac{K}{k'_{pq'}}, \qquad D = K|\chi(W_{A'})|, \quad \mbox{and} \quad D' = K|\chi(W_A)|.  
\end{equation}
Lemma~\ref{lem:constants1} now implies:

\begin{lemma}  The constants $K$, $d_{pq}$, $d'_{pq'}$, $D$, and $D'$ are all positive integers.
\qed
\end{lemma}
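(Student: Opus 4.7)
The plan is to show that each of the five constants is a positive integer by tracking divisibilities back to Lemma~\ref{lem:constants1} and Definition~\ref{def:euler}. First, for $K$: by Lemma~\ref{lem:constants1}(1), each factor $k_{pq}$ and $k'_{pq'}$ appearing in~\eqref{eqn:K} is a positive integer (in fact divisible by $4$), so $K$, being a finite product of positive integers, is itself a positive integer.

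Next, for $d_{pq}$ and $d'_{pq'}$: these are defined in~\eqref{eqn:D} as $K/k_{pq}$ and $K/k'_{pq'}$. Since $K$ is by definition~\eqref{eqn:K} the product of \emph{all} the $k_{pq}$ and $k'_{pq'}$, the factor in the denominator appears explicitly in the numerator, so $d_{pq}$ and $d'_{pq'}$ equal the product of the remaining factors and are thus positive integers.

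Finally, for $D = K|\chi(W_{A'})|$ and $D'=K|\chi(W_A)|$: from Definition~\ref{def:euler} applied to the triangle-free graphs $\G$ and $\G'$, the Euler characteristics $\chi(W_A)$ and $\chi(W_{A'})$ are rational numbers whose denominators divide $4$. It therefore suffices to show that $K$ is divisible by $4$. The hypotheses of Theorem~\ref{thm:CycleGenTheta} ensure that at least one nontrivial generalized $\Theta$-graph occurs in $\G$ (since the definition of a cycle of generalized $\Theta$-graphs forbids two consecutive trivial ones and $N \geq 3$), so the index set $I$ is nonempty, whence by condition~(1)(a) there is at least one commensurability class $\cV_p$ and at least one factor $k_{pq}$ in the product defining $K$. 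By Lemma~\ref{lem:constants1}(1) this factor is divisible by $4$, so $4 \mid K$. Moreover $\chi(W_A)$ and $\chi(W_{A'})$ are nonzero (in fact strictly negative, since the corresponding Coxeter groups are infinite and hyperbolic, and are the fundamental groups of the $2$-dimensional orbifolds $\cA, \cA'$ from Section~\ref{sec:A}). Hence $D$ and $D'$ are positive integers, completing the proof.

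No step here is a genuine obstacle; this is a bookkeeping lemma whose purpose is to record that the covering degrees to be used in the construction of $\cY$ and $\cY'$ later in Section~\ref{sec:cond1} are well-defined positive integers, so the only thing to verify carefully is the divisibility by $4$ needed to clear the denominators of the Euler characteristics.
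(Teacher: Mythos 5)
Your argument is correct and matches the paper, which simply records this lemma as an immediate consequence of Lemma~\ref{lem:constants1} (each $k_{pq}$, $k'_{pq'}$ a positive integer divisible by $4$, and $\chi(W_A)$, $\chi(W_{A'})$ negative rationals with denominator dividing $4$), exactly the bookkeeping you spell out.
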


\subsubsection{Construction of the half-covers $\Psi$ and $\Psi'$}\label{sec:Psi}

We now construct the graphs $\Psi$ and $\Psi'$, and show that they are isomorphic.

Let $B$ be as defined at~\eqref{eqn:B} above, and recall from Lemma~\ref{lem:constants1} that $B$ is a positive integer.  
The graphs $\Psi$ and $\Psi'$ will both have $2B$ Type~1 vertices.  In $\Psi$, we use that $B = \sum_{p=1}^M \left( \sum_{q=1}^{N_p} |\chi(W_{A'})|k_{pq}\right)$ to put
\[
V_1(\Psi) = \{ x_{pq}^{i,+}, x_{pq}^{i,-} \mid 1 \leq p \leq M, 1 \leq q \leq N_p, 1 \leq i \leq |\chi(W_{A'})|k_{pq} \}.
\]
In $\Psi'$, we use that $B = \sum_{p=1}^M \left( \sum_{q'=1}^{N'_p} |\chi(W_A)|k'_{pq'}\right)$ to put 
\[
V_1(\Psi') = \{ x_{pq'}^{i',+}, x_{pq'}^{i',-} \mid 1 \leq p \leq M, 1 \leq q' \leq N'_p, 1 \leq i' \leq |\chi(W_A)|k'_{pq'} \}.
\]

We now describe the Type 2 vertices of $\Psi$ and $\Psi'$, and the edges.  The graph $\Psi$ has a distinguished Type~2 vertex $y_1$ of valence $2B$, which is adjacent to every Type 1 vertex.  There are $\sum_{p=1}^M B_p r_p$ additional Type 2 vertices in $\Psi$, each of valence~2, so that:
\[
V_2(\Psi) = \{ y_1 \} \cup \{ y_{pq}^{ij} \mid 1 \leq p \leq M, 1 \leq q \leq N_p, 1 \leq i \leq |\chi(W_{A'})|k_{pq}, 1 \leq j \leq r_p\}.
\]
For $1 \leq j \leq r_{p}$, the vertex $y_{pq}^{ij}$ is adjacent to $x_{pq}^{i,+}$ and $x_{pq'}^{i,-}$.  Similarly, the graph $\Psi'$ has $1 + \sum_{p=1}^M B_p r_p$ Type~2 vertices, given by
\[
V_2(\Psi') = \{ y_1' \} \cup \{ y_{pq'}^{i'j} \mid 1 \leq p \leq M, 1 \leq q' \leq N'_p, 1 \leq i' \leq |\chi(W_A)|k'_{pq'}, 1 \leq j \leq r_p\},
\]
with $y_1'$ adjacent to every Type 1 vertex, and $y_{pq'}^{i'j}$ adjacent to $x_{pq'}^{i',+}$ and $x_{pq'}^{i',-}$.

\begin{lemma}\label{lem:iso_Psi}  The graphs $\Psi$ and $\Psi'$ are isomorphic.
\end{lemma}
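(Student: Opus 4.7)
The proof will be a straightforward combinatorial verification, comparing the two graphs feature by feature and writing down an explicit isomorphism. The key observation is that both $\Psi$ and $\Psi'$ have a very rigid structure: a single ``central'' Type~2 vertex adjacent to every Type~1 vertex, together with a disjoint collection of ``bundles,'' where each bundle is attached to a pair of Type~1 vertices (one labelled $+$, one labelled $-$) and consists of some number of valence-2 Type~2 vertices each joined to both members of the pair. An isomorphism will follow once I match the central vertices and biject the bundles appropriately.

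First I would count. Both $\Psi$ and $\Psi'$ have $2B$ Type~1 vertices by definition. For Type~2 vertices, both have a single central vertex ($y_1$ and $y_1'$ respectively) of valence $2B$, and both have $\sum_{p=1}^M B_p r_p$ additional valence-2 vertices, since by the two expressions for $B_p$ the sums $\sum_{q=1}^{N_p} |\chi(W_{A'})| k_{pq}$ and $\sum_{q'=1}^{N'_p} |\chi(W_A)| k'_{pq'}$ are both equal to~$B_p$. Next I would describe each graph as the union of (a) all edges incident to the central Type~2 vertex, and (b) for each $p \in \{1,\dots,M\}$, a disjoint collection of bundles: in $\Psi$ there are $B_p$ bundles indexed by~$p$, one for each pair $(x_{pq}^{i,+}, x_{pq}^{i,-})$, each consisting of $r_p$ valence-2 Type~2 vertices $\{y_{pq}^{ij}\}_{j=1}^{r_p}$ joined to both members of the pair; and similarly for $\Psi'$.

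Finally I would construct the isomorphism $\Phi:\Psi \to \Psi'$. Set $\Phi(y_1) = y_1'$. For each $p$, since both graphs contain exactly $B_p$ bundles of multiplicity $r_p$ indexed by $p$, choose any bijection from the set of $\pm$-pairs of Type~1 vertices in $\Psi$ indexed by $p$ to the set of $\pm$-pairs in $\Psi'$ indexed by $p$, preserving the $\pm$ labels (say, any numbering of the two collections $1,\ldots,B_p$ and matching by equal index). Send each pair $(x_{pq}^{i,+},x_{pq}^{i,-})$ to its image pair $(x_{pq'}^{i',+},x_{pq'}^{i',-})$, and for each $j \in \{1,\dots,r_p\}$ send $y_{pq}^{ij}$ to $y_{pq'}^{i'j}$. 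Because every edge of $\Psi$ either joins $y_1$ to a Type~1 vertex or lies in a bundle, and the same holds for $\Psi'$, this map preserves adjacencies and is a graph isomorphism. No step here poses a real obstacle; the only thing being used, beyond bookkeeping, is the equality of the two expressions for $B_p$, which is Lemma~\ref{lem:pq}.
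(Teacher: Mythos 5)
Your proposal is correct and follows essentially the same route as the paper: both arguments reduce to observing that for each $p$ the two graphs contain the same number $B_p$ of $\pm$-pairs (by the two expressions for $B_p$, i.e.\ Lemma~\ref{lem:pq}), each carrying the identical local structure of the central vertex plus $r_p$ valence-two Type~2 vertices, so matching pairs within each $p$ gives the isomorphism. The paper merely phrases this more compactly by comparing valences ($1+r_p$ for each such Type~1 vertex), while you spell out the explicit bijection; there is no substantive difference.
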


\begin{proof}  By construction, it suffices to show that for each $p$, the $B_p$ vertices $\{ x_{pq}^{i,+} \mid 1 \leq q \leq N_p, 1 \leq i \leq |\chi(W_{A'})|k_{pq} \}$ in $\Psi$ and the $B_p$ vertices $\{ x_{pq'}^{i',+} \mid 1 \leq q' \leq N'_p, 1 \leq i' \leq |\chi(W_A)|k'_{pq'}\}$ in $\Psi'$ have the same collection of valences.  But  each of these vertices in fact has the same valence, namely $1 + r_p$, since each $x_{pq}^{i,+}$ (respectively, $x_{pq'}^{i',+}$) is adjacent to the central vertex $y_1$ (respectively, $y_1'$), and to the $r_p$ vertices $\{ y_{pq}^{ij} \mid 1 \leq j \leq r_p \}$ (respectively,  $\{ y_{pq'}^{i'j} \mid 1 \leq j \leq r_p \}$).  The result follows.
\end{proof}

The next lemma is easily verified.

\begin{lemma}\label{lem:Psi}  There is a half-covering $\Psi \to \half(\Lambda)$ induced by $y_1 \mapsto y_0$, $x_{pq}^{i,\pm} \mapsto x_{pq}^{\pm}$, and $y_{pq}^{ij} \mapsto y_{pq}^j$, and similarly for $\Psi' \to \half(\Lambda')$.
\end{lemma}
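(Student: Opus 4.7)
The plan is a straightforward verification of the three conditions in Definition~\ref{def:half-cover}, so the proof amounts to unpacking the constructions of $\Psi$ and $\half(\Lambda)$ from Sections~\ref{sec:half-coverings} and~\ref{sec:Psi}, extending the stated vertex map to edges, and checking local bijectivity/surjectivity at vertices of the appropriate type.

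First I would define the edge map. Every edge of $\Psi$ is incident to a Type~1 vertex $x_{pq}^{i,\pm}$ and either to $y_1$ or to some $y_{pq}^{ij}$. Declare that the edge $\{x_{pq}^{i,\pm}, y_1\}$ maps to $\{x_{pq}^{\pm}, y_0\}$ and the edge $\{x_{pq}^{i,\pm}, y_{pq}^{ij}\}$ maps to $\{x_{pq}^{\pm}, y_{pq}^j\}$. These targets are genuine edges of $\half(\Lambda)$: by the construction in Section~\ref{sec:notation1}, $y_0$ is adjacent to every Type~1 vertex of $\half(\Lambda)$, and $y_{pq}^j$ is adjacent to both $x_{pq}^+$ and $x_{pq}^-$. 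Hence we obtain a graph morphism, and condition~(1) of Definition~\ref{def:half-cover} holds immediately, since the vertex map sends $y_1$ and each $y_{pq}^{ij}$ to Type~2 vertices and each $x_{pq}^{i,\pm}$ to a Type~1 vertex.

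Next I would check condition~(2), local bijectivity at Type~1 vertices. The vertex $x_{pq}^{i,\pm}$ is incident to exactly the $1+r_p$ edges $\{x_{pq}^{i,\pm},y_1\}$ and $\{x_{pq}^{i,\pm},y_{pq}^{ij}\}$ for $1\le j\le r_p$, while its image $x_{pq}^{\pm}$ is incident to exactly the $1+r_p$ edges $\{x_{pq}^{\pm},y_0\}$ and $\{x_{pq}^{\pm},y_{pq}^j\}$ for $1\le j\le r_p$. The edge map above pairs these off bijectively. Condition~(3), local surjectivity at Type~2 vertices, splits into two subcases. For $y_1$: every edge at $y_0$ has the form $\{x_{pq}^{\pm},y_0\}$, which is hit by $\{x_{pq}^{i,\pm},y_1\}$ for any choice of $i$ in the allowed range (such an $i$ exists since $|\chi(W_{A'})|k_{pq}\ge 1$). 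For $y_{pq}^{ij}$: its image $y_{pq}^j$ is incident to the two edges $\{x_{pq}^{\pm},y_{pq}^j\}$, and these are exactly the images of the two edges at $y_{pq}^{ij}$.

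The argument for $\Psi'\to\half(\Lambda')$ is identical after swapping $q\leftrightarrow q'$ and $|\chi(W_{A'})|\leftrightarrow|\chi(W_A)|$. There is no real obstacle here: the construction of $\Psi$ was designed precisely so that each Type~2 vertex other than the central one has the same valence as its image, and the central vertex $y_1$ has valence $2B$, which sits over the central vertex $y_0$ of $\half(\Lambda)$ surjectively by construction.
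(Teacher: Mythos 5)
Your verification is correct and matches what the paper intends: the paper simply asserts this lemma is "easily verified," and your direct check of the three conditions in Definition~\ref{def:half-cover} (type preservation, local bijectivity at the $x_{pq}^{i,\pm}$, local surjectivity at $y_1$ and the $y_{pq}^{ij}$) is exactly the straightforward argument being referenced.
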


\subsubsection{Covering the central surfaces}\label{sec:T}

Denote by $S_{\cA}$ the surface in $\cX$ that covers the orbifold $\cA$ with degree 16, and by $S_{\cA'}$ the corresponding surface in $\cX'$ (see Section~\ref{sec:S_A} for the constructions of these surfaces).   In this section, we construct a common cover $T$ of the surfaces $S_\cA$ and $S_{\cA'}$.  

Recall that the boundary components of $S_\cA$ (respectively, $S_{\cA'}$) are in bijection with the Type 1 vertices of $\half(\Lambda)$ (respectively, $\half(\Lambda')$).  Now label the boundary components of $S_\cA$ and $S_{\cA'}$ using the notation for the Type 1 vertices of these graphs from Section~\ref{sec:notation1} above.  Let $d_{pq}$, $d_{pq'}$, $D$, and~$D'$ be the positive integers defined at~\eqref{eqn:D} above.

\begin{lemma}\label{lem:T} There exists a connected surface $T$ with $2B$ boundary components which covers $S_\cA$ with degree $D$ and $S_{\cA'}$ with degree $D'$.  
\end{lemma}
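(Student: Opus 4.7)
The plan is to produce $T$ as a common cover: I will apply Lemma~\ref{lem:neumann} twice to obtain a connected degree-$D$ cover $T_1 \to S_\cA$ and a connected degree-$D'$ cover $T_2 \to S_{\cA'}$ whose underlying surfaces agree, and then identify them. For the cover $T_1 \to S_\cA$, the prescription is that over each boundary component $x_{pq}^{\pm}$ of $S_\cA$ lie $|\chi(W_{A'})|\,k_{pq}$ boundary components of $T_1$, each of degree $d_{pq} = K/k_{pq}$; these degrees sum to $|\chi(W_{A'})|\,k_{pq}\cdot(K/k_{pq}) = D$. The analogous prescription for $T_2 \to S_{\cA'}$ places $|\chi(W_A)|\,k'_{pq'}$ boundary components of degree $d'_{pq'} = K/k'_{pq'}$ over each boundary component $x_{pq'}^{\pm}$ of $S_{\cA'}$, summing to $D'$. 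Both prescriptions amount to $2B$ boundary components in total.

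Next I verify the hypotheses of Lemma~\ref{lem:neumann}. The surfaces $S_\cA$ and $S_{\cA'}$ each have genus at least~$2$ by the construction in Section~\ref{sec:S_A}. Using $\chi(S_\cA) = 16\,\chi(W_A)$, $\chi(S_{\cA'}) = 16\,\chi(W_{A'})$, and the definitions $D = K|\chi(W_{A'})|$, $D' = K|\chi(W_A)|$, one checks the Euler characteristic identity
$$
D\,\chi(S_\cA) \;=\; D'\,\chi(S_{\cA'}) \;=\; -16K\,\chi(W_A)\,\chi(W_{A'}),
$$
which is an even integer and hence has the same parity as the prescribed boundary count $2B$. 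Finally, $2B \geq 2\sum_p N_p$ and $2B \geq 2\sum_p N'_p$ follow from Lemma~\ref{lem:constants1}(3) and its symmetric analogue (with the roles of $\G$ and $\G'$ interchanged). Thus Lemma~\ref{lem:neumann} produces the connected covers $T_1$ and $T_2$ with the required boundary data.

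By the displayed identity, $T_1$ and $T_2$ are connected orientable surfaces with equal Euler characteristic and both have $2B$ boundary components, so they are homeomorphic by the classification of surfaces. Choosing a homeomorphism $\psi: T_1 \to T_2$ and setting $T := T_1$, the covering $T_2 \to S_{\cA'}$ composed with $\psi$ yields the required degree-$D'$ covering, while the degree-$D$ covering of $S_\cA$ is in hand. The boundary components of $T$ then acquire a labeling by elements of $V_1(\Psi)$ from the $T_1$ side together with a labeling by elements of $V_1(\Psi')$ from the $T_2$ side (transferred via $\psi$); the resulting bijection between $V_1(\Psi)$ and $V_1(\Psi')$ realizes an isomorphism of the kind provided by Lemma~\ref{lem:iso_Psi}, and this realized isomorphism will govern the subsequent gluing of $\cY$ and $\cY'$. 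The main obstacle is the joint verification of the degree, parity, and count conditions needed to invoke Lemma~\ref{lem:neumann} on both sides simultaneously; once these are in place, the identification of $T_1$ with $T_2$ and hence the existence of $T$ are immediate.
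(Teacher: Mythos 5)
Your proposal is correct and takes essentially the same approach as the paper: the identical boundary-degree prescriptions, the identity $D\,\chi(S_\cA)=D'\,\chi(S_{\cA'})$, the bounds $B_p\ge N_p$ and $B_p\ge N_p'$, and Lemma~\ref{lem:neumann} are used in exactly the same way. The only cosmetic difference is that you construct two covers $T_1\to S_\cA$ and $T_2\to S_{\cA'}$ and identify them via the classification of surfaces, whereas the paper fixes a single surface $T$ with $\chi(T)=D\,\chi(S_\cA)$ and $2B$ boundary components and applies the lemma twice to it.
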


\begin{proof} Notice first that
\[
D\cdot\chi(S_\cA) = \left(K|\chi(W_{A'})|\right)\cdot\left( 16 \chi(W_A) \right) =   \left(K|\chi(W_A)|\right)\cdot\left( 16 \chi(W_{A'}) \right) = D' \cdot\chi(S_{\cA'}),
\]
so these covering degrees are compatible with the existence of $T$.

Next we consider the number of boundary components.  Recall from Section~\ref{sec:S_A} that $S_\cA$ is obtained by gluing together the connected surface $S_A$, which has $2N$ boundary components, and a connected surface $S_\beta$ with two boundary components for every branch $\beta$ in $\G$ whose vertices are contained in $A$.  There is one such $S_\beta$ for every $r_i = 1$, thus $S_\cA$ has two boundary components for every $r_i \geq 2$.  Hence $S_\cA$ has $2\sum_{p=1}^{M}N_p$ boundary components.  Similarly, $S_{\cA'}$ has $2\sum_{p=1}^{M}N'_p$ boundary components. 

Now by Lemma~\ref{lem:constants1}, we have $B_p \geq N_p$ (respectively, $B_p \geq N_p'$) for $1 \leq p \leq M$.  Since $B = \sum_{p=1}^M B_p$, it follows that the number of boundary components of $S_\cA$ (respectively, $S_{\cA'}$) has the same parity as $2B$, and is less than or equal to $2B$.  

Now let $T$ be a connected surface with Euler characteristic $\chi(T) = D \cdot \chi(S_\cA) = D' \cdot \chi(S_{\cA'})$ and~$2B$ boundary components.  To complete the proof, by Lemma~\ref{lem:neumann} it suffices to specify the degrees by which the boundary components of $T$ will cover the boundary components of $S_\cA$ and~$S_{\cA'}$.  

For this, we label the $2B$ boundary components of $T$ in two different ways, first by the Type~1 vertices of $\Psi$, and second by the Type 1 vertices of $\Psi'$ (see Section~\ref{sec:Psi} above).  We specify that for $1 \leq i \leq  |\chi(W_{A'})|k_{pq}$, the boundary component $x_{pq}^{i,+}$ of $T$ covers the boundary component $x_{pq}^+$ of~$S_\cA$ with degree $d_{pq} = K/{k_{pq}}$, and that for $1 \leq i' \leq  |\chi(W_A)|k'_{pq'}$, the boundary component $x_{pq'}^{i',+}$ of $T$ covers the boundary component $x_{pq'}^+$ of $S_{\cA'}$ with degree $d'_{pq'} = K/{k'_{pq'}}$.  Hence the union $\cup_{i} x_{pq}^{i,+}$ covers $x_{pq}^+$ with total degree $|\chi(W_{A'})|k_{pq}d_{pq} = |\chi(W_{A'})|K = D$ and the union $\cup_{i'} x_{pq'}^{i',+}$ covers $x_{pq'}^+$ with total degree $|\chi(W_A)|k'_{pq'}d'_{pq'} = |\chi(W_A)|K = D'$.  Similarly, the union  $\cup_{i} x_{pq}^{i,-}$ covers $x_{pq}^- \subset S_\cA$ with degree $D$ in total and degree $d_{pq}$ on each component, and $\cup_{i'} x^{i',-}_{pq'} \subset T$ covers $x^-_{pq'} \subset S_{\cA'}$ with degree $D'$ in total and degree $d'_{pq'}$ on each component.   This completes the proof.
\end{proof}

\subsubsection{Covering the other surfaces}\label{sec:Tpj}

For each $p$, $q$, $q'$, and $j$, let $S_{pq}^j$ (respectively, $S_{pq'}^j$) be the surface in $\cX$ (respectively, $\cX'$) corresponding to the vertex $y_{pq}^j \in V_2(\half(\Lambda))$ (respectively, $y_{pq'}^j \in V_2(\half(\Lambda'))$).  Recall that each $S_{pq}^j$ and $S_{pq'}^j$ has 2 boundary components.  In this section we construct a surface~$T_p^j$ which is a common cover of the surfaces $S_{pq}^j$ and~$S_{pq'}^j$.  

\begin{lemma}\label{lem:Tpj}  For each $1 \leq p \leq M$ and each $1 \leq j \leq r_p$, there exists a connected surface $T_p^j$ with 2 boundary components so that:
\begin{enumerate}
\item for all $1 \leq q \leq N_p$, the surface $T_p^j$ covers $S_{pq}^j$ with degree $d_{pq}$ in total, and degree $d_{pq}$ on each boundary component; and
\item  for all $1 \leq q' \leq N'_p$, the surface $T_p^j$ covers $S_{pq'}^j$ with degree $d'_{pq'}$ in total, and degree~$d'_{pq'}$ on each boundary component.
\end{enumerate}
\end{lemma}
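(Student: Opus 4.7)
The plan is to first show that the value $d_{pq}\chi(S_{pq}^j)$ is independent of $q$ and equals $d'_{pq'}\chi(S_{pq'}^j)$ for every $q'$, then fix $T_p^j$ to be the (unique) connected surface with this Euler characteristic and exactly $2$ boundary components, and finally realize the required covering maps using Lemma~\ref{lem:neumann}.

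For the consistency step, recall that $S_{pq}^j$ is the $16$-fold cover $S_\beta$ from Section~\ref{sec:S_beta} of a branch orbifold $\cP_{\beta}$ where $\beta$ is the $j$th branch of $\Theta_{pq}$, so $\chi(S_{pq}^j)=16(v_{pq})_j$, and analogously $\chi(S_{pq'}^j)=16(v'_{pq'})_j$. Using $v_{pq}=R_{pq}w_p$ together with $d_{pq}=K/k_{pq}$ and $k_{pq}=16|R_{pq}|$ from~\eqref{eqn:kpq} and~\eqref{eqn:D}, one computes
\[
d_{pq}\,\chi(S_{pq}^j)=\frac{K}{16|R_{pq}|}\cdot 16\,R_{pq}\,(w_p)_j=K\,\mathrm{sign}(R_{pq})\,(w_p)_j.
\]
Since all entries of every $v_{pq}$ and $v'_{pq'}$ are strictly negative while $w_p$ has positive first nonzero entry by definition, we must have $\mathrm{sign}(R_{pq})=\mathrm{sign}(R'_{pq'})=-1$, and in fact every entry of $w_p$ is positive. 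Hence $d_{pq}\,\chi(S_{pq}^j)=-K(w_p)_j=d'_{pq'}\,\chi(S_{pq'}^j)$ for all $q$, $q'$, so we may legitimately define $\chi(T_p^j)$ to be this common value, which is a negative even integer (since $K$ is divisible by a large power of $2$ by Lemma~\ref{lem:constants1}(1) and $(w_p)_j$ is an integer).

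Define $T_p^j$ to be the connected surface with this Euler characteristic and $b'=2$ boundary components, so $T_p^j$ has genus $g=K(w_p)_j/2\ge 1$. To realize the required covering maps, fix $q$ and apply Lemma~\ref{lem:neumann} to $S=S_{pq}^j$: the surface $S_{pq}^j=S_\beta$ has genus $2(p-4)\ge 2>0$ by construction in Section~\ref{sec:S_beta}, and the parity condition is immediate since $\chi(S_{pq}^j)=16(v_{pq})_j$ is an even integer and $b'=2$. Specifying degree $d_{pq}$ on each of the two boundary components of $S_{pq}^j$ (so that the prescribed collection on each component has a single element summing to $d_{pq}$), Lemma~\ref{lem:neumann} produces a connected $d_{pq}$-fold cover of $S_{pq}^j$ with exactly $2$ boundary components, each of degree $d_{pq}$. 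Its Euler characteristic is $d_{pq}\chi(S_{pq}^j)=\chi(T_p^j)$, so this cover is homeomorphic to $T_p^j$, yielding the desired covering $T_p^j\to S_{pq}^j$. Running the identical argument with $S_{pq'}^j$ in place of $S_{pq}^j$ and $d'_{pq'}$ in place of $d_{pq}$ produces the coverings $T_p^j\to S_{pq'}^j$ in (2).

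The only real obstacle is the bookkeeping in the first step: one has to extract from the commensurability class $\cV_p$ the single invariant $-K(w_p)_j$ that governs the Euler characteristic of $T_p^j$, and then verify that the divisibility properties of $K$ (ultimately coming from the denominators of Euler characteristics of right-angled Coxeter groups via Lemma~\ref{lem:constants1}) guarantee integrality and the needed parity for Lemma~\ref{lem:neumann}. Once these arithmetic facts are in hand, the topological construction is an immediate application of Lemma~\ref{lem:neumann}.
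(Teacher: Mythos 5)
Your proposal is correct and follows essentially the same route as the paper: define $T_p^j$ as the connected surface with two boundary components and Euler characteristic $-Kw_p^j$, verify $d_{pq}\chi(S_{pq}^j)=16d_{pq}R_{pq}w_p^j=-Kw_p^j=d'_{pq'}\chi(S_{pq'}^j)$, and realize the coverings via Lemma~\ref{lem:neumann}. You are merely more explicit than the paper about the sign of $R_{pq}$ and the positive-genus and parity hypotheses of Lemma~\ref{lem:neumann}, which the paper leaves implicit.
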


\noindent The constants  $d_{pq}$ and $d'_{pq'}$ in this statement are as defined at~\eqref{eqn:D} above.

\begin{proof}  We prove (1); the proof of (2) is similar.  Write $w_p^j$ and $v_{pq}^j$ for the $j$th entry of the vectors~$w_p$ and $v_{pq}$,  respectively.  Let $T_p^j$ be a connected surface with 2 boundary components and Euler characteristic $\chi(T_p^j) = -Kw_p^j$, where $K$ is the positive integer defined at~\eqref{eqn:K} above.  Since each~$S_{pq}^j$ also has 2 boundary components, by Lemma~\ref{lem:neumann} it suffices to show that $\chi(T_p^j) = d_{pq}\chi(S^j_{pq})$.  Now~$S_{pq}^j$ is a 16-fold cover of an orbifold with Euler characteristic~$v_{pq}^j$, so  
\[ d_{pq}\chi(S_{pq}^j) = 16d_{pq}v_{pq}^j = 16 d_{pq}R_{pq}w_p^j = -k_{pq}d_{pq}w_p^j = -K w_p^j = \chi(T_p^j)\]
where the constant $k_{pq}$ is as defined at~\eqref{eqn:kpq} above.  This completes the proof.
\end{proof}

\subsubsection{Constructions of surface amalgams $\cY$ and $\cY'$}\label{sec:Y}

We finish the proof of Proposition~\ref{prop:cond1} by constructing homeomorphic surface amalgams $\cY$ and $\cY'$ which cover $\cX$ and $\cX'$, respectively.

Recall that the surface amalgam $\cY$ will have JSJ graph $\Psi$, and $\cY'$ will have JSJ graph $\Psi'$, where~$\Psi$ and $\Psi'$ are the isomorphic graphs from Section~\ref{sec:Psi} above.  The surfaces in $\cY$ and $\cY'$ will be those constructed in Sections~\ref{sec:T} and~\ref{sec:Tpj} above, as we now explain.  In $\cY$, put $S_{y_1}=T$, and in $\cY'$, put $S_{y_1'}=T$, where $T$ is the surface with $2B$ boundary components from Lemma~\ref{lem:T}.  Now for each $p$, $q$, $q'$, $i$, $i'$, and $j$, if $y = y_{pq}^{ij} \in V_2(\Psi)$ write $S_{pq}^{ij}$ for the surface $S_y$ in $\cY$, and similarly, if $y' = y_{pq'}^{i'j} \in V_2(\Psi')$ write $S_{pq'}^{i'j}$ for the surface $S_{y'}$ in $\cY'$.  We then put $S_{pq}^{ij} = T_p^j$ in $\cY$ and $S_{pq'}^{i'j} = T_p^j$ in $\cY'$, where $T_p^j$ is the surface with 2 boundary components constructed in Lemma~\ref{lem:Tpj} above. 
 
To glue these surfaces together to form a surface amalgam $\cY$ which covers $\cX$, we label the two boundary components of $S_{pq}^{ij}$ by the two vertices of $\Psi$ which are adjacent to $y_{pq}^{ij}$, namely $x_{pq}^{i,+}$ and $x_{pq}^{i,-}$.   Now recall from the proof of Lemma~\ref{lem:T} that if we label the boundary components of $T = S_{y_1}$ by the Type 1 vertices of $\Psi$, then the boundary component $x_{pq}^{i,\pm}$ of $S_{y_1}$ covers the boundary component $x_{pq}^\pm$ of $S_\cA \subset \cX$ with degree $d_{pq}$.  Also, by Lemma~\ref{lem:Tpj}, the covering from $T_{p}^j = S_{pq}^{ij}$ to the surface $S_{pq}^j \subset \cX$ has degree $d_{pq}$ on each boundary component.  Since the degrees of these coverings $S_{y_1} \to S_\cA$ and $ S_{pq}^{ij} \to S_{pq}^j$ are equal on each boundary component which has the same label, we may glue together boundary components of the collection $\{ S_{y_1} \} \cup \{ S_{pq}^{ij} \}$ which have the same labels to obtain a surface amalgam $\cY$ which covers $\cX$.  The construction of $\cY'$ which covers $\cX'$ is similar.  

Now all of the surfaces in $\cY$ and $\cY'$ are compact, hence the coverings $\cY \to \cX$ and $\cY' \to \cX'$ are finite-sheeted (in fact, they have degrees $D$ and $D'$ respectively, but we will not need this).  The following lemma then completes the proof of Proposition~\ref{prop:cond1}.

\begin{lemma}  The surface amalgams $\cY$ and $\cY'$ are homeomorphic.
\end{lemma}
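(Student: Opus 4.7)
The strategy is to build a homeomorphism $\cY \to \cY'$ piece by piece, exploiting the fact that by the construction in Section~\ref{sec:Y}, once a suitable isomorphism $\phi : \Psi \to \Psi'$ is chosen, the surfaces decorating $\phi$-corresponding vertices of $\Psi$ and $\Psi'$ are not merely homeomorphic but literally equal.

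First, I will refine the isomorphism $\phi$ from Lemma~\ref{lem:iso_Psi} so that it respects the indexing data $p$, $j$, and the $\pm$-labels. All Type $1$ vertices in commensurability class $p$ have valence $1 + r_p$, and there are exactly $B_p$ of them on each side, so $\phi$ can be chosen to send each pair $(x_{pq}^{i,+}, x_{pq}^{i,-})$ in $\Psi$ bijectively to a pair $(x_{pq'}^{i',+}, x_{pq'}^{i',-})$ in $\Psi'$ with the same $p$, and further to match $+$ to $+$ and $-$ to $-$; this is possible since the $\pm$-labels within each pair are symmetric by construction. The bipartite local structure of $\Psi$ then lets us arrange that $y_{pq}^{ij}$ maps to $y_{pq'}^{i'j}$ with the same $j$. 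Under this choice, the central surfaces satisfy $S_{y_1} = T = S_{y_1'}$, and each Type $2$ surface $S_{pq}^{ij} = T_p^j$ in $\cY$ equals its $\phi$-image $S_{pq'}^{i'j} = T_p^j$ in $\cY'$.

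Next, I will use the standard fact that a compact connected surface with nonempty boundary admits a self-homeomorphism realizing any prescribed permutation of its boundary components to produce a self-homeomorphism $h : T \to T$ that induces on boundary components the permutation dictated by $\phi|_{V_1(\Psi)}$. Defining the candidate map $\cY \to \cY'$ to equal $h$ on the central piece and the identity on each $T_p^j$, the labels on glued boundary circles match by the choice of $\phi$, so each boundary gluing in $\cY$ is sent to the correctly labeled gluing in $\cY'$.

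The main obstacle is rotational compatibility: although glued circles are correctly matched as labeled circles, the identification homeomorphisms $S^1 \to S^1$ used to glue $T_p^j$ to $T$ in $\cY$ and the corresponding $T_p^j$ to $T$ in $\cY'$ may differ by a rotation on each circle. Each such rotation can be absorbed either by precomposing $h$ with a Dehn twist along a boundary-parallel curve in $T$, or by replacing the identity on the relevant $T_p^j$ with a boundary-setwise-preserving self-homeomorphism that rotates the appropriate boundary. Since $T$ and each $T_p^j$ have positive genus, the mapping class groups contain enough such elements, and once these adjustments are made on every glued boundary, the piecewise map assembles into a homeomorphism $\cY \to \cY'$, as required.
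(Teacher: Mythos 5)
Your proof is correct and follows essentially the same route as the paper: match $\Psi$ and $\Psi'$ via the isomorphism of Lemma~\ref{lem:iso_Psi} (refined to respect the $p$, $j$ and $\pm$ data), observe that corresponding vertices carry literally the same surfaces $T$ and $T_p^j$, and assemble a piecewise homeomorphism. The extra care you take with boundary permutations of $T$ and with absorbing discrepancies in the gluing circle maps is exactly the routine detail the paper's terse argument leaves implicit (and positive genus is not actually needed for that step, since collar isotopies suffice).
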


\begin{proof} We note first that $\cY$ and $\cY'$ have isomorphic JSJ graphs $\Psi$ and $\Psi'$,  by Lemma~\ref{lem:iso_Psi}, so it suffices to consider the surfaces in $\cY$ and $\cY'$.  By construction, $\cY$ and $\cY'$  have the same central surface $S_{y_1} = T = S_{y_1'}$.  Now for each $1 \leq p \leq M$, consider the $B_p r_p$ surfaces $\{ S_{pq}^{ij} \}$ in $\cY$ and the $B_p r_p$ surfaces $\{ S_{pq'}^{i'j} \}$ in $\cY'$.  Since $S_{pq}^{ij} = T_p^j = S_{pq'}^{i'j}$ for each $q$, $q'$, $i$, and $i'$, we have by construction that each of the $B_p$ pairs of branching curves $\{x_{pq}^{i,+}, x_{pq}^{i,-}\}$ in $\cY$ (respectively, $\{x_{pq'}^{i',+}, x_{pq'}^{i',-}\}$ in $\cY'$) is incident to the same collection of $1 + r_p$ surfaces $\{T\} \cup \{ T_p^j  \}$.  Hence $\cY$ and $\cY'$ are homeomorphic.
\end{proof}

This completes the proof of Proposition~\ref{prop:cond1}.

\subsection{Sufficiency of condition (2)}\label{sec:cond2} 

We now prove the following result.
 
 \begin{prop}\label{prop:cond2}
Let $W=W_{\Gamma}$ and $W'=W_{\Gamma'}$ be as in Theorem~\ref{thm:CycleGenTheta}.  If condition (2) holds, then 
$W$ and $W'$ are commensurable. 
 \end{prop}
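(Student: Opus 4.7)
The plan is to imitate Section~\ref{sec:cond1} and construct homeomorphic surface amalgams $\cY \cong \cY'$, each a finite-sheeted cover of the degree 16 torsion-free cover $\cX$ (respectively $\cX'$) from Section~\ref{sec:tfcover}. This yields a common finite-index subgroup of $W$ and $W'$, establishing commensurability. The new difficulty relative to condition~(1) is that under condition~(2) the single commensurability class of the $v_i$'s and the single commensurability class of the $v_k'$'s may differ, so the central non-branch orbifolds $\cA$ and $\cA'$ need not occupy the same sorted position in $w$ and $w'$, even though $w$ and $w'$ are commensurable.

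First I would fix minimal integral elements $u,u'$ with $v_i = R_i u$, $v_k' = R_k' u'$, set $S=\sum_{i\in I}R_i$ and $S'=\sum_{k\in I'}R_k'$, and choose positive integers $J,J'$ with $Jw=J'w'$. Let $p\in\{1,\dots,r+1\}$ denote the sorted position of $\chi(W_A)$ in $w$ and $p'$ that of $\chi(W_{A'})$ in $w'$, and let $\phi$ and $\phi'$ be the order-preserving bijections from $\{1,\dots,r\}$ to $\{1,\dots,r+1\}\setminus\{p\}$ and $\{1,\dots,r+1\}\setminus\{p'\}$, respectively. The common JSJ graph $\Psi$ of $\cY$ and $\cY'$ will be built so as to simultaneously half-cover both $\half(\Lambda)$ and $\half(\Lambda')$. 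Its Type~1 vertices carry labels $(i,k,\pm)$ with $i\in I$, $k\in I'$, recording their images in $x_i^\pm\in\half(\Lambda)$ and $x_k^\pm\in\half(\Lambda')$; the multiplicity of each such label is chosen proportional to $|R_i R_k'|$ and scaled by a common factor to ensure integrality and the parity condition of Lemma~\ref{lem:neumann}. Each Type~1 vertex is adjacent to exactly $r+1$ Type~2 vertices, one at each position $j\in\{1,\dots,r+1\}$; the Type~2 vertex at position $j$ maps to $y_0$ in $\half(\Lambda)$ if $j=p$ and to $y_{i\phi(j)}$ otherwise, and analogously on the $\cX'$-side using $p'$ and $\phi'$.

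When $p=p'$, the setup collapses to an analogue of Section~\ref{sec:cond1}, with one ``central'' surface covering both $S_\cA$ and $S_{\cA'}$. When $p\neq p'$, the position-$p$ Type~2 surfaces cover $S_\cA$ on the $\cX$-side but cover ring surfaces on the $\cX'$-side; since each ring surface of $\cX'$ has only two boundary components, I use one family of position-$p$ Type~2 vertices for each $k\in I'$, adjacent only to those Type~1 vertices whose $\cX'$-label is of the form $(\,\cdot\,,k,\pm)$, and symmetrically at position $p'$. For each position $j$, the Type~2 surface is then produced by Lemma~\ref{lem:neumann}: the identity $Jw_j=J'w_j'$ supplies the Euler-characteristic compatibility for a single connected upstairs surface to cover the prescribed downstairs surfaces in both $\cX$ and $\cX'$ with compatible degrees, the positive-genus hypothesis is automatic from the construction in Section~\ref{sec:tfcover}, and the parity condition is forced by the scaling chosen above. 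The surface amalgams $\cY$ and $\cY'$ then coincide by construction. The main obstacle is the combinatorial bookkeeping in the case $p\neq p'$: one must verify that all degree counts across every Type~1 circle are simultaneously consistent under both half-coverings, while keeping the multiplicities integer and the parity condition of Lemma~\ref{lem:neumann} satisfied on every upstairs surface.
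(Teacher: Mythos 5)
Your route is genuinely different from the paper's. You aim to build a common \emph{cover}: a single surface amalgam $\cY$ (with JSJ graph $\Psi$ half-covering both $\half(\Lambda)$ and $\half(\Lambda')$) admitting finite covering maps to both $\cX$ and $\cX'$, and you correctly identify the main difficulty, namely that under condition (2) the sorted positions $p$ and $p'$ of $\chi(W_A)$ in $w$ and of $\chi(W_{A'})$ in $w'$ need not agree, so surfaces covering $S_\cA$ on one side must cover ring surfaces on the other. The paper instead builds a common \emph{quotient}: it constructs a small model surface amalgam $\cW$ whose JSJ graph has just two Type~1 vertices and $r+1$ Type~2 vertices, with the $i$th surface of Euler characteristic $-2w_i$ where $w_0=(w_1,\dots,w_{r+1})$ is the minimal integral element of the common class of $w$ and $w'$; it then scales $\cX$ by a degree-$K$ cover $\cZ$ (same JSJ graph $\half(\Lambda)$, all Euler characteristics multiplied by $K$, all boundary degrees $K$, via Lemma~\ref{lem:neumann}) and shows $\cZ$ covers $\cW$ with degree $D=\sum_i d_i$, where $\widetilde{S_{ij}}$ covers $W_j$ with degree $d_i$ and $\widetilde{S_\cA}$ covers $W_{r+1}$ with its $i$th pair of boundary curves mapping with degree $d_i$; likewise $\cZ'$ covers $\cW$. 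Commensurability follows because $\pi_1(\cZ)$ and $\pi_1(\cZ')$ are then finite-index subgroups of $\pi_1(\cW)$. The point of this trick is exactly to neutralize your $p\neq p'$ problem: in $\cW$ every surface, central or ring, hangs off the same two branching circles, so no alignment of the positions of $\chi(W_A)$ and $\chi(W_{A'})$ is ever needed, and there is no simultaneous two-sided bookkeeping.

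As written, your proposal has a gap precisely at its crux, and you acknowledge it yourself: the claim that ``$Jw_j=J'w_j'$ supplies the Euler-characteristic compatibility'' is not enough, because $w_j$ and $w_j'$ are \emph{sums} $\sum_i\chi_{ij}$ and $\sum_k\chi'_{kj}$, whereas each generic-position surface in $\cY$ covers a single $S_{i,\phi^{-1}(j)}$ on one side and a single $S'_{k,\phi'^{-1}(j)}$ on the other, so what you actually need is a per-pair identity $d\,\chi_{i,\phi^{-1}(j)}=d'\,\chi'_{k,\phi'^{-1}(j)}$ with degrees $d,d'$ that are independent of $j$ and sum correctly over $k$ (to the total degree over $\cX$) and over $i$ (to the total degree over $\cX'$), plus the analogous identities at positions $p$ and $p'$ matching $\chi(W_A)$ and $\chi(W_{A'})$ against ring entries. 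These facts are available, but only by combining (2)(a) and (2)(b): writing $v_i=R_iu$, $v_k'=R_k'u'$ and using $w=Rw_0$, $w'=R'w_0$ one gets that $u_{\phi^{-1}(j)}/u'_{\phi'^{-1}(j)}=(RS')/(R'S)$ is constant in $j$ (with $S=\sum_iR_i$, $S'=\sum_kR_k'$), and that the position-$p$, $p'$ equations reduce to $D/D'=|R'|/|R|$; none of this is carried out in the proposal, nor are the remaining verifications (that circle multiplicities and degrees can be chosen integrally and consistently at every Type~1 circle under \emph{both} half-coverings, that each prescribed boundary datum satisfies the parity condition of Lemma~\ref{lem:neumann}, and that the pieces assemble into one space with two genuine covering maps). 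You also need, for every position $j\notin\{p,p'\}$ and not just at $p$ and $p'$, that each Type~2 surface attaches only to circles with a fixed label pair $(i,k)$, since ring surfaces on both sides have only two boundary components. So the plan is salvageable but incomplete; the paper's construction of $\cW$ is the device that makes all of this bookkeeping unnecessary.
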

 
 \begin{center}
 \begin{figure}[t]
  \begin{overpic}[width=125mm]
{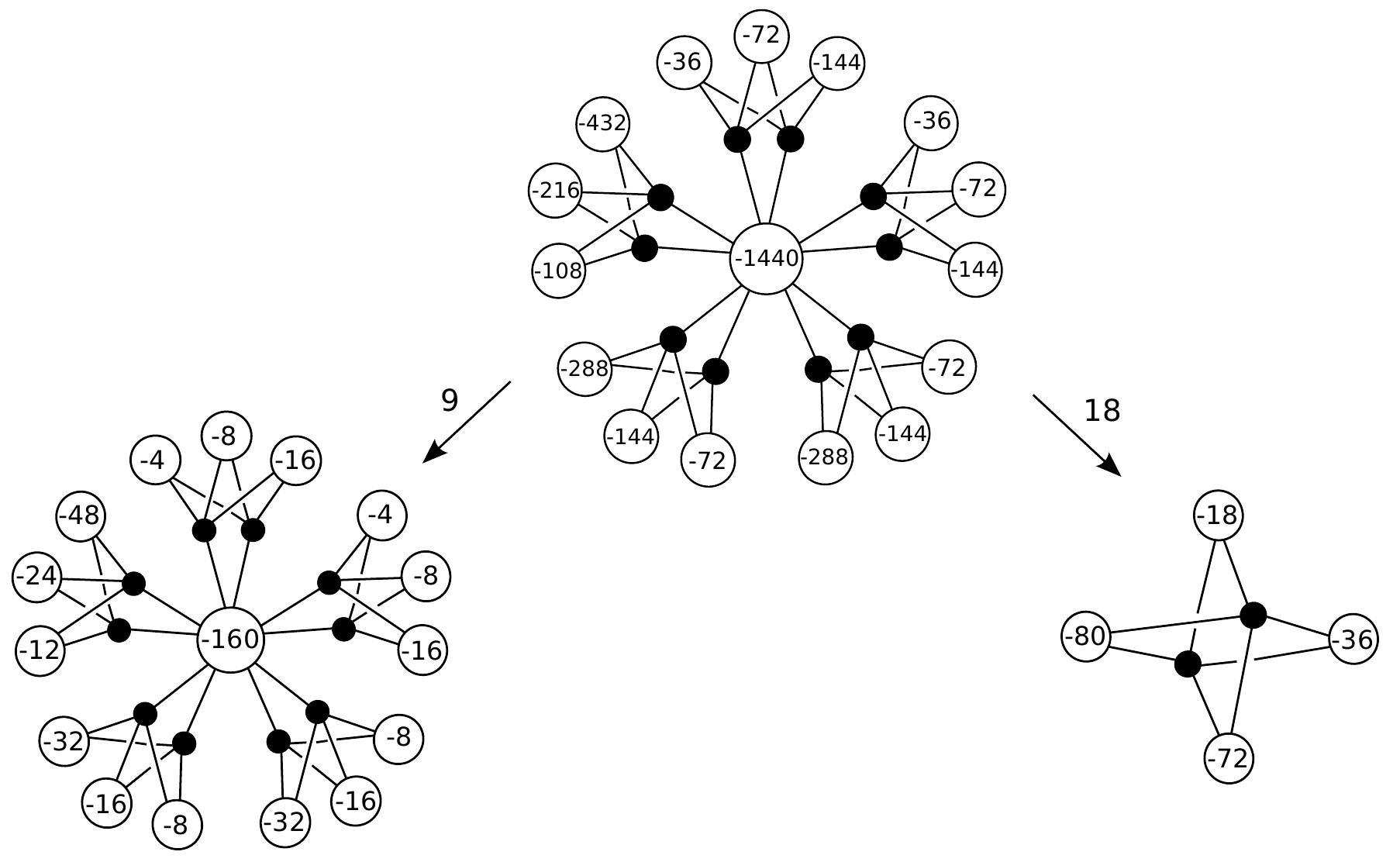}
\put(0,30){$\cX$}
\put(35,55){$\cZ$}
\put(95,25){$\cW$}
 \end{overpic}
 \caption{{\Small Illustrated above is an example of the covers described in Section \ref{sec:cond2}, on the level of the JSJ graphs. The white vertices represent surfaces with specified Euler characteristic; the valence of the vertex is the number of boundary components of the surface. The black vertices represent branching curves that are glued to boundary components of adjacent surfaces. The covering maps between surface amalgams restrict to half-coverings between the JSJ graphs. In this example, the space $\cX$ does not cover the space $\cW$, and there is no minimal surface amalgam within this commensurability class.   }}
 \label{figure:suff_cond_2} 
  \end{figure}
 \end{center}

 By the construction described in Section \ref{sec:tfcover}, there are surface amalgams $\cX$ and $\cX'$ which form degree $16$ covers of the orbicomplexes $\cOg$ and $\cOgp$, respectively. To prove that $W$ and $W'$ are abstractly commensurable, we prove there are finite-sheeted covers $\cZ \rightarrow \cX$ and $\cZ' \rightarrow \cX'$ and a surface amalgam $\cW$ so that $\cZ$ and $\cZ'$ finitely cover $\cW$.  An example of the construction given in this section appears in Figure \ref{figure:suff_cond_2}.

 We will describe these covering spaces and maps in terms of their JSJ graphs and half-covers as defined in Section \ref{sec:half-coverings}. We set notation in Section \ref{sec:suff2_notation}. The surface amalgam $\cW$ is described in Section \ref{sec:suff2_cW}. The surface amalgam $\cZ$ and the finite cover $\cZ \rightarrow \cX$ is given in Section \ref{sec:suff2_cZ}. Finally, the finite cover $\cZ \rightarrow \cW$ is given in Section \ref{sec:suff2_covers}. 
The construction for $\cX'$ is analogous.   
  
  \subsubsection{Notation}\label{sec:suff2_notation}
  For ease of notation, suppose the elements of the multiset of Euler characteristic vectors $\{v_i \mid i \in I\}$ in the statement of Theorem \ref{thm:CycleGenTheta} are labeled $v_1, \ldots, v_n$. By assumption, there exists $r \geq 2$ so that each vector $v_i$ for $1 \leq i \leq n$ has $r$ entries. Let $\cV$ denote the commensurability class of the vectors $\{v_i \mid 1 \leq i \leq n\}$. 
  
  The orbicomplex $\cOg$ contains a central orbifold $\cA$ with $n$ non-reflection edges $e_1, \ldots, e_n$. Attached to the non-reflection edge $e_i$ is a collection of $r$ branch orbifolds $\cP_{i1}, \ldots, \cP_{ir}$, with $\cP_{ij} = \cP_{\beta_{ij}}$ and $\pi_1(\cP_{ij}) = W_{ij} = W_{\beta_{ij}}$ for each $1 \leq i \leq n$ and $1 \leq j \leq r$. By definition, for $1 \leq i \leq n$ the Euler characteristic vector $v_i$ is given by
 \[
  v_i = (\chi_{i1}, \ldots, \chi_{ir}) = (\chi(W_{i1}), \ldots, \chi(W_{ir})),
  \]
  and $w$ is the reordering of 
\[ \left(\sum_{j=1}^n \chi_{j1}, \sum_{j=1}^n \chi_{j2}, \ldots, \sum_{j=1}^n \chi_{jr}, \chi(W_A)\right)  
\]
so that its entries are in non-increasing order.

 The surface amalgam $\cX$ has a central surface $S_{\cA}$ with Euler characteristic $16\chi(W_A)$ and $2n$ boundary components. Attached to each of the $n$ pairs of boundary components of $S_{\cA}$ are surfaces $S_{i1}, \ldots ,S_{ir}$ for $1\leq i \leq n$ where each $S_{ij}$ has two boundary components and Euler characteristic $16\chi(W_{ij})$, and forms a degree $16$ cover of $\cP_{ij}$. 
 
 Let $\Lambda$ be the JSJ graph of $W$. As described in Section \ref{sec:X}, the JSJ graph of  $\cX$ is the half-cover~$\half(\Lambda)$. 
 The graph $\cH(\gL)$ has $2n$ vertices of Type 1. Label these vertices as 
 $$ V_1(\cH(\gL)) = \{ x_i^{+}, x_i^{-} \mid 1 \leq i \leq n \}. $$
 There is a central vertex of Type 2 associated to the surface $S_{\cA}$; label this vertex as $y_0$. This vertex is adjacent to every Type 1 vertex, hence has valence $2n$. For each $i$, the graph $\cH(\gL)$ has~$r$ additional Type 2 vertices, associated to the surfaces $S_{ij}$ for $1 \leq j \leq r$. Label these vertices accordingly, so that 
 \[V_2(\cH(\gL)) = \{ y_0\} \cup \{ y_{ij} \mid 1 \leq i \leq n, 1 \leq j \leq r \}.  \]
 Each vertex $y_{ij}$ is adjacent to $x_i^+$ and $x_i^-$. 
 
 For $1 \leq i \leq n$,  define 
 \[
  \widetilde{v}_i = 16v_i 
  = (\chi(S_{i1}), \dots, \chi(S_{ir})) \in 2\Z^r_{<0}.
\]  
Let $\ww = 16w$, so that $\ww$ is the reordering of 
 \[
    \left(\sum_{j=1}^n \chi(S_{j1}), \sum_{j=1}^n \chi(S_{j2}), \ldots, \sum_{j=1}^n \chi(S_{jr}), \chi(S_{\cA})\right) \in 2\Z^{r+1}_{<0} 
   \]
 so that its entries are in non-increasing order. Suppose that in this ordering, $\chi(S_{\cA})$ is the $k$th entry of $\ww$. Let $\widehat{w} \in \Z^r$ be the vector $\ww$ with the $k$th entry deleted. Then 
  \[
   \hw = \wv_1 + \dots + \wv_n \in 2\Z^r_{<0}.
  \]
  
  By condition (2)(b) in Theorem~\ref{thm:CycleGenTheta}, the vectors $w$ and $w'$ are commensurable. Let 
  \begin{eqnarray*}
   w_0 &=& (w_1, \ldots, w_{r+1}) \in \Z^{r+1}
  \end{eqnarray*}
 be the minimal integral element in the commensurability class of $w$ and $w'$. Since all entries of $w$ and $w'$ are negative, all entries of $w_0$ are positive.  Finally, let $\hwn$ be the vector $w_0$ minus its $k$th entry, so 
     \begin{eqnarray*}
      \hwn &=& (w_1, \ldots, w_{k-1}, w_{k+1}, \ldots, w_{r+1}) \in \Z^r.
     \end{eqnarray*}
  
  \subsubsection{The surface amalgam $\cW$}\label{sec:suff2_cW}
      
    The surface amalgam $\cW$ has JSJ graph $\Phi$ with two Type~1 vertices labeled 
    \[ V_1(\Phi) = \{x^+, x^- \}. \] 
    The graph $\Phi$ has $r+1$ Type $2$ vertices labeled 
    \[ V_2(\Phi) = \{y_1, \ldots, y_{r+1} \}. \]
    Each Type $2$ vertex is adjacent to both $x^+$ and $x^-$. 
    
    To construct the surface amalgam $\cW$, for $1 \leq i \leq r+1$, let $W_i$ be the surface with Euler characteristic $-2w_i$ and 
    2 boundary components $C_{i1}$ and $C_{i2}$; the surface $W_i$ is associated to the vertex $y_i$. Identify the curves $C_{i1}$ for $1 \leq i \leq r+1$ to create a single curve $C_1$ associated to the vertex $x^+$, and identify the curves $C_{i2}$ for $1 \leq i \leq r+1$ to create a single curve $C_2$ associated to the vertex $x^-$. This forms a surface amalgam $\cW$ which has two singular curves $C_1$ and $C_2$ and JSJ graph $\Phi$. 
    
    \begin{lemma} \label{lem:suff2_JSJ_cover}
     The graph $\half(\gL)$ forms a half-cover of the graph $\Phi$.
    \end{lemma}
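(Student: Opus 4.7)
The plan is to exhibit an explicit map $\theta: \half(\Lambda) \to \Phi$ and verify it satisfies the three conditions in Definition~\ref{def:half-cover}. Recall $k$ is the index such that $\chi(S_\cA)$ is the $k$th entry of $\ww$, and that $\Phi$ has $r+1$ Type~2 vertices while each Type~1 vertex in $\half(\Lambda)$ has valence $r+1$ (being adjacent to $y_0$ and to $y_{i1},\dots,y_{ir}$). This matching of valences is what makes the half-covering possible.

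First I would define $\theta$ on vertices as follows: send $x_i^{\pm} \mapsto x^{\pm}$ for each $1 \leq i \leq n$; send $y_0 \mapsto y_k$; and choose a bijection $\sigma: \{1,\dots,r\} \to \{1,\dots,r+1\}\setminus\{k\}$ and set $y_{ij} \mapsto y_{\sigma(j)}$ for all $1 \leq i \leq n$ and $1 \leq j \leq r$. Since every edge in $\half(\Lambda)$ joins a vertex in $V_1$ to a vertex in $V_2$, and similarly for $\Phi$, the map $\theta$ extends uniquely to a graph morphism on edges (each edge $\{x_i^{\pm}, y_0\}$ maps to $\{x^{\pm}, y_k\}$ and each edge $\{x_i^{\pm}, y_{ij}\}$ maps to $\{x^{\pm}, y_{\sigma(j)}\}$). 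This map preserves the bipartition by construction, establishing condition (1) of Definition~\ref{def:half-cover}.

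Next I would check local bijectivity at each Type~1 vertex. Fix $x_i^+$; its incident edges connect it to $y_0, y_{i1}, \dots, y_{ir}$, and these map respectively to edges connecting $x^+$ to $y_k, y_{\sigma(1)}, \dots, y_{\sigma(r)}$. Since $\sigma$ is a bijection onto $\{1,\dots,r+1\}\setminus\{k\}$, these are precisely the $r+1$ distinct edges of $\Phi$ incident to $x^+$, giving a bijection $\half(\Lambda)(x_i^+) \to \Phi(\theta(x_i^+))$. The argument for $x_i^-$ is identical, establishing condition (2).

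Finally I would verify local surjectivity at Type~2 vertices. The vertex $y_0$ is adjacent to all $2n$ Type~1 vertices of $\half(\Lambda)$, and these edges project surjectively onto the two edges of $\Phi$ incident to $y_k$ (the edges to $x_i^+$ hit the edge to $x^+$, and symmetrically for $x^-$). For each $y_{ij}$, its two incident edges go to $x_i^+$ and $x_i^-$, which map to the two edges of $\Phi$ incident to $y_{\sigma(j)}$, giving surjectivity on the nose. This verifies condition (3), so $\theta$ is a half-covering. No step presents a real obstacle; the content of the lemma is entirely combinatorial, and the verification is immediate once the correct map $\theta$ is specified.
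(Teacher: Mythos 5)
Your proof is correct and follows essentially the same approach as the paper, which simply exhibits the vertex map $x_i^{\pm}\mapsto x^{\pm}$, $y_{ij}\mapsto y_j$, $y_0\mapsto y_{r+1}$ and notes it induces a half-covering. Your variant sending $y_0\mapsto y_k$ and using a bijection $\sigma$ onto $\{1,\dots,r+1\}\setminus\{k\}$ differs only by an automorphism of $\Phi$ (which permutes the Type~2 vertices freely, since each is adjacent to exactly $x^+$ and $x^-$), and your explicit verification of conditions (1)--(3) of Definition~\ref{def:half-cover} is exactly the routine check the paper leaves implicit.
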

    \begin{proof}
     The cover is induced by the maps $x_i^+ \mapsto x^+$, $x_i^- \mapsto x^-$, and $y_{ij} \mapsto y_j$ for $1\leq i \leq n$ and $1 \leq j \leq r$, and $y_0 \mapsto y_{r+1}$. 
    \end{proof}

    \begin{remark} In view of Lemma~\ref{lem:suff2_JSJ_cover}, note that 
$\cX$ does not necessarily cover $\cW$;    an example of this is given in Figure \ref{figure:suff_cond_2}. To resolve this, we construct a finite cover $\cZ \rightarrow \cX$ so that~$\cZ$ has JSJ graph $\half(\gL)$ and finitely covers $\cW$.
    \end{remark}

    \subsubsection{The surface amalgam $\cZ$ and the finite covering map $\cZ \rightarrow \cX$.}\label{sec:suff2_cZ}  
    
    We will need the following lemma.

      \begin{lemma} \label{lemma_suff2_wu}
       Let $u \in \Z^r$ be the minimal integral element in the commensurability class  $\cV$. There exists a positive integer $K$ so that $\hwn = Ku$. 
      \end{lemma}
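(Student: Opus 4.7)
The plan is as follows. First I would use condition (2)(a) of Theorem~\ref{thm:CycleGenTheta}, which says that the vectors $v_1, \ldots, v_n$ all lie in the commensurability class $\cV$. Since $u$ is the minimal integral element of $\cV$, there exist rationals $\alpha_1, \ldots, \alpha_n$ with $v_i = \alpha_i u$ for each $i$ (these are negative, since the $v_i$ have negative entries and $u$ has positive entries). Summing gives $v_1 + \cdots + v_n = \left(\sum_i \alpha_i\right) u$, a rational multiple of $u$.

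The key observation is that $\hat{w} := v_1 + \cdots + v_n$, where by $\hat{w}$ I mean the vector $w$ with its $k$th entry $\chi(W_A)$ deleted. Indeed, by Definition~\ref{def:theta-euler} each $v_i$ satisfies $\chi_{i1} \geq \chi_{i2} \geq \cdots \geq \chi_{ir}$, so the componentwise sum $(\sum_i \chi_{i1}, \ldots, \sum_i \chi_{ir})$ is already in non-increasing order. Consequently, reordering the tuple $\left(\sum_i \chi_{i1}, \ldots, \sum_i \chi_{ir}, \chi(W_A)\right)$ into non-increasing order to obtain $w$, and then deleting the entry equal to $\chi(W_A)$, returns $(\sum_i \chi_{i1}, \ldots, \sum_i \chi_{ir}) = v_1 + \cdots + v_n$. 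Hence $\hat w = \bigl(\sum_i \alpha_i\bigr) u$.

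Next I would use condition (2)(b): the vectors $w$ and $w'$, and therefore $w$ and $w_0$, are commensurable. So there is a nonzero rational $R$ with $w = R w_0$. This equation holds componentwise, so deleting the $k$th entry of both sides gives $\hat w = R \hwn$. Combined with the previous paragraph,
\[
\hwn \;=\; \tfrac{1}{R}\,\hat w \;=\; \tfrac{\sum_i \alpha_i}{R}\; u,
\]
so $\hwn$ is a rational multiple of $u$; write $\hwn = K u$ with $K \in \Q$.

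It remains to show $K$ is a positive integer. By the setup, $\hwn \in \Z^r$ (since $w_0 \in \Z^{r+1}$) and the entries of $u$ have $\gcd$ equal to $1$. Writing $K = p/q$ in lowest terms, the condition that every entry of $Ku$ is an integer forces $q$ to divide every entry of $u$ and hence $q = 1$, so $K \in \Z$. Finally, the first nonzero entry of $u$ is positive by the minimal integral element convention, and every entry of $\hwn$ is positive (since every entry of $w_0$ is), so $K > 0$. This last step is a standard primitivity argument, and I do not expect any serious obstacle in the proof.
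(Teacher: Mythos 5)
Your proof is correct and follows essentially the same route as the paper: both show $\hwn$ lies in the commensurability class $\cV$ (via $\hat w = v_1+\cdots+v_n$, which the paper records as $\hw = \wv_1+\cdots+\wv_n$ in its notation section, together with $w = Rw_0$), and then deduce that the rational multiple relating $\hwn$ to the primitive vector $u$ is a positive integer. You simply make explicit the primitivity and positivity steps that the paper's three-line proof leaves implicit.
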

      \begin{proof}
       Since $\wv_i = 16v_i$, the set of integer vectors $\{\wv_i, \hw, \hwn\mid 1 \leq i \leq n\}$ is also contained in $\mathcal{V}$. Thus there exists an integer $K$ so that $\hwn = Ku$. Since all entries of $\hwn$ and $u$ are positive, $K$ is a positive integer. 
      \end{proof}

      Let $\cZ$ be the following surface amalgam with JSJ graph $\half(\gL)$. Associated to the Type 2 vertex~$y_0$ of $\half(\gL)$, the space $\cZ$ contains one central surface $\widetilde{S_{\cA}}$ with $2n$ boundary components and Euler characteristic $\chi(\widetilde{S_{\cA}}) = K\chi(S_{\cA})$.      
      Attached to the $i$th pair of boundary curves of $\widetilde{S_{\cA}}$, which are associated to the vertices $x_i^+, x_i^- \in \half(\gL)$  for $1 \leq i \leq n$, there are $r$ surfaces $\widetilde{S_{ij}}$ for $1 \leq j \leq r$, where~$\widetilde{S_{ij}}$ has 2 boundary components and Euler characteristic $K\chi(S_{ij})$; these surfaces are associated to the vertices $y_{ij}$ in $\half(\gL)$.

      \begin{lemma} \label{lemma_suff2_zoverx}
       The surface amalgam $\cZ$ forms a degree $K$ cover of the surface amalgam $\cX$, where~$K$ is the positive integer guaranteed by Lemma \ref{lemma_suff2_wu}.        
      \end{lemma}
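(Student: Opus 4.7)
The plan is to build $\cZ$ surface by surface as a degree $K$ cover of $\cX$, using Lemma~\ref{lem:neumann} twice, and then to glue the pieces together consistently along their boundary circles. Concretely, I will construct $\widetilde{S_\cA}$ as a connected degree $K$ cover of $S_\cA$ with $2n$ boundary components, one lying over each boundary component of $S_\cA$ and each covering it with degree $K$; and for each $i,j$, I will construct $\widetilde{S_{ij}}$ as a connected degree $K$ cover of $S_{ij}$ with $2$ boundary components, one over each boundary component of $S_{ij}$, again each of degree $K$.

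To apply Lemma~\ref{lem:neumann}, I need each base surface $S_\cA, S_{ij}$ to have positive genus, and I need the specified boundary count to have the correct parity. Positivity of the genus was built into the covers in Section~\ref{sec:tfcover}: per Table~\ref{table:summary}, $S_\cA$ has genus at least~$2$ and each $S_{ij} = S_{\beta_{ij}}$ has genus $2(p-4) \geq 2$. For the parity check, note that $\chi(S_\cA) = 16\chi(W_A)$ and $\chi(S_{ij}) = 16\chi(W_{ij})$ are both even integers, so $K\chi(S_\cA)$ and $K\chi(S_{ij})$ are even, matching the prescribed boundary counts $2n$ and $2$, respectively. The Euler characteristic of each proposed cover automatically comes out right by construction, so Lemma~\ref{lem:neumann} produces the surfaces with the stated Euler characteristics and boundary structure.

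Finally, I will glue the resulting surfaces along their boundary circles in a type-preserving manner to build a surface amalgam with JSJ graph $\half(\Lambda)$, together with a surjection onto $\cX$. The key compatibility is that at each singular curve of $\cX$ (labeled by some $x_i^\pm$) the preimage circles in $\widetilde{S_\cA}$ and in the various $\widetilde{S_{ij}}$ all cover the base curve with the same degree $K$, and the degree $K$ covering of $S^1$ is unique up to homeomorphism. Hence the identifications made in $\cX$ lift consistently, producing a surface amalgam $\cZ$ and a branched-free, locally homeomorphic surjection $\cZ \to \cX$ which restricts to the prescribed degree $K$ covering on each surface and each singular circle; this is exactly a degree $K$ cover of surface amalgams.

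The only non-mechanical step is verifying that the gluings can be made coherently; the main obstacle is ensuring that the local degrees on singular circles agree, but this is guaranteed because every preimage circle has been assigned degree exactly $K$ by construction. Thus the result reduces entirely to parity and positive-genus checks, both of which are immediate from the framework of Section~\ref{sec:tfcover}.
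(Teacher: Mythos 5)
Your proof is correct and takes essentially the same approach as the paper: apply Lemma~\ref{lem:neumann} to $S_{\cA}$ and to each $S_{ij}$ to obtain connected degree $K$ covers restricting to degree $K$ on every boundary circle, and then glue these piecewise covering maps along the branching curves to get the degree $K$ cover $\cZ \to \cX$. The only difference is that you make explicit the positive-genus and parity hypotheses needed for Lemma~\ref{lem:neumann}, which the paper leaves implicit.
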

      \begin{proof}
       By Lemma \ref{lem:neumann}, the surface $\widetilde{S_{\cA}}$ forms a degree $K$ cover of the surface $S_{\cA}$ so that the degree restricted to each boundary component of $\widetilde{S_{\cA}}$ over the corresponding boundary component of $S_{\cA}$ is equal to $K$. Similarly, by Lemma \ref{lem:neumann}, each surface $\widetilde{S_{ij}}$ forms a degree $K$ cover of $S_{ij}$ so that the degree restricted to each boundary component of $\widetilde{S_{ij}}$ over the corresponding boundary component of $S_{ij}$ is equal to $K$. Thus, since the covering maps agree along the gluings, $\cZ$ forms a degree $K$ cover of $\cX$. 
      \end{proof}

  \subsubsection{The finite cover $\cZ \rightarrow \cW$}\label{sec:suff2_covers}
 For $1 \leq i \leq n$, define 
\[
   z_i = K\wv_i 
   = (\chi(\widetilde{S_{i1}}), \dots, \chi(\widetilde{S_{ij}}))\;\text{ and }\;
z= K\ww,
\]   
where $K$ is the positive integer from Lemma \ref{lemma_suff2_wu}.
Note that the entries of $z$ are in non-increasing order. 
If $\chi(\widetilde{S_{\cA}})$ is the $k$th entry of $z$, let $\hz$ be the vector $z$ with the $k$th entry deleted. Then
$$
    \hz = K\hw 
    = z_1 + \dots + z_n \in 2\Z^r_{<0}.
$$
   Since $w_0$ is the minimal integral element in the commensurability class of $\ww \in 2\Z^{r+1}_{<0}$, there exists a positive integer $M$ so that $\ww=-2Mw_0$. Hence, $z = MK(-2w_0)$. Let $D = MK \in \N$. Then
$$
    z= D(-2w_0) \;\text{ and }\;
    \hz =D(-2\hwn).
  $$

 We compute $D$ a second way, and use this to prove that $\cZ$ finitely covers $\cW$. 
 
 \begin{lemma} \label{lem:suff2_D}
  Let $d_i$ be positive integers so that $\wv_i = d_i(-2u)$, where $u$ is the minimal integral element in the commensurability class $\mathcal{V}$. Then $D = \sum_{i=1}^n d_i$. 
 \end{lemma}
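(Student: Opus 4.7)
The plan is a direct computation comparing two expressions for $\hat{z}$ and using the fact that $u$ is a nonzero vector (so coefficients of $u$ can be compared).

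First, I would expand $\hat{w}$ using the hypothesis of the lemma. Since $\widetilde{v}_i = d_i(-2u)$ for each $1 \leq i \leq n$, the identity $\hat{w} = \widetilde{v}_1 + \cdots + \widetilde{v}_n$ established in Section~\ref{sec:suff2_notation} gives
\[
\hat{w} = \sum_{i=1}^n d_i(-2u) = -2\Bigl(\sum_{i=1}^n d_i\Bigr) u.
\]
Multiplying by $K$ and using $\hat{z} = K\hat{w}$, I obtain
\[
\hat{z} = -2K\Bigl(\sum_{i=1}^n d_i\Bigr) u.
\]

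On the other hand, by Lemma~\ref{lemma_suff2_wu}, $\hat{w_0} = Ku$, so $-2\hat{w_0} = -2Ku$. Combining this with the identity $\hat{z} = D(-2\hat{w_0})$ established just before the statement of the lemma yields
\[
\hat{z} = D(-2Ku) = -2DK\,u.
\]

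Comparing the two expressions for $\hat{z}$ gives $-2DK\,u = -2K\bigl(\sum_{i=1}^n d_i\bigr) u$. Since $u$ is the minimal integral element of the commensurability class $\mathcal{V}$, it is a nonzero vector (with positive entries, as all entries of $\widetilde{v}_i$ are negative and each $d_i>0$ forces $u$ to have negative entries scaled by $-2$, hence $u \neq 0$). Dividing by $-2K \neq 0$ and equating the scalar coefficients of $u$, I conclude $D = \sum_{i=1}^n d_i$.

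There is no real obstacle here; the work was done in setting up the normalizations. The only thing to check carefully is that the integer $K$ from Lemma~\ref{lemma_suff2_wu} and the integer $M$ from the definition $\widetilde{w} = -2Mw_0$ are consistent with $D = MK$, which follows from the definitions in Section~\ref{sec:suff2_covers}.
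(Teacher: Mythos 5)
Your proposal is correct and follows essentially the same route as the paper: the paper computes $z_i = K\wv_i = d_i(-2\hwn)$ and sums to get $\hz = \bigl(\sum_{i=1}^n d_i\bigr)(-2\hwn)$, then compares with the previously established identity $\hz = D(-2\hwn)$, which is exactly your computation (you just factor through $\hw$ before multiplying by $K$). The only cosmetic difference is that you spell out the non-vanishing of $u$ and $K$ when cancelling, which the paper leaves implicit.
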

 \begin{proof}
   By construction of the covering map and the definition of $K$, we have
$$
  z_i = K\wv_i = K(d_i(-2u)) = d_i(-2\hwn).
$$ Hence
$$
  \hz = z_1 + \dots + z_n = \left(\sum_{i=1}^n d_i\right) (-2\hwn).
 $$
 So, combining the two equations for $\hz$ given above, we have that $\sum_{i=1}^n d_i = D$.
 \end{proof}

 \begin{lemma}
  The surface amalgam $\cZ$ forms a degree $D$ cover of the surface amalgam $\cW$.
 \end{lemma}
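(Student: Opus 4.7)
The strategy is to build the covering $\cZ\to\cW$ surface by surface using Lemma~\ref{lem:neumann}, then verify the individual covers glue consistently along branching curves. I will first fix the combinatorics. Because each $\wv_i$ has entries in non-increasing order, $\hw$ is already sorted, so $\ww$ is obtained by inserting $\chi(S_\cA)$ at position $k$. Letting $\sigma:\{1,\ldots,r\}\to\{1,\ldots,r+1\}\setminus\{k\}$ be the order-preserving bijection (so $\sigma(j)=j$ for $j<k$ and $\sigma(j)=j+1$ otherwise), I then have $\hwn_j = w_{\sigma(j)}$. The covering will induce the half-covering $\half(\Lambda)\to\Phi$ given by $y_0\mapsto y_k$, $y_{ij}\mapsto y_{\sigma(j)}$, and $x_i^\pm\mapsto x^\pm$; in particular $\widetilde{S_\cA}$ will cover $W_k$ and each $\widetilde{S_{ij}}$ will cover $W_{\sigma(j)}$.

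Next I will verify Euler characteristic compatibility. Using $\chi(W_\ell)=-2w_\ell$, $\wv_i = d_i(-2u)$, and $\hwn=Ku$ from Lemma~\ref{lemma_suff2_wu}, direct computation gives
\[
\chi(\widetilde{S_{ij}}) = K\chi(S_{ij}) = -2Kd_i u_j = -2d_i w_{\sigma(j)} = d_i\,\chi(W_{\sigma(j)}),
\]
and $\chi(\widetilde{S_\cA}) = K\chi(S_\cA) = -2KMw_k = -2Dw_k = D\,\chi(W_k)$. Lemma~\ref{lem:neumann} will then produce, for each $i,j$, a connected degree-$d_i$ cover $\widetilde{S_{ij}}\to W_{\sigma(j)}$ in which each of its two boundary components maps to a distinct boundary of $W_{\sigma(j)}$ with degree $d_i$; and a connected degree-$D$ cover $\widetilde{S_\cA}\to W_k$ in which $x_1^+,\ldots,x_n^+$ cover $x^+$ with degrees $d_1,\ldots,d_n$ summing to $D$ by Lemma~\ref{lem:suff2_D}, and analogously for the $x_i^-$. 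The hypotheses of Lemma~\ref{lem:neumann} hold since each $W_\ell$ has positive genus $w_\ell\geq 1$ (as $w_0$ has positive entries) and the relevant boundary counts and Euler characteristics are all even. Because orientable surfaces are classified by Euler characteristic and boundary count, the surfaces produced agree with those already occurring in $\cZ$.

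The only potential obstacle, and the crux of the argument, is gluing these surface covers into a covering of surface amalgams. At each branching curve $x_i^\pm$ of $\cZ$, the $r+1$ incident surfaces $\widetilde{S_\cA}$ and $\widetilde{S_{ij}}$ for $j=1,\ldots,r$ all restrict to degree-$d_i$ covers of $x^\pm\subset\cW$; after conjugating each boundary cover by a rotation of $S^1$ (a deck transformation of the degree-$d_i$ cover $S^1\to S^1$) to align them, the covers on adjacent surfaces will agree along their common boundary, yielding a well-defined global covering $\cZ\to\cW$ of degree $D=\sum_i d_i$. This uniformity of boundary degrees across each ring at $x_i^\pm$ is precisely what the commensurability relation $\wv_i=d_i(-2u)$ provides, and is the only place the hypotheses of condition~(2) enter essentially.
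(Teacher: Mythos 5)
Your proposal is correct and follows essentially the same route as the paper's proof: compute that $\chi(\widetilde{S_{ij}}) = d_i\chi(W_{\sigma(j)})$ and $\chi(\widetilde{S_{\cA}}) = D\chi(W_k)$, apply Lemma~\ref{lem:neumann} with the prescribed boundary degrees ($d_i$ on each boundary of $\widetilde{S_{ij}}$, and the $i$th pair of boundaries of $\widetilde{S_{\cA}}$ covering with degree $d_i$, summing to $D$ by Lemma~\ref{lem:suff2_D}), and glue the pieces along branching curves where the boundary degrees match. You are merely more explicit about the index bookkeeping (general $k$ via $\sigma$, where the paper assumes $k=r+1$ for ease of notation) and about aligning the boundary coverings by rotations before gluing.
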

 \begin{proof}
    Suppose for ease of notation that $\chi(\widetilde{S_{\cA}})$ is the $(r+1)$st entry of the vector $z$. Then  for all $1 \leq i \leq n$ and all $1 \leq j \leq r$, we have that $\chi(\widetilde{S_{ij}}) = d_i \chi(W_j)$, where the surface $W_j$ in $\cW$ has Euler characteristic $w_j$ by definition, and $d_i$ is defined in Lemma \ref{lem:suff2_D}. By Lemma~\ref{lem:neumann}, $\widetilde{S_{ij}}$ covers~$W_j$ by degree $d_i$ so that the degree restricted to each of the boundary components is equal to $d_i$. So, $\bigcup_{i=1}^n \widetilde{S_{ij}}$ forms a degree $D$ cover of $W_j$ for all $1 \leq j \leq r$. In addition, $\chi(\widetilde{S_{\cA}}) = D\chi(W_{r+1})$, and the~$n$ pairs of boundary components of $\widetilde{S_{\cA}}$ can be partitioned so that the $i$th pair of boundary curves of~$\widetilde{S_{\cA}}$ covers the pair of boundary curves of $W_{r+1}$ by degree $d_i$, for $1 \leq i \leq n$, with  $\sum_{i=1}^n d_i = D$. Thus by Lemma~\ref{lem:neumann}, the space $\cZ$ forms a $D$-fold cover of $\cW$, concluding the proof. 
 \end{proof}

\section{Geometric amalgams and RACGs}\label{sec:GeomAmalgamsRACGs}

In this section we further investigate the relationship between geometric amalgams of free groups (see Definition~\ref{def:surface-amalgam}) and right-angled Coxeter groups.  In Section~\ref{sec:GeomAmalgamsTrees} we prove Theorem~\ref{thm:GeomAmalgamsRACGs} from the introduction, which states that  the fundamental groups of surface amalgams whose JSJ graphs are trees are commensurable to right-angled Coxeter groups (with defining graphs in~$\cG$).  
As a consequence, we obtain the commensurability classification of geometric 
amalgams of free groups whose JSJ graphs are trees of diameter at most four 
(Corollary~\ref{cor:GeomAmalgamsCommens}).  Then in Section~\ref{sec:GeomAmalgamsNonTrees} we give an example of a geometric amalgam of free groups which is not quasi-isometric (hence not commensurable) to any Coxeter group, or indeed to any group which is finitely generated by torsion elements.

\subsection{Geometric amalgams over trees}\label{sec:GeomAmalgamsTrees}
Let $\cX(T)$ be a surface amalgam whose JSJ graph is a tree~$T$.  To prove Theorem~\ref{thm:GeomAmalgamsRACGs}, we construct a surface amalgam $\cX(\Lambda)$ which finitely covers $\cX(T)$, and which looks like one of the covers constructed in 
Section~\ref{sec:tfcover}.  We then follow the construction from Section~\ref{sec:tfcover} backwards 
to construct an orbicomplex $\cO$ with fundamental group a right-angled Coxeter group whose JSJ graph is $T$.  It follows that the fundamental group of $\cX(T)$ is commensurable to a right-angled Coxeter group with JSJ graph $T$.

\begin{proof}[Proof of Theorem~\ref{thm:GeomAmalgamsRACGs}]
Let $\cX(T)$ be as above.   By Corollary~\ref{cor:PosGenus} we may assume that every surface associated to a Type 2 vertex of $\cX(T)$ has positive genus.  Let $\Lambda = \half(T)$ be the graph from Definition~\ref{def:halfT}, with associated half-covering map $\theta: \Lambda \to T$.   We construct a surface amalgam over~$\Lambda$ which covers $\cX(T)$ with degree 8. 

Let $y$ and $y'$ be Type 2 vertices in $T$ and $\Lambda$ respectively, with $\theta(y') =y$. Recall that for each edge~$e$ incident to $y$ in $T$, there are  two edges $e', e''$ incident to $y'$ in $\Lambda$, such that $\theta(e') = \theta(e'') =e$. In particular, the valence of $y'$ is twice that of $y$.  Suppose $S_y$ is the surface associated to $y$, with genus~$g$ and $b$ boundary components.  
The surface $S_{y'}$ associated to $y'$ is defined to be the degree 8 cover of $S_{y}$ which has 
$2b$ boundary components, such that each boundary component of $S_y$ is covered by two boundary curves of $S_{y'}$, each mapping with degree 4.  Such a cover exists by Lemma~\ref{lem:neumann}, since 8 and $2b$ are both even.  This induces a bijection between the boundary components of $S_{y'}$ and the edges of $\Lambda$ incident to $y'$.  
Now form $\cX(\Lambda)$ by gluing the surfaces associated to the Type~2 vertices of $\Lambda$ according to the adjacencies in $\Lambda$.  The covering maps on the individual surfaces  induce a degree 8 cover $\cX(\Lambda) \to \cX(T)$. 

If $y, y', S_y, g, b$ and $S_{y'}$ are as in the previous paragraph, and $g'$ is the genus of $S_{y'}$, then we have $8\chi(S_y) = \chi (S_{y'})$, or equivalently, $8(2-2g-b)= 2-2g'-2b$.  Simplifying, we get:
\begin{equation}\label{eq:g'}
g' = 3b + 8g -7. \end{equation}
Observe that $g'$ is even if and only if $b$ is odd, and that $g' > 3b$, since $g$ was assumed to be positive.

Next we construct an orbicomplex $\cO$ 
as a graph of spaces with underlying graph $T$
such that~$\cX(\Lambda)$ is a degree 16 cover of $\cO$.  
If $y$ is a terminal Type 2 vertex of $T$, then the vertex space associated to~$y$ 
is the orbifold $\cP_y$ defined as follows. Let $S_{y'}\subset \cX(\Lambda)$ be as above.  
It follows from Equation~\eqref{eq:g'}, since $b=1$ in this case, that $g'$ is even.  Moreover, $g'$ is positive.  Then $S_{y'}$ forms a degree 16 cover of $\cP_y$, an orbifold with underlying space a right-angled hyperbolic polygon with $\frac{g'}2+4 \ge 5$ sides, as described in 
Section~\ref{sec:tfcover}.  The orbifold $\cP_y$ has $\frac{g'}{2}+3 \ge 4$ reflection edges and one non-reflection edge of length~1.

Now let $y$ be a non-terminal Type 2 vertex of $T$ with valence $b$.
 Then the surface $S_{y'}$ of $\cX(\Lambda)$ has $2b$ boundary components and genus $g' > 3b$ such that $g'$ and $b$ have opposite parity. 
We will realize $S_{y'}$ as a cover of an orbifold $\cA_y$ which is the union of 
orbifolds $\cQ_A$ and $\cP$, identified to each other along a non-reflection edge of each.
To do this, decompose $S_{y'}$ into two surfaces $S_{1}$ and $S_{2}$ by cutting along a pair of non-separating curves whose union separates $S_{y'}$, and
 such that $S_{1}$ has $2b+2$ boundary components and genus $3(b+1) - 7 <g'$, and $S_{2}$ has 2 boundary components and genus $h$ satisfying 
$g' = 1 + 3(b+1)-7 + h$.  Thus $h = g'-3b +3$.  Since $g'$ and $b$ have opposite parity, $h$ is even, and since $g'>3b$, we know that $h$ is positive.

 By the construction in Section~\ref{sec:tfcover}, the surface 
$S_{1}$ can be realized as a degree 16 cover of  an orbifold~$\cQ_A$ with underlying space a right-angled $2(b+1)$-gon, so that $\cQ_A$ has $b+1$ non-reflection edges, each of length 1, and 
$b+1$ reflection edges.  
Since $h$ is positive and even, the surface $S_{2}$ forms a degree 16 cover of an  orbifold $\cP$ with $\frac h 2 +4 \ge 5$ sides, as in the terminal vertex case. The orbifold $\cP$ has $\frac{h}{2} +3$ reflection edges and one non-reflection edge of length 1. Form the orbifold $\cA_y$ by gluing the non-reflection edge of $\cP$ to one of the non-reflection edges of $\cQ_A$.  Now let $\cA_y$ be the vertex space associated to $y$.  

We now identify the orbifold vertex spaces described above according to the incidence relations of~$T$ along non-reflection edges to get an orbicomplex $\cO$. 
Arguments similar to those from Section~\ref{sec:orbicomplex} show that the fundamental group of $\cO$ is a right-angled Coxeter group.   
Since the covering maps from the surfaces in $\cX$ to the orbifolds in $\cO$ agree along their intersections, the space $\cX$ finitely covers $\cO$.  The theorem follows. 
\end{proof}

\begin{proof}[Proof of Corollary~\ref{cor:GeomAmalgamsCommens}.]
 If $T$ is a tree, then by Theorem~\ref{thm:GeomAmalgamsRACGs}, any geometric amalgam of free groups with JSJ graph $T$ is commensurable to a right-angled Coxeter group $W$ with JSJ graph $T$.  If, moreover, $T$ has diameter at most 4, then by 
Remark~\ref{rmk:TreeJSJ}, the defining graph of $W$ is 
 either a generalized $\Theta$-graph or a cycle of generalized $\Theta$-graphs. Corollary~\ref{cor:GeomAmalgamsCommens} follows.
 \end{proof}

\subsection{Geometric amalgams not over trees}\label{sec:GeomAmalgamsNonTrees} 

In the case of surface amalgams whose JSJ graphs are not trees, the situation is more complicated.  
Since $\half(T)$ is not a tree, clearly some geometric amalgams of free groups whose JSJ graphs are not trees are commensurable to right-angled Coxeter groups.  On the other hand, the following example shows that there exist geometric amalgams of free groups which are not quasi-isometric (and hence not commensurable) to any Coxeter group, or indeed to  any group which is finitely generated by torsion elements.  

\begin{example}\label{ex:non-Coxeter-amalgam}
Let $G$ be a geometric amalgam of free groups with JSJ graph $\Lambda$ as in
Figure~\ref{fig:non-Coxeter-amalgam}, where the Type 1 vertices are white and the Type 2 vertices black.

\begin{figure}[htp]
\begin{overpic}[scale =0.33]%
{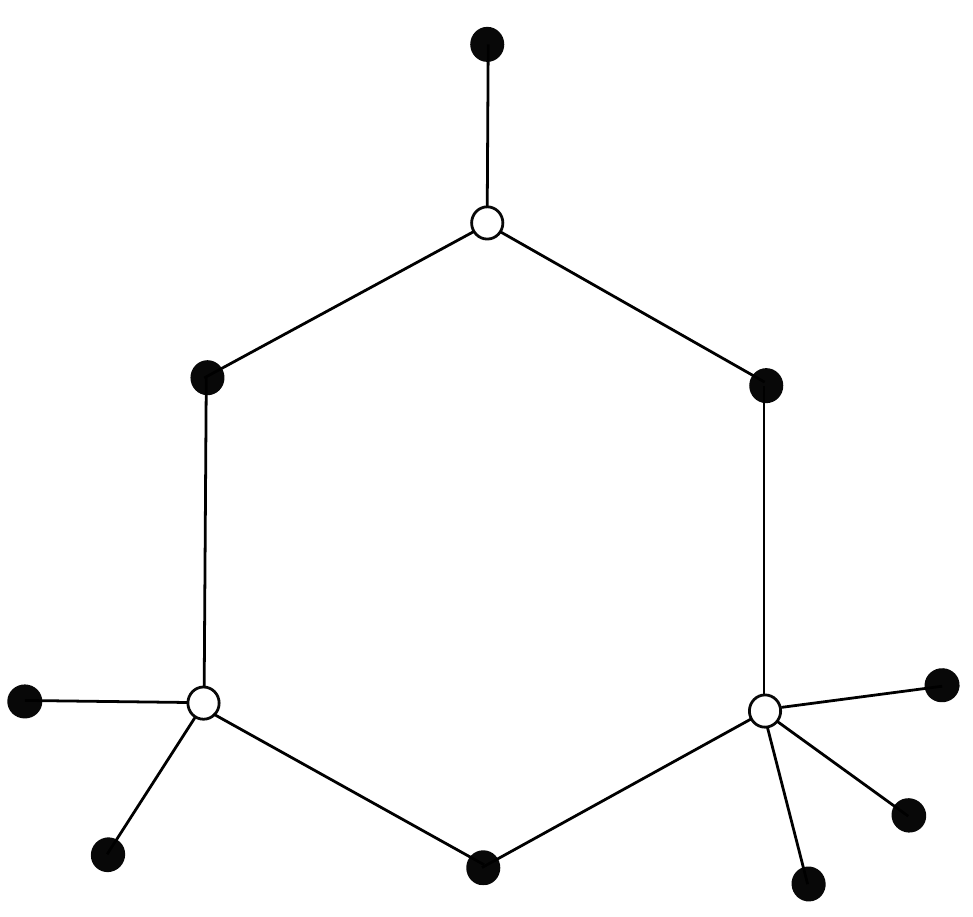}
\put(-30,40){{$ \Lambda=$}}
\put(54,72){{\scriptsize$ a_0$}}
\put(82,54){{\scriptsize$a_1$}}
\put(82,27){{\scriptsize$a_2$}}
\put(45,-5){{\scriptsize$a_3$}}
\put(8 ,27){{\scriptsize$a_4$}}
\put(8,54){{\scriptsize $a_5$}}
\end{overpic}
\caption{}
\label{fig:non-Coxeter-amalgam}
\end{figure}

Suppose $G$ is quasi-isometric to a group $H$ which is finitely generated by torsion elements.  Then~$H$ is 
also 1-ended and hyperbolic, and $G$ and $H$ have the same JSJ tree $\cT$.  Moreover, 
the JSJ graph of $H$ is a finite tree, say $\Psi$.  (The graph~$\Psi$ is a tree since any group with a nontrivial loop in its JSJ graph admits a nontrivial homomorphism to $\Z$, but no such homomorphism from $H$ to $\Z$ exists.)  Now the JSJ tree $\cT$ half-covers $\Psi$.  
We show that this is a contradiction, by showing that if $\theta:\cT \to T$ is a half-covering, where $T$ is any tree, then $T$ must be infinite.  In the following argument, all indices are taken mod $6$.

For each vertex $a_i$ of $\Lambda = G \backslash \cT$, by abuse of notation write $Ga_i$ for the $G$-orbit of some lift of~$a_i$ in $\cT$.    The following are straightforward consequences of the definition of a half-cover.  
\begin{enumerate}
\item 
Each vertex in $ \theta(Ga_i) \subset V(T)$ is adjacent to at least one vertex from each of $\theta(Ga_{i-1})$ and $\theta(Ga_{i+1})$. \item For all $i \neq j$, we have $\theta( Ga_i)\cap \theta(Ga_j) = \emptyset$.
\end{enumerate}

To show that $T$ must be infinite, we define a map $\nu: [0,\infty) \to T$ as follows.  Choose $\nu(0)$ to be an element of $\theta(Ga_0)$, and then for each $i>0$, inductively choose $\nu(i)$ to be an element of $\theta(Ga_i)$ adjacent to $\theta(Ga_{i-1})$, which exists by (1) above.  The adjacency condition can now be used to extend $\nu$ to 
$[0, \infty)$.  By (2), we conclude that for each $i>0$, the vertices $\nu(i-1)$ and $\nu(i+1)$ are distinct (i.e. there is no ``folding'').  This, together with the fact that $T$ is a tree, implies that~$\nu$ is injective, and it follows that $T$ is infinite.  
\end{example}

\section{Discussion}\label{sec:discussion}

In this section we discuss some obstructions to extending our commensurability results.

A natural strategy that we considered is that used by Behrstock--Neumann in~\cite{behrstock-neumann} for quasi-isometry classification.  
In Theorem 3.2 of~\cite{behrstock-neumann}, it is shown that two non-geometric graph manifolds $M$ and $M'$ which have isomorphic Bass--Serre trees $T$ and~$T'$ are quasi-isometric, by first proving that there is a minimal graph $\Lambda''$ which is \emph{weakly covered} by both of the quotient graphs $\Lambda = \pi_1(M) \backslash T$ and $\Lambda' = \pi_1(M') \backslash T'$.  The notion of weak covering just requires the graph morphism to be locally surjective, unlike the half-coverings we introduce in Section~\ref{sec:half-coverings}, which must be locally bijective at vertices of Type 1.  The minimal graph $\Lambda''$ determines a $3$-manifold~$M''$ which is covered by both $M$ and $M'$, and a bilipschitz map $M \to M'$ is constructed by showing that there are bilipschitz maps $M \to M''$ and $M' \to M''$.

We now give an example to explain why such a strategy cannot be pursued in our setting.  Let $G = G(\Lambda)$ and $G' = G(\Lambda')$ be geometric amalgams of free groups with JSJ graphs $\Lambda$ and~$\Lambda'$ as shown in Figure~\ref{fig:NoHalfCover}, with Type 1 vertices white and Type 2 vertices black.  
\begin{figure}[htp]
\begin{overpic}[scale =0.4]%
{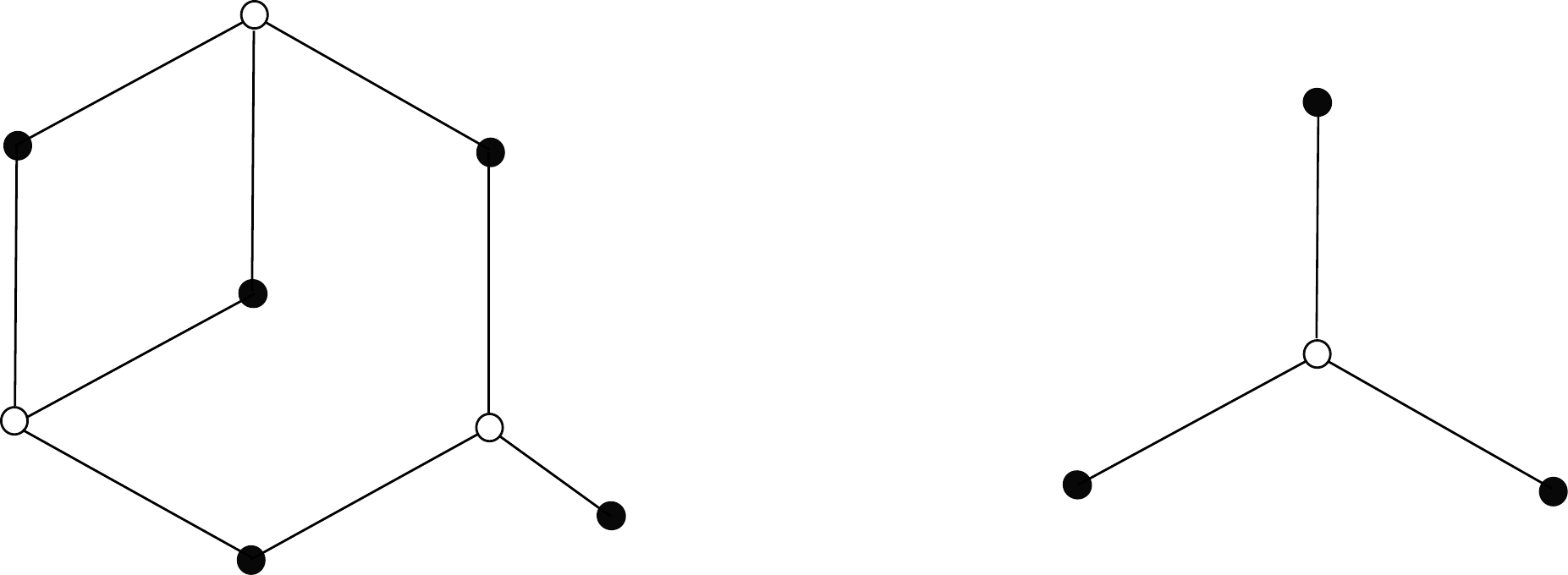}
\put(-15,18){{$ \Lambda=$}}
\put(55,18){{$ \Lambda'=$}}
\put(15,-2){{\scriptsize$y_1$}}
\put(18,16){{\scriptsize$y_2$}}
\put(-4,27){{\scriptsize $y_3$}}
\put(33,27){{\scriptsize $y_4$}}
\put(41,3){{\scriptsize $y_5$}}
\end{overpic}
\caption{}
\label{fig:NoHalfCover}
\end{figure}
Since all Type~1 vertices in $\Lambda$ and $\Lambda'$ are of valence 3, the groups $G$ and $G'$ have isomorphic JSJ trees.  Now the graph~$\Lambda'$ does not half-cover any other graph, so if any graph is half-covered by both $\Lambda$ and~$\Lambda'$, then there is a half-covering $\theta:\Lambda \to \Lambda'$.  The map $\theta$ must send each of the triples of  Type~2 vertices $\{y_1, y_2, y_3\}$, $\{y_2, y_3, y_4\}$, and $\{y_1, y_4, y_5\}$ of $\Lambda$ to the three distinct Type 2 vertices of~$\Lambda'$, but this is impossible.  Hence there is no (minimal) graph which is half-covered by both $\Lambda$ and $\Lambda'$.  

Instead, we may approach commensurability by considering common finite half-covers of $\Lambda$ and~$\Lambda'$.  More precisely, we consider common finite half-covers of finite, connected, bipartite graphs $\Lambda$ and $\Lambda'$ in which all Type 1 vertices have valence $\geq 3$, such that $\Lambda$ and $\Lambda'$ are both half-covered by the same (infinite) bipartite tree in which all Type 2 vertices have countably infinite valence.  With these assumptions, any corresponding geometric amalgams of free groups $G = G(\Lambda)$ and $G' = G(\Lambda')$ will have isomorphic JSJ trees (a necessary condition for commensurability).  Using similar methods to those of Leighton~\cite{leighton} (see also Neumann~\cite{neumann-on-leighton}), we can construct a common finite half-cover $\Lambda''$ of any two such graphs $\gL$ and $\gL'$.  

However, a half-covering of  JSJ graphs does not necessarily induce a topological covering map of associated surface amalgams.  For example, in Figure~\ref{figure:suff_cond_2}, the JSJ graph of $\pi_1(\cX)$ does half-cover the JSJ graph of $\pi_1(\cW)$, but by considering Euler characteristics, one sees that the surface amalgam~$\cX$ does not cover the surface amalgam $\cW$.  Hence half-coverings of graphs may not induce finite-index embeddings of associated geometric amalgams of free groups.  Indeed, our results distinguishing commensurability classes can be used to construct examples where for certain surface amalgams $\cX$ over $\Lambda$ and $\cX'$ over $\Lambda'$, there is no surface amalgam $\cX''$ over a common finite half-cover $\Lambda''$ so that~$\cX''$ covers both $\cX$ and $\cX'$.  Such examples exist even when $\gL$ and $\gL'$ are isomorphic graphs.
A priori, we see no reason why there would be a bound on the size of common half-covers so that in determining whether two groups are commensurable, it is enough to look at the set of common half-covers of their JSJ graphs up to a given size and determine whether suitable surface amalgams over these graphs exist.  

Finally we remark that the proofs of our necessary and sufficient conditions for cycles of generalized $\Theta$-graphs make substantial use of $\Lambda$ and $\Lambda'$ both having a distinguishable central vertex, so it is not clear how to generalize the commensurability classification to arbitrary pairs of JSJ graphs.

\bibliographystyle{plain}
\bibliography{refs} 

\end{document}